\documentclass[11pt]{article}

\usepackage[english]{babel}
\usepackage{latexsym,dsfont}
\usepackage{fullpage}
\usepackage[T1]{fontenc}
\usepackage{amsmath}
\usepackage{amsfonts}
\usepackage{amssymb}
\usepackage{mathrsfs}
\usepackage{amsthm}
\usepackage{latexsym}
\usepackage{array}
\usepackage{mathrsfs}
\usepackage{amsxtra}
\usepackage{amscd}
\usepackage{mathtools}
\usepackage{verbatim}
\usepackage{stmaryrd}
\usepackage{enumerate}
\usepackage{frcursive}
\usepackage{hyperref}
\hypersetup{
    colorlinks=true,
    linkcolor=blue,
	linktocpage=true
}

\usepackage[toc,page]{appendix}

\usepackage[usenames]{color}
\definecolor{red}{rgb}{1.0,0.0,0.0}

\definecolor{blu}{rgb}{0.0,0.0,1.0}

\definecolor{gre}{rgb}{0.03,0.50,0.03}

\definecolor{amethyst}{rgb}{0.6, 0.4, 0.8}

\definecolor{blue-violet}{rgb}{0.54, 0.17, 0.89}

\definecolor{darkviolet}{rgb}{0.58, 0.0, 0.83}

\definecolor{mrc}{rgb}{0.8, 0, 0.4}

\def\Swiech
{{\accent1 S}wie{\hbox{\kern -0.21em\lower
0.77ex\hbox{$\textfont1=\scriptfont1
\lhook$}}}ch}

\def\SWIECH
{{\accent1 S}WIE{\hbox{\kern -0.21em\lower
0.77ex\hbox{$\textfont1=\scriptfont1
\lhook$}}}CH}

\numberwithin{equation}{section}

\theoremstyle{plain}
\newtheorem{Theorem}{Theorem}[section]
\newtheorem{Definition}[Theorem]{Definition}
\newtheorem{Proposition}[Theorem]{Proposition}
\newtheorem{Lemma}[Theorem]{Lemma}

\newtheorem{Corollary}[Theorem]{Corollary}
\newtheorem{Remark}[Theorem]{Remark}
\newtheorem{Example}[Theorem]{Example}

\newenvironment{Assumption}[1]
{\innerAssumption}
{\endinnerAssumption}
\renewcommand{\epsilon}{\varepsilon}

\newenvironment{StandingAssumption}[1]
{\innerStandingAssumption}
{\endinnerStandingAssumption}

\newenvironment{Notation}[1]
{\innerNotation}
{\endinnerNotation}

\newcommand{\conv}[1]{\stackrel{}{\star_{#1}}}
\newcommand{\sconv}[1]{\stackrel{\scriptscriptstyle dB}{\star_{#1}}}
\newcommand{\wt}{\widetilde}

\def\<{\left\langle }
\def\>{\right\rangle }

\def \S{\mathbf{S}}

\def \E{\mathbb{E}}
\def \F{\mathbb{F}}
\def \N{\mathbb{N}}
\def \P{\mathbb{P}}
\def \R{\mathbb{R}}

\def \Bc{{\cal B}}
\def \Dc{{\cal D}}
\def \Fc{{\cal F}}
\def \Gc{{\cal G}}
\def \Hc{{\cal H}}
\def \Lc{{\cal L}}
\def \Mc{{\cal M}}
\def \Nc{{\cal N}}
\def \Pc{{\cal P}}
\def \Uc{{\cal U}}
\def \Wc{{\cal W}}

\def \eps{\varepsilon}

\def \ni{\noindent}

\def \beqs{\begin{eqnarray*}}
\def \enqs{\end{eqnarray*}}
\def \beq{\begin{eqnarray}}
\def \enq{\end{eqnarray}}

\def \Tr{{\rm Tr}}
\def \Ur{{\rm U}}

\DeclareMathOperator*{\esssup}{ess\,sup}


\begin{document}

\title{Optimal control of path-dependent \\
McKean-Vlasov SDEs in infinite dimension}

\author{
Andrea COSSO\footnote{University of Bologna, Italy; andrea.cosso@unibo.it} \quad
Fausto GOZZI\footnote{Luiss University, Roma, Italy; fgozzi@luiss.it} \quad
Idris KHARROUBI\footnote{LPSM, UMR CNRS 8001, Sorbonne University and Universit\'e de Paris; idris.kharroubi@upmc.fr} \quad \\
Huy\^en PHAM\footnote{LPSM, UMR CNRS 8001, Sorbonne University and Universit\'e de Paris; pham@lpsm.paris} \quad
Mauro ROSESTOLATO \footnote{Dipartimento di Matematica e Fisica ``Ennio De Giorgi'', Universit\`a del Salento, 73100 Lecce, Italy; mauro.rosestolato@unisalento.it.}
}

\date{To appear in {\it Annals of Applied Probability}}

\maketitle

\begin{abstract}
We study the optimal control of  path-dependent McKean-Vlasov equations valued in Hilbert spaces motivated by non Markovian mean-field models driven by stochastic PDEs.
We first establish the well-posedness of the state equation,  and then we prove the dynamic programming principle (DPP) in such a general framework.  The crucial law invariance property of the value function $V$  is rigorously obtained,  which  means that $V$ can be viewed as a function on the Wasserstein space of probability measures on the set of continuous functions valued in Hilbert space.
We then define a notion of pathwise measure derivative,  which extends  the Wasserstein derivative due to  Lions \cite{lio12}, and  prove a related functional It\^o formula  in the spirit of
Dupire \cite{dup09} and Wu and Zhang \cite{WuZhang18PPDE}.  The Master Bellman equation is derived from the DPP by means of a suitable notion of viscosity solution. We provide different formulations and simplifications of such a Bellman equation notably in the special case when there is no dependence on the law of the control.
\end{abstract}

\bigskip
\noindent\textbf{Mathematics Subject Classification (2010):} 93E20, 60K35, 49L25.

\bigskip
\noindent\textbf{Keywords:} Path-dependent McKean-Vlasov SDEs in Hilbert space, dynamic programming principle, pathwise measure derivative, functional It\^o calculus,
Master Bellman equation, viscosity solutions.

\newpage

\tableofcontents

\newpage
\section{Introduction}

Given two real separable Hilbert spaces $H$ and $K$, let us consider  the nonlinear stochastic differential equation (SDE)  on $H$ in the form:
\begin{align} \label{SDEintro}
dX_t =  A X_t dt +  b_t(X,\P_{X_{\cdot\wedge t}},\alpha_t,\P_{\alpha_t}) dt + \sigma_t(X,\P_{X_{\cdot\wedge t}},\alpha_t,\P_{\alpha_t}) dB_t,
\end{align}
over a finite interval $[0,T]$.  Here,  $A\colon \mathcal{D}(A)\subset H\rightarrow H$ is
the generator of a $C_0$-semigroup of contractions in $H$,
and $B=(B_t)_{t\geq 0}$
is a $K$-valued cylindrical Brownian motion on a  complete probability space $(\Omega,\Fc,\P)$ with $\F^B$ its completed natural filtration.
The coefficients  $b$ and $\sigma$,  valued respectively in $H$ and $\Lc_2(K;H)$ (the space of Hilbert-Schmidt operators from $K$ to $H$), depend on time, on the whole path of the state process $X$, on an input
control process $\alpha$, that is an $\F^B$-adapted process valued in  some Borel space $U$,  and furthermore on the distribution of the state/control  process.

Equation \eqref{SDEintro} is referred to as controlled McKean-Vlasov SDE in Hilbert spaces, and we are interested in the optimal control for \eqref{SDEintro} by minimizing, over control processes $\alpha$, a functional in the form
\begin{equation*}
J(X_0,\alpha) =  \E \Big[ \int_0^T f_t(X,\P_{X_{\cdot\wedge t}},\alpha_t,\P_{\alpha_t}) dt + g(X,\P_{X}) \Big],
\end{equation*}
given running cost and terminal cost functions $f$ and $g$.

When the coefficients $b,\sigma,f,g$ do not depend on the law of the state process, the control of equation \eqref{SDEintro} is
motivated by various kinds of stochastic partial differential equations (SPDEs) like
stochastic heat equations (see e.g. \cite{CannarsaDaPrato91,Gozzi95,
Gozzi96,Masiero05,FuhrmanTessitore02-ann})
stochastic reaction-diffusion equations (see e.g.\cite{Cerrai01,Cerrai01-40}), stochastic porous media equations (see e.g. \cite{barroc19}), singular stochastic dissipative equations (see e.g. \cite{daproc02}),
stochastic Burgers and Navier-Stokes equations (see e.g. \cite{DaPratoDebussche99,dapdeb00,GozziSritharanSwiech05}),
Zakai equation in filtering (see e.g. \cite{GozziSwiech00}),
stochastic first-order equations (see e.g. \cite{GoldysGozzi06}),
stochastic delay equations (see e.g. \cite{FuhrmanMasieroTessitore11}\cite{RosestolatoSwiech15},
\cite{GozziMasiero17,GozziMasiero17bis,GozziMasieroErrata21}
\cite{BiffisGozziProsdocimi20}).  We refer also to the lecture notes and monographs \cite{DPZ}, \cite{bardaproc16}, \cite{FabbriGozziSwiech}, for an account on this topic.

The main novelty of this paper is to consider a mean-field dependence on the coefficients of the infinite-dimensional stochastic differential equation \eqref{SDEintro}, and to study the corresponding control problem. Mean-field diffusion processes, also called McKean-Vlasov equations,  in finite dimension have a long history with  the pioneering works \cite{kac}, \cite{mackean}, and later on with the seminal paper \cite{S89} in the framework of propagation of chaos. The control of such equations has attracted an increasing interest since the emergence of mean-field game theory initiated independently  in \cite{laslio07} and \cite{huacaimal06}, aiming at describing control of large systems or population of interacting particles, and  has generated over the last few years numerous contributions, see e.g.  \cite{phawei17}, \cite{CossoPham19}, \cite{DjetePossamaiTan19}, and the reference monographs  \cite{benetal13} and \cite{cardelbook}.

In addition to the infinite-dimensional feature of the McKean-Vlasov equation \eqref{SDEintro}, we emphasize the  path-dependency (in a nonanticipative way)
of the coefficients $b,\sigma,f,g$, on the state process as well as on its distribution.
This general setting, which is motivated by various applications, see e.g. Example \ref{ex:SE}, seems to be considered for the first time in the present paper.
It is worth noting that, also in the mean-field game theory, some research papers
(see \cite{CarmonaEtAl18,FouqueZhang18}) started to look at the cases when the state of the agents follows a delay equation. As far as we know, only cases with explicit solution are studied up to now, and we think it would be worth to build a general theory in such cases, on the line of what we do here for McKean-Vlasov control problems.

Our basic objective here is to extend to our infinite-dimensional path-dependent setting the tools required in the dynamic programming approach for McKean-Vlasov control problems.

%

In the finite-dimensional case, i.e. $H=\R^d$, and in the Markovian case, i.e. without path-dependency of the coefficients, the Wasserstein derivative in the lifted sense of Lions \cite{lio12}, turns out to be a convenient notion of measure derivative when combined with It\^o's formula along the flow of probability measures (see \cite{bucetal17}) in order to  define
the Master equation in mean-field game/control. These concepts have been recently extended to the path-dependent case in  \cite{WuZhang18PPDE} with a functional It\^o formula in the McKean-Vlasov setting.

\vspace{2mm}

\noindent {\bf Our contributions.}
Our first main result is to prove  the crucial law invariance property of the value function to the control problem (see Theorem \ref{T:id-law}), which implies that the value function can be considered  as a function on the Wasserstein space of probability measures on  $C([0,T];H)$. We also state and provide a direct proof of the dynamic programming principle  in this context (see Theorem \ref{T:DPP} and Corollary \ref{C:DPP}).  Next, we introduce a notion of pathwise derivative in Wasserstein space and a related functional It\^o formula in our infinite-dimensional McKean-Vlasov context that extend the concepts in \cite{WuZhang18PPDE}.  Equipped with these tools, we can then derive from the dynamic programming principle the associated Master HJB equation, which is a PDE
where the  state variable is a probability measure on $C([0,T];H)$.  For such PDE, we provide equivalent formulations and simplifications,
notably in the special case when there is no dependence of the coefficients on the law of the control. We  define an intrinsic notion of viscosity solution in $\Pc_2(C([0,T];H))$, the space of square-integrable probability measures on $C([0,T];H)$,
together with the viscosity property of the value function. Comparison principle for Master Bellman equation is postponed to further investigation, as it is already a challenging issue in the finite-dimensional case where only partial results exist in the literature, see \cite{WuZhang18PPDE} and \cite{buretal19}.

We also point out that our results clarify and improve in particular some statements from  the finite-dimensional case, like the law invariance property (see Remark \ref{R:Aliprantis})
and the dynamic programming principle.
%

The outline of the paper is organized as follows.  In Section \ref{sec-descr}, we present the notations and formulate the McKean-Vlasov state equation valued in Hilbert space: due to the generality of the setting basic results on
well-posedness and approximation of this equation are not known and are carefully proved.
Section \ref{S:ControlPb} is devoted to the formulation of the optimal control problem, the dynamic programming principle and law invariance property of the associated value function.
We introduce in Section \ref{S:Ito}  the notion of pathwise derivative in Wasserstein space and the related functional It\^o formula.
Section \ref{S:HJB}  is concerned with the derivation of the Master Bellman equation and the viscosity property of the value function. Finally, Appendices \ref{App:StateEquation}, \ref{App:LawInvariance}, \ref{App:PathDeriv}, \ref{ItoProof},  \ref{App:Consistency}, \ref{App:HJB} collect some technical results used throughout the paper.

\section{Controlled path-dependent McKean-Vlasov SDEs
in\\ Hilbert spaces}\label{sec-descr}

\subsection{Notations and assumptions}
\label{Subs:Notations}

\paragraph{State space and functional analytic setting.}
We fix two real separable Hilbert spaces $H$ and $K$, with inner products
$\<\cdot,\cdot\>_H$,$\<\cdot,\cdot\>_K$  and induced norms $|\cdot|_H$,$|\cdot|_K$, respectively, omitting the subscripts $H$ or $K$ when clear from the context. We denote by $\Lc(K;H)$ (resp.\ $\Lc(H)$) the space of bounded linear operators from $K$ to $H$ (resp.\ $H$ to $H$). We endow $\Lc(K;H)$ with the operator norm $\|\cdot\|_{\Lc(K;H)}$ defined by
\[
\|{F}\|_{\Lc(K;H)} = \sup_{k\in K,\;k\neq 0} {|{F}k|_H\over|k|_K}\,, \qquad {F}\in \Lc(K;H).
\]
Similarly, we endow $\Lc(H)$ with the corresponding norm $\|\cdot\|_{\Lc(H)}$. We also denote by $\Lc_2(K;H)$ the space of Hilbert-Schmidt operators from $K$ to $H$, that is the set of all
${F}\in \Lc(K;H)$ such that
\[
\sum_{n\in\N}|{F}e_n|^2_H < +\infty
\]
for some orthonormal basis $\{e_n\}_{n\in\N}$ of $K$. We endow $\Lc_2(K;H)$ with the  norm $\|\cdot\|_{\Lc_2(K;H)}$ defined by
\[
\|{F}\|_{\Lc_2(K;H)}  =  \sqrt{ \sum_{n\in\N}|{F}e_n|^2_H}\,,  \qquad {F}\in\Lc_2(K;H).
\]
We recall that the definitions of $\Lc_2(K;H)$ and $\|\cdot\|_{\Lc_2(K;H)}$ do not depend on the choice of the orthonormal basis $\{e_n\}_{n\in\N}$ of $K$.

We now fix a finite time horizon $T > 0$ and consider the \emph{state space} of our optimal control problem which is given by the set $C([0,T];H)$ of continuous $H$-valued functions on $[0,T]$. Given $x\in C([0,T];H)$ and $t\in[0,T]$, we denote by $x_t$ the value of $x$ at time $t$ and we set $x_{\cdot\wedge t}\coloneqq (x_{s\wedge t})_{s\in[0,T]}$. Notice that $x_t\in H$, while $x_{\cdot\wedge t}\in C([0,T];H)$. We endow $C([0,T];H)$ with the uniform norm $\|\cdot\|_T$ defined as
\[
\|x\|_T = \sup_{s\in[0,T]} |x_s|_H\,, \qquad x\in C([0,T];H).
\]
Notice that $(C([0,T];H),\|\cdot\|_T)$ is a Banach space. We denote by $\mathscr B$ the Borel $\sigma$-algebra of $C([0,T];H)$. Finally, for every $t\in[0,T]$ we introduce the seminorm $\|\cdot\|_t$ defined as
\[
\|x\|_t = \| x_{\cdot\wedge t}\|_T\,, \qquad x\in C([0,T];H).
\]

\paragraph{Spaces of probability measures and Wasserstein distance.} Given a metric space $M$, if $\mathscr{M}$ denotes its Borel $\sigma$-algebra,
 we denote by $\Pc(M)$ the set of all probability measures on $(M,\mathscr M)$. We endow $\Pc(M)$ with the topology of weak convergence. When $M$ is a
Polish space $\rm S$, with metric $d_{\rm S}$, we also define, for $q\geq 1$,
\[
\Pc_q({\rm S}) \coloneqq  \left\{\mu \in \Pc({\rm S})\colon \int_{\rm S} d_{\rm S}(x_0,x)^q \mu(dx)<+\infty\right\} ,
\]
where $x_0\in\rm S$ is arbitrary. This set is endowed with the $q$-Wasserstein distance defined as
\begin{align*}
\Wc_q(\mu,\mu') &\coloneqq  \inf\bigg\{\int_{{\rm S}\times{\rm S}}d_{\rm S}(x,y)^q\, \pi(dx,dy)\colon
  \pi \in \Pc({\rm S}\times{\rm S})\\
 & \hspace{2cm} \text{ such that } \pi(\cdot\times{\rm S})= \mu \mbox{ and }\pi({\rm S}\times\cdot)=\mu'\bigg\}^{1\over q}\,,\qquad q\geq 1,
\end{align*}
for every $\mu,\mu'\in \Pc_q({\rm S})$. The space $\big(\Pc_q({\rm S}),\Wc_q)$ turns out to be a Polish space (see for instance \cite[Theorem 6.18]{Vi09}).

\paragraph{Probabilistic setting.}

We fix a complete probability space $(\Omega,\Fc,\P)$ on which a $K$-valued cylindrical Brownian motion $B=(B_t)_{t\geq0}$ is defined (see e.g.
\cite[Section 4.1]{DPZ} and \cite[Remark 1.89]{FabbriGozziSwiech} on the definition of cylindrical Brownian motion).
We denote by $\F^B=(\Fc_t^B)_{t\geq0}$ the $\P$-completion of the filtration generated by $B$\footnote{Notice that it may be not obvious to define the natural filtration for cylindrical Brownian motion as, in principle, it may depend on the choice of the reference system where such process is considered, which, in general, is not unique. However, as noted in \cite[Remark 1.89]{FabbriGozziSwiech}) this will not affect the class of integrable processes and, consequently, the filtration.}. Notice that $\F^B$ is also right-continuous (see~\cite[Lemma 1.94]{FabbriGozziSwiech}), so, in particular, it satisfies the usual conditions. We assume that there exists a sub-$\sigma$-algebra $\Gc$ of $\Fc$ satisfying the following standing assumptions.
\begin{StandingAssumption}{\bf(A$_\Gc$)}\label{A_G}\quad
{\begin{enumerate}[\upshape i)]
\item $\Gc$ and $\Fc_\infty^B$ are independent;
\item $\Gc$ is ``rich enough'' in the sense that the following property holds:
\begin{align*}
\Pc_2\big(C([0,T];H)\big) &= \big\{\P_\xi\text{ with }\xi\colon[0,T]\times\Omega\rightarrow H\text{ continuous and} \\
&\qquad \hspace{1mm} \text{$\Bc([0,T])\otimes\Gc$-measurable process satisfying }\E\big[\|\xi\|_T^2\big] < \infty\big\},
\end{align*}
i.e. for every $\mu\in\Pc_2(C([0,T];H))$ there exists a continuous and $\Bc([0,T])\otimes\Gc$-measurable process $\xi\colon[0,T]\times\Omega\rightarrow H$, satisfying $\E\|\xi\|_T^2<\infty$, such that $\xi$ has law equal to $\mu$.
\end{enumerate}}
\end{StandingAssumption}

\noindent As stated in the following lemma (take $\Hc=\Gc$ in Lemma \ref{L:U_G}), property \textup{\ref{A_G}}-ii) holds if and only if there exists a $\Gc$-measurable random variable $U_\Gc\colon\Omega\rightarrow\R$ having uniform distribution on $[0,1]$ (see also Remark \ref{R:Atomless}).

\begin{Lemma}\label{L:U_G}
On the probability space $(\Omega,\Fc,\P)$ consider a sub-$\sigma$-algebra $\Hc\subset\Fc$. The following statements are equivalent.
\begin{enumerate}[\upshape 1)]
\item There exists a $\Hc$-measurable random variable $U_\Hc\colon\Omega\rightarrow\R$ having uniform distribution on $[0,1]$.
\item $\Hc$ is ``rich enough'' in the sense that the following property holds:
\begin{align*}
\Pc_2\big(C([0,T];H)\big) &= \big\{\P_\xi\text{ with }\xi\colon[0,T]\times\Omega\rightarrow H\text{ continuous and} \\
&\qquad \hspace{1mm} \text{$\Bc([0,T])\otimes\Hc$-measurable process satisfying }\E\big[\|\xi\|_T^2\big] < \infty\big\}.
\end{align*}
\end{enumerate}
\end{Lemma}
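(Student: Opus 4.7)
The plan is to prove both implications using the parametrization of Borel probabilities on a Polish space by Lebesgue measure on $[0,1]$, together with the fact that $C([0,T];H)$ is itself Polish (being a separable Banach space), so that its Borel structure is isomorphic as a measurable space to $([0,1],\Bc([0,1]))$.

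For the implication $1)\Rightarrow 2)$, I would fix $\mu\in\Pc_2(C([0,T];H))$ and appeal to the Borel isomorphism theorem to produce a Borel measurable map $\varphi\colon[0,1]\to C([0,T];H)$ whose pushforward of Lebesgue measure equals $\mu$. Setting
\[
\xi_t(\omega) \;:=\; \varphi\big(U_\Hc(\omega)\big)(t), \qquad (t,\omega)\in[0,T]\times\Omega,
\]
the map $t\mapsto\xi_t(\omega)$ is continuous for every $\omega$ since $\varphi$ takes values in $C([0,T];H)$, and the path-valued random variable $\omega\mapsto\varphi(U_\Hc(\omega))$ is $\Hc$-measurable into $C([0,T];H)$ by composition. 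The joint $\Bc([0,T])\otimes\Hc$-measurability of $\xi$ then follows by composing with the jointly continuous evaluation $(t,x)\mapsto x_t$ on $[0,T]\times C([0,T];H)$. The law of $\xi$ is $\mu$ by construction, and $\E[\|\xi\|_T^2]=\int\|x\|_T^2\,\mu(dx)<\infty$ because $\mu\in\Pc_2$.

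For the implication $2)\Rightarrow 1)$, I would exhibit a distinguished element of $\Pc_2(C([0,T];H))$ whose realization under 2) already encodes a uniform random variable. Assuming $H\neq\{0\}$ (the degenerate case being vacuous), pick any $h_0\in H$ with $|h_0|_H=1$ and let $\mu_0$ be the pushforward of Lebesgue measure on $[0,1]$ under the map $u\mapsto(t\mapsto u\,h_0)$; this $\mu_0$ belongs to $\Pc_2(C([0,T];H))$ as it has bounded support. Property 2) yields a continuous $\Bc([0,T])\otimes\Hc$-measurable process $\xi$ with law $\mu_0$. The slice $\omega\mapsto\xi_0(\omega)$ is $\Hc$-measurable by the standard section principle for jointly measurable functions, and since $\mu_0$ is concentrated on the one-parameter family $\{u h_0\colon u\in[0,1]\}$, the random variable
\[
U_\Hc \;:=\; \<\xi_0,h_0\>_H
\]
is $\Hc$-measurable and uniformly distributed on $[0,1]$.

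I do not anticipate any serious obstacle: both directions reduce to standard facts of Polish-space measure theory. The only point that deserves some care is the joint $\Bc([0,T])\otimes\Hc$-measurability of $\xi$ in the first implication, which hinges on keeping track of the distinction between the $C([0,T];H)$-valued random variable $\varphi\circ U_\Hc$ and the process $(\xi_t)_{t\in[0,T]}$, and exploiting the continuity of the evaluation map to pass from the former to the latter.
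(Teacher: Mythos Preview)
Your proposal is correct and follows essentially the same approach as the paper's proof: for $1)\Rightarrow 2)$ you realize $\mu$ as a pushforward of Lebesgue measure through a Borel map into $C([0,T];H)$ and compose with $U_{\Hc}$ (the paper cites Kallenberg, Theorem~3.19, for this parametrization rather than the Borel isomorphism theorem, and obtains joint measurability from pathwise continuity via Dellacherie--Meyer rather than through the evaluation map, but these are equivalent standard arguments); for $2)\Rightarrow 1)$ you embed the uniform law on $[0,1]$ as constant paths along a fixed unit vector and recover $U_{\Hc}$ via the inner product at $t=0$, exactly as the paper does with the first basis vector $e_1$. One small quibble: the degenerate case $H=\{0\}$ is not quite ``vacuous'' for $2)\Rightarrow 1)$---in that case $2)$ holds trivially while $1)$ can fail---but this is irrelevant in the paper's standing setup where $H$ is a genuine state space.
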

\begin{Remark}\label{R:Atomless}
Using the same notations as in Lemma \ref{L:U_G}, if the probability space $(\Omega,\Hc,\P)$ is \emph{atomless} (namely, for any $E\in\Hc$ such that $\P(E)>0$ there exists $F\in\Hc$, $F\subset E$, such that $0<\P(F)<\P(E)$) then property 1), or equivalently 2), of Lemma \ref{L:U_G} holds (see for instance \cite[vol.\ I, p.\ 352]{cardelbook}).
\end{Remark}
\begin{Remark}\label{R:G}
The additional randomness (other than $B$) coming from the $\sigma$-algebra $\Gc$ is used for the initial condition $\xi$ of the state equation \eqref{cont-MKV-differential} (notice that it is necessary to consider \emph{random} initial conditions in order to state and prove the dynamic programming principle, Theorem \ref{T:DPP}, where for instance the initial condition at time $s$ is given by the random variable $X^{t,\xi,\alpha}$). However, we remark that whenever $t>0$ the $\sigma$-algebra $\Gc$ can be replaced by $\Fc_t^B$; in other words, the initial condition $\xi$ can be taken only $\Fc_t^B$-measurable.  As a matter of fact, for every $t>0$, it holds that:
\begin{enumerate}[i)]
\item $\Fc_t^B$ and $\sigma(B_s-B_t,\ s\geq t)$ are independent (in item \textup{i}) of \textup{\ref{A_G}} we have imposed the stronger condition that $\Gc$ and $\Fc_\infty^B$ have to be independent; however, if we consider only the control problem with initial time $t$, then the assumption imposed here is enough.
\item $\Fc_t^B$ satisfies the property of being ``rich enough'' or, equivalently, it satisfies property 1) of Lemma \ref{L:U_G}.
\end{enumerate}
Therefore, the $\sigma$-algebra $\Gc$ is really necessary only for the control problem with initial time $t=0$. In fact, this allows to define the value function $v$ (see \eqref{v}) for every pair $(t,\mu)$ in $[0,T]\times\Pc_2(C([0,T];H))$, with $\mu$ being the law of $\xi$. On the other hand, if we do not use $\Gc$, $v$ is defined on $(0,T]\times\Pc_2(C([0,T];H))$ and for $t=0$ is defined only at the Dirac measures.
\end{Remark}
\begin{proof}[\textbf{Proof of Lemma \ref{L:U_G}.}]
\noindent 1) $\Longrightarrow$ 2). Fix $\mu\in\Pc_2(C([0,T];H))$. Our aim is to find a process $\xi\colon[0,T]\times\Omega\rightarrow H$ continuous and $\Bc([0,T])\otimes\Hc$-measurable with law equal to $\mu$. To this end, consider the probability space $([0,1],\Bc([0,1]),\lambda)$, where $\lambda$ denotes the Lebesgue measure on the unit interval. Given such a $\mu$, it follows from Theorem 3.19 in \cite{Kallenberg} that there exists a measurable function $\Xi\colon[0,1]\rightarrow C([0,T];H)$ such that the image (or push forward)  measure of $\lambda$ by $\Xi$ is equal to $\mu$. Now, denote
\[
\xi_t(\omega) \coloneqq  \Xi(U_\Hc(\omega))_t, \qquad \forall\ (t,\omega)\in[0,T]\times\Omega,
\]
where the subscript in $\Xi(U_\Hc(\omega))_t$ denotes the valuation at time $t$ of the continuous function $\Xi(U_\Hc(\omega))$.

Notice that $\xi$ is a continuous process with law equal to $\mu$. Moreover, for every fixed $t\in[0,T]$, $\xi_t$ is $\Hc$-measurable.
Since $\xi$ has continuous paths, it follows that $\xi$ is also $\Bc([0,T])\otimes\Hc$-measurable (see for instance \cite{DM}, Chapter IV, Theorem 15). This concludes the proof of the implication 1) $\Longrightarrow$ 2).

\vspace{2mm}

\noindent 2) $\Longrightarrow$ 1). The claim follows from Lemma \ref{L:xi_U_indep}.
\end{proof}

\noindent We denote by $\F=(\Fc_t)_{t\geq0}$ the filtration defined as
\[
\Fc_t = \Gc\vee\Fc_t^B\,, \qquad t\geq0 .
\]
Notice that $\F$ satisfies the usual conditions of completeness and right-continuity. We then denote by $\S_2(\F)$ (resp.\ $\S_2(\Gc)$) the set of $H$-valued continuous $\F$-progressively measurable (resp.\ $\Bc([0,T])\otimes\Gc$-measurable) processes $\xi$ such that
\[
\|\xi\|_{\S_2} \coloneqq  \E\big[\|\xi\|_T^{2}\big]^{\frac{1}{2}} < \infty .
\]

\paragraph{Control processes.} {The \emph{space of control actions}, denoted by $\Ur$, satisfies the following standing assumption.}

\begin{StandingAssumption}{\bf(A$_\Ur$)}\label{A_U}\quad
{$\Ur$ is a Borel space (see for instance Definition 7.7 in \cite{BertsekasShreve}), namely a Borel subset of some Polish space $E$.
$\mathscr U$ denotes its Borel $\sigma$-algebra.}
\end{StandingAssumption}

\begin{Remark}\label{R:Polish}
Our Assumption \textup{\ref{A_U}} is quite general. Indeed in most applications it is enough that $\Ur$ is a Polish space or even a Hilbert space, as in the examples of  Example \ref{ex:SE}.
\end{Remark}

Finally, we denote by $\Uc$ the \emph{space of control processes}, namely the family of all $\F$-progressively measurable processes $\alpha\colon[0,T]\times\Omega\rightarrow\Ur$.

\paragraph{Assumptions on the coefficients of the state equation.}
We consider a linear, possibly unbounded, operator $A:\Dc(A)\subset H \to H$ and two functions
\[
b,\;\sigma :~[0,T]\times C([0,T];H)\times \Pc_2\big(C([0,T];H)\big)\times\Ur\times\Pc(\Ur) \longrightarrow
\ H,\;\Lc_2(K;H),
\]
where we recall that $\Pc(\Ur)$ is endowed with the topology of weak convergence. We impose the following assumptions on $A$, $b$, $\sigma$.

\begin{Assumption}{\bf(A$_{A,b,\sigma}$)}\label{A_A,b,sigma}\quad
\begin{enumerate}[(i)]
\item $A$ generates a $C_0$-semigroup of  pseudo-contractions  $\{e^{tA}, \; t \ge 0\}$ in $H$. Hence, there exists and $\eta\in \R$ such that
\begin{equation}\label{eq:Meta}
\|e^{tA}\|_{\Lc(H)} \leq e^{\eta t}.
\end{equation}
\item The functions $b$ and $\sigma$ are measurable.
\item\label{2020-06-20:01} There exists a constant $L$ such that
\begin{equation*}
  \begin{split}
    |b_t(x,\mu,u,\nu)-b_t(x',\mu',u,\nu)|_H &\leq L \big(\|x-x'\|_t + \Wc_2(\mu,\mu')\big), \\
\|\sigma_t(x,\mu,u,\nu)-\sigma_t(x',\mu',u,\nu)\|_{\Lc_2(K;H)} &\leq L \big(\|x-x'\|_t + \Wc_2(\mu,\mu')\big), \\
|b_t(0,\delta_0,u,\nu)|_H + \|\sigma_t(0,\delta_0,u,\nu)\|_{\Lc_2(K;H)} &\leq L,
\end{split}
\end{equation*}
for all $(t,u,\nu)\in[0,T]\times\Ur\times\Pc(\Ur)$, $(x,\mu),(x',\mu')\in C([0,T];H)\times\Pc_2(C([0,T];H))$, with $\delta_0$ being the Dirac measure at $0$, namely the probability measure on $C([0,T];H)$ putting mass equal to $1$ to the constant path $0$.
\end{enumerate}
\end{Assumption}

\begin{Remark}
Notice that, from the Lipschitz property of $b$ and $\sigma$ with respect to the variable $x\in C([0,T];H)$, it follows that $b$ and $\sigma$ satisfies the following \emph{non-anticipativity property:}
\[
b_t(x,\mu,u,\nu) = b_t(x_{\cdot\wedge t},\mu,u,\nu), \qquad\qquad \sigma_t(x,\mu,u,\nu) = \sigma_t(x_{\cdot\wedge t},\mu,u,\nu),
\]
for every $(t,x,\mu,u,\nu)\in[0,T]\times C([0,T];H)\times\Pc_2(C([0,T];H))\times\Ur\times\Pc(\Ur)$.
\end{Remark}

\begin{Remark}
The optimal control problem of McKean-Vlasov SDEs is sometimes called \emph{extended} or \emph{generalized} (see \cite{Acciaio19,CossoPham19}) when the coefficients also depend on the law of the control process $\P_{\alpha_s}$ (as in the present framework, see equation \eqref{cont-MKV-differential} below) or, more generally, on the joint law $\P_{(X_{\cdot\wedge s},\alpha_s)}$. Notice however that, under the assumptions below, the latter case is only apparently more general than our framework. As a matter of fact, consider for simplicity only the drift coefficient $b$ and suppose that it is replaced by a function $\bar b$ from $[0,T]\times C([0,T];H)\times\Ur\times\Pc(C([0,T];H)\times\Ur)$ to $H$. Recall from \textup{\ref{A_A,b,sigma}} that the Lipschitz continuity of $b$ with respect to the law of the path reads as
\begin{equation}\label{Lipschitz_b}
|b_t(x,\mu,u,\nu)-b_t(x,\mu',u,\nu)|_H \leq L \Wc_2(\mu,\mu'),
\end{equation}
for all $(t,x,u,\nu)\in[0,T]\times C([0,T];H)\times\Ur\times\Pc(\Ur)$, $\mu,\mu'\in\Pc_2(C([0,T];H))$. In other words, the Lipschitz continuity is imposed on the law of the state variable, not on the law of the control variable. If we do the same for the coefficient $\bar b$, we get the following $($$\pi(\cdot\times\Ur)$ is the law of the state variable, while $\pi(C([0,T];H)\times\cdot)$ is the law of the control variable$):$
\begin{equation}\label{Lipschitz_b_2}
|\bar b_t(x,u,\pi)-\bar b_t(x,u,\pi')|_H \leq L \Wc_2\big(\pi(\cdot\times\Ur),\pi'(\cdot\times\Ur)\big),
\end{equation}
for every $(t,x,u)\in[0,T]\times C([0,T];H)\times\Ur$, $\pi,\pi'\in\Pc(C([0,T];H)\times\Ur)$ with $\pi(\cdot\times\Ur),\pi'(\cdot\times\Ur)\in\Pc_2(C([0,T];H))$ and $\pi(C([0,T];H)\times\cdot)=\pi'(C([0,T];H)\times\cdot)$. We require that $\pi(C([0,T];H)\times\cdot)=\pi'(C([0,T];H)\times\cdot)$ since, as in \eqref{Lipschitz_b}, the law of the control variable is fixed. More precisely, using the notation of \eqref{Lipschitz_b},
\[
\mu \ = \ \pi(\cdot\times\Ur), \qquad \mu' \ = \ \pi'(\cdot\times\Ur), \qquad \nu \ = \ \pi(C([0,T];H)\times\cdot) \ = \ \pi'(C([0,T];H)\times\cdot).
\]
It follows directly from assumption \eqref{Lipschitz_b_2} that $\bar b_t(x,u,\pi)=\bar b_t(x,u,\pi')$ whenever the marginals of $\pi$ and $\pi'$ coincide: $\pi(\cdot\times\Ur)=\pi'(\cdot\times\Ur)$ and $\pi(C([0,T];H)\times\cdot)=\pi'(C([0,T];H)\times\cdot)$. This shows that $\bar b$ depends on $\pi$ only through its marginals.
\end{Remark}

\subsection{State equation}

Given an initial time $t\in[0,T]$, an initial path $\xi\in\S_2(\F)$, a control process $\alpha\in\Uc$, the state process evolves according to the following controlled path-dependent McKean-Vlasov stochastic differential equation:
\begin{equation}\label{cont-MKV-differential}
\begin{cases}
dX_s = AX_s+ b_s\big(X,\P_X,\alpha_s,\P_{\alpha_s}\big)ds
+\sigma_s\big(X,\P_X,\alpha_s,\P_{\alpha_s}\big)dB_s &s>t \\
X_s = \xi_s & s\leq t.
\end{cases}
\end{equation}

\begin{Definition}
Fix $t\in[0,T]$, $\xi\in\S_2(\F)$, $\alpha\in\Uc$. A \textbf{mild solution} of \eqref{cont-MKV-differential} is a process $X=(X_s)_{s\in[0,T]}$ in $\S_2(\F)$ satisfying
  \begin{multline*}
X_s = e^{((s-t)\vee 0)A}\,\xi_{s\wedge t}+\int_t^{s\vee t}
e^{(s-r)A}\,b_r\big(X,\P_{X_{\cdot\wedge r}},\alpha_r,\P_{\alpha_r}\big)\,dr\nonumber \\
 + \int_t^{s\vee t}
e^{(s-r)A}\,\sigma_r\big(X,\P_{X_{\cdot\wedge r}},\alpha_r,\P_{\alpha_r}\big)\,dB_r\qquad \forall s\in[0,T],\,\P\text{-a.s.}
\end{multline*}
\end{Definition}

\begin{Proposition}\label{Prop-diff-MKVPD}
Fix $t\in[0,T]$, $\xi\in\S_2(\F)$, $\alpha\in\Uc$. Under \textup{\ref{A_A,b,sigma}}, equation \eqref{cont-MKV-differential} admits a unique mild solution
$X^{t,\xi,\alpha}\in\S_2(\F)$.
The map
\[
    [0,T]\times\S_2(\F) \rightarrow \S_2(\F),
(t,\xi) \mapsto  X^{t,\xi,\alpha}
\]
is jointly continuous in $(t,\xi)$, uniformly with respect to $\alpha\in\Uc$, and Lipschitz continuous in $\xi$, uniformly in $t$ and $\alpha$.
 Moreover, $X^{t,\xi,\alpha}=X^{t,\xi_{\cdot\wedge t},\alpha}$ and there exists a constant $C$, independent of $t,\xi,\alpha$, such that
\begin{equation}\label{EstimateX}
\big\|X^{t,\xi,\alpha}\big\|_{\S_2} \leq C\,\big(1 + \|\xi_{\cdot\wedge t}\|_{\S_2}\big).
\end{equation}
\end{Proposition}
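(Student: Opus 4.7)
The natural strategy is a Banach fixed-point argument on $\S_2(\F)$. Given $(t,\xi,\alpha)$, I would define $\Phi\colon\S_2(\F)\to\S_2(\F)$ by $\Phi(Y)_s=\xi_s$ for $s\le t$ and
\begin{equation*}
\Phi(Y)_s \,=\, e^{(s-t)A}\xi_t + \int_t^s e^{(s-r)A} b_r(Y,\P_{Y_{\cdot\wedge r}},\alpha_r,\P_{\alpha_r})\,dr + \int_t^s e^{(s-r)A}\sigma_r(Y,\P_{Y_{\cdot\wedge r}},\alpha_r,\P_{\alpha_r})\,dB_r
\end{equation*}
for $s>t$, so that fixed points of $\Phi$ are precisely mild solutions of \eqref{cont-MKV-differential}. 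The first step is to check that $\Phi$ sends $\S_2(\F)$ into itself: using \textup{\ref{A_A,b,sigma}}\textup{(iii)} (linear growth of $b,\sigma$ from the Lipschitz bound plus the estimate at $(0,\delta_0)$), the pseudo-contraction bound \eqref{eq:Meta} together with Jensen controls the deterministic convolution, while a Da Prato--Kwapie\'n--Zabczyk-type maximal inequality (via the factorization method, after the shift $\tilde A = A-\eta I$ reduces the pseudo-contraction case to a contraction) controls $\E\sup_{s\in[t,T]}\big|\int_t^s e^{(s-r)A}\sigma_r\,dB_r\big|_H^2$ by a constant times $\E\int_t^T\|\sigma_r\|_{\Lc_2(K;H)}^2\,dr$.

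For the contraction step, comparing $\Phi(Y)$ and $\Phi(Z)$ and combining \textup{\ref{A_A,b,sigma}}\textup{(iii)} with the basic inequality $\Wc_2(\P_{Y_{\cdot\wedge r}},\P_{Z_{\cdot\wedge r}})^2 \le \E\|Y-Z\|_r^2$ yields
\begin{equation*}
\E\|\Phi(Y)-\Phi(Z)\|_s^2 \,\leq\, C \int_t^s \E\|Y-Z\|_r^2\,dr, \qquad s\in[t,T],
\end{equation*}
for some $C=C(L,T,\eta)$. Iterating $n$ times produces the factor $(C(T-t))^n/n!$, so $\Phi^n$ is a strict contraction on $\S_2(\F)$ for $n$ large and has a unique fixed point $X^{t,\xi,\alpha}$; equivalently, one may work with the weighted norm $\E[\sup_s e^{-\beta s}|Y_s|_H^2]^{1/2}$ for $\beta$ large. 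The identity $X^{t,\xi,\alpha}=X^{t,\xi_{\cdot\wedge t},\alpha}$ is then immediate, since only $\xi_{s\wedge t}$ appears in the mild formula, and it transfers by uniqueness.

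The a priori estimate \eqref{EstimateX} follows by applying the same inequalities to $X^{t,\xi,\alpha}$ alone (comparing against the constant $0$ process and invoking linear growth) and Gronwall's lemma. For the Lipschitz dependence on $\xi$ uniform in $(t,\alpha)$, I would compare two mild solutions with common $(t,\alpha)$ and data $\xi,\xi'$ and close by Gronwall. Joint continuity in $(t,\xi)$ uniform in $\alpha$ requires the additional ingredient of estimating $|e^{(s-t)A}\xi_t - e^{(s-t')A}\xi'_{t'}|_H$ by the strong continuity of the semigroup together with a density argument, plus a standard decomposition of the overlapping deterministic and stochastic integrals on $[t,s]$ versus $[t',s]$; Gronwall again closes the argument.

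The main obstacle is the combined infinite-dimensional, path-dependent, and McKean-Vlasov nature of \eqref{cont-MKV-differential}: one must establish the stochastic-convolution maximal inequality only under the pseudo-contraction assumption rather than the strict contraction hypothesis typically stated in the literature, and one must carefully reconcile the fact that the Lipschitz constants of $b,\sigma$ involve the nonanticipative seminorm $\|\cdot\|_r$ on paths whereas the $2$-Wasserstein distance on $\Pc_2(C([0,T];H))$ is defined through the full uniform norm $\|\cdot\|_T$—the identity $\|y_{\cdot\wedge r}\|_T = \|y\|_r$ is precisely what makes the coupling estimate $\Wc_2(\P_{Y_{\cdot\wedge r}},\P_{Z_{\cdot\wedge r}})^2 \le \E\|Y-Z\|_r^2$ close the fixed-point scheme.
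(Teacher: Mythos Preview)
Your proposal is correct and follows essentially the same fixed-point strategy as the paper, differing only in organization. The paper restricts the solution map to small time intervals $[a,b]$ with $b-a\leq\epsilon$, obtains a uniform $1/2$-contraction there, and then glues the local solutions; you instead iterate (or equivalently pass to a weighted norm) to get a global contraction directly. Both are standard and equivalent. For the stochastic-convolution maximal inequality the paper simply cites \cite[Theorem~1.111]{FabbriGozziSwiech}, which is exactly the factorization-method result you describe. For joint continuity in $(t,\xi)$ uniformly in $\alpha$, the paper isolates an abstract lemma (if $h(r,x,m,y)$ is a uniform contraction in $y$, Lipschitz in $m$, and continuous in $x$ uniformly over $r\in E$, then the fixed-point map inherits the same joint regularity), whereas you propose to argue directly by decomposing the integrals and invoking strong continuity of the semigroup plus Gronwall; both routes work, with the abstract lemma being cleaner to state once and reuse for the Yosida approximation in Proposition~\ref{Prop-diff-MKVPD-An}. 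Your identification of the key coupling bound $\Wc_2(\P_{Y_{\cdot\wedge r}},\P_{Z_{\cdot\wedge r}})^2\le\E\|Y-Z\|_r^2$ is precisely what the paper uses (implicitly) in its Claim~III.
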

\begin{proof}[\textbf{Proof.}]
See Appendix \ref{App:StateEquation}.
\end{proof}

\begin{Remark}\label{rm:convSEinitialdatum}
From Proposition~\ref{Prop-diff-MKVPD} we have, in particular, for $r,t\in[0,T], \xi\in \mathbf{S}_2(\mathbb{F})$,
\begin{equation}
  \label{2020-10-11:03}
  \begin{split}
      \lim_{r\rightarrow t} \sup_{\alpha\in \mathcal{U}}  \|X^{\alpha,t,\xi}_{r\wedge \cdot}-\xi_{t\wedge \cdot}\|_{\mathbf{S}_2}
  \ &\leq \
  \lim_{r\rightarrow t}
   \left(
\sup_{\alpha\in \mathcal{U}}  \|X^{\alpha,t,\xi}_{r\wedge \cdot}-
  X^{\alpha,r,\xi}_{r\wedge \cdot}\|_{\mathbf{S}_2}
+\|\xi_{r\wedge \cdot}
-\xi_{t\wedge \cdot}\|_{\mathbf{S}_2} \right)\\
&\leq \
\lim_{r\rightarrow t}
\left(
  \sup_{\alpha\in \mathcal{U}}
  \|X^{\alpha,t,\xi}-
  X^{\alpha,r,\xi}\|_{\mathbf{S}_2}
+\|\xi_{r\wedge \cdot}
  -\xi_{t\wedge \cdot}\|_{\mathbf{S}_2} \right) \ = \ 0,
\end{split}
\end{equation}
where we have used the fact that
$X^{\alpha,r,\xi}_{r\wedge \cdot}=\xi_{r\wedge \cdot}$.
\end{Remark}

Let $\{A_n\}_{n\in\mathbb{N}}$ be the Yosida approximation of $A$, i.e. $A_n=nA(n-A)^{-1}$, for $n\in \mathbb{N},n> \eta$, with $\eta$ as in \eqref{eq:Meta}. Denote by $S^n$ the uniformly continuous semigroup generated by $A_n$.
Notice that
$S^n$ is a pseudo-contraction semigroup for all $n\in \mathbb{N}$, $n> \eta$, and that, for some $\tilde \eta>0$, $\|S^n_t\|_{\mathcal{L}(H)}\leq e^{\tilde{\eta}t}$
uniformly for $n\in \mathbb{N}, t\geq 0$.
In particular,
we can apply Proposition~\ref{Prop-diff-MKVPD} to obtain existence of a unique mild solution
 $X^{n,t,\xi,\alpha}$
to the following equation:
\begin{equation}\label{cont-MKV-differential-An}
\begin{cases}
dX^n_s = A_nX^n_s+ b_s\big(X^n,\P_{X^n},\alpha_s,\P_{\alpha_s}\big)ds
+\sigma_s\big(X^n,\P_{X^n},\alpha_s,\P_{\alpha_s}\big)dB_s, &\qquad s>t, \\
X^n_s = \xi_s, &\qquad s\leq  t.
\end{cases}
\end{equation}

\begin{Proposition}\label{Prop-diff-MKVPD-An}
  There exists a constant $C>0$ such that
  \begin{equation}
    \label{2020-10-11:07}
    \sup_{\substack{\alpha\in \mathcal{U}\\n\in \mathbb{N}\\t\in[0,T]}}
    \|X^{n,t,\xi,\alpha}-X^{n,t,\xi',\alpha}\|_{\mathbf{S}_2} \ \leq \ C\|\xi-\xi'\|_{\mathbf{S}_2},\qquad \forall\,\xi,\xi' \in \mathbf{S}_2(\mathbb{F}).
  \end{equation}
  Moreover,
  \begin{equation}
    \label{2020-10-11:08}
    \lim_{\substack{t\rightarrow t'\\n\rightarrow \infty}}
    \|X^{n,t,\xi,\alpha}-X^{t',\xi,\alpha}\|_{\mathbf{S}_2} \ = \ 0,\qquad \forall\,\alpha\in \mathcal{U},\,\xi\in \mathbf{S}_2(\mathbb{F}),\,t'\in [0,T].
  \end{equation}
\end{Proposition}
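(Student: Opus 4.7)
My plan is to prove both statements by reprising the contraction/Gronwall argument that underlies Proposition~\ref{Prop-diff-MKVPD}, but applied to the approximated equation \eqref{cont-MKV-differential-An}, and by keeping track of constants so that they are uniform in $n\in\N$. The uniform estimates are then combined with the classical Trotter--Kato strong convergence $S^n_r x \to e^{rA} x$ in $H$ for every $x\in H$, locally uniformly in $r$.

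For \eqref{2020-10-11:07}, I would subtract the mild formulations of $X^{n,t,\xi,\alpha}$ and $X^{n,t,\xi',\alpha}$, take the $H$-norm squared and the supremum over $s\in[0,T]$, then the expectation, and apply the Burkholder--Davis--Gundy inequality to the stochastic integral. The Lipschitz estimates of \textup{\ref{A_A,b,sigma}}, together with $\Wc_2(\P_{X^{n,t,\xi,\alpha}_{\cdot\wedge r}},\P_{X^{n,t,\xi',\alpha}_{\cdot\wedge r}})^2 \leq \E\bigl[\sup_{u\leq r}|X^{n,t,\xi,\alpha}_u-X^{n,t,\xi',\alpha}_u|_H^2\bigr]$, produce the standard Gronwall-type estimate in which every semigroup-dependent constant is bounded by $\|S^n_r\|_{\Lc(H)}\leq e^{\tilde\eta T}$ for all $n\in\N$ and $r\in[0,T]$, hence is independent of $n$. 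Gronwall's inequality then yields a Lipschitz constant uniform in $(n,t,\alpha)$.

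For \eqref{2020-10-11:08}, the triangle inequality gives
\[
\|X^{n,t,\xi,\alpha}-X^{t',\xi,\alpha}\|_{\S_2} \;\leq\; \|X^{n,t,\xi,\alpha}-X^{n,t',\xi,\alpha}\|_{\S_2} \,+\, \|X^{n,t',\xi,\alpha}-X^{t',\xi,\alpha}\|_{\S_2}.
\]
The first summand tends to $0$ as $t\to t'$ \emph{uniformly in} $n$: it is enough to run the argument of Remark~\ref{rm:convSEinitialdatum} for the Yosida-approximated equations, and the uniform bound on $\|S^n_\cdot\|_{\Lc(H)}$ makes the resulting modulus of continuity independent of $n$. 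For the second summand, at fixed $t'$, I would subtract the mild formulations of $X^{n,t',\xi,\alpha}$ and $X^{t',\xi,\alpha}$ and split each of the three natural components into (i) a ``semigroup-difference'' part of the form $\bigl(S^n_{s-r}-e^{(s-r)A}\bigr)$ applied to the (unperturbed) integrand built from $X^{t',\xi,\alpha}$, and (ii) an ``integrand-difference'' part where $S^n_{s-r}$ is applied to the difference of integrands. Part (i) tends to $0$ in $\S_2$ by Trotter--Kato strong convergence together with dominated convergence, using the uniform operator bound $\|S^n_r-e^{rA}\|_{\Lc(H)}\leq 2e^{\tilde\eta T}$ and the $\S_2$-integrability granted by Proposition~\ref{Prop-diff-MKVPD} and \textup{\ref{A_A,b,sigma}} (the stochastic-integral term additionally requires the Burkholder--Davis--Gundy inequality to reduce the question to a Hilbert--Schmidt-valued $L^2$-convergence); part (ii), by the Lipschitz property of $b,\sigma$ and the uniform semigroup bound, is controlled by a Gronwall-absorbable term involving the supremum of $|X^{n,t',\xi,\alpha}_u-X^{t',\xi,\alpha}_u|_H$ on $[0,r]$.

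The main technical obstacle is organising the dominated-convergence argument in step (i) cleanly: one must verify pointwise (in $\omega,r,s$) convergence of the relevant integrands together with an integrable dominating function, uniformly in $n$. Once this is achieved, Gronwall's lemma closes the fixed-$t'$ convergence, and the simultaneous limit $t\to t',\ n\to\infty$ in \eqref{2020-10-11:08} follows from the combination of the uniform-in-$n$ continuity in $t$ obtained for the first summand and the $n$-wise convergence at $t'$ obtained for the second.
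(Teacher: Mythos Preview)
Your strategy is correct and essentially parallels the paper's proof; the paper packages the same estimates through an abstract fixed-point continuity lemma (Lemma~\ref{2020-10-11:02}) applied on small subintervals and then iterated, whereas you run a direct Gronwall argument on $[0,T]$ and handle the joint limit via the triangle split you wrote. Both routes rest on the same two ingredients: the uniform-in-$n$ contraction/Lipschitz constants (from $\|S^n_r\|_{\Lc(H)}\le e^{\tilde\eta r}$) and the Trotter--Kato strong convergence $S^n_rx\to e^{rA}x$ uniformly on compacts of $[0,T]\times H$. Your version is a bit more elementary; the paper's buys a cleaner bookkeeping by reducing both \eqref{2020-10-11:07} and \eqref{2020-10-11:08} to a single continuity property of the contraction map $\tilde\psi(\alpha,n,t,\xi,X)$.

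Two technical points need tightening. First, the estimate you call ``Burkholder--Davis--Gundy'' is not BDG proper, because the integrand $S^n_{s-r}\sigma_r(\cdots)$ depends on the upper endpoint $s$ and the stochastic convolution is not a martingale in $s$; you need the maximal inequality for stochastic convolutions (the paper invokes \cite[Theorem~1.111]{FabbriGozziSwiech}, and \cite[Proposition~1.112]{FabbriGozziSwiech} for the $S^n\to S$ passage in the diffusion term). Second, the uniform-in-$n$ continuity in $t$ of the initial-data piece $s\mapsto S^n_{(s-t)^+}\xi_{s\wedge t}$ does not follow from the uniform operator bound alone: you also need $\sup_{n}|(I-S^n_r)x|_H\to 0$ as $r\to 0$ for each $x\in H$. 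This holds, but the argument uses density of $\mathcal D(A)$ together with the uniform resolvent bound $|A_ny|_H\le C|Ay|_H$ for $y\in\mathcal D(A)$ (a consequence of Hille--Yosida), not merely $\|S^n_r\|\le e^{\tilde\eta r}$. Once these two points are made precise, your Gronwall closure goes through exactly as you outlined.
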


\begin{proof}[\textbf{Proof.}]
See Appendix \ref{App:StateEquation}.
\end{proof}

\begin{Remark}\label{rm:unboundedcontrol}
In the third inequality of Assumption \ref{A_A,b,sigma}-(iv) we assume, for simplicity, the boundedness of $b$ and $\sigma$ with respect to the controls.
In many applications (e.g. in linear quadratic control cases see Example \ref{ex:SE} below) the state equation contains unbounded control terms. It is therefore interesting to understand what happens in such cases.
Assume that $\Ur$ is a closed subset of a Hilbert space and that the right-hand side of the third inequality of Assumption \ref{A_A,b,sigma}-(iv) is replaced by
$L(1+|u|_{\Ur})$. In this case Proposition \ref{Prop-diff-MKVPD} still holds, assuming that $\alpha_{\cdot} \in L^2([0,T]\times \Omega;\Ur)$, with the estimate \eqref{EstimateX} replaced by
\begin{equation}\label{EstimateXnew}
\big\|X^{t,\xi,\alpha}\big\|_{\S_2} \leq C\,\left[1 + \|\xi_{\cdot\wedge t}\|_{\S_2} +\ \left(\E\int_t^T |\alpha_{s}|^2_{\Ur} ds\right)\right].
\end{equation}
The only difference is that the joint continuity in $(t,\xi)$ is uniform only with respect to $\alpha$ belonging to the bounded sets of $L^2([0,T]\times \Omega;\Ur)$. Consequently also the limit in \eqref{2020-10-11:03}
is uniform only in the same sense.
Similarly, also  Proposition \ref{Prop-diff-MKVPD-An} still holds but with the supremum in $\alpha$ belonging to the bounded sets of $L^2([0,T]\times \Omega;\Ur)$.
\end{Remark}

\begin{Example}\label{ex:SE}
Examples of problems where the state equation has the above structure.
\begin{itemize}
  \item[(i)] Lifecycle optimal portfolio problems.
This family of problems introduces, together with the standard equation for the wealth $x(\cdot)$ used in Merton model, another state variable $y$ which is the labor income of the agent.
The equation for the labor income (which is one of the state equations\footnote{The equation for $y$ is not controlled, however $y$ is part of the state as it appears in the wealth dynamics, which is controlled.}
of the optimal portfolio problem)
is, in the simplest case, the one of a geometric Brownian motion (see e.g.  \cite{DYBVIG_LIU_JET_2010}).
It is however natural, for a more realistic description of such dynamics, to introduce two extensions in the equation for $y$:
\begin{itemize}
  \item
first, as proposed in the concluding remarks of \cite{DYBVIG_LIU_JET_2010} and done (in different cases) in \cite{BiffisGozziProsdocimi20,BCGZ21,BGZ21},
to add a path-dependent term in the drift and/or in the diffusion;
  \item
second, as proposed in the introduction of \cite{DjeicheGozziZancoZanella20}),
to add, in the drift, a mean-field term depending on the distribution $\P_{y(t)}$ of $y$ itself at time $t$.\footnote{As written at pages 2-3 of \cite{DjeicheGozziZancoZanella20},
``the labor income $y_i$ of an agent $i$ is benchmarked
against the labor incomes of a population
$y^N := (y_1, y_2, \dots , y_N)$ of $N$ agents with comparable tasks or ranks among the profession such as the level of full professor, associate professor, actuary, trader,
risk manager etc., where one usually uses some wage level
$b(y^N)$ as a reference to declare whether
that agent has a superior, fair or inferior labor income compared with her peers''.}
\end{itemize}
Consequently, it makes sense to model the dynamics of the labor income ``$y$'' using a one-dimensional stochastic delay ODE of McKean-Vlasov type as follows (here $\phi\in L^2(-d,0)$ is a given datum providing the weight of the past income into the the current trend, and $Z$ is a one-dimensional Brownian motion).
      $$
      dy(t) \ = \ \left[b_0\big(\P_{y(t)}\big) +
      \int_{-d}^{0}y(t+\xi)\,\phi(\xi)\,d\xi\right]dt + \sigma\,y(t)\,dZ(t).
      $$
      Such equations can be rephrased as SDEs in the Hilbert space $\R\times L^2(-d,0)$ and the resulting dynamics falls into the class treated in the present section.
      In \cite{DjeicheGozziZancoZanella20} the authors study only the case where $b_0$ is a linear function\footnote{This is needed there to get a simplified HJB equation and to find explicit solutions of it.} of the expectation $\E[y(t)]$ but, as explained there,
      other choices are possible, such as ``the median
wage or the truncated average above a certain level, within the company or even within the profession''.

The setting of the present paper allows to cover not only the case when $b_0$ above is nonlinear but also more general (and still interesting, like e.g. the average income of the last years) cases where it depends on the law of the past of $y$.

Clearly, the problem becomes much more difficult than the one treated in \cite{DjeicheGozziZancoZanella20} and we should consider the results of this paper as a first step to study such types of models.

\item[(ii)] Optimal investment with vintage capital.
These are typical partial equilibrium models arising in Economics
(see e.g. \cite{BarucciGozzi98,BarucciGozzi01,FaggianGozzi10,Feichtingeretal06JET})
where the state variable ``$x$'' is the capital stock and the control variable is the investment ``$u$'', both depending on time $t\geq0$ and vintage $s\in [0,\bar{s}]$ (here $\bar s>0$ is the maximum possible vintage): capital goods indexed with small $s$ embody newer technologies. In the above papers $x$, in the simplest cases, is required to satisfy a first-order PDE of the following type:
$$
\frac{\partial x(t,s)}{\partial t}+
\frac{\partial x(t,s)}{\partial s}=-\delta x(t,s)+ u(t,s),
$$
where $\delta$ is a depreciation rate of the capital goods, which is kept constant for simplicity.
This PDE can be easily rewritten as an ODE in the space $H:=L^2(0,\bar s)$.
If one wants to take into account stochastic disturbances (similarly to what is done in \cite{Fousekis} in a case without vintage), the state equation,
written in $L^2(0,\bar s)$, becomes the following infinite dimensional SDE
(here $B$ is a cylindrical Wiener process):
$$
dx(t) \ = \ \left[Ax(t) - \delta x(t) +Cu(t)\right]dt + \sigma(x(t))\,dB(t),
$$
for suitable linear operators $A,C,\delta$ and
$\sigma\colon H\to\mathcal{L} (H)$. Here $x(t),u(t)$ stand for $x(t,\cdot)$ and $u(t,\cdot)$, respectively.
\\
Given such a controlled infinite dimensional SDE, the objective to be maximized depends, beyond the control $u$, also on the production $Q(t)$ provided by the capital stock.
The expression of $Q(t)$, as given for example in \cite{Feichtingeretal06JET}, is
$$
Q(t):=\int_{0}^{\bar s}f(t-s)v(s)x(t,s)ds,
$$
where $f$ takes into account  the technological progress and $v$ embodies learning and spillover effects. As observed e.g in \cite{Garcia06,LucasMoll12}, effects of this type can be modelled taking $f$ and $v$ depending on the distribution of $x(t)$ and $u(t)$.
Moreover, effects like the so-called time to build (see e.g. \cite{KydlandPrescott,BambiJEDC}) call for path-dependency for such coefficients
(see, for a deterministic infinite dimensional modelling of time-to-build, \cite{BambiJEDC,BambiFabbriGozzi,BambiDiGirolami,FGG1,FGG2}).

\item[(iii)] Optimal consumption in spatial growth models in Economics.
A way to model capital accumulation in spatial growth models
(see e.g. \cite{BoucekkineCamachoFabbri13}) is to assume that the capital $x(t,\xi)$ at time $t\ge 0$ in the position $\xi$ (here we take, for simplicity, $\xi\in S^1$, the one dimensional sphere)
satisfies a second order PDE like
$$
\frac{\partial x(t,\xi)}{\partial t}=
\frac{\partial^2 x(t,\xi)}{\partial \xi^2}+a(t,\xi) x(t,\xi) -c(t,\xi), \qquad \xi \in S^1,\,t\ge 0,
$$
where $a$ is a productivity coefficient and $c$ the consumption.
If we modify such PDE to take into account the time delay $d$ due to time-to-build, it takes the following form:
$$
\frac{\partial x(t,\xi)}{\partial t}=
\frac{\partial^2 x(t,\xi)}{\partial \xi^2}+a(t,\xi) x(t-d,\xi) -c(t,\xi), \qquad \xi \in S^1,\,t\ge 0.
$$
By considering  stochastic disturbances (as done, e.g., in \cite{Bismut75} for the case without space variable and in \cite{GozziLeocata21} in the spatial growth framework)
and of the mean-field dependence on the productivity (as argued in the previous example) we get the following path-dependent SPDE of McKean-Vlasov type in the space $H:=L^2(S^1)$
$$
dx(t)= \left[Ax(t) + h\left(x(t-d),\P_{x(t)}\right)-c(t)\right] dt
+\sigma(x(t))dB(t),
$$
where $B$ is a cylindrical Wiener process,
$A$ is a suitable linear second order differential operator, $h\colon H\times \mathcal{P}(H)\to H$  and
$\sigma\colon H\to\mathcal{L} (H)$. Here $x(t),c(t)$ stand for $x(t,\cdot)$ and $c(t,\cdot)$, respectively.
Again this problem falls into the class we treat in the present paper.
\end{itemize}
\end{Example}

\section{The optimal control problem}
\label{S:ControlPb}

\subsection{Reward functional and lifted value function}

We are given two functions
\begin{align*}
f\colon[0,T]\times C([0,T];H)\times \Pc_2\big(C([0,T];H)\big)\times\Ur\times\Pc(\Ur) &\longrightarrow \R \\
g\colon C([0,T];H)\times \Pc_2\big(C([0,T];H)\big) &\longrightarrow \R
\end{align*}
on which we impose the following assumptions.

\begin{Assumption}{\bf(A$_{f,g}$)}\label{A_f,g}\quad
\begin{enumerate}[(i)]
\item The functions $f$ and $g$ are measurable.
\item The function $f$ satisfies the non-anticipativity property:
\[
f_t(x,\mu,u,\nu) = f_t(x_{\cdot\wedge t},\mu,u,\nu),
\]
for all $(t,x,\mu,u,\nu)\in[0,T]\times C([0,T];H)\times\Pc_2(C([0,T];H))\times\Ur\times\Pc(\Ur)$.
\item There exists a locally bounded function $h\colon[0,\infty)\rightarrow[0,\infty)$ such that
\[
|f_t(x,\mu,u,\nu)| \leq h\big(\Wc_2(\mu,\delta_0)\big)\big(1 + \|x\|^2_t\big), \qquad |g(x,\mu)| \leq h\big(\Wc_2(\mu,\delta_0)\big)\big(1 + \|x\|^2_T\big),
\]
for all $(t,x,\mu,u,\nu)\in[0,T]\times C([0,T];H)\times\Pc_2(C([0,T];H))\times\Ur\times\Pc(\Ur)$.
\end{enumerate}
\end{Assumption}

\ni We will also need the following continuity assumption on $f$ and $g$.

\begin{Assumption}{{\bf(A}$_{f,g}${\bf)}$_\text{\textnormal{cont}}$}\label{A_f,g_cont}
The function $f$ is locally uniformly continuous in $(x,\mu)$ uniformly with respect to $(t,u,\nu)$. Similarly, $g$ is locally uniformly continuous. More precisely, it holds that: for every $\eps>0$ and $n\in\N$ there exists $\delta=\delta(\eps,n)>0$ such that, for every $(t,u,\nu)\in[0,T]\times\Ur\times\Pc(\Ur)$, $(x,\mu),(x',\mu')\in C([0,T];H)\times\Pc_2(C([0,T];H))$, with $\|x\|_T+\Wc_2(\mu,\delta_0)\leq n$ and $\|x'\|_T+\Wc_2(\mu',\delta_0)\leq n$,
\begin{align*}
&\|x - x'\|_t + \Wc_2(\mu,\mu') \leq \delta \\
&\hspace{2cm}\Longrightarrow \quad |f(t,x,\mu,u,\nu) - f(t,x',\mu',u,\nu)| \leq \eps \; \text{ and } \; |g(x,\mu) - g(x',\mu')| \leq \eps.
\end{align*}
\end{Assumption}

Under Assumptions \textup{\ref{A_A,b,sigma}} and \textup{\ref{A_f,g}}, from Proposition \ref{Prop-diff-MKVPD} we get that the \emph{reward functional} $J$, given by
\[
J(t,\xi,\alpha) = \E\bigg[\int_t^Tf_s\big(X^{t,\xi,\alpha},\P_{X^{t,\xi,\alpha}_{\cdot\wedge s}},\alpha_s,\P_{\alpha_s}\big)\,ds + g\big(X^{t,\xi,\alpha},\P_{X^{t,\xi,\alpha}}\big)\bigg],
\]
is well-defined for any $(t,\xi,\alpha)\in [0,T]\times\S_2(\F)\times\Uc$. We then consider the function $V\colon[0,T]\times\S_2(\F)\longrightarrow\R$, to which we refer as the \emph{lifted value function}, defined as
\begin{equation}\label{V_aux}
V(t,\xi) = \sup_{\alpha\in \Uc} J(t,\xi,\alpha)\,, \qquad \forall\,(t,\xi)\in[0,T]\times\S_2(\F)\,.
\end{equation}

\begin{Remark}\label{R:V_non_ant}
Recall from Proposition \ref{Prop-diff-MKVPD} that $X^{t,\xi,\alpha}$ only involves the values of $\xi$ up to time $t$, namely it holds that $X^{t,\xi,\alpha}=X^{t,\xi_{\cdot\wedge t},\alpha}$. As a consequence, both $J$ and $V$ satisfy the \emph{non-anticipativity property:}
\[
J(t,\xi,\alpha) = J(t,\xi_{\cdot\wedge t},\alpha), \qquad\qquad V(t,\xi) = V(t,\xi_{\cdot\wedge t}),
\]
for every $(t,\xi)\in[0,T]\times\S_2(\F)$, $\alpha\in\Uc$.
\end{Remark}

\begin{Remark}\label{rm:quadraticreward}
In Remark \ref{rm:unboundedcontrol} we looked at the case when $\Ur$ is a Hilbert space and the coefficients of the state equation have linear growth in the controls. In such a case, in order to have a well-defined reward functional $J$, we need some compensating term in the current reward $f$. A typical assumption which guarantees that $J$ is well-defined is that the first inequality of Assumption \ref{A_f,g}-(iii) is replaced by
\[
f_t(x,\mu,u,\nu) \ \leq \ h\big(\mathcal{W}_2(\mu,\delta_0)\big)\,\big(1+\|x\|_t^2\big) - C |u|_{\Ur}^\theta,
\]
for some $\theta>1$ and $C$ $>$ $0$.  This would include some typical linear-quadratic control cases. For example in the case of optimal investment problems mentioned in Example \ref{ex:SE}-(ii)
a typical form of $f$ would be (recall that here $H=L^2(0,\bar s)$)
\begin{equation}\label{eq:costinvest}
f_t(x,\mu,u,\nu) \ = \ f_t(x,u) \ = \ e^{-rt}[R(Q(t))-\<a_2,u(t)\>_H-
\<Mu(t),u(t)\>_H],
\end{equation}
where $r$ is the interest rate, $R$ is suitable one variable function (possibly linear or quadratic) $a_2\in L^2(0,\bar s)$,
and $M$ is a suitable multiplication operator
in $L^2(0,\bar s)$ (see e.g. \cite{FaggianGozzi10,Feichtingeretal06JET}).
\end{Remark}

\begin{Proposition}\label{P:Cont}
Suppose that \textup{\ref{A_A,b,sigma}} and \textup{\ref{A_f,g}} hold. The function $V$ satisfies a quadratic growth condition: there exists a constant $C$ such that
\begin{equation}\label{EstimateV}
|V(t,\xi)| \leq C\,\big(1 + \|\xi_{\cdot\wedge t}\|_{\S_2}^2\big),
\end{equation}
for every $(t,\xi)\in[0,T]\times\S_2(\F)$. Moreover, if in addition \textup{\ref{A_f,g_cont}} holds, the map $V\colon[0,T]\times\S_2(\F)\rightarrow\R$ is jointly continuous.
\end{Proposition}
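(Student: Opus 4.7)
The plan is to derive both assertions from Proposition~\ref{Prop-diff-MKVPD} combined with the growth and continuity hypotheses on $f$ and $g$. For the quadratic bound \eqref{EstimateV}, fix $(t,\xi,\alpha)$, set $X\coloneqq X^{t,\xi,\alpha}$, and observe that $\Wc_2(\P_{X_{\cdot\wedge s}},\delta_0)^2\leq \E[\|X_{\cdot\wedge s}\|_T^2]\leq \|X\|_{\S_2}^2$, which together with \eqref{EstimateX} yields $\Wc_2(\P_{X_{\cdot\wedge s}},\delta_0)\leq C(1+\|\xi_{\cdot\wedge t}\|_{\S_2})$ uniformly in $s\in[0,T]$ and $\alpha\in\Uc$. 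Plugging this into the growth bound from \ref{A_f,g}(iii) and using local boundedness of $h$, the running and terminal costs are pointwise dominated by $h(C(1+\|\xi_{\cdot\wedge t}\|_{\S_2}))(1+\|X\|_T^2)$. Taking expectations, applying \eqref{EstimateX} once more, and passing to the supremum over $\alpha$ gives \eqref{EstimateV}.

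For continuity, fix $(t,\xi)$ and a sequence $(t_n,\xi_n)\to(t,\xi)$ in $[0,T]\times\S_2(\F)$, and estimate $|V(t_n,\xi_n)-V(t,\xi)|\leq\sup_{\alpha\in\Uc}|J(t_n,\xi_n,\alpha)-J(t,\xi,\alpha)|$. Setting $X^n\coloneqq X^{t_n,\xi_n,\alpha}$ and $X\coloneqq X^{t,\xi,\alpha}$, Proposition~\ref{Prop-diff-MKVPD} gives $\sup_{\alpha\in\Uc}\|X^n-X\|_{\S_2}\to 0$, and since the $2$-Wasserstein distance is dominated by the $L^2$-coupling distance we also obtain $\sup_{\alpha\in\Uc,\,s\in[0,T]}\Wc_2(\P_{X^n_{\cdot\wedge s}},\P_{X_{\cdot\wedge s}})\to 0$; moreover $\|X^n\|_{\S_2}\vee\|X\|_{\S_2}\leq C(1+R)$ for some fixed $R\geq\sup_n\|\xi_n\|_{\S_2}$ by \eqref{EstimateX}, uniformly in $\alpha$.

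I would then split $J(t_n,\xi_n,\alpha)-J(t,\xi,\alpha)$ into a terminal difference, a main running-cost piece over $[t_n\vee t,T]$, and a residual piece over the small interval $[t_n\wedge t,t_n\vee t]$ of length $|t_n-t|\to 0$; the residual piece vanishes uniformly in $\alpha$ by the quadratic growth of $f$ and the uniform $\S_2$-bound. For the main running-cost piece and the terminal piece, given $\eps>0$ I would fix $n_0$ large enough that the Chebyshev-type bad event $E^n_\alpha\coloneqq\{\|X^n\|_T\vee\|X\|_T>n_0\}$ has probability below $\eps$ uniformly in $n$ and $\alpha$, and then invoke \ref{A_f,g_cont} at the level $n_0$ to obtain a modulus $\delta(\eps,n_0)>0$ such that, on $(E^n_\alpha)^c$ and when $\|X^n-X\|_T+\Wc_2(\P_{X^n_{\cdot\wedge s}},\P_{X_{\cdot\wedge s}})\leq\delta$, the integrand is pointwise $\leq\eps$. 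The event where the latter inequality fails has vanishing probability by Markov's inequality and $\S_2$-convergence; on the exceptional events, the quadratic-growth bound from \ref{A_f,g}(iii) combined with uniform $L^2$-integrability of $\|X^n\|_T^2,\|X\|_T^2$ keeps the contribution $O(\eps)$ via a Cauchy--Schwarz splitting. The same recipe applies to the terminal term with $g$.

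The main obstacle is coordinating the three levels of approximation simultaneously—pathwise $L^2$-convergence of the state process, $\Wc_2$-convergence of its marginal laws along the path, and uniform continuity of $f,g$ on bounded sets—all uniformly in $\alpha\in\Uc$, and then upgrading the resulting pointwise convergence to $L^1$-convergence without losing that uniformity. The uniformity in $\alpha$ hinges precisely on Proposition~\ref{Prop-diff-MKVPD}'s joint continuity statement that is uniform in the control, while the interplay between quadratic growth and uniform $L^2$-integrability is what allows the continuity modulus of $f$ and $g$ to be used on the essentially full-probability event and the tail to be discarded.
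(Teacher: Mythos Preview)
Your argument is correct and follows exactly the paper's two-step strategy: bound $|J(t,\xi,\alpha)|$ uniformly in $\alpha$ via the growth hypothesis \textup{\ref{A_f,g}}(iii) and \eqref{EstimateX}, then reduce continuity of $V$ to uniform-in-$\alpha$ continuity of $(t,\xi)\mapsto J(t,\xi,\alpha)$ via Proposition~\ref{Prop-diff-MKVPD} and \textup{\ref{A_f,g_cont}}. Your Step~2 is in fact far more detailed than the paper's, which simply declares the uniform continuity of $J$ a ``straightforward consequence'' of those two ingredients without writing out the truncation/tail argument.

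One technical point to tighten: your ``Cauchy--Schwarz splitting'' on the exceptional event $E^n_\alpha$ implicitly uses a uniform-in-$\alpha$ bound on $\E\big[\|X^{t,\xi,\alpha}\|_T^4\big]$ (equivalently, uniform integrability of $\{\|X^{t,\xi,\alpha}\|_T^2:\alpha\in\Uc\}$), whereas \eqref{EstimateX} only gives $L^2$. This is routine under Assumption~\ref{A_A,b,sigma}---rerun the fixed-point/Gronwall estimate of Proposition~\ref{Prop-diff-MKVPD} in $\S_p$ for some $p>2$---but it should be stated rather than assumed, since without it the tail contribution $\E\big[(1+\|X^n\|_T^2+\|X\|_T^2)\mathds{1}_{E^n_\alpha}\big]$ cannot be made small uniformly in $\alpha$ merely from $\P(E^n_\alpha)\to 0$.
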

\begin{proof}[\textbf{Proof.}]
We split the proof into two steps.

\vspace{1mm}

\noindent\emph{\textbf{Step 1}. Proof of estimate \eqref{EstimateV}.} From the definition of $J$, we have
\[
|J(t,\xi,\alpha)| \leq \E\bigg[\int_t^T\big|f_s\big(X^{t,\xi,\alpha},\P_{X^{t,\xi,\alpha}_{\cdot\wedge s}},\alpha_s,\P_{\alpha_s}\big)\big|\,ds\bigg] + \E\big[\big|g\big(X^{t,\xi,\alpha},\P_{X^{t,\xi,\alpha}}\big)\big|\big].
\]
By the quadratic growth of $f$ and $g$, together with estimate \eqref{EstimateX}, we see that there exists a constant $C$ such that
\begin{equation}\label{EstimateJ}
|J(t,\xi,\alpha)| \leq C\,\big(1 + \|\xi_{\cdot\wedge t}\|_{\S_2}^2\big),
\end{equation}
for every $(t,\xi,\alpha)\in[0,T]\times\S_2(\F)\times\Uc$. Then, estimate \eqref{EstimateV} follows directly from the definition of $V$ and the fact that \eqref{EstimateJ} holds uniformly with respect to $\alpha\in\Uc$.

\vspace{1mm}

\noindent\emph{\textbf{Step 2}. Continuity of $V$.} We begin noticing that, for every $(t,\xi),(s,\eta)\in[0,T]\times\S_2(\F)$,
\[
|V(t,\xi) - V(s,\eta)| \leq \sup_{\alpha\in\Uc} |J(t,\xi,\alpha) - J(s,\eta,\alpha)|.
\]
Then, the continuity of $V$ follows once we prove that $J$ is continuous in $(t,\xi)$ uniformly with respect to $\alpha$, namely that the following property holds: for every $\eps>0$ and every $(t,\xi)\in[0,T]\times\S_2(\F)$, there exists $\delta=\delta(\eps,t,\xi)>0$ such that, for every $(s,\eta,\alpha)\in[0,T]\times\S_2(\F)\times\Uc$,
\[
|t - s|\,\leq\,\delta \quad \text{ and } \quad \|\xi - \eta\|_{\S_2} \leq \delta \qquad \Longrightarrow \qquad |J(t,\xi,\alpha) - J(s,\eta,\alpha)| \leq \eps.
\]
Such a property is a straightforward consequence of the last statement of Proposition \ref{Prop-diff-MKVPD} and of assumption
\textup{\ref{A_f,g_cont}}.
\end{proof}

\subsection{Dynamic programming principle for $V$}

In this section we prove the dynamic programming principle for the lifted value function $V$ defined in \eqref{V_aux}.

\begin{Theorem}\label{T:DPP}
Suppose that \textup{\ref{A_A,b,sigma}} and \textup{\ref{A_f,g}} hold. The lifted value function $V$ satisfies the \textbf{dynamic programming principle}: for every $t,s\in[0,T]$, with $t\leq s$, and every $\xi\in\S_2(\F)$ it holds that
\[
V(t,\xi) = \sup_{\alpha\in\Uc}\bigg\{\E\bigg[\int_t^s f_r\big(X^{t,\xi,\alpha},\P_{X^{t,\xi,\alpha}_{\cdot\wedge r}},\alpha_r,\P_{\alpha_r}\big)\,dr\bigg] + V\big(s, X^{t,\xi,\alpha}\big)\bigg\}.
\]
\end{Theorem}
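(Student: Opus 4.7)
The plan is to prove the two inequalities $V(t,\xi) \leq \sup\{\cdots\}$ and $V(t,\xi) \geq \sup\{\cdots\}$, both built on a \emph{flow (semigroup) property} of the controlled MKV mild solution. Specifically, for $t \leq s$, $\xi \in \S_2(\F)$, $\alpha \in \Uc$, and setting $\eta := X^{t,\xi,\alpha}_{\cdot \wedge s}$ (which lies in $\S_2(\F)$), the identity
\[
X^{t,\xi,\alpha} = X^{s,\eta,\alpha} \quad \text{on } [0,T], \; \P\text{-a.s.},
\]
will be obtained by splitting the mild integral equation of Proposition \ref{Prop-diff-MKVPD} at time $s$ through the semigroup factorization $e^{(r-t)A} = e^{(r-s)A}e^{(s-t)A}$, and then applying the uniqueness statement of the same proposition on $[s,T]$. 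An analogous uniqueness argument yields a \emph{causality property}: if $\alpha,\tilde\alpha \in \Uc$ coincide on $[0,s]$, then $X^{t,\xi,\alpha}$ and $X^{t,\xi,\tilde\alpha}$ coincide on $[0,s]$, since the MKV equation on $[0,s]$ depends on the controls and state-laws only through their restriction to $[0,s]$.

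For the inequality $V(t,\xi) \leq \sup\{\cdots\}$, I would split, for each $\alpha \in \Uc$,
\[
J(t,\xi,\alpha) = \E\!\int_t^s f_r\big(X^{t,\xi,\alpha},\P_{X^{t,\xi,\alpha}_{\cdot\wedge r}},\alpha_r,\P_{\alpha_r}\big)\,dr + \left\{\E\!\int_s^T f_r(\cdots)\,dr + \E\,g\big(X^{t,\xi,\alpha},\P_{X^{t,\xi,\alpha}}\big)\right\}.
\]
By the flow property, the quantity in braces equals $J(s,\eta,\alpha)$, which is bounded above by $V(s,\eta) = V(s, X^{t,\xi,\alpha})$ (the last equality being the non-anticipativity noted in Remark \ref{R:V_non_ant}). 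Taking $\sup_\alpha$ gives this direction.

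For the reverse inequality, I would fix $\eps > 0$ and $\alpha^1 \in \Uc$, set $\eta := X^{t,\xi,\alpha^1}_{\cdot \wedge s} \in \S_2(\F)$, and use the definition of the supremum in \eqref{V_aux} to pick $\alpha^2 \in \Uc$ with $J(s,\eta,\alpha^2) \geq V(s,\eta) - \eps$. Form the concatenation
\[
\alpha_r := \alpha^1_r\, \mathbf{1}_{[0,s]}(r) + \alpha^2_r\, \mathbf{1}_{(s,T]}(r), \quad r \in [0,T],
\]
which is $\F$-progressively measurable, hence in $\Uc$. Causality gives $X^{t,\xi,\alpha}|_{[0,s]} = X^{t,\xi,\alpha^1}|_{[0,s]}$, so $X^{t,\xi,\alpha}_{\cdot \wedge s} = \eta$; the flow property then yields $X^{t,\xi,\alpha} = X^{s,\eta,\alpha^2}$ on $[0,T]$. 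Splitting $J(t,\xi,\alpha)$ at $s$ as before, the $[t,s]$ part becomes the $[t,s]$-integral of $f$ along $(X^{t,\xi,\alpha^1},\alpha^1)$, while the $[s,T]$ part plus $g$ assembles into $J(s,\eta,\alpha^2) \geq V(s,\eta)-\eps$. Combining with $V(t,\xi) \geq J(t,\xi,\alpha)$, letting $\eps \downarrow 0$, and taking $\sup_{\alpha^1}$ closes the argument.

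The main obstacle, specific to the MKV/path-dependent Hilbert setting, is that the coefficients depend on the laws $\P_{X_{\cdot\wedge r}}$ and $\P_{\alpha_r}$. In the classical Markovian \emph{pointwise} framework the "$\geq$" direction typically requires a delicate measurable selection of $\eps$-optimal controls per realization of the state; here this difficulty is elegantly bypassed because $V$ is formulated on the random-initial-condition space $\S_2(\F)$ (legitimately so, thanks to Standing Assumption \ref{A_G}, which guarantees that $\Gc$ is rich enough), and so the $\eps$-optimal control $\alpha^2$ is chosen once at the random-variable level of $\eta$. The remaining care is only in verifying that the law inputs fed to $b,\sigma,f$ on the two intervals $[t,s]$ and $(s,T]$ are the correct ones after concatenation; this is immediate from the disjoint supports of the two pieces and from the non-anticipativity built into Assumption \ref{A_A,b,sigma}-(iii) and Assumption \ref{A_f,g}-(ii).
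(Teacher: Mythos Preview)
Your proposal is correct and follows essentially the same route as the paper's proof: both directions rest on the flow property $X^{t,\xi,\alpha}=X^{s,X^{t,\xi,\alpha},\alpha}$ (from uniqueness in Proposition~\ref{Prop-diff-MKVPD}) together with the causality observation $X^{t,\xi,\alpha}_{\cdot\wedge s}=X^{t,\xi,\gamma}_{\cdot\wedge s}$ when $\alpha$ and $\gamma$ agree on $[0,s]$, and the ``hard'' inequality is obtained by concatenating an arbitrary control on $[t,s]$ with an $\eps$-optimal control for $V(s,\cdot)$ on $(s,T]$. The only cosmetic difference is that the paper picks an $\eps$-optimal $\alpha^\eps$ for the right-hand side first and then an $\eps$-optimal $\beta^\eps$ for $V(s,X^{t,\xi,\alpha^\eps})$, arriving at a $2\eps$ slack, whereas you fix $\alpha^1$ arbitrarily and take the supremum at the end; these are equivalent.
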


\begin{proof}[\textbf{Proof.}]
Set
\[
\Lambda(t,\xi) \coloneqq  \sup_{\alpha\in\Uc}\bigg\{\E\bigg[\int_t^s f_r\big(X^{t,\xi,\alpha},\P_{X^{t,\xi,\alpha}_{\cdot\wedge r}},\alpha_r,\P_{\alpha_r}\big)\,dr\bigg] + V\big(s, X^{t,\xi,\alpha}\big)\bigg\}.
\]
\emph{\textbf{Step 1}. Proof of the inequality $\Lambda(t,\xi)\geq V(t,\xi)$.} For every fixed $\alpha\in\Uc$, the lifted value function at $(s,X^{t,\xi,\alpha})$ is given by
\[
V(s,X^{t,\xi,\alpha}) = \sup_{\beta\in\Uc}\E\bigg[\int_s^T f_r\Big(X^{s,X^{t,\xi,\alpha},\beta},\P_{X^{s,X^{t,\xi,\alpha},\beta}_{\cdot\wedge r}},\beta_r,\P_{\beta_r}\Big)\,dr + g\Big(X^{s,X^{t,\xi,\alpha},\beta},\P_{X^{s,X^{t,\xi,\alpha},\beta}}\Big)\bigg].
\]
Choosing $\beta=\alpha$, we find
\[
V(s,X^{t,\xi,\alpha}) \geq \E\bigg[\int_s^T f_r\Big(X^{s,X^{t,\xi,\alpha},\alpha},\P_{X^{s,X^{t,\xi,\alpha},\alpha}_{\cdot\wedge r}},\alpha_r,\P_{\alpha_r}\Big)\,dr + g\Big(X^{s,X^{t,\xi,\alpha},\alpha},\P_{X^{s,X^{t,\xi,\alpha},\alpha}}\Big)\bigg].
\]
By the uniqueness property for equation \eqref{cont-MKV-differential} stated in Proposition \ref{Prop-diff-MKVPD}, we obtain the flow property
\[
X^{t,\xi,\alpha} = X^{s,X^{t,\xi,\alpha},\alpha}.
\]
Hence
\[
V(s,X^{t,\xi,\alpha}) \geq \E\bigg[\int_s^T f_r\big(X^{t,\xi,\alpha},\P_{X^{t,\xi,\alpha}_{\cdot\wedge r}},\alpha_r,\P_{\alpha_r}\big)\,dr + g\big(X^{t,\xi,\alpha},\P_{X^{t,\xi,\alpha}}\big)\bigg].
\]
Adding to both sides the quantity $\E\int_t^s f_r(X^{t,\xi,\alpha},\P_{X^{t,\xi,\alpha}_{\cdot\wedge r}},\alpha_r,\P_{\alpha_r})\,dr$, we get
\[
\Lambda(t,\xi) \geq \E\bigg[\int_t^T f_r\big(X^{t,\xi,\alpha},\P_{X^{t,\xi,\alpha}_{\cdot\wedge r}},\alpha_r,\P_{\alpha_r}\big)\,dr + g\big(X^{t,\xi,\alpha},\P_{X^{t,\xi,\alpha}}\big)\bigg].
\]
As the latter inequality holds true for every $\alpha\in\Uc$, we conclude that $\Lambda(t,\xi)\geq V(t,\xi)$.

\vspace{1mm}

\noindent\emph{\textbf{Step 2}. Proof of the inequality $\Lambda(t,\xi)\leq V(t,\xi)$.} For every $\eps>0$, let $\alpha^\eps\in\Uc$ be such that
\begin{equation}\label{DPP_proof1}
\Lambda(t,\xi) \leq  \E\bigg[\int_t^s f_r\big(X^{t,\xi,\alpha^\eps},\P_{X^{t,\xi,\alpha^\eps}_{\cdot\wedge r}},\alpha_r^\eps,\P_{\alpha_r^\eps}\big)\,dr\bigg] + V(s,X^{t,\xi,\alpha^\eps}) + \eps.
\end{equation}
From the definition of $V(s,X^{t,\xi,\alpha^\eps})$, it follows that there exists $\beta^\eps\in\Uc$ such that
\begin{align}\label{DPP_proof2}
V(s,X^{t,\xi,\alpha^\eps}) &\leq \E\bigg[\int_s^T f_r\Big(X^{s,X^{t,\xi,\alpha^\eps},\beta^\eps},\P_{X^{s,X^{t,\xi,\alpha^\eps},\beta^\eps}_{\cdot\wedge r}},\beta_r^\eps,\P_{\beta_r^\eps}\Big)\,dr \\
&\quad + \, g\Big(X^{s,X^{t,\xi,\alpha^\eps},\beta^\eps},\P_{X^{s,X^{t,\xi,\alpha^\eps},\beta^\eps}}\Big)\bigg] + \eps. \notag
\end{align}
Set
\[
\gamma^\eps = \alpha^\eps\,\mathds{1}_{[0,s]} + \beta^\eps\,\mathds{1}_{(s,T]}.
\]
Notice that $\gamma^\eps\in\Uc$. Using again the uniqueness property for equation \eqref{cont-MKV-differential}, we get
\[
X^{s,X^{t,\xi,\alpha^\eps},\beta^\eps} = X^{t,\xi,\gamma^\eps}.
\]
Hence, \eqref{DPP_proof2} becomes
\[
V(s,X^{t,\xi,\alpha^\eps}) \leq \E\bigg[\int_s^T f_r\big(X^{t,\xi,\gamma^\eps},\P_{X^{t,\xi,\gamma^\eps}_{\cdot\wedge r}},\gamma_r^\eps,\P_{\gamma_r^\eps}\big)\,dr + g\big(X^{t,\xi,\gamma^\eps},\P_{X^{t,\xi,\gamma^\eps}}\big)\bigg] + \eps.
\]
Then, by \eqref{DPP_proof1} it follows that
\begin{align}\label{DPP_proof3}
\Lambda(t,\xi) \leq \E\bigg[\int_t^s f_r\big(X^{t,\xi,\alpha^\eps},\P_{X^{t,\xi,\alpha^\eps}_{\cdot\wedge r}},\alpha_r^\eps,\P_{\alpha_r^\eps}\big)\,dr &+ \int_s^T f_r\big(X^{t,\xi,\gamma^\eps},\P_{X^{t,\xi,\gamma^\eps}_{\cdot\wedge r}},\gamma_r^\eps,\P_{\gamma_r^\eps}\big)\,dr \notag \\
&+ g\big(X^{t,\xi,\gamma^\eps},\P_{X^{t,\xi,\gamma^\eps}}\big)\bigg] + 2\eps.
\end{align}
From the definition of $\gamma^\eps$, we see that
\[
X^{t,\xi,\alpha^\eps}_{\cdot\wedge s} = X^{t,\xi,\gamma^\eps}_{\cdot\wedge s}.
\]
As a consequence, we can rewrite \eqref{DPP_proof3} in terms of the only process $X^{t,\xi,\gamma^\eps}$ as
\[
\Lambda(t,\xi) \leq \E\bigg[\int_t^T \!\! f_r\big(X^{t,\xi,\gamma^\eps},\P_{X^{t,\xi,\gamma^\eps}_{\cdot\wedge r}},\gamma_r^\eps,\P_{\gamma_r^\eps}\big)dr + g\big(X^{t,\xi,\gamma^\eps},\P_{X^{t,\xi,\gamma^\eps}}\big)\bigg] + 2\eps \leq V(t,\xi) + 2\eps.
\]
The claim follows from the arbitrariness of $\eps$.
\end{proof}

  \begin{Remark}
It is worth noticing that, despite the stochastic setting, there is no issue of measurability in the proof of the dynamic programming principle. This is a consequence of the fact that the function $V$ depends on the whole random variable $\xi$, so that the proof of the dynamic programming principle can be done proceeding along the same lines as in the case of deterministic optimal control.
\end{Remark}

\subsection{Law invariance property of the lifted value function $V$}

In the present section we introduce the \emph{value function} of the optimal control problem, which is a real-valued map defined on $[0,T]\times\Pc_2(C([0,T];H))$ (see \eqref{v}). In order to define such a value function, it is necessary to prove that the map $V$ satisfies the following \emph{law invariance property:} for every $t\in[0,T]$ and every $\xi,\eta\in\S_2(\F)$ it holds that
\[
V(t,\xi) = V(t,\eta).
\]
This is the subject of the next theorem.

\begin{Theorem}\label{T:id-law}
Suppose that \textup{\ref{A_A,b,sigma}} and \textup{\ref{A_f,g}} hold. Fix $t\in[0,T]$ and $\xi,\eta\in\S_2(\F)$, with $\P_\xi=\P_\eta$. Suppose that there exist two random variables $U_\xi$ and $U_\eta$ having uniform distribution on $[0,1]$, being $\Fc_t$-measurable and such that $\xi$ and $U_\xi$ (resp.\ $\eta$ and $U_\eta$) are independent. Then, it holds that
\[
V(t,\xi) = V(t,\eta).
\]
If in addition \textup{\ref{A_f,g_cont}} holds, the map $V$ satisfies the \textbf{law invariance property}: for every $t\in[0,T]$ and every $\xi,\eta\in\S_2(\F)$, with $\P_\xi=\P_\eta$, it holds that
\[
V(t,\xi) = V(t,\eta).
\]
\end{Theorem}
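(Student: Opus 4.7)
The strategy is to show $V(t,\xi)\leq V(t,\eta)$ (equality then follows by symmetry). Given any $\alpha\in\mathcal{U}$, my plan is to construct $\alpha'\in\mathcal{U}$ with $(X^{t,\eta,\alpha'},\alpha')\stackrel{d}{=}(X^{t,\xi,\alpha},\alpha)$. Since the reward $J(t,\cdot,\cdot)$ is a deterministic functional of this joint law alone (the marginals $\mathbb{P}_{X^{t,\xi,\alpha}_{\cdot\wedge s}}$ and $\mathbb{P}_{\alpha_s}$ being read off from it, and $f,g$ being measurable), this immediately yields $J(t,\xi,\alpha)=J(t,\eta,\alpha')$ and the inequality.

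The heart of the argument is to put $\alpha$ into a functional form $\alpha_s=\Phi_s(\xi_{\cdot\wedge t},U_\xi,B)$ for a non-anticipative measurable functional $\Phi\colon[t,T]\times C([0,T];H)\times[0,1]\times C([0,T];K)\to U$, after which the natural candidate is $\alpha'_s:=\Phi_s(\eta_{\cdot\wedge t},U_\eta,B)$. Two facts feed into this representation: by the non-anticipativity property in Remark \ref{R:V_non_ant}, only $\xi_{\cdot\wedge t}$ matters after time $t$, and by the standing Assumption \ref{A_G}-i) the post-$t$ Brownian increments $(B_u-B_t)_{u\geq t}$ are independent of $\mathcal{F}_t$. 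Combined with the uniform $U_\xi$ independent of $\xi$, a Kallenberg-type transfer theorem in the spirit of Theorem 3.19 of \cite{Kallenberg} (already invoked in the proof of Lemma \ref{L:U_G}) provides such a $\Phi$ and an $\mathbb{F}$-progressively measurable process $\tilde\alpha_s=\Phi_s(\xi_{\cdot\wedge t},U_\xi,B)$ with $(\xi_{\cdot\wedge t},(B_u-B_t)_{u\geq t},\tilde\alpha)\stackrel{d}{=}(\xi_{\cdot\wedge t},(B_u-B_t)_{u\geq t},\alpha)$; since $J$ only sees this joint distribution, we may replace $\alpha$ by $\tilde\alpha$ and assume without loss of generality that $\alpha$ itself is of this functional form.

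With $\alpha$ and $\alpha'$ so expressed, verifying $(X^{t,\eta,\alpha'},\alpha')\stackrel{d}{=}(X^{t,\xi,\alpha},\alpha)$ reduces to checking
\[
(\xi_{\cdot\wedge t},U_\xi,(B_u-B_t)_{u\geq t}) \;\stackrel{d}{=}\; (\eta_{\cdot\wedge t},U_\eta,(B_u-B_t)_{u\geq t}).
\]
This holds because the Brownian increments are independent of $\mathcal{F}_t$ on both sides, while the initial pairs $(\xi_{\cdot\wedge t},U_\xi)$ and $(\eta_{\cdot\wedge t},U_\eta)$ share the same joint law (equal marginals by $\mathbb{P}_\xi=\mathbb{P}_\eta$, uniform marginals for the $U$'s, and independence within each pair). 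Plugging this equality in law into the McKean-Vlasov SDE via the uniqueness part of Proposition \ref{Prop-diff-MKVPD}, and using that the marginal-law coefficients coincide on both sides by the McKean-Vlasov fixed-point construction, one concludes that $(X^{t,\xi,\alpha},\alpha)$ and $(X^{t,\eta,\alpha'},\alpha')$ have the same joint distribution, closing the first part.

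For the second statement I would remove the uniform-independence hypothesis by approximation, using the continuity Assumption \ref{A_f,g_cont}. Either one enlarges the probability space with an auxiliary independent uniform (then checking, by the same transfer argument, that $V$ is unchanged under such an enlargement because any control on the enlarged space can be mimicked by one on the original space with the same law) and invokes the first part directly; or one exploits the richness provided by \ref{A_G}-ii) to construct $\xi^n,\eta^n\in\mathbf{S}_2(\mathbb{F})$ admitting independent uniforms, with $\mathbb{P}_{\xi^n}=\mathbb{P}_{\eta^n}$ and $\xi^n\to\xi$, $\eta^n\to\eta$ in $\mathbf{S}_2$, applies the first part to get $V(t,\xi^n)=V(t,\eta^n)$, and passes to the limit using the joint continuity of $V$ from Proposition \ref{P:Cont}. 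The main obstacle will be the functional-representation step of the second paragraph: conceptually a Kallenberg-Blackwell-type argument, it requires care in our infinite-dimensional, path-dependent, cylindrical-Brownian setting to obtain $\Phi$ with the required joint measurability while preserving distributions, which is why the detailed execution is deferred to Appendix \ref{App:LawInvariance}.
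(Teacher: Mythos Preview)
Your approach matches the paper's. The functional representation of $\alpha$ is exactly Lemma~\ref{L:alpha=a} (built on Kallenberg's transfer Theorem~6.10, not~3.19), and for the second part the paper follows your option~2, with the one ingredient you leave implicit being that the approximants $\xi^n,\eta^n$ are taken \emph{finitely-valued} (via a Borel partition of $C([0,T];H)$), so that Lemma~\ref{L:ExistUnif} guarantees the existence of independent $\mathcal{F}_t$-measurable uniforms $U_{\xi^n},U_{\eta^n}$.
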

\begin{proof}[\textbf{Proof.}]
We split the proof into two steps.

\vspace{1mm}

\noindent\textbf{\emph{Step 1.}} \emph{Only \textup{\ref{A_A,b,sigma}} and \textup{\ref{A_f,g}} hold.} Fix $t\in[0,T]$, $\xi,\eta\in\S_2(\F)$, with $\P_\xi=\P_\eta$, and let $U_\xi$, $U_\eta$ be $\Fc_t$-measurable random variables with uniform distribution on $[0,1]$, such that $\xi$ and $U_\xi$ (resp.\ $\eta$ and $U_\eta$) are independent.

By Remark \ref{R:V_non_ant} we can assume that $\xi=\xi_{\cdot\wedge t}$ and $\eta=\eta_{\cdot\wedge t}$, so, in particular, both $\xi$ and $\eta$ are $\Bc([0,T])\otimes\Fc_t$-measurable (this is needed in order to apply Lemma \ref{L:alpha=a} of Appendix \ref{App:LawInvariance}). Now, given $\alpha\in\Uc$ consider the function $\mathrm a\colon[0,T]\times\Omega\times C([0,T];H)\times[0,1]\rightarrow\Ur$ introduced in Lemma \ref{L:alpha=a}. By \eqref{EqLaw_bis} we have
\[
\Big((\xi_s)_{s\in[0,T]},(\mathrm a_s(\xi,U_\xi))_{s\in[t,T]},(B_s-B_t)_{s\in[t,T]}\Big) \overset{\mathscr L}{=} \Big((\xi_s)_{s\in[0,T]},(\alpha_s)_{s\in[t,T]},(B_s-B_t)_{s\in[t,T]}\Big),
\]
where $\overset{\mathscr L}{=}$ stands for equality in law (between random objects defined on $(\Omega,\Fc,\P)$). Then, notice that (here we use again that $\xi$ and $\eta$ are $\Bc([0,T])\otimes\Fc_t$-measurable, so, in particular, they are independent of $(B_s-B_t)_{s\in[t,T]}$)
\begin{equation}\label{Equality_in_law}
\Big((\xi_s)_{s\in[0,T]},(\alpha_s)_{s\in[t,T]},(B_s-B_t)_{s\in[t,T]}\Big) \overset{\mathscr L}{=} \Big((\eta_s)_{s\in[0,T]},(\beta_s)_{s\in[t,T]},(B_s-B_t)_{s\in[t,T]}\Big),
\end{equation}
where
\[
\beta \coloneqq  \big(\mathrm a_s(\eta,U_\eta)\big)_{s\in[0,T]}.
\]
Observe that $\beta\in\Uc$ and, by \eqref{Equality_in_law},
\[
\big((X_s^{t,\xi,\alpha})_{s\in[t,T]},(\alpha_s)_{s\in[t,T]}\big) \overset{\mathscr L}{=} \big((X_s^{t,\eta,\beta})_{s\in[t,T]},(\beta_s)_{s\in[t,T]}\big),
\]
where the above equality in law can be deduced from \eqref{Equality_in_law} proceeding along the same lines as in the proof of Proposition 1.137 in \cite{FabbriGozziSwiech}. As a consequence, it holds that
\[
J(t,\xi,\alpha) = J(t,\eta,\beta).
\]
Hence $J(t,\xi,\alpha)\leq V(t,\eta)$. From the arbitrariness of $\alpha$, we deduce that $V(t,\xi)\leq V(t,\eta)$. Changing the roles of $\xi$ and $\eta$ we get the opposite inequality, from which we deduce that $V(t,\xi)=V(t,\eta)$.

\vspace{1mm}

\noindent\textbf{\emph{Step 2.}} \emph{Assumptions \textup{\ref{A_A,b,sigma}}, \textup{\ref{A_f,g}}, \textup{\ref{A_f,g_cont}} hold.} Fix $t\in[0,T]$ and $\xi,\eta\in\S_2(\F)$, with $\P_\xi=\P_\eta$. As in the previous step, we exploit Remark \ref{R:V_non_ant} and take $\xi=\xi_{\cdot\wedge t}$ , $\eta=\eta_{\cdot\wedge t}$ (so, in particular, both $\xi$ and $\eta$ are $\Bc([0,T])\otimes\Fc_t$-measurable; this is needed in order to apply Lemma \ref{L:ExistUnif}).

\vspace{1mm}

\noindent\textbf{\emph{Substep 2.1.}} \emph{The discrete case.} Suppose that
\[
\P_{\xi} = \sum_{i=1}^m p_i\,\delta_{x_i},
\]
for some $\{x_1,\ldots,x_m\}\subset C([0,T];H)$, with $x_i\neq x_j$ if $i\neq j$, where $\delta_{x_i}$ is the Dirac measure at $x_i$ and $p_i>0$, with $\sum_{i=1}^m p_i=1$. Then, by Lemma \ref{L:ExistUnif} there exist two $\Fc_t$-measurable random variables $U_\xi$ and $U_\eta$, with uniform distribution on $[0,1]$, such that $\xi$ and $U_\xi$ (resp.\ $\eta$ and $U_\eta$) are independent. The claim then follows from \textbf{\emph{Step 1}}\!.

\vspace{1mm}

\noindent\textbf{\emph{Substep 2.2.}} \emph{The general case.} In the general case, we rely on the continuity of the map $\xi\mapsto V(t,\xi)$, defined from $\S_2(\F)$ into $\R$, which follows from Proposition \ref{P:Cont}. More precisely, we proceed by approximating $\xi$ and $\eta$. For $n\in \mathbb{N}$, let $\{C^n_i\}_{i\in \mathbb{N}}$ be a partition of $C([0,T];H)$ of Borel sets such that $\operatorname{diam}(C^n_i)<2^{-n}$. For each $i\in \mathbb{N}$, choose $x^n_i\in C^n_i$. Then, define
\[
\tilde\xi_n \coloneqq  \sum_{i=1}^\infty x^n_{i}\,\mathds{1}_{C^n_i}(\xi), \qquad\qquad \tilde\eta_n \coloneqq  \sum_{i=0}^\infty x^n_{i}\, \mathds{1}_{C^n_i}(\eta).
\]
Notice that $\tilde\xi_n,\tilde\eta_n\in\S_2(\F)$ and $\tilde\xi_n\rightarrow \xi$, $\tilde\eta_n\rightarrow\eta$ uniformly with respect to $\omega\in\Omega$. Moreover, $\tilde\xi_n$ and $\tilde\eta_n$ have the same law. By a diagonal argument, we can choose $N_n\in \mathbb{N}$ such that the sequences
$\{\xi_n\}_{n\in \mathbb{N}}$ and $\{\eta_n\}_{n\in \mathbb{N}}$, defined by
\begin{equation}\label{xi_n}
\xi_n \coloneqq  \sum_{i=1}^{N_n} x^n_i\, \mathds{1}_{C^n_i}(\xi), \qquad\qquad \eta_n \coloneqq  \sum_{i=1}^{N_n} x^n_i\, \mathds{1}_{C^n_i}(\eta),
\end{equation}
converge respectively to $\xi$ and $\eta$, both $\mathbb{P}$-a.s. and in $L^2(\Omega;C([0,T];H))$.
From \textbf{\emph{Substep 2.1}} we have
\[
V(t,\xi_n) = V(t,\eta_n), \qquad \forall\,n\in\N.
\]
Then, using the continuity of $V$, we can pass to the limit as $n\rightarrow\infty$ and conclude that $V(t,\xi)=V(t,\eta)$.
\end{proof}

\begin{Remark}\label{R:GinsteadF}
Suppose that \textup{\ref{A_A,b,sigma}}, \textup{\ref{A_f,g}}, \textup{\ref{A_f,g_cont}} hold. Thanks to the law invariance property stated in Theorem \ref{T:id-law}, in the definition of $V$ we can consider only $\xi\in\S_2(\Gc)$ rather than $\xi\in\S_2(\F)$ (recall that it was necessary to take $\xi\in\S_2(\F)$ in order to state and prove the dynamic programming principle, Theorem \ref{T:DPP}, where for instance the initial condition at time $s$ is $X^{t,\xi,\alpha}$ and $X^{t,\xi,\alpha}\in\S_2(\F)$ but, in general, $X^{t,\xi,\alpha}\notin\S_2(\Gc)$).
\end{Remark}

\begin{Remark}\label{R:Aliprantis}
In the finite-dimensional and non-path-dependent case, the law invariance property was already addressed in \cite{CossoPham19}, Proposition 3.1, under only \textup{\ref{A_A,b,sigma}} and \textup{\ref{A_f,g}}. Notice however that the proof of such a proposition is based on the measurable selection theorem stated in \cite{AliprantisBorder}, Corollary 18.23, which is unfortunately not true. For this reason, Theorem \ref{T:id-law} is also relevant in the finite-dimensional and non-path-dependent setting. Moreover, we emphasize that assuming only \textup{\ref{A_A,b,sigma}} and \textup{\ref{A_f,g}} is not enough for the validity of the law invariance property. To this regard, we give the following example.\\
\textbf{Example.} Let $T=1$, $H=\R^3$, $K=\R$, $\Ur=[0,1]$. We consider a non-path-dependent setting. The coefficients
\[
b,\,\sigma,\,f\colon[0,T]\times\R^3\times\Pc_2(\R^3)\times[0,1]\times\Pc([0,1]) \longrightarrow \R^3,\,\R^3,\,\R
\]
and $g\colon\R^3\times\Pc_2(\R^3)\rightarrow\R$ are given by
\[
b_t(x,\mu,u,\nu) \coloneqq
\left(\begin{array}{c}
0 \\
u \\
0
\end{array}\right), \quad \sigma_t(x,\mu,u,\nu) \coloneqq
\left(\begin{array}{c}
0 \\
0 \\
1
\end{array}\right), \quad
f_t(x,\mu,u,\nu) \coloneqq  0, \quad
g(x,\mu) \coloneqq  \mathds{1}_{\{\mu=\mu_0\}}
\]
for every $(t,x,\mu,u,\nu)\in[0,T]\times\R^3\times\Pc_2(\R^3)\times[0,1]\times\Pc([0,1])$, where $\mu_0\in\Pc_2(\R^3)$ is the probability measure defined as
\[
\mu_0 \coloneqq  \textup{Unif}\,(0,1)\otimes\textup{Bern}(1/2)\otimes\Nc(0,1),
\]
with $\textup{Unif}\,(0,1)$ being the uniform distribution on $[0,1]$, $\textup{Bern}(1/2)$ the Bernoulli distribution with parameter $1/2$, $\Nc(0,1)$ the standard Gaussian distribution.

We now fix the probabilistic setting. Consider the probability spaces $(\Omega^\circ,\Fc^\circ,\P^\circ)$ and  $(\Omega^\text{\tiny\rm 1},\Fc^\text{\tiny\rm 1},\P^\text{\tiny\rm 1})$ where $\Omega^\circ=[0,1]$, $\Fc^\circ$ its Borel $\sigma$-algebra and  $\P^\circ$ is the Lebesgue measure on the unit interval, while $\Omega^\text{\tiny\rm 1}=\{\omega^\text{\tiny\rm 1}\in C([0,1];\R)\colon\omega^\text{\tiny\rm 1}_0=0\}$, $\Fc^\text{\tiny\rm 1}$ its Borel $\sigma$-algebra and $\P^\text{\tiny\rm 1}$ is the Wiener measure on $(\Omega^\text{\tiny\rm 1},\Fc^\text{\tiny\rm 1})$. We then define $\Omega\coloneqq \Omega^\circ\times\Omega^{\text{\tiny\rm 1}}$, $\Fc$ the completion of $\Fc^\circ\otimes\Fc^{\text{\tiny\rm 1}}$ with respect to $\P^\circ\otimes\P^{\text{\tiny\rm 1}}$ and by $\P$ the extension of $\P^\circ\otimes\P^{\text{\tiny\rm 1}}$ to $\Fc$. We also denote by $\Gc\coloneqq \Fc^\circ\otimes\{\emptyset,\Omega^\text{\tiny\rm 1}\}$ the canonical extension of $\Fc^\circ$ to the product space $\Omega$. Finally, we denote by $B=(B_t)_{t\in[0,1]}$ the canonical process $B_t(\omega^\circ,\omega^\text{\tiny\rm 1})\coloneqq \omega^\text{\tiny\rm 1}_t$, $\forall\,t\in[0,1]$. Notice that, under the probability measure $\P$, the process $B$ is a real-valued Brownian motion. Then, in the present context the lifted value function is given by
\[
V(t,\xi) = \sup_{\alpha\in\Uc} \E\Big[\mathds{1}_{\big\{\P_{X_1^{t,\xi,\alpha}}=\mu_0\big\}}\Big], \qquad \forall\,(t,\xi)\in[0,1]\times L^2(\Omega,\Fc_t,\P;\R^3),
\]
where
\[
\xi = \
\left(\begin{array}{c}
\xi^1 \\
\xi^2 \\
\xi^3
\end{array}\right)
\qquad
\text{ and }
\qquad
X_1^{t,\xi,\alpha} = \
\left(\begin{array}{c}
\vspace{1mm}\xi^1 \\
\vspace{1mm}\displaystyle\xi^2 + \int_t^1 \alpha_s\,ds \\
\xi^3 + B_1 - B_t
\end{array}\right).
\]
Now, let $\xi\colon\Omega\rightarrow\R^3$ be given by
\[
\xi(\omega^\circ,\omega^\text{\tiny\rm 1}) \coloneqq  \
\left(\begin{array}{c}
\,\,\,\omega^\circ \\
0 \\
0
\end{array}\right),
\qquad \forall\,(\omega^\circ,\omega^\text{\tiny\rm 1})\in\Omega.
\]
Notice that $\xi^1$ has distribution $\textup{Unif}\,(0,1)$. Moreover, $\xi$ is $\Gc$-measurable and generates the $\sigma$-algebra $\Gc$ itself, namely $\Gc=\sigma(\xi)$. Define $\eta\colon\Omega\rightarrow\R^3$ and $Z\colon\Omega\rightarrow[0,1]$ by
\[
\eta \coloneqq  \
\left(\begin{array}{c}
2\xi\,\mathds{1}_{\{\xi\leq1/2\}} + (2\xi-1)\,\mathds{1}_{\{\xi>1/2\}} \\
0 \\
0
\end{array}\right),
\qquad\qquad
Z \coloneqq  \mathds{1}_{\{\xi\leq1/2\}}.
\]
Notice that $\eta$ and $Z$ are $\Gc$-measurable and independent. Moreover, the first component of $\eta$, namely $\eta^1$, has distribution $\textup{Unif}\,(0,1)$, while $Z$ has distribution $\textup{Bern}(1/2)$.

Let us prove that $V(0,\xi)\neq V(0,\eta)$ and, in particular, $V(0,\xi)=0$ while $V(0,\eta)=1$. If the initial condition at time $t=0$ is $\eta$, then taking the control process $\alpha_s^*=Z$, $\forall\,s\in[0,T]$, we get
\[
X_1^{0,\eta,\alpha^*} = \
\left(\begin{array}{c}
\eta^1 \\
Z \\
B_1
\end{array}\right).
\]
Notice that $\P_{X_1^{0,\eta,\alpha^
*}}=\mu_0$, which proves that $V(0,\eta)=1$. On the other hand, when the initial condition is $\xi$, for every $\alpha\in\Uc$ we have
\[
X_1^{0,\xi,\alpha}(\omega^\circ,\omega^\text{\tiny\rm 1}) = \
\left(\begin{array}{c}
\vspace{1mm}\omega^\circ \\
\vspace{1mm}\displaystyle\int_0^1 \alpha_s(\omega^\circ,\omega^\text{\tiny\rm 1})\,ds \\
\omega_1^\text{\tiny\rm 1}
\end{array}\right),
\qquad \forall\,(\omega^\circ,\omega^\text{\tiny\rm 1})\in\Omega.
\]
  Let us prove that $V(0,\xi)=0$. Suppose on the contrary that $V(0,\xi)=1$. This implies that $X_1^{0,\xi,\alpha}$ has distribution $\mu_0$, so in particular $\int_0^1\alpha_s\,ds$ has Bernoulli distribution with parameter $1/2$. Recalling that the control process takes values in $[0,1]$, there exists some set $E\in\Fc$ such that $\alpha_s(\omega)=1_E(\omega)$, $\forall\,(s,\omega)\in[0,1]\times\Omega$. Since $\alpha$ is progressively measurable, $E\in\Gc$; in other words, $\alpha$ depends only on $\omega^\circ$. It follows that the random variable $\int_0^1\alpha_s\,ds$ cannot be independent of $\xi$ $($and, a fortiori, of $(\xi,B_1)$$)$, unless it is a constant. This contradicts the fact that $\int_0^1\alpha_s\,ds$ has Bernoulli distribution of parameter $1/2$ and proves that $V(0,\xi)=0$.
\end{Remark}

In conclusion, under assumptions \textup{\ref{A_A,b,sigma}}, \textup{\ref{A_f,g}}, \textup{\ref{A_f,g_cont}}, we can define the \emph{value function} $v\colon[0,T]\times\Pc_2(C([0,T];H))\rightarrow\R$ as
\begin{equation}\label{v}
v(t,\mu) = V(t,\xi), \qquad \forall\,(t,\mu)\in[0,T]\times\Pc_2(C([0,T];H)),
\end{equation}
for any $\xi\in\S_2(\F)$ with $\P_\xi=\mu$. By Theorem \ref{T:DPP} we immediately deduce the dynamic programming principle for $v$.

\begin{Corollary}\label{C:DPP}
Suppose that \textup{\ref{A_A,b,sigma}}, \textup{\ref{A_f,g}}, \textup{\ref{A_f,g_cont}} hold. The value function $v$ satisfies the \textbf{dynamic programming principle}: for every $t,s\in[0,T]$, with $t\leq s$, and every $\mu\in\Pc_2(C([0,T];H))$ it holds that
\[
v(t,\mu) = \sup_{\alpha\in\Uc}\bigg\{\E\bigg[\int_t^s f_r\big(X^{t,\xi,\alpha},\P_{X^{t,\xi,\alpha}_{\cdot\wedge r}},\alpha_r,\P_{\alpha_r}\big)\,dr\bigg] + v\big(s,\P_{X^{t,\xi,\alpha}}\big)\bigg\},
\]
for any $\xi\in\S_2(\F)$ with $\P_\xi=\mu$.
\end{Corollary}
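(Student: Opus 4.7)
The plan is to derive the result by a direct combination of Theorem \ref{T:DPP} (the DPP for the lifted value function $V$) with the law invariance of $V$ from Theorem \ref{T:id-law}, together with the very definition \eqref{v} of $v$. Fix $(t,\mu)\in[0,T]\times\Pc_2(C([0,T];H))$ and $s\in[t,T]$. By Standing Assumption \ref{A_G}-ii), there exists $\xi\in\S_2(\Gc)\subset\S_2(\F)$ with $\P_\xi=\mu$, and by definition $v(t,\mu)=V(t,\xi)$. So the task reduces to rewriting Theorem \ref{T:DPP} applied at this $\xi$ in terms of $v$.

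Concretely, I would first invoke Theorem \ref{T:DPP} at $(t,\xi)$ to get
\[
V(t,\xi) \;=\; \sup_{\alpha\in\Uc}\bigg\{\E\bigg[\int_t^s f_r\big(X^{t,\xi,\alpha},\P_{X^{t,\xi,\alpha}_{\cdot\wedge r}},\alpha_r,\P_{\alpha_r}\big)\,dr\bigg] + V\big(s,X^{t,\xi,\alpha}\big)\bigg\}.
\]
Next, for each $\alpha\in\Uc$, Proposition \ref{Prop-diff-MKVPD} guarantees $X^{t,\xi,\alpha}\in\S_2(\F)$, and its law $\P_{X^{t,\xi,\alpha}}$ belongs to $\Pc_2(C([0,T];H))$ thanks to the estimate \eqref{EstimateX}. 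Hence, by the definition \eqref{v} of $v$ (which is unambiguous precisely because of Theorem \ref{T:id-law}), picking $X^{t,\xi,\alpha}$ itself as an admissible representative of its law yields
\[
V\big(s,X^{t,\xi,\alpha}\big) \;=\; v\big(s,\P_{X^{t,\xi,\alpha}}\big).
\]
Substituting this identity into the previous display, and noting that the running cost term depends on $\xi$ only through the law of $X^{t,\xi,\alpha}$ on $[t,s]$, gives exactly the announced formula.

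There is no genuine obstacle here, since all the heavy lifting was carried out in Theorems \ref{T:DPP} and \ref{T:id-law}: the law invariance is what makes the right-hand side $v(s,\P_{X^{t,\xi,\alpha}})$ well-defined independently of the chosen representative, and the lifted DPP is what gives the recursive structure. The only minor point worth spelling out in the write-up is that the formula does not depend on the particular $\xi\in\S_2(\F)$ chosen with $\P_\xi=\mu$: this too is immediate from Theorem \ref{T:id-law}, because if $\xi'$ is another such representative, one obtains the same supremum on the right-hand side once both sides are expressed through $v$ and the laws $\P_{X^{t,\xi,\alpha}}$, $\P_{X^{t,\xi',\alpha}}$ (which, thanks to Step~1 of the proof of Theorem \ref{T:id-law}, can be matched via a suitable reparametrization of the controls).
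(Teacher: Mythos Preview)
Your proposal is correct and matches the paper's approach exactly: the paper simply remarks that the corollary follows immediately from Theorem \ref{T:DPP} and the definition \eqref{v}, which is precisely your argument. One minor point: your final paragraph's appeal to Step~1 of Theorem \ref{T:id-law} is unnecessary (and would require the auxiliary uniform variables $U_\xi,U_\eta$, not guaranteed for arbitrary $\xi,\xi'\in\S_2(\F)$); the independence from the choice of $\xi$ is already implicit in your main argument, since for any $\xi\in\S_2(\F)$ with $\P_\xi=\mu$ the lifted DPP shows the right-hand side equals $V(t,\xi)$, and law invariance gives $V(t,\xi)=v(t,\mu)$.
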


\begin{Remark}
Recall from Remark \ref{R:V_non_ant} that $V$ is non-anticipative, namely $V(t,\xi)=V(t,\xi_{\cdot\wedge t})$, for every $(t,\xi)\in[0,T]\times\S_2(\F)$. As a consequence, the value function $v$ satisfies the following \emph{non-anticipativity property:}
\[
v(t,\mu) = v(t,\mu_{[0,t]}),
\]
for every $(t,\mu)\in[0,T]\times C([0,T];H)$, where we denote by $\mu_{[0,t]}$ the measure $\mu\circ\big( (x_s)_{s\in[0,T]}\mapsto (x_{s\wedge t})_{s\in[0,T]} \big)^{-1}$.
\end{Remark}

\section{Pathwise derivatives in the Wasserstein space and It\^o's formula}
\label{S:Ito}

This section is devoted to the proof of It\^o's formula for a real-valued function $\varphi$ defined on $[0,T]\times\Pc_2(C([0,T];H))$. Such a formula involves the so-called pathwise derivatives in the Wasserstein space that we now define. In the present section we substantially follow \cite[Section 2]{WuZhang18PPDE} (see also \cite{WuZhang18}), extending their framework to our more general setting with $H$ being a real separable Hilbert space (not necessarily a Euclidean space as in \cite{WuZhang18PPDE}).

\subsection{Notations}

In order to define the pathwise derivatives, we need to extend the canonical space $C([0,T];H)$ to the space of c\`adl\`ag paths $D([0,T];H)$, which we endow with the Skorokhod topology (in what follows, we denote paths in $D([0,T];H)$ using $\hat\cdot$ in order to distinguish them from paths in $C([0,T];H)$; we do the same with other mathematical objects).
In order to introduce the Skorohod topology, we follow \cite{Billingsley} and introduce the following metric on $D([0,T];H)$ (which corresponds to metric $d^\circ$ in \cite[formula (12.16)]{Billingsley}):
\[
    d_{\textup{Sk}}(\hat x,\hat y) \coloneqq  \inf_{\boldsymbol\lambda\in\mathbf\Lambda}\bigg\{\max\bigg(\sup_{\substack{s<t\\s,t\in[0,T]}}\bigg|\log\frac{\boldsymbol\lambda(t)-\boldsymbol\lambda(s)}{t-s}\bigg|,\sup_{t\in[0,T]}\big|\hat x_t - \hat y_{\boldsymbol\lambda(t)}\big|_H\bigg)\bigg\},
\]
where $\mathbf\Lambda$ denotes the set of strictly increasing and continuous maps $\boldsymbol\lambda\colon[0,T]\rightarrow[0,T]$, satisfying $\boldsymbol\lambda(0)=0$ and $\boldsymbol\lambda(T)=T$. We recall 
  from \cite[Theorem 12.2]{Billingsley} (in \cite{Billingsley} the case $H=\R$ is considered, however the same proof applies to the general case of a real separable Hilbert space $H$) that $(D([0,T];H),d_{\textup{Sk}})$ is a Polish space.
Moreover,
$d_{\rm Sk}$ induces on $C([0,T];H)$ the topology of the uniform convergence (see \cite{Billingsley}, p.\ 124), and $C([0,T];H)$ is a Borel subset of $\big(D([0,T];H),d_{\rm Sk}\big)$ (apply e.g.\ result number (2) at p.\ 67 in \cite{Federer1969}, with $C=X=\big(C([0,T];H),\|\cdot\|_T\big)$, $Y=\big(D([0,T],H),d_{\rm Sk}\big)$, and as $f$ take the canononical embedding).
We define the spaces
\[
\mathscr{H} \coloneqq  [0,T]\times \Pc_2\big(C([0,T];H)\big), \qquad\qquad \mathscr{\hat H} \coloneqq  [0,T]\times \Pc_2\big(D([0,T];H)\big).
\]
For every $(t,\mu)\in \mathscr{H}$, we denote by $\mu_{[0,t]}$ the measure $\mu\circ\big( (x_s)_{s\in[0,T]}\mapsto (x_{s\wedge t})_{s\in[0,T]} \big)^{-1}$. We define similarly $\hat\mu_{[0,t]}$, for every $(t,\hat\mu)\in\mathscr{\hat H}$. We then equip $\mathscr{H}$ and $\mathscr{\hat H}$ with the following pseudo-distances, respectively:
\begin{align*}
d_{\mathscr{H}}\big((t,\mu),(t',\mu')\big) &\coloneqq  \Big(|t-t'|^2+\Wc_2\big(\mu_{[0,t]},\mu_{[0,t']}'\big)^2\Big)^{\frac{1}{2}}\,, \qquad (t,\mu),(t',\mu')\in \mathscr{H}, \\
d_{\mathscr{\hat H}}\big((t,\hat\mu),(t',\hat\mu')\big) &\coloneqq  \Big(|t-t'|^2+\widehat\Wc_2\big(\hat\mu_{[0,t]},\hat\mu_{[0,t']}'\big)^2\Big)^{\frac{1}{2}}\,, \qquad (t,\hat\mu),(t',\hat\mu')\in \mathscr{\hat H},
\end{align*}
where $\widehat\Wc_2$ is defined as (denoting ${\rm D}:=D([0,T];H)$)
\begin{align*}
\widehat\Wc_2(\hat\mu,\hat\mu') &\coloneqq  \inf\bigg\{\int_{{\rm D}\times{\rm D}}d_{\rm Sk}(\hat x,\hat y)^2\,\hat\pi(d\hat x,d\hat y)\colon
  \hat\pi \in \Pc({\rm D}\times{\rm D})\\
 & \hspace{2cm} \text{ such that } \hat\pi(\cdot\times{\rm D})= \hat\mu \mbox{ and }\hat\pi({\rm D}\times\cdot)=\hat\mu'\bigg\}^{1\over 2}\,,
\end{align*}
for every $\hat\mu,\hat\mu'\in \Pc_2(D([0,T];H))$.

\begin{Remark}
  Notice that a function $\varphi\colon\mathscr{H}\rightarrow\R$ (resp.\ $\hat\varphi\colon\mathscr{\hat H}\rightarrow\R$) that is
  measurable with respect to $d_{\mathscr{H}}$ (resp.\ $d_{\mathscr{\hat H}}$) satisfies the \textup{non-anticipativity property:}
\[
\varphi(t,\mu) = \varphi(t,\mu_{[0,t]}), \quad \forall\,(t,\mu)\in\mathscr{H} \qquad \big(\text{\textup{resp.\ $\hat\varphi(t,\hat\mu)=\hat\varphi(t,\hat\mu_{[0,t]})$, \quad $\forall\,(t,\hat\mu)\in\mathscr{\hat H}$}}\big).
\]
\end{Remark}

\begin{Remark}
Notice that there is a natural injection
$$
i_0:\Pc_2\big(C([0,T];H)\big)\to
\Pc_2\big(D([0,T];H)\big),
\qquad \mu \mapsto i_0(\mu)
$$
where $i_0(\mu)(E)=\mu (E\cap C([0,T];H))$ for all $E$ Borel subset of $D([0,T];H)$.
This induces an injection $i=\mathscr{H} \to \mathscr{\hat H}$
given by $i(t,\mu)=(t,i_0(\mu))$.
We claim that
the restriction of $d_{\mathscr{\hat H}}$ to $i(\mathscr H)\times
i(\mathscr H)$ gives rise to the same topology on $i(\mathscr H)$ induced by $d_{\mathscr H}$ through the injection $i$.
Indeed, this is a consequence of the following property. Let $\{\mu_n\}_{n\in\N}\subset\Pc_2(C([0,T];H))$ and $\mu\in\Pc_2(C([0,T];H))$. Then, denoting ${\rm C}:=C([0,T];H)$, it holds that
\[
\Wc_2(\mu_n,\mu)\coloneqq\inf\bigg\{\int_{{\rm C}\times{\rm C}}\|x-y\|_T^2\, \pi(dx,dy)\colon
  \pi \in \Pc({\rm C}\times{\rm C}),\;\pi(\cdot\times{\rm C})= \mu_n,\;\pi({\rm C}\times\cdot)=\mu\bigg\}^{\frac{1}{2}}  \overset{n\rightarrow\infty}{\longrightarrow}  0
\]
if and only if $($denoting ${\rm D}:=D([0,T];H)$$)$
\begin{align*}
&\widehat\Wc_2(i_0(\mu_n),i_0(\mu))\\
&\coloneqq\inf\bigg\{\int_{{\rm D}\times{\rm D}}d_{\textup{Sk}}(\hat x,\hat y)^2\,\hat\pi(d\hat x,d\hat y)\colon
  \hat\pi \in \Pc({\rm D}\times{\rm D}),\;\hat\pi(\cdot\times{\rm D})= i_0(\mu_n),\;\hat\pi({\rm D}\times\cdot)=i_0(\mu)\bigg\}^{\frac{1}{2}}  \overset{n\rightarrow\infty}{\longrightarrow} 0.
\end{align*}
To prove the latter equivalence we first observe that the above probability measure $\hat\pi$ with marginals $i_0(\mu_n)$ and $i_0(\mu)$ satisfies $\hat\pi({\rm C}\times{\rm C})=1$. Then, the following equality holds:
\[
\widehat\Wc_2(i_0(\mu_n),i_0(\mu))=\inf\bigg\{\int_{{\rm C}\times{\rm C}}d_{\textup{Sk}}(x,y)^2\,\pi(dx,dy)\colon
  \pi \in \Pc({\rm C}\times{\rm C}),\;\pi(\cdot\times{\rm C})= \mu_n,\;\pi({\rm C}\times\cdot)=\mu\bigg\}^{\frac{1}{2}}.
\]
Now, we recall from \cite[Definition 6.8 and Theorem 6.9]{Vi09}) that $\lim_{n\rightarrow\infty}\Wc_2(\mu_n,\mu)=0$ if and only if for all continuous functions $\varphi\colon(C([0,T];H),\|\cdot\|_T)\rightarrow\R$ with $|\varphi(x)|\leq c(1+\|x\|_T^2)$, $c\in\R$, one has
\begin{equation}\label{Uniform}
\int_{\rm C} \varphi(x)\,\mu_n(dx) \ \overset{n\rightarrow\infty}{\longrightarrow} \ \int_{\rm C}\varphi(x)\,\mu(dx).
\end{equation}
Similarly, since $(C([0,T];H),d_{\textup{Sk}})$ is a Radon separable metric space\footnote{This comes from the fact that such a space is topologically equivalent to the Polish space $\big(C([0,T];H),\|\cdot\|_T\big)$.} (see \cite[Definition 5.1.4]{AGS08}), from the proof of Proposition 7.1.5. in \cite{AGS08} we deduce that $\lim_{n\rightarrow\infty}\widehat\Wc_2(i_0(\mu_n),i_0(\mu))=0$ if and only if for all continuous functions $\varphi\colon(C([0,T];H),d_{\textup{Sk}})\rightarrow\R$ with $|\varphi(x)|\leq c(1+d_{\textup{Sk}}(x,0)^2)$, $c\in\R$, one has
\begin{equation}\label{Sk}
\int_{\rm C} \varphi(x)\,\mu_n(dx) \ \overset{n\rightarrow\infty}{\longrightarrow} \ \int_{\rm C}\varphi(x)\,\mu(dx).
\end{equation}
Hence, recalling that the Skorohod topology relativized to $C([0,T];H)$ coincides with the uniform topology, 
 it follows that the set of real-valued continuous functions on $(C([0,T];H),\|\cdot\|_T)$ coincides with the set of real-valued continuous on $(C([0,T];H),d_{\textup{Sk}})$. Finally, concerning the  subquadratic growth, it holds that $d_{\textup{Sk}}(x,0)=\|x\|_T$, which shows that the class of functions involved in \eqref{Uniform} and \eqref{Sk} are the same.
\end{Remark}

\noindent We also introduce the lifted spaces
\[
\mathfrak{H} \coloneqq  [0,T]\times L^2\big(\Omega;C([0,T];H)\big), \qquad\qquad \mathfrak{\hat H} \coloneqq  [0,T]\times L^2\big(\Omega;D([0,T];H)\big).
\]
\begin{Remark}\label{R:U_tilde}
To alleviate notation, in the present Section \ref{S:Ito} we work on the same probability space $(\Omega,\Fc,\P)$ adopted in the rest of the paper. Notice however that, for the definition of pathwise derivatives, $(\Omega,\Fc,\P)$ can be replaced by any other probability space which supports a random variable having uniform distribution on $[0,1]$. See also Remark \ref{R:Atomless}.
\end{Remark}

\noindent Finally, we introduce the following notation.

\begin{Notation}{\bf(Ntn$_{\hat{\mathbb P}}$)}\label{Ntn_P}
For every $\xi\in L^2(\Omega;C([0,T];H))$, we denote by $\hat\P_\xi$ the law of $\xi$ on $D([0,T];H)$, while we recall that $\P_\xi$ denotes the law of $\xi$ on $C([0,T];H)$. So, in particular, $\hat\P_\xi\in\Pc_2(D([0,T];H))$, while $\P_\xi\in\Pc_2(C([0,T];H))$. Clearly, it holds that $\hat\P_\xi(\mathfrak B)=\P_\xi(\mathfrak B)$, for every Borel subset $\mathfrak B$ of $C([0,T];H)$. Notice that in \cite{WuZhang18PPDE} the probability $\hat\P_\xi$ is denoted simply by $\P_\xi$ (see the beginning of Section 2.4 in \cite{WuZhang18PPDE}).
\end{Notation}

\subsection{Pathwise derivatives for a map $\hat\varphi\colon\mathscr{\hat H}\rightarrow\R$ and It\^o's formula}

We start with the definition of pathwise time derivative for a map $\hat\varphi\colon\mathscr{\hat H}\rightarrow\R$.

\begin{Definition}\label{Def:HorizontalDer}
Let $\hat\varphi\colon\mathscr{\hat H}\rightarrow\R$ be a non-anticipative function. We say that $\hat\varphi$ is pathwise differentiable in time at $(t,\hat\mu)\in\mathscr{\hat H}$, with $t<T$, if the following limit exists and is finite:
\[
\partial_t \hat\varphi(t,\hat\mu) \coloneqq  \lim_{\delta\rightarrow 0^+}\frac{\hat\varphi(t+\delta,\hat\mu_{[0,t]})-\hat\varphi(t,\hat \mu)}{\delta}.
\]
At time $t=T$, we define
\[
\partial_t \hat\varphi(t,\hat\mu) \coloneqq  \lim_{t\rightarrow T^-}\partial_t \hat\varphi(t,\hat\mu),
\]
when the limit exists and is finite. We refer to $\partial_t \hat\varphi$ as the \textbf{pathwise time derivative} (or \textbf{horizontal derivative}) \textbf{of $\hat\varphi$ at $(t,\hat\mu)$}.
If $ \partial _t\hat{\varphi}$ exists everywhere as a function $\mathscr{\hat{H}}\rightarrow \mathbb{R}$, we refer to it as the \textbf{pathwise time derivative of $\hat{\varphi}$}.
\end{Definition}

\begin{Remark}\label{R:NA-time_deriv}
Notice that $\partial_t\hat\varphi$ is a non-anticipative function, namely $\partial_t\hat\varphi(t,\hat\mu)=\partial_t\hat\varphi(t,\hat\mu_{[0,t]})$, for every $(t,\hat\mu)\in\mathscr{\hat H}$.
\end{Remark}

In order to define the pathwise measure derivative, we need to consider the lifting of $\hat\varphi$.

\begin{Definition}
Given $\hat\varphi\colon\mathscr{\hat H}\to \R$, we say that $\hat\Phi\colon\mathfrak{\hat H}\to\R$ is a \textbf{lifting} of $\hat\varphi$ if
\[
\hat\Phi(t,\hat\xi) = \hat\varphi(t,\P_{\hat\xi}), \qquad \forall\,(t,\hat\xi)\in\mathfrak{\hat H},
\]
where we recall that $\P_{\hat\xi}$ stands for the law of the random variable $\hat\xi\in L^2(\Omega;D([0,T];H))$.
\end{Definition}

\begin{Definition}\label{D:PathSpaceDeriv}
Let $\hat\Phi\colon\mathfrak{\hat H}\rightarrow\R$ be a non-anticipative function, namely $\hat\Phi(t,\hat\xi)=\hat\Phi(t,\hat\xi_{\cdot\wedge t})$, for every $(t,\hat\xi)\in\mathfrak{\hat H}$. We say that $\hat\Phi$ is \textbf{pathwise differentiable in space} at $(t,\hat\xi)\in\mathfrak{\hat H}$ if there exists $D\hat\Phi(t,\hat\xi)\in L^2(\Omega;H)$ such that
\[
  \lim_{Y\rightarrow 0}
  \frac{\left| \hat\Phi(t,\hat\xi+Y\mathds{1}_{[t,T]})-\hat\Phi(t,\hat\xi)
  -\E\big[\langle D\hat\Phi(t,\hat\xi),Y\rangle_H\big]\right|}{|Y|_{L^2(\Omega;H)}}=0.
\]
We refer to $D\hat\Phi(t,\hat\xi)$ as the \textbf{pathwise space derivative} (or \textbf{vertical derivative}) \textbf{of $\hat\Phi$ at $(t,\hat\xi)$}.
If $ D\hat{\Phi}$ exists everywhere as a function $\mathfrak{\hat{H}}\rightarrow L^2(\Omega;H)$, we refer to it as the \textbf{pathwise space derivative of $\hat{\varphi}$}.
\end{Definition}

\begin{Remark}\label{R:NA-DPhi}
Notice that, if $\hat{\Phi}$ is pathwise differentiable in space at $(t,\hat\xi)$, then it is pathwise differentiable in space at $(t,\hat{\xi}')$ for every $\hat{\xi}'\in L^2(\Omega; D([0,T];H))$ such that
$\hat{\xi}_{t\wedge \cdot}=\hat{\xi}'_{t\wedge \cdot}$ $\mathbb{P}$-a.s., and, in such a case,
$D\hat\Phi(t,\hat\xi)=D\hat\Phi(t,\hat\xi')$ $\P$-a.s.
\end{Remark}

We can then give, similarly to \cite[Definition 5.22]{CD14},
the following definition.

\begin{Definition}\label{D:MeasureDerivative}
Let $\hat\varphi\colon\mathscr{\hat H}\rightarrow\R$ be a non-anticipative function and $(t,\hat\mu)\in\mathscr{\hat H}$.
We say that $\hat\varphi$ is \textbf{pathwise differentiable in measure} at $(t,\hat\mu)$ if its lifting $\hat\Phi$ is pathwise differentiable in space at some $(t,\hat\xi)\in\mathfrak{\hat H}$ such that $\P_{\hat\xi}=\hat\mu$.

Moreover, we say that $\hat\varphi$ admits
\textbf{pathwise measure derivative}
at $(t,\hat\mu)$
if its lifting $\hat \Phi$ is pathwise differentiable in space
at every $(t,\hat{\xi})\in \mathfrak{\hat H}$ such that $\mathbb{P}_{\hat{\xi}}=\hat{ \mu}$
and if there exists a masurable function
$\hat{g}\colon D([0,T];H)\rightarrow H$
such that,
for all $(t,\hat{\xi})\in \mathfrak{\hat H}$ with $\mathbb{P}_{\hat{\xi}}=\hat{ \mu}$,
\begin{equation}\label{DPhi=partial_mu}
D\hat\Phi(t,\hat\xi) = \hat g(\hat\xi)
  \qquad \P\text{-a.s.}
\end{equation}
 The map $\hat g$ (which is $\hat{\mu}$-a.s.\ uniquely determined)
 is called \textbf{pathwise measure derivative of $\hat\varphi$ at $(t,\hat\mu)$}\footnote{Notice that, if a function is pathwise differentiable in measure at some point and the related pathwise space derivative is continuous (at least in a neighborhood), then it admits the pathwise measure derivative at that point, see e.g.\ \cite[Theorem 6.5]{carda12} or \cite[Proposition 5.25]{cardelbook} in finite dimension and our Lemma \ref{L:General}. Without the continuity assumption of the pathwise space derivative such a result is not obvious. See also \cite{GangboTudor}.}.

 Finally, if $\hat \varphi$ admits pathwise measure derivative at every $(t,\hat{\mu})\in \mathfrak{\hat{H}}$, then the function
 \begin{equation*}
   \partial _\mu \hat\varphi\colon \mathscr{\hat H}\times D([0,T];H) \longrightarrow H, \qquad\qquad (t,\hat\mu, \hat x) \longmapsto   \partial _\mu\hat{\varphi} (t,\hat\mu,\hat x)
 \end{equation*}
 such that, for every $(t,\hat{\xi}) \in \mathfrak{\hat H}$,
 $\partial _\mu\hat{\varphi}(t,\mathbb{P}_{\hat{\xi}},\cdot)$ is measurable and
 $D\hat\Phi(t,\hat\xi) = \partial _\mu\hat{\varphi}(t,\mathbb{P}_{\hat{\xi}},\hat\xi)$
 $\mathbb{P}$-a.s., is called the \textbf{pathwise measure derivative of $\hat{\varphi}$}\footnote{We stress the fact that $ \partial _\mu\hat{\varphi}(t,\mathbb{P}_{\hat{\xi}},\cdot)$ is \emph{a-priori} uniquely determined only $\mathbb{P}_{\hat{\xi}}$-a.s.}.
\end{Definition}



  In the following lemma we state, under general conditions, 
  the existence of the pathwise measure derivative.
In order to do it, we proceed similarly to what is done in \cite[Section 2.3]{WuZhang18PPDE} (see also \cite[Section 5.3]{CD14}) in the finite-dimensional case.
The proof of the lemma is postponed in Appendix \ref{App:PathDeriv}.

\begin{Lemma}\label{L:General}
Fix $(t,\hat\xi)\in\mathfrak{\hat H}$. Let $\hat\varphi\colon \mathscr{\hat H}\to\R$ be such that its lifting
$\hat\Phi$ admits a continuous pathwise space derivative $D\hat\Phi$ on the set $\{t\}\times \mathcal O_{\hat\xi}$, where $\mathcal O_{\hat\xi}$ is a neighborhood of $\hat\xi$ in $L^2(\Omega;D([0,T];H))$. Then, there exists a measurable function
$\hat g\colon D([0,T];H)\rightarrow H$
and
\begin{equation}
  \label{2018-11-06:10}
  D\hat\Phi(t,\hat\xi) = \hat g(\hat\xi),
  \qquad \P\text{-a.s.}
\end{equation}
Let $\hat\xi'$ be such that $\P_{\hat\xi'}=\P_{\hat\xi}$. If in addition $\hat\Phi$ admits a continuous pathwise space derivative on the set $\{t\}\times \mathcal O_{\hat\xi'}$, with $\mathcal O_{\hat\xi'}$ being a neighborhood of $\hat\xi'$, then \eqref{2018-11-06:10} holds true with $\hat\xi$ replaced by $\hat\xi'$.

Hence,
if the pathwise space derivative $D\hat\Phi(t,\hat \xi)$ at $(t,\hat\xi)$ exists
for every $(t,\hat \xi)\in\mathfrak{\hat H}$ and if $D\hat{\Phi}$ is continuous,
then there exists
the pathwise measure derivative $ \partial _\mu\hat{\varphi}$ of $\hat{\varphi}$.

If in addition
the map $ \partial _\mu \hat\varphi(t,\cdot,\cdot)\colon\Pc_2(D([0,T];H))\times D([0,T];H)\rightarrow\R$ is continuous
for every $t\in[0,T]$, then $ \partial _\mu\hat \varphi$ is uniquely defined.

Finally,assume that 
the pathwise space derivative $D\hat\Phi$ exists everywhere and is uniformly continuous. Then
$ \partial _\mu\hat{\varphi}$ is measurable.
\end{Lemma}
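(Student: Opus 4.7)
The strategy is to separate the four claims in the lemma, with the law-invariance of $\hat g$ as the pivotal ingredient. First, I would establish the single-point factorization $D\hat\Phi(t,\hat\xi)=\hat g(\hat\xi)$. The key observation is that the lifting $\hat\Phi(t,\cdot)$ depends only on the law of its argument, so for any two perturbations $Y,Y'\in L^2(\Omega;H)$ whose joint laws $\mathbb{P}_{(\hat\xi,Y)}$ and $\mathbb{P}_{(\hat\xi,Y')}$ coincide---equivalently, $Y$ and $Y'$ share the same conditional law given $\hat\xi$---the perturbed paths $\hat\xi+\epsilon Y\mathds{1}_{[t,T]}$ and $\hat\xi+\epsilon Y'\mathds{1}_{[t,T]}$ have the same law, and therefore $\hat\Phi$ evaluates identically on them. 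Comparing the Fréchet expansions of Definition \ref{D:PathSpaceDeriv} and cancelling the $o(\epsilon)$ terms yields
\[
\mathbb{E}\big[\langle D\hat\Phi(t,\hat\xi),\,Y-Y'\rangle_H\big]=0
\]
for every such pair; the assumed continuity of $D\hat\Phi$ on $\mathcal{O}_{\hat\xi}$ is convenient here, as it permits $Y'$ to be built on an enlarged probability space supporting independent copies. A standard Cardaliaguet--Delarue argument (cf.\ \cite[Proposition 5.24]{cardelbook}) then shows that $D\hat\Phi(t,\hat\xi)$ is $\sigma(\hat\xi)$-measurable, and the Doob--Dynkin factorization (valid since $D([0,T];H)$ is Polish) produces a Borel $\hat g\colon D([0,T];H)\to H$ with $D\hat\Phi(t,\hat\xi)=\hat g(\hat\xi)$ $\mathbb{P}$-a.s., as required in \eqref{2018-11-06:10}.

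Next, I would prove the law-invariance of $\hat g$. Given $\hat\xi'$ with $\mathbb{P}_{\hat\xi'}=\mathbb{P}_{\hat\xi}=:\hat\mu$, the first stage supplies a Borel $\hat g'$ with $D\hat\Phi(t,\hat\xi')=\hat g'(\hat\xi')$ a.s. For any bounded Borel $h\colon D([0,T];H)\to H$ I would take the perturbations $Y\coloneqq h(\hat\xi)$ and $Y'\coloneqq h(\hat\xi')$: the joint laws of $(\hat\xi,Y)$ and $(\hat\xi',Y')$ are both the push-forward of $\hat\mu$ under $\hat x\mapsto(\hat x,h(\hat x))$, so the perturbed paths share a law and $\hat\Phi$ agrees on them. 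Matching Fréchet expansions now gives
\[
\int_{D([0,T];H)}\!\!\big\langle \hat g(\hat x)-\hat g'(\hat x),\,h(\hat x)\big\rangle_H\,\hat\mu(d\hat x)=0
\]
for every bounded Borel $h$, whence $\hat g=\hat g'$ $\hat\mu$-a.s. Applying these two stages at every $(t,\hat\mu)$ when $D\hat\Phi$ exists globally and is continuous yields the existence of $\partial_\mu\hat\varphi$. Uniqueness under continuity of $\partial_\mu\hat\varphi(t,\cdot,\cdot)$ then follows because two continuous maps on $\Pc_2(D([0,T];H))\times D([0,T];H)$ that agree $\hat\mu$-a.s.\ for every $\hat\mu$ must coincide on $\mathrm{supp}\,\hat\mu$, and ranging $\hat\mu$ over Dirac masses $\delta_{\hat x}$ pins down equality at every $\hat x$.

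For the final measurability claim under uniform continuity of $D\hat\Phi$, I would invoke the ``rich enough'' property from Standing Assumption \ref{A_G} and Lemma \ref{L:U_G} to build a jointly measurable selection $(t,\hat\mu)\mapsto \hat\xi_{t,\hat\mu}\in L^2(\Omega;D([0,T];H))$ with $\mathbb{P}_{\hat\xi_{t,\hat\mu}}=\hat\mu$. Uniform continuity of $D\hat\Phi$ then makes $(t,\hat\mu)\mapsto D\hat\Phi(t,\hat\xi_{t,\hat\mu})$ uniformly continuous into $L^2(\Omega;H)$, hence jointly measurable. Approximating an arbitrary $\hat\mu$ by finitely supported measures $\hat\mu_n$---for which $\hat\xi_{t,\hat\mu_n}$ is piecewise constant in $\omega$ and $\partial_\mu\hat\varphi(t,\hat\mu_n,\cdot)$ can be read off explicitly at each atom---and passing to the limit via the uniform-continuity modulus yields joint measurability of $\partial_\mu\hat\varphi$ on $\mathscr{\hat H}\times D([0,T];H)$. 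I expect the main obstacle to be precisely this last step: the local representatives $\hat g_{t,\hat\mu}$ are only $\hat\mu$-a.s.\ determined, so gluing them into a single function measurable in $(t,\hat\mu,\hat x)$ requires the uniform-continuity modulus of $D\hat\Phi$ to control the discrepancies between different selections along the discrete approximations.
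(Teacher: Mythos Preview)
Your approach is correct but genuinely different from the paper's. You follow the ``direct'' Cardaliaguet/Carmona--Delarue route: exploit law-invariance of $\hat\Phi$ to show that $\mathbb{E}[\langle D\hat\Phi(t,\hat\xi),Y\rangle]$ depends only on the joint law of $(\hat\xi,Y)$, build a conditionally independent copy, and conclude $\sigma(\hat\xi)$-measurability of $D\hat\Phi(t,\hat\xi)$; then a separate test-function argument gives $\hat g=\hat g'$ $\hat\mu$-a.s. The paper instead proceeds by \emph{discrete approximation}: it first proves a self-contained Lemma~\ref{L:Discrete} for finitely-valued $\hat\xi$ (where the factorization $D\hat\Phi(t,\hat\xi)=\hat g(\hat\xi)$ is obtained by computing directional derivatives along indicators of level sets and invoking a result of Wu--Zhang on functions constant on sets of equal measure), then approximates a general $\hat\xi$ by simple $\hat\xi_n=\sum_{i=1}^{N_n}d^n_i\mathds{1}_{D^n_i}(\hat\xi)$, uses continuity of $D\hat\Phi$ to pass to the limit in $L^2$, and defines $\hat g$ as the $\P$-a.s.\ limit of explicit functions $\tilde g_n(\hat x)=\sum_i\hat g_n(d^n_i)\mathds{1}_{D^n_i}(\hat x)$. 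The advantage of the paper's construction is that $\tilde g_n$ is a deterministic function on $D([0,T];H)$ built from the partition only, so replacing $\hat\xi$ by any $\hat\xi'$ with the same law reproduces the \emph{same} $\tilde g_n$ and the \emph{same} limit set $S$; law-invariance of $\hat g$ thus comes for free, with no second argument needed. Your route is shorter and more conceptual, but the step where you invoke an independent copy on an enlarged space needs care (the Fr\'echet derivative lives on the original $\Omega$); in the present setting this is harmless because $(\Omega,\Fc,\P)$ is already atomless and rich enough (Lemma~\ref{L:U_G}, Lemma~\ref{L:Kall}) to build the required $Y'$ without enlargement. For the last two claims (uniqueness under continuity, measurability under uniform continuity) the paper simply refers to Corollary~2.3 of \cite{WuZhang18PPDE}, so your sketch is at a comparable level of detail; your Dirac-mass argument for uniqueness is exactly right.
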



\begin{Remark}[\textbf{Non-anticipativity property of $\partial_\mu\hat\varphi$.}]\label{R:NA-measure-deriv}
Let $\hat\varphi\colon\mathscr{\hat H}\rightarrow\R$ be a non-anticipative function. Suppose that
  there exists
  the pathwise measure derivative
of $\hat{\varphi}$.
Then, thanks to Remark \ref{R:NA-DPhi} and equality \eqref{DPhi=partial_mu}, $\partial_\mu\hat\varphi$ is a \textup{non-anticipative function} in the sense that,
for every $(t,\hat{\xi})\in  \mathfrak{\hat H}$,
\begin{equation*}
  \partial_\mu \hat\varphi(t,\mathbb{P}_{\hat{\xi}})(\hat\xi)=
\partial_\mu \hat\varphi(t,(\mathbb{P}_{\hat{\xi}})_{[0,t]})(\hat\xi_{\cdot\wedge t})\qquad\P\textrm{-a.s.}
\end{equation*}
or, equivalently,
\[
\partial_\mu\hat\varphi(t,\hat\mu)(\hat x) = \partial_\mu \hat\varphi(t,\hat\mu_{[0,t]})(\hat{x}_{\cdot\wedge t}),
\qquad\hat\mu\text{-a.e.}
\]
for every
$(t,\hat\mu,\hat x)\in\mathscr{\hat H}\times D([0,T];H)$.
\end{Remark}

Finally, we define the pathwise derivative of second-order $\partial_x\partial_{\mu}\hat\varphi$.

\begin{Definition}\label{D:2nd-measure-deriv}
   Let $\hat\varphi\colon\mathscr{\hat H}\rightarrow\R$ be a non-anticipative function and $(t,\hat\mu)\in\mathscr{\hat H}$.
  Suppose that:
\begin{enumerate}[1)]
\item there exists the pathwise measure derivative $\partial_\mu\hat\varphi$;
\item for every $t\in[0,T]$, the map $\partial_\mu\hat\varphi(t,\cdot)(\cdot)\colon\Pc_2(D([0,T];H))\times D([0,T];H)\rightarrow H$ is continuous (hence, by Lemma~\ref{L:General}, $ \partial _\mu\hat{\varphi}$ is uniquely determined).
\end{enumerate}
Given $\hat x\in D([0,T];H)$, we say that $\hat\varphi$ is \textbf{pathwise differentiable in measure and space at $(t,\hat\mu,\hat x)$} if there exists an operator $\partial_x\partial_\mu\hat\varphi(t,\hat\mu)(\hat x)\in\Lc(H)$ such that
\begin{equation*}
  \lim_{h\rightarrow 0}
  \frac{\big|\partial_\mu\hat\varphi(t,\hat\mu)(\hat x + h\,\mathds{1}_{[t,T]}) - \partial_\mu \hat\varphi(t,\hat\mu)(\hat x) - \partial_x\partial_\mu \hat\varphi(t,\hat\mu)(\hat x)h\big|_H }{|h|_H}= 0.
\end{equation*}
We refer to $\partial_x\partial_\mu\hat\varphi(t,\hat\mu)(\hat x)$ as the \textbf{second-order pathwise derivative in measure and space of $\hat\varphi$ at $(t,\hat\mu,\hat x)$}.

If $ \partial _x \partial _\mu\hat{\varphi}$ exists everywhere as function
$\mathscr{\hat H}\times D([0,T];H)\rightarrow\Lc(H)$,
we refer to it as the \textbf{pathwise derivative in measure and space of $\hat{\varphi}$}.
\end{Definition}


\begin{Remark}\label{R:NA-2nd-measure-deriv}
Recalling Remark \ref{R:NA-measure-deriv}, we see that $\partial_x\partial_\mu\hat\varphi$ is a non-anticipative function, namely it holds that $\partial_x\partial_\mu\hat\varphi(t,\hat\mu)(\hat\xi)=\partial_x\partial_\mu\hat\varphi(t,\hat\mu_{[0,t]})(\hat\xi_{\cdot\wedge t})$, $\P$-a.s., for every $(t,\hat\xi)\in\mathfrak{\hat H}$, with $\P_{\hat\xi}=\hat\mu$, or, equivalently,
\[
\partial_x \partial_\mu\hat\varphi(t,\hat\mu)(\hat x) = \partial_x \partial_\mu \hat\varphi(t,\hat\mu_{[0,t]})(\hat{x}_{\cdot\wedge t}),
\qquad\hat\mu\text{-a.e.}
\]
for every
$(t,\hat\mu,\hat x)\in\mathscr{\hat H}\times D([0,T];H)$.

\end{Remark}

\begin{Definition}\label{D:C1,2}
We denote by $\boldsymbol C^{1,2}(\mathscr{\hat H})$ the set of non-anticipative functions $\hat\varphi\colon\mathscr{\hat H}\rightarrow\R$ such that:
\begin{enumerate}[\upshape 1)]
\item
  the lifting $\hat\Phi$ of $\hat\varphi$ admits a continuous pathwise space derivative $D\hat\Phi$ on $\mathfrak{\hat H}$ (hence, by Lemma~\ref{L:General}, there exists the pathwise measure derivative $ \partial _\mu \hat{\varphi}$);
\item $\hat{\varphi}, \partial _\mu\hat{\varphi}$ are continuous;
\item
  there exist
the pathwise time derivative $ \partial _t\hat{\varphi}$, the second-order pathwise derivative in measure and space $ \partial _x \partial _\mu\hat{\varphi}$, and $ \partial _t\hat{\varphi}, \partial _x \partial _\mu\hat{\varphi}$ are continuous.
\end{enumerate}
\end{Definition}

\begin{Definition}
We denote by $\boldsymbol C_b^{1,2}(\mathscr{\hat H})$ the set of $\hat\varphi\in\boldsymbol C^{1,2}(\mathscr{\hat H})$ such that
$\hat\varphi$, $\partial_t\hat\varphi$, $\partial_\mu\hat\varphi$, $\partial_x\partial_\mu\hat\varphi$ are bounded.
\end{Definition}

We end this section with the It\^o formula.
The proof is postponed in Appendix~\ref{ItoProof}.

\begin{Theorem}\label{T:ItoD}
Fix $t\in[0,T]$ and let $\xi\in\S_2(\F)$.
Let also  $F\colon[0,T]\times\Omega\rightarrow H$, $G\colon[0,T]\times\Omega\rightarrow\mathcal{L}_2(K;H)$ be square-integrable and $\F$-progressively measurable processes, so, in particular,
\[
\int_0^T \E[|F_s|^2_H]\,ds < \infty, \qquad\qquad \int_0^T \E\big[\textup{Tr}(G_sG_s^*)\big]\,ds < \infty.
\]
Consider the process $X=(X_s)_{s\in[0,T]}$ given by\footnote{In what follows, we will always implicitly
refer to   It\^o processes only by  continuous versions.}
\begin{equation}\label{2020-12-04:02}
X_s = \xi_{s\wedge t} + \int_t^{s\vee t} F_r\,dr + \int_t^{s\vee t} G_r\,dB_r, \qquad \forall\,s\in[0,T].
\end{equation}
If $\hat\varphi\colon\mathscr{\hat H}\rightarrow\mathbb{R}$ is in $\boldsymbol C_b^{1,2}(\mathscr{\hat H})$, then the following \textbf{It\^o formula} holds:
\begin{align}\label{ItoD}
\hat\varphi(s,\hat\P_{X_{\cdot\wedge s}}) &= \hat\varphi(t,\hat\P_{\xi_{\cdot\wedge t}}) + \int_t^s \partial_t \hat\varphi(r,\hat\P_{X_{\cdot\wedge r}})\,dr + \int_t^s
\mathbb{E}
 \left[   \langle F_r,\partial_\mu \hat\varphi(r,\hat\P_{X_{\cdot\wedge r}})(X_{\cdot\wedge r})\rangle_H
 \right]dr \notag \\
&\quad + \frac{1}{2}\int_t^s\mathbb{E}
 \left[
   \Tr \left(
        G_r G_r^* \partial _x\partial_\mu
       \hat\varphi(r,\hat\P_{X_{\cdot\wedge r}})(X_{\cdot\wedge r})
   \right)
 \right]dr,
\end{align}
for every $s\in[t,T]$ (for the definition of $\hat\P_{X_{\cdot\wedge r}}$ see \ref{Ntn_P}).
\end{Theorem}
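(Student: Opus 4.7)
The plan is to follow a Dupire-style functional It\^o approach, as in \cite{WuZhang18PPDE} for the finite-dimensional case, suitably adapted to our Hilbert space and Wasserstein-lifting setting. Specifically: introduce a refining sequence of time partitions of $[t,s]$, telescope the increment of $\hat\varphi$ along each partition, split every increment into a pure-time (``horizontal'') part and a pure-measure (``vertical'') part, and pass to the limit separately in the two sums to identify the three right-hand side terms of \eqref{ItoD}.

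Fix partitions $\pi_n\colon t = t_0^n < t_1^n < \dots < t_{N_n}^n = s$ with $|\pi_n| \to 0$ and write
\begin{equation*}
\hat\varphi(s, \hat\P_{X_{\cdot\wedge s}}) - \hat\varphi(t, \hat\P_{\xi_{\cdot\wedge t}}) \ = \ \sum_i A_i^n \ + \ \sum_i B_i^n,
\end{equation*}
with $A_i^n \coloneqq \hat\varphi(t_{i+1}^n, \hat\P_{X_{\cdot\wedge t_i^n}}) - \hat\varphi(t_i^n, \hat\P_{X_{\cdot\wedge t_i^n}})$ (horizontal) and $B_i^n \coloneqq \hat\varphi(t_{i+1}^n, \hat\P_{X_{\cdot\wedge t_{i+1}^n}}) - \hat\varphi(t_{i+1}^n, \hat\P_{X_{\cdot\wedge t_i^n}})$ (vertical). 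For the horizontal sum: by Definition \ref{Def:HorizontalDer} and Remark \ref{R:NA-time_deriv} the measure is held fixed at $\hat\P_{X_{\cdot\wedge t_i^n}}$ while time advances from $t_i^n$ to $t_{i+1}^n$, so $A_i^n = \int_{t_i^n}^{t_{i+1}^n}\partial_t\hat\varphi(r, \hat\P_{X_{\cdot\wedge t_i^n}})\,dr$. The continuity and boundedness of $\partial_t\hat\varphi$, together with the $d_{\mathscr{\hat H}}$-continuity of $r \mapsto \hat\P_{X_{\cdot\wedge r}}$ (which follows from the $\S_2(\F)$-continuity of $r \mapsto X_{\cdot\wedge r}$ and dominated convergence), then yield $\sum_i A_i^n \to \int_t^s \partial_t\hat\varphi(r, \hat\P_{X_{\cdot\wedge r}})\,dr$.

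For the vertical sum, pass to the lifting $\hat\Phi(r,\hat\eta) = \hat\varphi(r, \P_{\hat\eta})$ so that $B_i^n = \hat\Phi(t_{i+1}^n, X_{\cdot\wedge t_{i+1}^n}) - \hat\Phi(t_{i+1}^n, X_{\cdot\wedge t_i^n})$. Setting $\Delta_i X \coloneqq X_{t_{i+1}^n} - X_{t_i^n}$, the crucial step is to replace $X_{\cdot\wedge t_{i+1}^n}$ by the step-extension $X_{\cdot\wedge t_i^n} + \Delta_i X\,\mathds{1}_{[t_{i+1}^n, T]}$; the resulting error, summed over $i$, is $o(1)$ by continuity of $X$ and uniform continuity of $\hat\Phi$ on bounded subsets of $L^2(\Omega;D([0,T];H))$. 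A second-order Taylor expansion in the Dupire-type vertical direction, using the identifications $D\hat\Phi = \partial_\mu\hat\varphi$ and its vertical derivative $\partial_x\partial_\mu\hat\varphi$, then gives
\begin{equation*}
B_i^n \ \approx \ \E\bigl[\langle \partial_\mu\hat\varphi(t_{i+1}^n, \hat\P_{X_{\cdot\wedge t_i^n}})(X_{\cdot\wedge t_i^n}),\,\Delta_i X\rangle_H\bigr] + \tfrac{1}{2}\E\bigl[\langle \partial_x\partial_\mu\hat\varphi(t_{i+1}^n, \hat\P_{X_{\cdot\wedge t_i^n}})(X_{\cdot\wedge t_i^n})\,\Delta_i X,\,\Delta_i X\rangle_H\bigr].
\end{equation*}
Substituting $\Delta_i X = \int_{t_i^n}^{t_{i+1}^n} F_r\,dr + \int_{t_i^n}^{t_{i+1}^n} G_r\,dB_r$: the drift contribution to the first-order term converges to $\int_t^s \E[\langle F_r, \partial_\mu\hat\varphi(r,\hat\P_{X_{\cdot\wedge r}})(X_{\cdot\wedge r})\rangle_H]\,dr$; the stochastic-integral contribution to the first-order term has zero expectation by adaptedness and independence of Brownian increments (conditioning on $\Fc_{t_i^n}$); the $dr\otimes dr$ and cross terms in the second-order part are $O(|\pi_n|)$ after summation (Cauchy–Schwarz plus the $|\pi_n|^{1/2}$ size of the stochastic increment); and the $dB\otimes dB$ contribution produces the trace term via It\^o isometry in $\Lc_2(K;H)$ (conditioning on $\Fc_{t_i^n}$ converts $\E[\langle K\int G\,dB,\,\int G\,dB\rangle]$ into $\E[\Tr(K\int G G^*\,dr)]$).

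The main obstacle is the rigorous justification of the step-extension replacement, since the Dupire vertical derivative of Definition \ref{D:PathSpaceDeriv} is only defined along perturbations of the form $Y\,\mathds{1}_{[t,T]}$; the ``smooth'' part of $X - X_{t_i^n}$ on $[t_i^n, t_{i+1}^n]$ must be absorbed into an asymptotically negligible remainder, using the uniform continuity of $\hat\Phi$ on bounded subsets of $L^2(\Omega; D([0,T];H))$ (which follows from $\hat\varphi \in \boldsymbol C_b^{1,2}(\mathscr{\hat H})$ and the boundedness of $\partial_\mu\hat\varphi$). A secondary difficulty is the uniform-in-$i$ control of the Taylor remainder in $L^2$, which uses the boundedness of $\partial_x\partial_\mu\hat\varphi$, the uniform continuity of its lifting, and standard Burkholder–Davis–Gundy moment estimates of the form $\E[\sup_{r\in[t_i^n,t_{i+1}^n]}|X_r - X_{t_i^n}|_H^{2+\epsilon}] = O(|\pi_n|^{1+\epsilon/2})$.
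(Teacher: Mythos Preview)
Your overall architecture (telescoping along a partition, horizontal/vertical split, second-order expansion in the lifted variable) matches the paper's. The substantive difference is in how the vertical increment is made amenable to the Dupire derivative, and this is where your argument has a genuine gap.

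You work directly with $X$ and propose to replace $X_{\cdot\wedge t_{i+1}^n}$ by the step-extension $X_{\cdot\wedge t_i^n}+\Delta_iX\,\mathds{1}_{[t_{i+1}^n,T]}$, absorbing the discrepancy on $[t_i^n,t_{i+1}^n]$ into a remainder controlled by ``uniform continuity of $\hat\Phi$ on bounded subsets''. This does not close: the two paths differ on $[t_i^n,t_{i+1}^n]$ by $X_r-X_{t_i^n}$, so each replacement error is of order $\omega\big(\E[\sup_{r\in[t_i^n,t_{i+1}^n]}|X_r-X_{t_i^n}|_H^2]^{1/2}\big)$, i.e.\ at best $O(|\pi_n|^{1/2})$ under a Lipschitz bound. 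Summed over $N_n\sim|\pi_n|^{-1}$ intervals this diverges. Uniform continuity alone gives no rate, and the vertical derivative of Definition~\ref{D:PathSpaceDeriv} only controls perturbations of the form $Y\mathds{1}_{[t,T]}$, so it cannot be used to linearise this particular discrepancy. Your appeal to $(2+\epsilon)$-moments is also unavailable: $F$ and $G$ are only assumed square-integrable.

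The paper sidesteps exactly this obstacle by first replacing $X$ with the \emph{piecewise constant} process $X^n\coloneqq\sum_k X_{t_{k-1}^n}\mathds{1}_{[t_{k-1}^n,t_k^n)}$ and telescoping $\hat\varphi(\cdot,\hat\P_{X^n})$ instead. For $X^n$ one has the exact identity $X^n_{t_k^n\wedge\cdot}=X^n_{t_{k-1}^n\wedge\cdot}+(X^n_{t_k^n}-X^n_{t_{k-1}^n})\mathds{1}_{[t_k^n,T]}$, so every vertical increment is \emph{already} a pure jump perturbation and the pathwise space derivative applies without any replacement error. The limit $n\to\infty$ is then taken only once, at the end, using the continuity of $\hat\varphi$, $\partial_t\hat\varphi$, $\partial_\mu\hat\varphi$, $\partial_x\partial_\mu\hat\varphi$ together with the pathwise convergence $X^n\to X$ and dominated convergence; no moment beyond $L^2$ is needed. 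This single idea --- discretise the path before telescoping --- is what is missing from your proposal.
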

\begin{Remark}\label{rm:ITOunbounded1}
  Proceeding along the same lines as in the proof of Theorem \ref{T:ItoD}, it is possible to prove It\^o's formula for a larger class of functions than $\boldsymbol C_b^{1,2}(\mathscr{\hat H})$,
  for example
  weakening the boundedness assumption of $\partial_\mu\hat\varphi$ by assuming linear growth with respect to $\hat\xi$.
\end{Remark}


\begin{Remark}\label{R:Sym}
Following \cite{WuZhang18PPDE}, we notice that in the last term of It\^o's formula \eqref{ItoD} we can replace $\partial_x\partial_\mu\hat\varphi(r,\hat\P_{X_{\cdot\wedge r}})(X_{\cdot\wedge r})$ by its {symmetrization} $\partial_x^{\textup{sym}}\partial_\mu\hat\varphi(r,\hat\P_{X_{\cdot\wedge r}})(X_{\cdot\wedge r})$, where $\partial_x^{\textup{sym}}\partial_\mu\hat\varphi\colon\mathscr{\hat H}\times D([0,T];H)\rightarrow\Lc(H)$ is defined as
\begin{equation}\label{Sym}
\partial_x^{\textup{sym}}\partial_\mu\hat\varphi(t,\hat\mu)(\hat x) \coloneqq  \frac{1}{2}\Big(\partial_x\partial_\mu\hat\varphi(t,\hat\mu)(\hat x) + \big(\partial_x\partial_\mu\hat\varphi(t,\hat\mu)(\hat x)\big)^*\Big),
\end{equation}
for every $(t,\hat\mu,\hat x)\in\mathscr{\hat H}\times D([0,T];H)$.\\
In the finite-dimensional case, it is proved in \cite[Remark 5.98]{cardelbook} that $\partial_x\partial_\mu\hat\varphi$ is already symmetric. Notice however that such a proof is quite involved and it is still an open problem to show that it remains valid in the present infinite-dimensional framework.
\end{Remark}

\subsection{Pathwise derivatives for a map $\varphi\colon\mathscr H\rightarrow\R$ and It\^o's formula}

In the present section we use several times the notation \ref{Ntn_P}, namely we use the superscript $\hat\cdot$ to denote the natural extension to $D([0,T];H)$ of a probability measure on $C([0,T];H)$.

\begin{Definition}\label{D:Consistent}
Given $\varphi\colon\mathscr H\rightarrow\R$ and a non-anticipative map $\hat\varphi\colon\mathscr{\hat H}\rightarrow\R$, we say that $\hat\varphi$ is \textbf{consistent} with $\varphi$ if (for the definition of $\hat\P_\xi$ see \ref{Ntn_P})
\begin{equation}\label{Consistency}
\varphi(t,\P_\xi) = \hat\varphi(t,\hat\P_\xi),
\end{equation}
for every $(t,\xi)\in\mathfrak H$, with $\xi\in\S_2(\F)$ (namely, $\xi\in L^2(\Omega;C([0,T];H))$ and it is $\F$-progressively measurable).
\end{Definition}
\begin{Remark}
Notice that we can replace $\S_2(\F)$ with $\S_2(\Gc)$ in Definition \ref{D:Consistent}, as a matter of fact the sets $\{\P_\xi\colon\xi\in\S_2(\F)\}$ and $\{\P_\xi\colon\xi\in\S_2(\Gc)\}$ are equal and coincide with $\Pc_2(C([0,T];H))$, as it follows from property  \textup{\ref{A_G}}-ii). This also shows that equality \eqref{Consistency} characterizes $\varphi$ in terms of $\hat\varphi$ for every pair $(t,\mu)\in\mathscr H$.
\end{Remark}

Next result is crucial in order to define pathwise derivatives for a map $\varphi\colon\mathscr H\rightarrow\R$, as it states a consistency property for the pathwise derivatives themselves.

\begin{Lemma}\label{L:Consistency}
Let $\hat\varphi_1,\hat\varphi_2\in\boldsymbol C_b^{1,2}(\mathscr{\hat H})$. If (for the definition of $\hat\P_\xi$ see \ref{Ntn_P})
\[
\hat\varphi_1(t,\hat\P_\xi) = \hat\varphi_2(t,\hat\P_\xi), \qquad \forall\,(t,\xi)\in\mathfrak H,\;\text{with }\xi\in\S_2(\F),
\]
then
\begin{align}
\partial_t\hat\varphi_1(t,\hat\P_\xi) &= \partial_t\hat\varphi_2(t,\hat\P_\xi), \label{Cons-time-deriv} \\
\partial_\mu\hat\varphi_1(t,\hat\P_\xi)(\xi) &= \partial_\mu\hat\varphi_2(t,\hat\P_\xi)(\xi), \qquad\qquad\! \P\text{\textup{-a.s.}} \label{Cons-measure-deriv} \\
\partial_x^{{\textup{sym}}}\partial_\mu\hat\varphi_1(t,\hat\P_\xi)(\xi) &= \partial_x^{{\textup{sym}}}\partial_\mu\hat\varphi_2(t,\hat\P_\xi)(\xi), \qquad \P\text{\textup{-a.s.}} \label{Cons-2nd-measure-deriv}
\end{align}
for every $(t,\xi)\in\mathfrak H$, with $\xi\in\S_2(\F)${, where $\partial_x^{\textup{sym}}\partial_\mu\hat\varphi$ is defined by \eqref{Sym}.}
\end{Lemma}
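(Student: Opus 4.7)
My plan is to set $\hat\varphi := \hat\varphi_1 - \hat\varphi_2$, which is in $\boldsymbol C_b^{1,2}(\mathscr{\hat H})$ and by hypothesis satisfies $\hat\varphi(t,\hat\P_\xi)=0$ for every $(t,\xi)\in\mathfrak H$ with $\xi\in\S_2(\F)$. The task then reduces to showing that all three pathwise derivatives of $\hat\varphi$ vanish at such points. Since the hypothesis only controls $\hat\varphi$ on laws of \emph{continuous} processes, the engine will be It\^o's formula (Theorem~\ref{T:ItoD}), applied to a rich family of genuine It\^o processes that never leave $\S_2(\F)$: this lets me extract first- and second-order derivatives without ever evaluating $\hat\varphi$ on laws of discontinuous paths.

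For fixed $(t,\xi)\in\mathfrak H$ with $t<T$ and $\xi\in\S_2(\F)$ (WLOG $\xi=\xi_{\cdot\wedge t}$ by non-anticipativity) and for arbitrary bounded $\Fc_t$-measurable random variables $F\colon\Omega\to H$ and $G\colon\Omega\to\Lc_2(K;H)$, I will consider
\[
X_s \;:=\; \xi_{s\wedge t} \,+\, (s-t)^+\,F \,+\, G\,(B_{s\vee t}-B_t), \qquad s\in[0,T].
\]
This $X$ is continuous, $\F$-progressively measurable and square-integrable, and $X_{\cdot\wedge s}\in\S_2(\F)$ for every $s$; hence the hypothesis forces $\hat\varphi(s,\hat\P_{X_{\cdot\wedge s}})=0$ on $[0,T]$. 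Since $X$ has the form \eqref{2020-12-04:02} with integrands $F_r=F\,\mathds{1}_{[t,T]}(r)$ and $G_r=G\,\mathds{1}_{[t,T]}(r)$, Theorem~\ref{T:ItoD} combined with Remark~\ref{R:Sym} yields, for every $s\in[t,T]$, the identity $0=\int_t^s\Psi(r)\,dr$, where
\[
\Psi(r) := \partial_t\hat\varphi(r,\hat\P_{X_{\cdot\wedge r}}) + \E\big[\langle F,\partial_\mu\hat\varphi(r,\hat\P_{X_{\cdot\wedge r}})(X_{\cdot\wedge r})\rangle_H\big] + \tfrac{1}{2}\E\big[\Tr\big(GG^*\,\partial_x^{\textup{sym}}\partial_\mu\hat\varphi(r,\hat\P_{X_{\cdot\wedge r}})(X_{\cdot\wedge r})\big)\big].
\]
The map $\Psi$ is continuous on $[t,T]$ by the $\boldsymbol C_b^{1,2}$-regularity of $\hat\varphi$ and the $L^2$-continuity of $r\mapsto X_{\cdot\wedge r}$, so $\Psi\equiv 0$. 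Evaluating at $r=t$, where $X_{\cdot\wedge t}=\xi$, gives the master identity
\[
\partial_t\hat\varphi(t,\hat\P_\xi) + \E\big[\langle F,\partial_\mu\hat\varphi(t,\hat\P_\xi)(\xi)\rangle_H\big] + \tfrac{1}{2}\E\big[\Tr\big(GG^*\,\partial_x^{\textup{sym}}\partial_\mu\hat\varphi(t,\hat\P_\xi)(\xi)\big)\big] = 0.
\]

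The three conclusions then fall out by specializing $F,G$. Taking $F=0$ and $G=0$ gives \eqref{Cons-time-deriv}. Taking $G=0$ and $F=\mathds{1}_E v$ for $E\in\Fc_t$ and $v\in H$ yields $\E[\mathds{1}_E\langle v,\partial_\mu\hat\varphi(t,\hat\P_\xi)(\xi)\rangle_H]=0$; since $\partial_\mu\hat\varphi(t,\hat\P_\xi)(\xi)$ is $\Fc_t$-measurable (by non-anticipativity of $\partial_\mu\hat\varphi$, Remark~\ref{R:NA-measure-deriv}), letting $v$ range over a countable dense subset of $H$ produces \eqref{Cons-measure-deriv}. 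For the second-order identity I will take $F=0$ and $G=\mathds{1}_E\,G_v$ with $G_v k := \langle k,k_0\rangle_K\,v$ for a fixed unit vector $k_0\in K$ and arbitrary $v\in H$; then $G_v G_v^* = v\otimes v$, so $\Tr(GG^*A)=\mathds{1}_E\,\langle Av,v\rangle_H$ where $A:=\partial_x^{\textup{sym}}\partial_\mu\hat\varphi(t,\hat\P_\xi)(\xi)$, and the same reasoning yields $\langle Av,v\rangle_H=0$ $\P$-a.s.\ for every $v$ in a countable dense subset of $H$; self-adjointness of $A$ together with the polarization identity then forces $A=0$ $\P$-a.s., which is \eqref{Cons-2nd-measure-deriv}. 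The boundary case $t=T$ is recovered by taking $t_n\nearrow T$ and invoking the joint continuity of the three derivatives built into $\boldsymbol C_b^{1,2}(\mathscr{\hat H})$.

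The main delicate point is the last step: the hypothesis controls $\hat\varphi$ only on measures supported on $C([0,T];H)$, whereas $\partial_x\partial_\mu$ is defined by probing perturbations $\hat x + h\mathds{1}_{[t,T]}$ that are inherently discontinuous. The It\^o formula sidesteps this by encoding first- and second-order moves through genuine continuous processes, and the rank-one Hilbert--Schmidt construction above is what guarantees that the family $\{GG^*\}$ sweeps out all self-adjoint rank-one operators on $H$, which is enough to recover the (self-adjoint) operator $A$.
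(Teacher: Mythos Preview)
Your proof is correct and follows essentially the same route as the paper: both arguments apply the It\^o formula (Theorem~\ref{T:ItoD}) to processes of the form $X_s=\xi_{s\wedge t}+\int_t^{s\vee t}F_r\,dr+\int_t^{s\vee t}G_r\,dB_r$ with $\Fc_t$-measurable constant integrands, use continuity of the integrand to evaluate at $r=t$, and then separate out the three derivatives by specializing $F$ and $G$. The only cosmetic difference is that the paper observes \eqref{Cons-time-deriv} is immediate from Definition~\ref{Def:HorizontalDer} (since $\hat\P_{\xi_{\cdot\wedge t}}$ is still the law of a continuous process), whereas you recover it as the $F=0,\,G=0$ case of the master identity; your rank-one construction for $G$ and the polarization argument make explicit what the paper leaves implicit in the phrase ``from the arbitrariness of $\Lambda$''.
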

\begin{proof}[\textbf{Proof.}]
See Appendix \ref{App:Consistency}.
\end{proof}
\begin{Remark}\label{R:2nd-measure-deriv}
{We do not address here the consistency property of $\partial_x\partial_\mu\hat\varphi$ (for hints on its proof we refer to \cite{WuZhang18PPDE}, see the paragraph just after Theorem 2.9), as It\^o's formula (and hence the Hamilton-Jacobi-Bellman equation) only depends on $\partial_x^{\textup{sym}}\partial_\mu\hat\varphi$, see Remark \ref{R:Sym}.}
\end{Remark}
\begin{Remark}\label{R:NA-Consistency}
By the non-anticipativity property of the pathwise derivatives (see Remarks \ref{R:NA-time_deriv}, \ref{R:NA-measure-deriv}, \ref{R:NA-2nd-measure-deriv}), it follows that equalities \eqref{Cons-time-deriv}-\eqref{Cons-measure-deriv}-\eqref{Cons-2nd-measure-deriv} hold if and only if
\begin{align*}
\partial_t\hat\varphi_1(t,\hat\P_{\xi_{\cdot\wedge t}}) &= \partial_t\hat\varphi_2(t,\hat\P_{\xi_{\cdot\wedge t}}), \\
\partial_\mu\hat\varphi_1(t,\hat\P_{\xi_{\cdot\wedge t}})(\xi_{\cdot\wedge t}) &= \partial_\mu\hat\varphi_2(t,\hat\P_{\xi_{\cdot\wedge t}})(\xi_{\cdot\wedge t}), \qquad\qquad\! \P\text{\textup{-a.s.}} \\
\partial_x^{{\textup{sym}}}\partial_\mu\hat\varphi_1(t,\hat\P_{\xi_{\cdot\wedge t}})(\xi_{\cdot\wedge t}) &= \partial_x^{{\textup{sym}}}\partial_\mu\hat\varphi_2(t,\hat\P_{\xi_{\cdot\wedge t}})(\xi_{\cdot\wedge t}), \qquad \P\text{\textup{-a.s.}}
\end{align*}
for every $(t,\xi)\in\mathfrak H$, with $\xi\in\S_2(\F)$.
\end{Remark}

Using Lemma \ref{L:Consistency}, we can now define the class $\boldsymbol C_b^{1,2}(\mathscr H)$.

\begin{Definition}\label{D:C12}
{We denote by $\boldsymbol C^{1,2}(\mathscr H)$ (respectively $\boldsymbol C_b^{1,2}(\mathscr H)$) the set of maps $\varphi\colon\mathscr H\rightarrow\R$ for which there exists $\hat\varphi\colon\mathscr{\hat H}\rightarrow\R$ such that $\hat\varphi$ is consistent with $\varphi$ and $\hat\varphi\in\boldsymbol C^{1,2}(\mathscr{\hat H})$
(respectively $\boldsymbol C_b^{1,2}(\mathscr{\hat H})$).} Then, we define (for the definition of $\hat\P_\xi$ see \ref{Ntn_P})
\begin{align*}
\partial_t\varphi(t,\P_\xi) &\coloneqq  \partial_t\hat\varphi(t,\hat\P_\xi), \\
\partial_\mu\varphi(t,\P_\xi)(\cdot) &\coloneqq  \partial_\mu\hat\varphi(t,\hat\P_\xi)(\cdot), \\
\partial_x^{{\textup{sym}}}\partial_\mu\varphi(t,\P_\xi)(\cdot) &\coloneqq  \partial_x^{{\textup{sym}}}\partial_\mu\hat\varphi(t,\hat\P_\xi)(\cdot),
\end{align*}
for every $(t,\xi)\in\mathfrak H$, with $\xi\in\S_2(\F)$.
\end{Definition}

We can finally state the It\^o formula.

\begin{Theorem}\label{T:Ito}
  Fix $t\in[0,T]$ and let $\xi\in\S_2(\F)$.
  Let also $F\colon[0,T]\times\Omega\rightarrow H$, $G\colon[0,T]\times\Omega\rightarrow\mathcal{L}_2(K;H)$ be square integrable
   and $\F$-progressively measurable process, so, in particular,
\[
\int_0^T \E[|F_s|^2_H]\,ds < \infty, \qquad\qquad \int_0^T \E\big[\textup{Tr}(G_sG_s^*)\big]\,ds < \infty.
\]
Consider the process $X=(X_s)_{s\in[0,T]}$ given by
\begin{equation*}
X_s = \xi_{s\wedge t} + \int_t^{s\vee t} F_r\,dr + \int_t^{s\vee t} G_r\,dB_r, \qquad \forall\,s\in[0,T].
\end{equation*}
If $\varphi\colon\mathscr H\rightarrow\mathbb{R}$ is in $\boldsymbol C_b^{1,2}(\mathscr H)$, then the following \textbf{It\^o formula} holds:
\begin{align}\label{Ito}
\varphi(s,\P_{X_{\cdot\wedge s}}) &= \varphi(t,\P_{\xi_{\cdot\wedge t}}) + \int_t^s \partial_t \varphi(r,\P_{X_{\cdot\wedge r}})\,dr + \int_t^s
\mathbb{E}
 \left[   \langle F_r,\partial_\mu \varphi(r,\P_{X_{\cdot\wedge r}})(X_{\cdot\wedge r})\rangle_H
 \right]dr \notag \\
&\quad + \frac{1}{2}\int_t^s\mathbb{E}
 \left[
   \Tr \left(
        G_r G_r^* \partial _x\partial_\mu
       \varphi(r,\P_{X_{\cdot\wedge r}})(X_{\cdot\wedge r})
   \right)
 \right]dr,
\end{align}
for every $s\in[t,T]$.
\end{Theorem}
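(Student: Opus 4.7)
The plan is to reduce Theorem \ref{T:Ito} to the already-established It\^o formula for $\boldsymbol C_b^{1,2}(\mathscr{\hat H})$ functions in Theorem \ref{T:ItoD}, by exploiting the consistency between $\varphi$ and its extension $\hat\varphi$. By Definition \ref{D:C12}, the assumption $\varphi\in\boldsymbol C_b^{1,2}(\mathscr H)$ furnishes a map $\hat\varphi\in\boldsymbol C_b^{1,2}(\mathscr{\hat H})$ consistent with $\varphi$, namely $\varphi(r,\P_\eta)=\hat\varphi(r,\hat\P_\eta)$ for every $r\in[0,T]$ and $\eta\in\S_2(\F)$, and whose pathwise derivatives are the very objects used to define $\partial_t\varphi$, $\partial_\mu\varphi$, and $\partial_x^{\textup{sym}}\partial_\mu\varphi$.

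First I would verify that $X$ is admissible for Theorem \ref{T:ItoD}: the process defined by \eqref{2020-12-04:02} has a continuous $\F$-progressively measurable version and $X_{\cdot\wedge r}\in\S_2(\F)$ for every $r\in[0,T]$, so the consistency relation and the derivative identities of Definition \ref{D:C12} can be invoked at each time $r$. Applying Theorem \ref{T:ItoD} to $\hat\varphi$ and $X$ produces the desired formula with $\hat\varphi$ and its derivatives in place of $\varphi$ and its derivatives, and with $\hat\P_{X_{\cdot\wedge r}}$ replacing $\P_{X_{\cdot\wedge r}}$. The left-hand side and the initial datum are then translated via the consistency relation, while the time-derivative and first-order measure-derivative terms are translated directly by the definitional equalities $\partial_t\hat\varphi(r,\hat\P_{X_{\cdot\wedge r}})=\partial_t\varphi(r,\P_{X_{\cdot\wedge r}})$ and $\partial_\mu\hat\varphi(r,\hat\P_{X_{\cdot\wedge r}})(X_{\cdot\wedge r})=\partial_\mu\varphi(r,\P_{X_{\cdot\wedge r}})(X_{\cdot\wedge r})$ $\P$-a.s.

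The only subtle step, and the one I expect to be the main obstacle, is the translation of the second-order term, since Definition \ref{D:C12} (see also Remark \ref{R:2nd-measure-deriv}) only identifies $\partial_x\partial_\mu\hat\varphi$ and $\partial_x\partial_\mu\varphi$ \emph{up to symmetrization}. To handle this, I would first invoke Remark \ref{R:Sym}: because $G_rG_r^*$ is self-adjoint, the trace is unchanged if $\partial_x\partial_\mu\hat\varphi$ is replaced by $\partial_x^{\textup{sym}}\partial_\mu\hat\varphi$ defined in \eqref{Sym}. Then Lemma \ref{L:Consistency}, and specifically the consistency equality \eqref{Cons-2nd-measure-deriv}, gives $\partial_x^{\textup{sym}}\partial_\mu\hat\varphi(r,\hat\P_{X_{\cdot\wedge r}})(X_{\cdot\wedge r})=\partial_x^{\textup{sym}}\partial_\mu\varphi(r,\P_{X_{\cdot\wedge r}})(X_{\cdot\wedge r})$ $\P$-a.s., so that a second appeal to the self-adjointness of $G_rG_r^*$ allows one to rewrite the trace in terms of $\partial_x\partial_\mu\varphi$ as understood in Definition \ref{D:C12}. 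Assembling the four translated terms yields \eqref{Ito}. A minor additional care is needed to apply Fubini and justify pulling the $\P$-a.s.\ identifications inside the expectations and time integrals, which is standard once the integrability built into $\boldsymbol C_b^{1,2}$ (boundedness of $\partial_\mu\varphi,\partial_x\partial_\mu\varphi$) is taken into account.
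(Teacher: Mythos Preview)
Your proposal is correct and follows exactly the same route as the paper: pick a consistent extension $\hat\varphi\in\boldsymbol C_b^{1,2}(\mathscr{\hat H})$ via Definition~\ref{D:C12}, apply Theorem~\ref{T:ItoD} to $\hat\varphi$, and then translate each term back to $\varphi$ using the consistency relation and the definitional identities for the derivatives. Your treatment of the second-order term via Remark~\ref{R:Sym} and the symmetrized derivative is in fact more careful than the paper's own proof, which simply appeals to Definition~\ref{D:C12} without dwelling on the point that only $\partial_x^{\textup{sym}}\partial_\mu\varphi$ is intrinsically defined there.
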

\begin{proof}[\textbf{Proof.}]
By Definition \ref{D:C12} there exists $\hat\varphi\colon\mathscr{\hat H}\rightarrow\R$ such that $\hat\varphi$ is consistent with $\varphi$ and $\hat\varphi\in\boldsymbol C_b^{1,2}(\mathscr{\hat H})$. As a consequence, by Theorem \ref{T:ItoD} we have the It\^o formula
\begin{align*}
\hat\varphi(s,\hat\P_{X_{\cdot\wedge s}}) &= \hat\varphi(t,\hat\P_{\xi_{\cdot\wedge t}}) + \int_t^s \partial_t \hat\varphi(r,\hat\P_{X_{\cdot\wedge r}})\,dr + \int_t^s
\mathbb{E}
 \left[   \langle F_r,\partial_\mu \hat\varphi(r,\hat\P_{X_{\cdot\wedge r}})(X_{\cdot\wedge r})\rangle_H
 \right]dr \notag \\
&\quad + \frac{1}{2}\int_t^s\mathbb{E}
 \left[
   \Tr \left(
        G_r G_r^* \partial _x\partial_\mu
       \hat\varphi(r,\hat\P_{X_{\cdot\wedge r}})(X_{\cdot\wedge r})
   \right)
 \right]dr,
\end{align*}
for every $s\in[t,T]$. Using the fact that $\hat\varphi$ is consistent with $\varphi$ and recalling the definition of pathwise derivatives of $\varphi$ (see Definition \ref{D:C12}), we obtain the claimed It\^o formula for $\varphi$.
\end{proof}

{In order to apply It\^o's formula to our case we need the following variant of
Theorem \ref{T:Ito} (for a similar result, see Proposition 1.165 in \cite{FabbriGozziSwiech}).}

\begin{Theorem}\label{T:ItoForTest}
{Fix $t\in[0,T]$ and let $\xi\in\S_2(\F)$ (namely, $\xi\in L^2(\Omega;C([0,T];H))$ and it is $\F$-progressively measurable). Let $A,b,\sigma$ be as in Assumption \ref{A_A,b,sigma}.
Let $X=X^{t,\xi,\alpha}$ be the unique mild solution of \eqref{cont-MKV-differential}.
Let $\varphi\colon\mathscr H\rightarrow\mathbb{R}$ belong to $\boldsymbol C_b^{1,2}(\mathscr H)$. Assume also that, for all
$(t,\mu,x)\in \mathscr H\times C([0,T];H)$,
$\partial_\mu \varphi(t,\mu)(x)\in D(A^*)$
and that the map
\[
\mathscr H \times C([0,T];H) \longrightarrow H
\qquad\qquad (t,\mu,x) \longmapsto A^*\partial\varphi_\mu(t,\mu)(x)
\]
is continuous and bounded
\footnote{ Indeed, in view of \eqref{EstimateX} here
    we could ask only
linear growth of $ \partial _\mu \varphi$ in $x$}.
Then the following variant of \textbf{It\^o formula} holds:
\begin{align}\label{Itonew}
\varphi(s,\P_{X_{\cdot\wedge s}}) &= \varphi(t,\P_{\xi_{\cdot\wedge t}}) +
\int_t^s \partial_t \varphi(r,\P_{X_{\cdot\wedge r}})\,dr +
\int_t^s\mathbb{E}
 \left[   \langle X_r,
A^* \partial_\mu \varphi(r,\P_{X_{\cdot\wedge r}})
 (X_{\cdot\wedge r})\rangle_H
 \right]dr \notag \\
&+\int_t^s\mathbb{E}\left[\langle
b_r\left(X,\P_{X_{\cdot\wedge r}}, \alpha_r,\P_{\alpha_r}\right),
 \partial_\mu \varphi(r,\P_{X_{\cdot\wedge r}})
 (X_{\cdot\wedge r})\rangle_H
 \right]dr \\
&+ \frac{1}{2}\int_t^s\mathbb{E}
 \left[\Tr \left(
\sigma_r\left(X,\P_{X_{\cdot\wedge r}},\alpha_r,\P_{\alpha_r}\right)
\sigma^*_r\left(X,\P_{X_{\cdot\wedge r}}, \alpha_r,\P_{\alpha_r}\right) \partial _x\partial_\mu
       \varphi(r,\P_{X_{\cdot\wedge r}})(X_{\cdot\wedge r})
   \right)
 \right]dr, \notag
\end{align}
for every $s\in[t,T]$.}
\end{Theorem}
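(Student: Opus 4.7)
The plan is to apply the already proven It\^o formula, Theorem \ref{T:Ito}, to the Yosida-approximated state process $X^n=X^{n,t,\xi,\alpha}$ and then to pass to the limit as $n\to\infty$. Since $A_n=nA(n-A)^{-1}$ is bounded, the mild solution $X^n$ of \eqref{cont-MKV-differential-An} is also a strong solution, hence an It\^o process of the form \eqref{2020-12-04:02} with $F_r^n=A_nX_r^n+b_r(X^n,\P_{X^n_{\cdot\wedge r}},\alpha_r,\P_{\alpha_r})$ and $G_r^n=\sigma_r(X^n,\P_{X^n_{\cdot\wedge r}},\alpha_r,\P_{\alpha_r})$. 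Proposition \ref{Prop-diff-MKVPD-An} guarantees that $F^n$ and $G^n$ are square-integrable and $\F$-progressively measurable, so Theorem \ref{T:Ito} yields
\begin{align*}
\varphi(s,\P_{X^n_{\cdot\wedge s}}) &= \varphi(t,\P_{\xi_{\cdot\wedge t}}) + \int_t^s \partial_t\varphi(r,\P_{X^n_{\cdot\wedge r}})\,dr + \int_t^s\E\big[\langle A_nX_r^n,\partial_\mu\varphi(r,\P_{X^n_{\cdot\wedge r}})(X^n_{\cdot\wedge r})\rangle_H\big]\,dr\\
&\quad + \int_t^s\E\big[\langle b_r^n,\partial_\mu\varphi(r,\P_{X^n_{\cdot\wedge r}})(X^n_{\cdot\wedge r})\rangle_H\big]\,dr + \tfrac12\int_t^s\E\big[\Tr(\sigma_r^n(\sigma_r^n)^*\partial_x\partial_\mu\varphi(r,\P_{X^n_{\cdot\wedge r}})(X^n_{\cdot\wedge r}))\big]\,dr,
\end{align*}
for every $s\in[t,T]$, where $b_r^n,\sigma_r^n$ denote the coefficients evaluated at $(X^n,\P_{X^n_{\cdot\wedge r}},\alpha_r,\P_{\alpha_r})$.

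The delicate term is the one containing $A_n$. Using the hypothesis $\partial_\mu\varphi(r,\mu)(x)\in D(A^*)$, I rewrite it by duality as
\[
\E\big[\langle A_nX_r^n,\partial_\mu\varphi(r,\P_{X^n_{\cdot\wedge r}})(X^n_{\cdot\wedge r})\rangle_H\big]
=\E\big[\langle X_r^n, J_n^*A^*\partial_\mu\varphi(r,\P_{X^n_{\cdot\wedge r}})(X^n_{\cdot\wedge r})\rangle_H\big],
\]
where $J_n\coloneqq n(n-A)^{-1}$, so that $A_n=J_nA$ on $D(A)$ and $A_n^*=J_n^*A^*$ on $D(A^*)$. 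Classical semigroup theory gives a uniform bound $\|J_n^*\|_{\Lc(H)}\le\tilde C$ (with $\tilde C$ independent of $n$) together with the strong convergence $J_n^*y\to y$ in $H$ for every $y\in H$.

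Next I pass to the limit $n\to\infty$. From Proposition \ref{Prop-diff-MKVPD-An} we have $X^n\to X$ in $\S_2(\F)$, and hence $\P_{X^n_{\cdot\wedge r}}\to\P_{X_{\cdot\wedge r}}$ in $\Pc_2(C([0,T];H))$ for every $r$, with a uniform $\S_2$-bound. The continuity of $\partial_t\varphi,\partial_\mu\varphi,\partial_x\partial_\mu\varphi$ (together with the continuity and boundedness of $A^*\partial_\mu\varphi$, plus Assumption \ref{A_A,b,sigma} on $b,\sigma$) ensures pointwise convergence of each integrand to the corresponding object with $X$ in place of $X^n$; in particular, thanks to the strong convergence $J_n^*y\to y$ and the uniform bound $\|J_n^*\|\le\tilde C$ applied to the bounded continuous map $A^*\partial_\mu\varphi$, the dual term converges to $\E[\langle X_r,A^*\partial_\mu\varphi(r,\P_{X_{\cdot\wedge r}})(X_{\cdot\wedge r})\rangle_H]$. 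Since $\varphi,\partial_t\varphi,\partial_x\partial_\mu\varphi$ are bounded, $A^*\partial_\mu\varphi$ is bounded by assumption, $b,\sigma$ have at most linear growth (Assumption \ref{A_A,b,sigma}-\eqref{2020-06-20:01}), and $\sup_n\|X^n\|_{\S_2}<\infty$, dominated convergence applies to each of the four time integrals and delivers the identity \eqref{Itonew}.

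The main obstacle is exactly the passage to the limit in the unbounded-operator term: one cannot work directly with $A$, so one must first rewrite $\langle A_nX_r^n,\cdot\rangle$ by duality, move $A^*$ onto the test function (where it is available by hypothesis), and then exploit the uniform boundedness together with the strong convergence of the resolvent operators $J_n^*$. Every other term is handled by standard continuity and dominated convergence, so the rest of the argument is essentially bookkeeping.
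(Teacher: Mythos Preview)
Your proposal is correct and follows essentially the same route as the paper: apply the already-established It\^o formula (Theorem~\ref{T:Ito}) to the Yosida-approximated process $X^n$ (which is a genuine It\^o process since $A_n$ is bounded), rewrite the $A_n$-term by duality, and pass to the limit via Proposition~\ref{Prop-diff-MKVPD-An} and dominated convergence. The paper's proof is terser and simply writes the dual term as $\langle X^n_r, A_n^*\partial_\mu\varphi\rangle$ without spelling out the $J_n^*$ factorization, but your more explicit handling of $A_n^*=J_n^*A^*$ on $D(A^*)$ together with the strong convergence $J_n^*\to I$ is exactly what makes the limit in that term go through.
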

\begin{proof}[\textbf{Proof.}]
  {The proof can be done proceeding along the same lines as in the proof of Proposition 1.165 in \cite{FabbriGozziSwiech}. We provide a sketch of proof.
\\
First of all if $A$ is a bounded operator from Theorem \ref{T:Ito} we immediately get \eqref{Itonew}.
Now take $A$ possibly unbounded and consider its Yosida approximations $A_n$, for $n\in \N$. Call, as in \eqref{cont-MKV-differential-An}, $X^n$ the solution of the state equation when $A$ is replaced by $A_n$. Then, from  \eqref{Itonew} we get
\begin{align*}
\varphi(s,\P_{X^n_{\cdot\wedge s}}) &= \varphi(t,\P_{\xi_{\cdot\wedge t}}) +
\int_t^s \partial_t \varphi(r,\P_{X^n_{\cdot\wedge r}})\,dr +
\int_t^s\mathbb{E}
 \left[   \langle X^n_r,
A_n^* \partial_\mu \varphi(r,\P_{X^n_{\cdot\wedge r}})
 (X^n_{\cdot\wedge r})\rangle_H
 \right]dr \notag \\
&+\int_t^s\mathbb{E}\left[\langle
b_r\left(X^n,\P_{X^n_{\cdot\wedge r}}, \alpha_r,\P_{\alpha_r}\right),
 \partial_\mu \varphi(r,\P_{X^n_{\cdot\wedge r}})
 (X^n_{\cdot\wedge r})\rangle_H
 \right]dr \\
&+ \frac{1}{2}\int_t^s\mathbb{E}
 \left[\Tr \left(
\sigma_r\left(X^n,\P_{X^n_{\cdot\wedge r}},\alpha_r,\P_{\alpha_r}\right)
\sigma^*_r\left(X^n,\P_{X^n_{\cdot\wedge r}}, \alpha_r,\P_{\alpha_r}\right) \partial _x\partial_\mu
       \varphi(r,\P_{X^n_{\cdot\wedge r}})(X^n_{\cdot\wedge r})
   \right)
 \right]dr, \notag
\end{align*}
for every $s\in[t,T]$.
Now the convergence of all terms above follows applying, in a straightforward way,
the result of Proposition \ref{Prop-diff-MKVPD-An}.}
\end{proof}


\section{Hamilton-Jacobi-Bellman equation}
\label{S:HJB}

\subsection{Viscosity properties of the value function}

\label{sub:HJBviscosity}

For every $t\in[0,T]$, we introduce the set
\[
\Mc_t \coloneqq  \big\{\mathfrak a\colon\Omega\rightarrow\Ur\colon\mathfrak a\text{ is $\Fc_t$-measurable}\big\}.
\]
Note that, since the filtration $\Fc_t$ is right-continuous, we have
$\Mc_{t}=\cap_{\eps>0} \Mc_{t+\eps}$.
We now consider the following Hamilton-Jacobi-Bellman (HJB) equation:
\begin{align}\label{HJB}
0 \ &= \ \partial_t w(t,\mu) + \E\langle \xi_t,A^*\partial_\mu w(t,\mu)(\xi)\rangle_H \notag \\
&\quad \ + \sup_{\mathfrak a\in\Mc_t}\bigg\{\E\big[f_t\big(\xi,\mu,\mathfrak a,\P_{\mathfrak a}\big) + \big\langle b_t\big(\xi,\mu,{\mathfrak a},\P_{\mathfrak a}\big),\partial_\mu w(t,\mu)(\xi)\big\rangle_H\big] \\
&\quad \ + \dfrac{1}{2}\,\E\Big[\textup{Tr}\Big(\sigma_t\big(\xi,\mu,{\mathfrak a},\P_{\mathfrak a}\big)\sigma_t^*\big(\xi,\mu,{\mathfrak a},\P_{\mathfrak a}\big)\partial_x\partial_\mu w(t,\mu)(\xi)\Big)\Big]\bigg\},  \notag
\end{align}
for $(t,\mu)\in\mathscr H$, $t<T$, $\xi\in\S_2(\Gc)$ such that $\P_\xi=\mu$,
with terminal condition
\begin{equation}\label{eq:TCHJB}
w(T,\mu) \ = \ \E[g(\xi,\mu)], \qquad
\hbox{for $\mu\in\Pc_2(C([0,T];H))$, $\xi\in\S_2(\Gc)$ such that $\P_\xi=\mu$}.
\end{equation}

\begin{Definition}
We say that a function $w\colon\mathscr H\to \R$ belongs to the space $\boldsymbol C_{b,A^*}^{1,2}(\mathscr H)$
if it satisfies the following regularity assumptions:
\begin{itemize}
  \item[(i)] $w\colon\mathscr H\rightarrow\mathbb{R}$ belongs to $\boldsymbol C_b^{1,2}(\mathscr H)$;
  \item[(ii)] for all
$(t,\mu,\xi)\in \mathscr H\times \S_2(\F)$,
$\partial_\mu \varphi(t,\mu)(\xi)\in L^2(\Omega;D(A^*))$
and the map
\[
\mathscr H \times \S_2(\F) \longrightarrow L^2(\Omega;H), \qquad\qquad (t,\mu,\xi) \longmapsto A^*\varphi(t,\mu)(\xi)
\]
is continuous and bounded.
\end{itemize}
\end{Definition}

\begin{Definition}
We say that a function $w\colon\mathscr H\to \R$
is a classical solution to the HJB equation \eqref{HJB}
with terminal condition \eqref{eq:TCHJB},
if it belongs to the space
$\boldsymbol C_{b,A^*}^{1,2}(\mathscr H)$
and satisfies \eqref{HJB}-\eqref{eq:TCHJB}.
\end{Definition}

Using Theorem \ref{T:ItoForTest} and Corollary \ref{C:DPP} we are able to prove the following result.

\begin{Theorem}\label{T:HJBreg}
Let Assumptions \ref{A_A,b,sigma} and \ref{A_f,g_cont} hold. Assume also that $b,\sigma,f$ are uniformly continuous in $t$, uniformly with respect to the other variables.
Assume that the value function $v$ (see \eqref{v}) belongs to the space
$\boldsymbol C_{b,A^*}^{1,2}(\mathscr H)$.
Then $v$ is a classical solution of \eqref{HJB}-\eqref{eq:TCHJB}.
\end{Theorem}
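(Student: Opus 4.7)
The terminal condition \eqref{eq:TCHJB} is immediate from the definitions of $v$ and $J$. To derive the PDE, fix $(t,\mu)\in\mathscr H$ with $t<T$ and, using \ref{A_G}, pick $\xi\in\S_2(\Gc)$ with $\P_\xi=\mu$ and $\xi=\xi_{\cdot\wedge t}$. The plan is to establish the two inequalities in \eqref{HJB} separately, each time by combining the dynamic programming principle (Corollary \ref{C:DPP}) with It\^o's formula in the form of Theorem \ref{T:ItoForTest}, which is applicable because $v\in\boldsymbol C_{b,A^*}^{1,2}(\mathscr H)$ by hypothesis.

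\textbf{Step 1 (first inequality).} Fix $\mathfrak a\in\Mc_t$ and choose any $\alpha^{\mathfrak a}\in\Uc$ such that $\alpha^{\mathfrak a}_r=\mathfrak a$ on $[t,T]$. For small $h>0$, Corollary \ref{C:DPP} gives
\begin{equation*}
v(t,\mu)\;\geq\;\E\!\int_t^{t+h}\!\!f_r\big(X^{t,\xi,\alpha^{\mathfrak a}},\P_{X^{t,\xi,\alpha^{\mathfrak a}}_{\cdot\wedge r}},\mathfrak a,\P_{\mathfrak a}\big)\,dr+v\big(t+h,\P_{X^{t,\xi,\alpha^{\mathfrak a}}_{\cdot\wedge t+h}}\big).
\end{equation*}
Theorem \ref{T:ItoForTest} applied to $X:=X^{t,\xi,\alpha^{\mathfrak a}}$ represents $v(t+h,\P_{X_{\cdot\wedge t+h}})-v(t,\mu)$ as a time-integral on $[t,t+h]$ of the sum of $\partial_t v$, $\E\langle X_r,A^*\partial_\mu v\rangle_H$, $\E\langle b_r,\partial_\mu v\rangle_H$, and $\tfrac12\E[\Tr(\sigma_r\sigma_r^*\partial_x\partial_\mu v)]$ evaluated at $(r,\P_{X_{\cdot\wedge r}},X_{\cdot\wedge r})$ with control action $\mathfrak a$. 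Substituting, dividing by $h$, and letting $h\to 0^+$, the boundedness coming from $\boldsymbol C_{b,A^*}^{1,2}(\mathscr H)$, the continuity of the pathwise derivatives of $v$, the uniform-in-$t$ continuity of $b,\sigma,f$, and Remark \ref{rm:convSEinitialdatum} (yielding $\|X_{r\wedge\cdot}-\xi\|_{\S_2}\to 0$ as $r\to t^+$), together with the Lebesgue differentiation theorem, produce
\begin{align*}
0 \ &\geq\ \partial_tv(t,\mu)+\E\langle\xi_t,A^*\partial_\mu v(t,\mu)(\xi)\rangle_H\\
&\quad+\E\big[f_t(\xi,\mu,\mathfrak a,\P_{\mathfrak a})+\langle b_t(\xi,\mu,\mathfrak a,\P_{\mathfrak a}),\partial_\mu v(t,\mu)(\xi)\rangle_H\big]\\
&\quad+\tfrac{1}{2}\E\big[\Tr\big(\sigma_t\sigma_t^*(\xi,\mu,\mathfrak a,\P_{\mathfrak a})\partial_x\partial_\mu v(t,\mu)(\xi)\big)\big].
\end{align*}
Taking the supremum over $\mathfrak a\in\Mc_t$ gives one of the inequalities in \eqref{HJB}.

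\textbf{Step 2 (reverse inequality).} For each $\eps,h>0$ select, via the sup in Corollary \ref{C:DPP}, a control $\alpha^{\eps,h}\in\Uc$ with
\begin{equation*}
v(t,\mu)-\eps h\;\leq\;\E\!\int_t^{t+h}\!\!f_r\big(X^{\eps,h},\P_{X^{\eps,h}_{\cdot\wedge r}},\alpha^{\eps,h}_r,\P_{\alpha^{\eps,h}_r}\big)\,dr+v\big(t+h,\P_{X^{\eps,h}_{\cdot\wedge t+h}}\big),
\end{equation*}
where $X^{\eps,h}:=X^{t,\xi,\alpha^{\eps,h}}$. Applying Theorem \ref{T:ItoForTest} to $v(t+h,\P_{X^{\eps,h}_{\cdot\wedge t+h}})-v(t,\mu)$, rearranging, dividing by $h$, and bounding the $\alpha^{\eps,h}_r$-dependent integrand pointwise by the supremum over $\mathfrak b\in\Mc_r$ of the corresponding HJB integrand at $(r,\P_{X^{\eps,h}_{\cdot\wedge r}},X^{\eps,h}_{\cdot\wedge r})$, one obtains
\begin{equation*}
-\eps\;\leq\;\frac{1}{h}\int_t^{t+h}\!\!\Big\{\partial_tv\big(r,\P_{X^{\eps,h}_{\cdot\wedge r}}\big)+\E\langle X^{\eps,h}_r,A^*\partial_\mu v\rangle_H+\sup_{\mathfrak b\in\Mc_r}\Psi_r\big(X^{\eps,h}_{\cdot\wedge r},\mathfrak b\big)\Big\}\,dr,
\end{equation*}
where $\Psi_r$ is the reduced Hamiltonian built from $f$, $b$, and $\sigma\sigma^*$. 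Remark \ref{rm:convSEinitialdatum} gives $\|X^{\eps,h}_{\cdot\wedge r}-\xi\|_{\S_2}\to 0$ uniformly for $r\in[t,t+h]$ and in $\alpha^{\eps,h}$; this, the continuity of the derivatives of $v$, and the uniform-in-$t$ continuity of the coefficients let one pass to the limit and replace $X^{\eps,h}_{\cdot\wedge r}$ by $\xi$ inside each term. Since $\sup_{\mathfrak b\in\Mc_r}=\sup_{\mathfrak a\in\Mc_t}$ at the limit point $(t,\mu,\xi)$ (both reducing, under \ref{A_G} and Lemma \ref{L:U_G}, to the supremum of the same expression over joint laws of $(\xi,\cdot)$ on $C([0,T];H)\times\Ur$), sending first $h\to 0^+$ and then $\eps\to 0^+$ produces the reverse inequality and closes the proof.

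The main obstacle is Step 2: because the $\eps$-optimal controls $\alpha^{\eps,h}$ depend on $h$, one cannot directly push $h\to 0^+$ inside the time-averaged integrand. The crucial ingredient is the uniform-in-$\alpha$ estimate $X^{t,\xi,\alpha}_{\cdot\wedge r}\to\xi$ from Proposition \ref{Prop-diff-MKVPD}/Remark \ref{rm:convSEinitialdatum}, combined with the uniform-in-$t$ continuity of $b,\sigma,f$ assumed in the theorem, which together absorb the dependence on $\alpha^{\eps,h}$. A secondary subtlety is the identification of $\sup_{\Mc_r}$ with $\sup_{\Mc_t}$ at the limit point, which relies on the ``rich enough'' property in \ref{A_G} to realize arbitrary joint distributions of control and state.
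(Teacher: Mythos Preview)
Your proof is correct and follows essentially the same approach as the paper's: both combine the dynamic programming principle (Corollary \ref{C:DPP}) with the It\^o formula of Theorem \ref{T:ItoForTest}, treat the two inequalities separately via a constant control in one direction and an $\eps$-optimal control in the other, and rely crucially on the uniform-in-$\alpha$ convergence of Remark \ref{rm:convSEinitialdatum} to absorb the dependence on the unknown controls. The only cosmetic differences are that the paper works with a single parameter (taking $h=\eps$) and identifies $\sup_{\Mc_{t+\eps}}$ with $\sup_{\Mc_t}$ via the right-continuity $\cap_{\eps>0}\Mc_{t+\eps}=\Mc_t$, whereas you carry two parameters and justify $\sup_{\Mc_r}=\sup_{\Mc_t}$ through a joint-law argument---the latter is in fact Lemma \ref{L:Formula_F} (not merely Lemma \ref{L:U_G}), and for it to apply you should pick $\xi$ via Lemma \ref{L:xi_U_indep} so that an independent uniform $U_\xi$ exists.
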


\begin{proof}[\textbf{Proof.}]
From Corollary \ref{C:DPP} we know that, for every $(t,\mu) \in \mathscr{H}$, for every $\xi \in\S_2(\F)$ such $\P_\xi=\mu$, for every $\alpha \in \Uc$, and for every $h>0$ sufficiently small,
\begin{equation}\label{eq:DPPforHJB}
0 = \sup_{\alpha\in\Uc}\bigg\{\E\bigg[
\frac{1}{h}\int_t^{t+h}
f_r\big(X,\P_{X_{\cdot\wedge r}},
\alpha_r,\P_{\alpha_r}\big)\,dr\bigg] + \frac{1}{h}\left[v\big(t+h,\P_{X}\big)-v(t,\mu)
\right]\bigg\},
\end{equation}
where, for simplicity, we wrote simply $X$ in place of $X^{t,\xi,\alpha}$.
Now we use \eqref{Itonew} getting
\begin{align}\label{Itonewbis}
&\frac{1}{h}\left[v\big(t+h,\P_{X^{t,\xi,\alpha}}\big)-v(t,\mu) \right]
=
\frac{1}{h}\int_t^{t+h} \partial_t v(r,\P_{X_{\cdot\wedge r}})\,dr
+
\frac{1}{h}\int_t^{t+h}\mathbb{E}
 \left[   \langle X_r,
A^* \partial_\mu v(r,\P_{X_{\cdot\wedge r}})
 (X_{\cdot\wedge r})\rangle_H
 \right]dr
 \notag \\
&\qquad +\frac{1}{h}\int_t^{t+h}\mathbb{E}\left[\langle
b_r\left(X,\P_{X_{\cdot\wedge r}}, \alpha_r,\P_{\alpha_r}\right),
 \partial_\mu v(r,\P_{X_{\cdot\wedge r}})
 (X_{\cdot\wedge r})\rangle_H
 \right]dr
  \\
 \notag
&\qquad +\frac{1}{2h}\int_t^{t+h}\mathbb{E}
 \left[\Tr \left(
\sigma_r\left(X,\P_{X_{\cdot\wedge r}},\alpha_r,\P_{\alpha_r}\right)
\sigma^*_r\left(X,\P_{X_{\cdot\wedge r}}, \alpha_r,\P_{\alpha_r}\right) \partial _x\partial_\mu
       v(r,\P_{X_{\cdot\wedge r}})(X_{\cdot\wedge r})
   \right)
 \right]dr,
\end{align}
We now show, as in the typical proof of this result, the two inequalities.
First take any ${\mathfrak a}\in \Mc_t$ and consider the control
$\alpha\in \Uc$ defined as
$$
\alpha_s=0, \quad s\in [0,t), \qquad
\alpha_s={\mathfrak a}, \quad s\in [t,T]
$$
Then, from \eqref{eq:DPPforHJB} and \eqref{Itonewbis} we get
\begin{align}\label{Itonewter}
&0 \ge
\frac{1}{h}\int_t^{t+h}
\E f_r\big(X,\P_{X_{\cdot\wedge r}},
{\mathfrak a},\P_{{\mathfrak a}}\big)\,dr
+
\frac{1}{h}\int_t^{t+h} \partial_t v(r,\P_{X_{\cdot\wedge r}})\,dr
 \notag \\
&\qquad +
\frac{1}{h}\int_t^{t+h}\mathbb{E}
 \left[   \langle X_r,
A^* \partial_\mu v(r,\P_{X_{\cdot\wedge r}})
 (X_{\cdot\wedge r})\rangle_H
 \right]dr
 \notag \\
&\qquad +\frac{1}{h}\int_t^{t+h}\mathbb{E}\left[\langle
b_r\left(X,\P_{X_{\cdot\wedge r}}, {\mathfrak a},\P_{{\mathfrak a}}\right),
 \partial_\mu v(r,\P_{X_{\cdot\wedge r}})
 (X_{\cdot\wedge r})\rangle_H
 \right]dr
 \\ \notag
&\qquad +\frac{1}{2h}\int_t^{t+h}\mathbb{E}
 \left[\Tr \left(
\sigma_r\left(X,\P_{X_{\cdot\wedge r}},{\mathfrak a},\P_{{\mathfrak a}}\right)
\sigma^*_r\left(X,\P_{X_{\cdot\wedge r}}, {\mathfrak a},\P_{{\mathfrak a}}\right) \partial _x\partial_\mu
       v(r,\P_{X_{\cdot\wedge r}})(X_{\cdot\wedge r})
   \right)
 \right]dr,
\end{align}
Using the regularity of $v$ and the continuity properties of $b,\sigma,f$ we get, passing to the limit for $h \to 0^+$
\begin{align}\label{Itonew4}
0 \ge \
&\partial_t v(t,\mu) + \E\langle \xi_t,A^*\partial_\mu v(t,\mu)(\xi)\rangle_H
\E\big[f_t\big(\xi,\mu,\mathfrak a,\P_{\mathfrak a}\big) + \big\langle b_t\big(\xi,\mu,{\mathfrak a},\P_{\mathfrak a}\big),\partial_\mu v(t,\mu)(\xi)\big\rangle_H\big]
\notag
\\
&+\,\dfrac{1}{2}\E\Big[\textup{Tr}\Big(\sigma_t\big(\xi,\mu,{\mathfrak a},\P_{\mathfrak a}\big)\sigma_t^*\big(\xi,\mu,{\mathfrak a},\P_{\mathfrak a}\big)\partial_x\partial_\mu v(t,\mu)(\xi)\Big)\Big]
\end{align}
Then the inequality $\ge$ follows by the arbitrariness of ${\mathfrak a}$.
We now prove the opposite inequality. For every $\eps>0$ we take  $\alpha^\eps\in \Uc$ such that, denoting by $X^\eps$ the corresponding state trajectory,
\begin{equation}\label{eq:DPPforHJBeps}
-\eps \le \E\bigg[
\frac{1}{\eps}\int_t^{t+\eps}
f_r\big(X^\eps,\P_{X^\eps_{\cdot\wedge r}},
\alpha^\eps_r,\P_{\alpha^\eps_r}\big)\,dr\bigg] + \frac{1}{\eps}\left[v\big(t+\eps,\P_{X^\eps}\big)-v(t,\mu)
\right]\bigg\},
\end{equation}
Now we apply Ito's formula \eqref{Itonewbis} above, for $h=\eps$, getting
\begin{align}\label{eq:DPPforHJBepsnew}
&-\eps \le \E\bigg[
\frac{1}{\eps}\int_t^{t+\eps}
f_r\big(X^\eps,\P_{X^\eps_{\cdot\wedge r}},
\alpha^\eps_r,\P_{\alpha^\eps_r}\big)\,dr\bigg] +
\frac{1}{\eps}\int_t^{t+\eps}
\partial_t v(r,\P_{X^\eps_{\cdot\wedge r}})\,dr
\notag\\
&+
\frac{1}{\eps}\int_t^{t+\eps}\mathbb{E}
 \left[   \langle X^\eps_r,
A^* \partial_\mu v(r,\P_{X^\eps_{\cdot\wedge r}})
 (X^\eps_{\cdot\wedge r})\rangle_H
 \right]dr
 \notag \\
&\qquad +\frac{1}{\eps}\int_t^{t+\eps}\mathbb{E}\left[
\langle
b_r\left(X^\eps,\P_{X^\eps_{\cdot\wedge r}}, \alpha_r,\P_{\alpha_r}\right),
 \partial_\mu v(r,\P_{X^\eps_{\cdot\wedge r}})
 (X^\eps_{\cdot\wedge r})\rangle_H
 \right]dr
  \\
 \notag
&\qquad +\frac{1}{2\eps}\int_t^{t+\eps}\mathbb{E}
 \left[\Tr \left(
\sigma_r\left(X^\eps,\P_{X^\eps_{\cdot\wedge r}},\alpha_r,\P_{\alpha_r}\right)
\sigma^*_r\left(X^\eps,\P_{X^\eps_{\cdot\wedge r}}, \alpha_r,\P_{\alpha_r}\right) \partial _x\partial_\mu
       v(r,\P_{X^\eps_{\cdot\wedge r}})(X^\eps_{\cdot\wedge r})
   \right)
 \right]dr,
\end{align}
By Remark \ref{rm:convSEinitialdatum} we obtain that, as $\eps \to 0$,
$X^\eps_{\cdot\wedge r} \to \xi_{\cdot\wedge t}$  and $\P_{X^\eps_{\cdot\wedge r}} \to \P_{\xi_{\cdot\wedge t}}$, hence
the second and third integrals of the above right-hand side converge to
$$
\partial_t v(t,\mu)
+\mathbb{E}\left[\langle \xi_t,
A^* \partial_\mu v(t,\mu)(\xi)\rangle_H\right]
$$
The remaining integrals of the right-hand side of
\eqref{eq:DPPforHJBepsnew} can be rewritten as
\begin{align}\label{eq:DPPforHJBepsnewbis}
\frac{1}{\eps}\int_t^{t+\eps}
\bigg(\mathbb{E}\left[
f_r\big(X^\eps,\P_{X^\eps_{\cdot\wedge r}},
\alpha^\eps_r,\P_{\alpha^\eps_r}\big)
+
\langle
b_r\left(X^\eps,\P_{X^\eps_{\cdot\wedge r}}, \alpha_r,\P_{\alpha_r}\right),
 \partial_\mu v(r,\P_{X^\eps_{\cdot\wedge r}})
 (X^\eps_{\cdot\wedge r})\rangle_H
 \right]
\\
\notag
\mathbb{E}
 \left[\Tr \left(
\sigma_r\left(X^\eps,\P_{X^\eps_{\cdot\wedge r}},\alpha_r,\P_{\alpha_r}\right)
\sigma^*_r\left(X^\eps,\P_{X^\eps_{\cdot\wedge r}}, \alpha_r,\P_{\alpha_r}\right) \partial _x\partial_\mu
       v(r,\P_{X^\eps_{\cdot\wedge r}})(X^\eps_{\cdot\wedge r})
   \right)
 \right]\bigg)dr
\end{align}
Recall now that, by our assumptions, $b$ and $\sigma$ are uniformly continuous in $(t,x,\mu)$ uniformly with respect to the other variables and that $f$ is locally uniformly continuous in $(x,\mu)$ uniformly with respect to the other variables.
Hence, using again Remark \ref{rm:convSEinitialdatum} we obtain that
\eqref{eq:DPPforHJBepsnewbis} can be rewritten as
\begin{align}\label{eq:DPPforHJBepsnewter}
\frac{1}{\eps}\int_t^{t+\eps}
\bigg(\mathbb{E}\left[
f_t\big(\xi,\mu,
\alpha^\eps_r,\P_{\alpha^\eps_r}\big)
+
\langle
b_t\left(\xi,\mu, \alpha_r,\P_{\alpha_r}\right),
 \partial_\mu v(t,\mu)
 (\xi)\rangle_H
 \right]
\\
\notag
\mathbb{E}
 \left[\Tr \left(
\sigma_t\left(\xi,\mu,\alpha_r,\P_{\alpha_r}\right)
\sigma^*_r\left(\xi,\mu, \alpha_r,\P_{\alpha_r}\right) \partial_x\partial_\mu
       v(t,\mu)(\xi)
   \right)
 \right]\bigg)dr + \rho (\eps)
\end{align}
where $\rho (\eps)\to 0$ as $\eps \to 0$.
It follows
\begin{align}\label{HJBeps}
&0\le \eps + \rho (\eps) +\partial_t v(t,\mu) + \E\langle \xi_t,A^*\partial_\mu v(t,\mu)(\xi)\rangle_H \notag \\
&+\,\sup_{\mathfrak a\in\Mc_{t+\eps}}\bigg\{\E\big[f_t\big(\xi,\mu,\mathfrak a,\P_{\mathfrak a}\big) + \big\langle b_t\big(\xi,\mu,{\mathfrak a},\P_{\mathfrak a}\big),\partial_\mu v(t,\mu)(\xi)\big\rangle_H\big] \\
&+\,\dfrac{1}{2}\,\E\Big[\textup{Tr}\Big(\sigma_t\big(\xi,\mu,{\mathfrak a},\P_{\mathfrak a}\big)\sigma_t^*\big(\xi,\mu,{\mathfrak a},\P_{\mathfrak a}\big)\partial_x\partial_\mu v(t,\mu)(\xi)\Big)\Big]\bigg\}  \notag
\end{align}
The conclusion then follows invoking the second part of Lemma
\ref{L:Formula_F}.
\end{proof}

Now, we provide the definition of viscosity solution that we shall  use.

\begin{Definition}
We say that a function $w\colon\mathscr H\to \R$
is a viscosity subsolution (respectively supersolution) to the HJB equation \eqref{HJB}
with terminal condition \eqref{eq:TCHJB},
if:
\begin{itemize}
\item $w(T,\mu)\leq(\text{respectively $\geq$})\,\E[g(\xi,\mu)]$, for $\mu\in\Pc_2(C([0,T];H))$, $\xi\in\S_2(\Gc)$ such that $\P_\xi=\mu$;
\item for $(t,\mu) \in \mathscr H$ and for every test function $\varphi\in \boldsymbol C_{b,A^*}^{1,2}(\mathscr H)$
such that $w-\varphi$ has a maximum at $(t,\mu)$ (with value $0$), one has that \eqref{HJB}-\eqref{eq:TCHJB} is satisfied with the inequality $\le$ $($respectively $\ge$$)$ in place of the equality and with $\varphi$ in place of $w$.
\end{itemize}
Moreover, $w$ is called a viscosity solution of
\eqref{HJB}-\eqref{eq:TCHJB} if it is both a viscosity subsolution and a viscosity supersolution.
\end{Definition}

\begin{Theorem}\label{T:HJBvisc}
Let Assumptions \ref{A_A,b,sigma} and \ref{A_f,g_cont} hold. Assume also that $b,\sigma,f$ are uniformly continuous in $t$, uniformly with respect to the other variables.
Then, the value function $v$ is a viscosity solution of \eqref{HJB}-\eqref{eq:TCHJB}.
\end{Theorem}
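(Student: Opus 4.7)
The plan is to adapt the argument already carried out in the proof of Theorem \ref{T:HJBreg} (classical case), replacing $v$ by an arbitrary admissible test function whenever second-order or lifted derivatives appear, while keeping $v$ itself only through values. The DPP of Corollary \ref{C:DPP} and the refined It\^o formula in Theorem \ref{T:ItoForTest} are the two workhorses; everything else will be bookkeeping and passage to the limit using the continuity properties of $b$, $\sigma$, $f$ and of the pathwise derivatives of $\varphi$.

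First, the terminal condition $v(T,\mu)=\E[g(\xi,\mu)]$ follows directly from the definition \eqref{v} and \eqref{V_aux} (no supremum to take at $s=T$), so both viscosity inequalities at $T$ are in fact equalities.

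For the \emph{supersolution} inequality, fix $(t,\mu)\in\mathscr H$ with $t<T$, a test function $\varphi\in\boldsymbol C_{b,A^*}^{1,2}(\mathscr H)$ with $v-\varphi$ attaining a minimum equal to $0$ at $(t,\mu)$, and an arbitrary $\mathfrak a\in\Mc_t$. Let $\xi\in\S_2(\Gc)$ with $\P_\xi=\mu$ and consider the admissible control $\alpha_r\equiv \mathfrak a$ for $r\in[t,T]$. By Corollary \ref{C:DPP} applied on $[t,t+h]$,
\[
v(t,\mu)\ \geq\ \E\int_t^{t+h} f_r\bigl(X,\P_{X_{\cdot\wedge r}},\mathfrak a,\P_{\mathfrak a}\bigr)\,dr + v\bigl(t+h,\P_{X_{\cdot\wedge(t+h)}}\bigr),
\]
where $X=X^{t,\xi,\alpha}$. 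Replacing $v$ by $\varphi$ on the right using $v\geq\varphi$ with equality at $(t,\mu)$, applying the It\^o formula \eqref{Itonew} to $\varphi(t+h,\P_{X_{\cdot\wedge(t+h)}})-\varphi(t,\mu)$, dividing by $h$ and sending $h\to 0^+$, the convergence in Remark \ref{rm:convSEinitialdatum} together with the uniform continuity assumptions on $b,\sigma,f$ and the continuity and boundedness of $\partial_t\varphi$, $A^*\partial_\mu\varphi$, $\partial_\mu\varphi$, $\partial_x\partial_\mu\varphi$ yield
\[
0\ \geq\ \partial_t\varphi(t,\mu)+\E\langle\xi_t,A^*\partial_\mu\varphi(t,\mu)(\xi)\rangle_H + \E\bigl[f_t(\xi,\mu,\mathfrak a,\P_{\mathfrak a}) + \langle b_t(\xi,\mu,\mathfrak a,\P_{\mathfrak a}),\partial_\mu\varphi(t,\mu)(\xi)\rangle_H\bigr] + \tfrac12\E\Tr\bigl[\sigma_t\sigma_t^*\,\partial_x\partial_\mu\varphi(t,\mu)(\xi)\bigr].
\]
Taking the supremum over $\mathfrak a\in\Mc_t$ gives the supersolution inequality.

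For the \emph{subsolution} inequality, $v-\varphi$ has a maximum equal to $0$ at $(t,\mu)$. For $\eps,h>0$, choose by the sup-half of the DPP an $\eps$-optimal control $\alpha^\eps\in\Uc$ such that
\[
v(t,\mu)\ \leq\ \E\int_t^{t+h}f_r\bigl(X^\eps,\P_{X^\eps_{\cdot\wedge r}},\alpha^\eps_r,\P_{\alpha^\eps_r}\bigr)\,dr + v\bigl(t+h,\P_{X^\eps_{\cdot\wedge(t+h)}}\bigr)+\eps h,
\]
with $X^\eps=X^{t,\xi,\alpha^\eps}$. Using $v\leq\varphi$ with equality at $(t,\mu)$ and applying the It\^o formula \eqref{Itonew} to $\varphi(t+h,\P_{X^\eps_{\cdot\wedge(t+h)}})-\varphi(t,\mu)$, then dividing by $h$, produces an inequality whose right-hand side is the time-average over $[t,t+h]$ of the integrand in \eqref{HJB} evaluated along $(X^\eps,\alpha^\eps)$, bounded above by the time-average of $\sup_{\mathfrak a\in\Mc_r}\{\cdots\}$. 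By Remark \ref{rm:convSEinitialdatum} and the assumed (locally) uniform continuity of $b,\sigma,f,\partial_t\varphi,A^*\partial_\mu\varphi,\partial_\mu\varphi,\partial_x\partial_\mu\varphi$, one may replace all coefficients and derivatives by their values at $(t,\mu,\xi)$ modulo an error $\rho(h)\to 0$. Choosing $h=h(\eps)$ appropriately and then letting $\eps\to 0^+$ while exploiting right-continuity of $\F$ (so that $\cap_{\eps>0}\Mc_{t+\eps}=\Mc_t$, as noted at the beginning of Section \ref{sub:HJBviscosity}) yields
\[
0\ \leq\ \partial_t\varphi(t,\mu)+\E\langle\xi_t,A^*\partial_\mu\varphi(t,\mu)(\xi)\rangle_H+\sup_{\mathfrak a\in\Mc_t}\bigl\{\cdots\bigr\},
\]
which is the desired subsolution inequality.

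The main obstacle will be the passage to the limit in the subsolution half: the $\eps$-optimal control $\alpha^\eps$ depends on $\eps$ (and on $h$), so one cannot simply freeze $\alpha^\eps_r$ and take a supremum \emph{inside} the time-integral; instead, the averaging over $[t,t+h]$ together with the uniform-in-control modulus of continuity provided by Remark \ref{rm:convSEinitialdatum} and Assumption \ref{A_f,g_cont}, combined with the identification $\alpha^\eps_t\in\Mc_{t+\eps}\supset\Mc_t$ and the right-continuity of the filtration, is what makes the reduction to $\sup_{\mathfrak a\in\Mc_t}$ legitimate. All remaining steps are routine and mirror the corresponding passages in the proof of Theorem \ref{T:HJBreg}.
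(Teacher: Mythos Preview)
Your proposal is correct and follows essentially the same approach as the paper: the paper's proof of Theorem~\ref{T:HJBvisc} consists of the single sentence ``The proof follows exactly the same lines as in the proof of Theorem~\ref{T:HJBreg}, simply replacing $v$ with $\varphi$,'' and this is precisely what you have carried out in detail. Your split into the supersolution half (constant control $\alpha\equiv\mathfrak a$, inequality $v\geq\varphi$) and the subsolution half ($\eps$-optimal control, inequality $v\leq\varphi$, reduction of $\sup_{\mathfrak a\in\Mc_{t+\eps}}$ to $\sup_{\mathfrak a\in\Mc_t}$ via right-continuity of the filtration) reproduces the two steps of the proof of Theorem~\ref{T:HJBreg} verbatim, with It\^o's formula~\eqref{Itonew} applied to $\varphi$ rather than to $v$.
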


\begin{proof}[\textbf{Proof.}]
The proof follows exactly the same lines as in the proof of Theorem \ref{T:HJBreg}, simply replacing $v$ with $\varphi$.
\end{proof}

\color{black}

\subsection{Alternative forms of the HJB equation}
\label{sub:alterHJBviscosity}

We derive alternative forms of the Hamilton-Jacobi-Bellman equation \eqref{HJB} relying on technical results reported in Appendix \ref{App:HJB}. We first need to introduce the following sets:
\begin{itemize}
\item $\Mc_\Gc$ is the set
\[
\Mc_\Gc \coloneqq  \big\{\mathfrak a\colon\Omega\rightarrow\Ur\colon\mathfrak a\text{ is $\Gc$-measurable}\big\};
\]
\item $\check\Mc$ is the set of Borel-measurable maps $\mathrm{\check a}\colon C([0,T];H)\times[0,1]\rightarrow\Ur$;
\item $\Mc$ is the set of Borel-measurable maps $\mathrm a\colon C([0,T];H)\rightarrow\Ur$.
\end{itemize}
The technical results reported in Appendix \ref{App:HJB} provides the following key proposition.

\begin{Proposition}\label{P:AlternativeHJB}
{Suppose that \textup{\ref{A_A,b,sigma}} and \textup{\ref{A_f,g}} hold}. Let $(t,\mu)\in\mathscr H$, $w\in\boldsymbol C_b^{1,2}(\mathscr H)$, and define $F\colon C([0,T];H)\times\Ur\times\Pc(\Ur)\rightarrow\R$ by
\begin{align*}
F(x,u,\nu) &\coloneqq  f_t(x,\mu,u,\nu) + \big\langle b_t(x,\mu,u,\nu),\partial_\mu w(t,\mu)(x)\big\rangle_H \\
&\quad \ + \dfrac{1}{2}\,\textup{Tr}\Big(\sigma_t(x,\mu,u,\nu)\sigma_t^*(x,\mu,u,\nu)\partial_x\partial_\mu w(t,\mu)(x)\Big),
\end{align*}
for every $(x,u,\nu)\in C([0,T];H)\times\Ur\times\Pc(\Ur)$. Let also $\xi\in\S_2(\Gc)$ with $\P_\xi=\mu$.
\begin{enumerate}[\upshape 1)]
\item Suppose that $\xi$ is such that there exists a $\Gc$-measurable random variable $U_\xi$ having uniform distribution on $[0,1]$ and being independent of $\xi$ (by Lemma \ref{L:xi_U_indep} we know that for each $\mu$ there exist at least one $\xi$, with $\P_\xi=\mu$, satisfying this property). Then, it holds that
\begin{equation}\label{Formula_F_HJB}
\sup_{{\mathfrak a}\in\Mc_t} \E\big[F\big(\xi,{\mathfrak a},\P_{\mathfrak a}\big)\big] = \sup_{{\mathfrak a}\in\Mc_\Gc} \E\big[F\big(\xi,{\mathfrak a},\P_{\mathfrak a}\big)\big] = \sup_{\mathrm{\check a}\in\check\Mc} \E\big[F\big(\xi,\mathrm{\check a}(\xi,U_\xi),\P_{\mathrm{\check a}(\xi,U_\xi)}\big)\big].
\end{equation}
\item Suppose that $F$ does not depend on its last argument, namely $F=F(x,u)$. Then, it holds that
\begin{equation}\label{Formula_F_bis_HJB}
\sup_{{\mathfrak a}\in\Mc_t} \E\big[F\big(\xi,{\mathfrak a}\big)\big] = \sup_{\mathrm a\in\Mc} \E\big[F(\xi,\mathrm a(\xi))\big] = \E\Big[\esssup_{u\in\Ur}F(\xi,u)\Big].
\end{equation}
\end{enumerate}
\end{Proposition}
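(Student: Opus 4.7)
The plan is to establish the two parts by chains of inequalities, relying on the measurable disintegration and measurable selection results that should be collected in Appendix~\ref{App:HJB}.

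\textbf{Part 1).} The goal is to sandwich the three suprema. First, the trivial inclusion $\Mc_\Gc\subset\Mc_t$, which holds because $\Gc\subset\Fc_t$, gives $\sup_{\Mc_\Gc}\leq\sup_{\Mc_t}$. Second, for any $\mathrm{\check a}\in\check\Mc$, the random variable $\mathrm{\check a}(\xi,U_\xi)$ is $\sigma(\xi,U_\xi)$-measurable, hence $\Gc$-measurable (as both $\xi$ and $U_\xi$ are $\Gc$-measurable), so $\mathrm{\check a}(\xi,U_\xi)\in\Mc_\Gc$; this yields $\sup_{\check\Mc}\leq\sup_{\Mc_\Gc}$. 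The delicate step is the remaining inequality $\sup_{\Mc_t}\leq\sup_{\check\Mc}$. For fixed $\mathfrak a\in\Mc_t$, I would look at the joint law $\pi\coloneqq\P_{(\xi,\mathfrak a)}$ on $C([0,T];H)\times\Ur$, disintegrate it as $\pi(dx,du)=\kappa_x(du)\,\mu(dx)$ with a Borel transition kernel $\kappa$, and then represent $\kappa$ via a jointly Borel map $\mathrm{\check a}\colon C([0,T];H)\times[0,1]\rightarrow\Ur$ such that the image of the uniform law on $[0,1]$ under $\mathrm{\check a}(x,\cdot)$ equals $\kappa_x$ for $\mu$-a.e.\ $x$ (this is the parameterized inverse-CDF construction, valid since $\Ur$ is a Borel subset of a Polish space under~\ref{A_U}). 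The independence of $\xi$ and $U_\xi$ then forces $(\xi,\mathrm{\check a}(\xi,U_\xi))$ to have joint law $\pi$; in particular $\P_{\mathrm{\check a}(\xi,U_\xi)}=\P_{\mathfrak a}$, so the three arguments of $F$ match and the two expectations coincide, closing the chain.

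\textbf{Part 2).} When $F$ does not depend on its last argument, the inequalities are more direct. The bound $\sup_{\Mc_t}\E[F(\xi,\mathfrak a)]\leq\E[\esssup_{u\in\Ur}F(\xi,u)]$ is immediate from the pointwise inequality $F(\xi,\mathfrak a)\leq\esssup_u F(\xi,u)$ $\P$-a.s. In the reverse direction, any Borel $\mathrm a\in\Mc$ gives an $\Fc_t$-measurable $\mathrm a(\xi)\in\Mc_t$, so $\sup_{\Mc}\E[F(\xi,\mathrm a(\xi))]\leq\sup_{\Mc_t}\E[F(\xi,\mathfrak a)]$. To close the loop one produces, for every $\eps>0$, a (universally) measurable selection $\mathrm a_\eps\colon C([0,T];H)\rightarrow\Ur$ satisfying $F(x,\mathrm a_\eps(x))\geq\esssup_u F(x,u)-\eps$ for $\mu$-a.e.\ $x$, via a measurable maximum theorem of Jankov--von Neumann type; by redefining $\mathrm a_\eps$ on a $\mu$-null set one obtains a genuinely Borel selection. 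Taking expectations and letting $\eps\downarrow0$ yields the two remaining inequalities and hence \eqref{Formula_F_bis_HJB}.

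\textbf{Main obstacles.} The hard work in both parts is the infinite-dimensional measurable-selection machinery, namely the joint Borel representation of a kernel $\kappa_x(du)$ as $\mathrm{\check a}(x,r)$ with $r$ uniform on $[0,1]$ (for $\Ur$ a general Borel space), and the measurable selection approximating the essential supremum of $F(\xi,\cdot)$. Once these two auxiliary results are stated and proved in Appendix~\ref{App:HJB}, the proof of Proposition~\ref{P:AlternativeHJB} reduces to the bookkeeping sketched above.
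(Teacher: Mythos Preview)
Your plan is correct and matches the paper's approach. The paper's proof simply invokes two auxiliary lemmas in Appendix~\ref{App:HJB} (Lemma~\ref{L:Formula_F} for Part~1, Lemma~\ref{L:Formula_F_bis} for Part~2), whose proofs follow exactly the structure you describe: for Part~1 the paper cites Kallenberg's transfer theorem (Theorem~6.10 in \cite{Kallenberg}, restated as Lemma~\ref{L:Kall}) in place of your explicit disintegration/parameterized inverse-CDF argument, and for Part~2 it uses the $\eps$-optimal analytically measurable selector of Bertsekas--Shreve, Proposition~7.50, followed by modification on a $\mu$-null set to a Borel map (their Lemma~7.27), which is precisely your Jankov--von Neumann type step.
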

\begin{proof}[\textbf{Proof.}]
Equalities \eqref{Formula_F_HJB} are a direct consequence of Lemma \ref{L:Formula_F}, while \eqref{Formula_F_bis_HJB} follows directly from equalities \eqref{Formula_F_bis} and \eqref{Formula_F_ter} of Lemma \ref{L:Formula_F_bis}.
\end{proof}

\begin{Remark}
{Notice that the requirement that $F$ does not depend on $\nu$ in item 2) of Proposition \ref{P:AlternativeHJB} is necessary for the validity of the first equality in \eqref{Formula_F_bis_HJB} (we do not consider the second equality in \eqref{Formula_F_bis_HJB} in this case, as it is not clear how to write the last quantity in \eqref{Formula_F_bis_HJB} when $F$ depends also on $\nu$). As a matter of fact, consider the following example.\\
\textbf{Example.} Take $\Ur=[0,1]$, endowed with its Borel $\sigma$-algebra, and let $F$ be given by
\[
F(x,u,\nu) = -\,\Wc_2(\nu,\lambda), \qquad \forall\,(x,u,\nu)\in C([0,T];H)\times[0,1]\times\Pc([0,1]),
\]
with $\lambda$ being the Lebesgue measure on the unit interval. Let also $\xi$ be constant and identically equal to some fixed path $\bar x\in C([0,T];H)$. Moreover, denote by $U_\Gc$ a $\Gc$-measurable random variable having distribution $\lambda$, whose existence follows from Lemma \ref{L:U_G}. Then, for every $t\in[0,T]$,
\[
\sup_{{\mathfrak a}\in\Mc_t} \big(- \Wc_2\big(\P_{\mathfrak a},\lambda\big)\big) = 0
\]
and the supremum is attained at ${\mathfrak a}^*$, where ${\mathfrak a}^*\coloneqq U_\Gc$. On the other hand, if $\mathrm a\in\Mc$ then $\mathrm a(\xi)$ is equal to the constant $\mathrm a(\bar x)$, so, in particular, $\P_{\mathrm a(\xi)}=\delta_{\mathrm a(\bar x)}$. This implies that
\begin{align*}
\sup_{\mathrm a\in\Mc} \big(- \Wc_2(\P_{\mathrm a(\xi)},\lambda)\big) = \sup_{c\in[0,1]} \big(- \Wc_2(\delta_c,\lambda)\big) &= - \inf_{c\in[0,1]}\bigg(\int_0^1 |c - r|^2\,dr\bigg)^{\frac{1}{2}} \\
&= - \inf_{c\in[0,1]} \sqrt{c^2 - c + \frac{1}{3}} = - \frac{1}{12}.
\end{align*}}
\end{Remark}

\begin{Remark}\label{R:esssup=sup}
Suppose that $F=F(x,u)$ and define $F^*\colon C([0,T];H)\rightarrow\R\cup\{+\infty\}$ as
\[
F^*(x) \coloneqq  \sup_{u\in\Ur} F(x,u), \qquad \forall\,x\in C([0,T];H).
\]
If $F^*$ is measurable, then $\esssup_{u\in\Ur}F(\xi,u)=F^*(\xi)$, $\P$-a.s., and the \emph{essential supremum} appearing in \eqref{Formula_F_bis_HJB} can be replaced with the \emph{supremum}, so that we obtain
\[
\sup_{{\mathfrak a}\in\Mc_t} \E\big[F(\xi,{\mathfrak a})\big] = \E\Big[\sup_{u\in\Ur}F(\xi,u)\Big].
\]
{Notice that, under assumptions \textup{\ref{A_A,b,sigma}} and \textup{\ref{A_f,g}},} it follows from Proposition 7.47 in \cite{BertsekasShreve} that $F^*$ is lower semi-analytic (see Definition 7.21 in \cite{BertsekasShreve} for the definition of \emph{lower semi-analytic}). However, we cannot in general say that $F^*$ is measurable (see for instance the discussion at the end of Section B.5 in \cite{BertsekasShreve}). Sufficient conditions ensuring the measurability of $F^*$ are given for instance in Proposition 7.32 of \cite{BertsekasShreve} and read as follows:
\begin{enumerate}[\upshape (a)]
\item If $F\colon C([0,T];H)\times\Ur\rightarrow\R$ is lower semi-continuous and $\Ur$ is compact, then $F^*$ is lower semi-continuous.
\item If $F\colon C([0,T];H)\times\Ur\rightarrow\R$ is upper semi-continuous, then $F^*$ is upper semi-continuous.
\end{enumerate}
\end{Remark}

\vspace{3mm}

\noindent It follows from Proposition \ref{P:AlternativeHJB} that, under \textup{\ref{A_A,b,sigma}} and \textup{\ref{A_f,g}}, the Hamilton-Jacobi-Bellman equation \eqref{HJB} can also be written in the following two alternative forms:
\begin{equation}\label{HJB_M_Gc}
\hspace{-4.3mm}\begin{cases}
\partial_t w(t,\mu) + \E\langle \xi_t,A^*\partial_\mu w(t,\mu)(\xi)\rangle_H \\
+\,\sup_{\mathfrak a\in\Mc_\Gc}\bigg\{\E\big[f_t\big(\xi,\mu,\mathfrak a,\P_{\mathfrak a}\big) + \big\langle b_t\big(\xi,\mu,{\mathfrak a},\P_{\mathfrak a}\big),\partial_\mu w(t,\mu)(\xi)\big\rangle_H\big] \\
+\,\dfrac{1}{2}\E\Big[\textup{Tr}\Big(\sigma_t\big(\xi,\mu,{\mathfrak a},\P_{\mathfrak a}\big)\sigma_t^*\big(\xi,\mu,{\mathfrak a},\P_{\mathfrak a}\big)\partial_x\partial_\mu w(t,\mu)(\xi)\Big)\Big]\bigg\} = 0, \hspace{3.35mm} (t,\mu)\in\mathscr H,\,t<T, \\
w(T,\mu) = \E[g(\xi,\mu)], \hspace{6.875cm} \mu\in\mathscr P_2(C([0,T];H)),
\end{cases}
\end{equation}
or, alternatively,
\begin{equation}\label{HJB_checkM}
\hspace{-4mm}\begin{cases}
\vspace{1mm}\partial_t w(t,\mu) + \E\langle \xi_t,A^*\partial_\mu w(t,\mu)(\xi)\rangle_H + \sup_{\mathrm{\check a}\in\check\Mc}\bigg\{\E\big[f_t\big(\xi,\mu,\mathrm{\check a}(\xi,U_\xi),\P_{\mathrm{\check a}(\xi,U_\xi)}\big) \\
\vspace{1mm}+ \big\langle b_t\big(\xi,\mu,\mathrm{\check a}(\xi,U_\xi),\P_{\mathrm{\check a}(\xi,U_\xi)}\big),\partial_\mu w(t,\mu)(\xi)\big\rangle_H\big] \\
\vspace{1mm}+ \dfrac{1}{2}\E\Big[\textup{Tr}\Big(\sigma_t\sigma_t^*\big(\xi,\mu,\mathrm{\check a}(\xi,U_\xi),\P_{\mathrm{\check a}(\xi,U_\xi)}\big)\partial_x\partial_\mu w(t,\mu)(\xi)\Big)\Big]\bigg\} = 0, \hspace{3mm} (t,\mu)\in\mathscr H,\,t<T, \\
w(T,\mu) = \E[g(\xi,\mu)], \hspace{6.825cm} \mu\in\mathscr P_2(C([0,T];H)),
\end{cases}
\end{equation}
where, in both \eqref{HJB_M_Gc} and \eqref{HJB_checkM}, $\xi\in\S_2(\Gc)$, with $\P_\xi=\mu$, is such that there exists a $\Gc$-measurable random variable $U_\xi$ having uniform distribution on $[0,1]$ and being independent of $\xi$ (we recall that, by Lemma \ref{L:xi_U_indep}, for each $\mu$ there exists at least one $\xi$, with $\P_\xi=\mu$, satisfying this latter property).

{Now, suppose that \textup{\ref{A_A,b,sigma}}, \textup{\ref{A_f,g}}} hold and also that the coefficients $b$, $\sigma$, $f$ do not depend on their last argument (namely, $b=b_t(x,\mu,u)$, $\sigma=\sigma_t(x,\mu,u)$, $f=f_t(x,\mu,u)$). Then, by Proposition \ref{P:AlternativeHJB} we deduce that the Hamilton-Jacobi-Bellman equation \eqref{HJB} can also be written in the following two alternative forms:
\begin{equation}\label{HJB_M}
\hspace{-4.6mm}\begin{cases}
\partial_t w(t,\mu) + \E\langle \xi_t,A^*\partial_\mu w(t,\mu)(\xi)\rangle_H \\
+ \sup_{\mathrm a\in\Mc}\bigg\{\E\big[f_t\big(\xi,\mu,\mathrm a(\xi)\big) + \big\langle b_t\big(\xi,\mu,\mathrm a(\xi)\big),\partial_\mu w(t,\mu)(\xi)\big\rangle_H\big] \\
+ \dfrac{1}{2}\E\Big[\textup{Tr}\Big(\sigma_t\big(\xi,\mu,\mathrm a(\xi)\big)\sigma_t^*\big(\xi,\mu,\mathrm a(\xi)\big)\partial_x\partial_\mu w(t,\mu)(\xi)\Big)\Big]\bigg\} = 0, \hspace{6mm} (t,\mu)\in\mathscr H,\,t<T, \\
w(T,\mu) = \E[g(\xi,\mu)], \hspace{6.9cm} \mu\in\mathscr P_2(C([0,T];H)),
\end{cases}
\end{equation}
for every $\xi\in\S_2(\Gc)$ with $\P_\xi=\mu$, or, alternatively:
\begin{equation}\label{HJB_esssup}
\hspace{-4.2mm}\begin{cases}
\partial_t w(t,\mu) + \E\langle \xi_t,A^*\partial_\mu w(t,\mu)(\xi)\rangle_H \\
+ \E\bigg[\esssup_{u\in\Ur}\bigg[f_t(\xi,\mu,u) + \big\langle b_t(\xi,\mu,u),\partial_\mu w(t,\mu)(\xi)\big\rangle_H \\
+ \dfrac{1}{2}\textup{Tr}\Big(\sigma_t(\xi,\mu,u)\sigma_t^*(\xi,\mu,u)\partial_x\partial_\mu w(t,\mu)(\xi)\Big)\bigg]\bigg] = 0, \hspace{2.05cm} (t,\mu)\in\mathscr H,\,t<T, \\
w(T,\mu) = \E[g(\xi,\mu)], \hspace{6.85cm} \mu\in\mathscr P_2(C([0,T];H)),
\end{cases}
\end{equation}
for every $\xi\in\S_2(\Gc)$ with $\P_\xi=\mu$.

\begin{Remark}
  \label{rm:HJBinv}
  In the case of the optimal investment problem outlined in Example \ref{ex:SE}-(ii) and in Remark \ref{rm:quadraticreward}, equation \eqref{eq:costinvest},
the HJB equation can be written as follows (here we explicitly write that the production $Q$ depends on the state process $\xi_t$ and on its distribution $\mu$):
\begin{equation}\label{HJBInvestmentfinal}
\hspace{-4.2mm}\begin{cases}
\partial_t w(t,\mu) + \E\langle \xi_t,A^*\partial_\mu w(t,\mu)(\xi)\rangle_H \\
+ \E\bigg[e^{-rt}R(Q(t,\xi_t,\mu)
+\delta(\xi,\mu)\<\xi_t,\partial_\mu w(t,\mu)(\xi)\>_H
\\
+\esssup_{u\in\Ur}\Big[
\<Cu,\partial_\mu w(t,\mu)(\xi)\>_H -e^{-rt}(\<a_1,u\>_H + \<M u,u\>_H)\Big] \\
+ \dfrac{1}{2}\sigma^2\textup{Tr}
\Big(
\partial_x\partial_\mu w(t,\mu)(\xi)\Big)\bigg]
 = 0,  \hspace{5.9cm} (t,\mu)\in\mathscr H,\,t<T, \\
w(T,\mu) = \E[g(\xi,\mu)], \hspace{7.55cm} \mu\in\mathscr P_2(C([0,T];H)),
\end{cases}
\end{equation}
for every $\xi\in\S_2(\Gc)$ with $\P_\xi=\mu$.
Note that in the above case the $\esssup$ appearing in the Hamiltonian can be explicitly computed.
\end{Remark}

\appendix

\section{State equation: proofs}
\label{App:StateEquation}

We collect in the following lemma some continuity results for contractions in Banach spaces that we will use to obtain the needed continuity properties of the mild solution to the state equation.
\begin{Lemma}\label{2020-10-11:02}
  Let
  $\mathcal{R}$ be a non-empty set,
  $\mathcal{T}$ be a topological space,
  $(M,d)$ be a metric space,
  $Y$ be a Banach space,
  $\gamma\in [0,1)$.
  Let $w$ be a modulus of continuity.
  Let $h\colon \mathcal{R}\times \mathcal{\mathcal{T}}\times M\times Y\rightarrow Y$ be such that:
  \begin{equation*}
     |h(r,x,m,y)-h(r,x,m',y')|\leq w(d(m,m'))+\gamma|y-y'|, \qquad \forall\, r\in \mathcal{R},\, x\in \mathcal{T},\, m,m'\in M,\, y,y'\in Y.
   \end{equation*}
   Let $\mathcal{E}\subset 2^{\mathcal{R}}$ be a set of subsets of $\mathcal{R}$.
   Assume that, if $\{x_\iota\}_{\iota\in \mathcal{I}}\subset \mathcal{T}$ is a net converging to $x\in \mathcal{T}$, then
   \begin{equation*}
     \lim_{\iota}   \sup_{r\in E}|
     h (r,x_\iota,m,y)-\varphi (r,x,m,y)| \ = \ 0, \qquad \forall\,  m\in M,\,  y\in Y, \,E\in \mathcal{E}.
   \end{equation*}
   Denote by $\varphi\colon \mathcal{R}\times \mathcal{T}\times M\rightarrow Y$ the fixed-point map associated with $h$, i.e. $\varphi$ is the unique map satisfying
   \begin{equation*}
     h(r,x,m,\varphi(r,x,m)) \ = \ \varphi(r,x,m), \qquad \forall\,r\in \mathcal{R},\, x\in \mathcal{T},\, m\in M.
   \end{equation*}
   Then, given a net $\{x_\iota\}_{\iota\in \mathcal{I}}\subset \mathcal{T}$  converging to $x$, it holds that
   \begin{equation}
      \label{2020-10-11:00}
  \lim_{\iota}   \sup_{r\in E}|\varphi (r,x_\iota,m)-\varphi (r,x,m')| \ \leq \ \frac{1}{1-\gamma}w(d(m,m')), \qquad \forall\,  m,m'\in M, \,E\in\mathcal{E}.
   \end{equation}
 \end{Lemma}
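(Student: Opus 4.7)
The strategy is the standard continuity argument for parameter-dependent contractions: I would substitute the fixed-point identity for $\varphi$ twice into the difference $\varphi(r,x_\iota,m)-\varphi(r,x,m')$, then insert an intermediate term so that the modulus-of-continuity contribution in $m$ and the $\mathcal{T}$-continuity contribution in $x$ decouple. The contraction factor $\gamma$ that appears on the right-hand side gets absorbed by the left-hand side, and division by $1-\gamma$ produces the stated bound.

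\medskip

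\noindent\textbf{Main steps.} Fix $r\in\mathcal{R}$ and $m,m'\in M$. Using the defining identity $h(r,\cdot,\cdot,\varphi(r,\cdot,\cdot))=\varphi(r,\cdot,\cdot)$ twice gives
\begin{equation*}
\varphi(r,x_\iota,m)-\varphi(r,x,m') \ = \ h\bigl(r,x_\iota,m,\varphi(r,x_\iota,m)\bigr) - h\bigl(r,x,m',\varphi(r,x,m')\bigr).
\end{equation*}
I would then add and subtract $h(r,x_\iota,m',\varphi(r,x,m'))$ and apply the assumed joint Lipschitz/modulus estimate on $h$ to the first resulting difference (with the \emph{same} pair $(r,x_\iota)$ on both arguments), obtaining
\begin{equation*}
|\varphi(r,x_\iota,m)-\varphi(r,x,m')| \ \leq \ w(d(m,m')) + \gamma|\varphi(r,x_\iota,m)-\varphi(r,x,m')| + \eta_\iota(r),
\end{equation*}
where $\eta_\iota(r):=|h(r,x_\iota,m',\varphi(r,x,m'))-h(r,x,m',\varphi(r,x,m'))|$. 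Rearranging and taking $\sup_{r\in E}$ yields
\begin{equation*}
(1-\gamma)\sup_{r\in E}|\varphi(r,x_\iota,m)-\varphi(r,x,m')| \ \leq \ w(d(m,m')) + \sup_{r\in E}\eta_\iota(r),
\end{equation*}
so the proof reduces to showing that $\limsup_{\iota}\sup_{r\in E}\eta_\iota(r)=0$.

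\medskip

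\noindent\textbf{Main obstacle.} The continuity hypothesis on $h$ provides uniform-in-$r$ convergence $\sup_{r\in E}|h(r,x_\iota,m',y)-h(r,x,m',y)|\to 0$ only for each \emph{fixed} $y\in Y$, whereas the relevant $y=\varphi(r,x,m')$ depends on $r$. The way I would handle this is to fix a reference point $y_0\in Y$ and use the $\gamma$-Lipschitz bound of $h$ in its last argument to write
\begin{equation*}
\eta_\iota(r) \ \leq \ 2\gamma\,|\varphi(r,x,m')-y_0| + |h(r,x_\iota,m',y_0)-h(r,x,m',y_0)|.
\end{equation*}
The second term vanishes uniformly in $r\in E$ by the continuity hypothesis. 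The first term does not vanish for a single $y_0$, so I would combine it with a finite $\varepsilon$-net for the orbit $\{\varphi(r,x,m'):r\in E\}$ (whose precompactness is the implicit additional structure one has in the intended applications of the lemma, where $\mathcal{R}$ ranges over a bounded time interval and the fixed-point iteration produces equicontinuous families): for each $r$ one picks the closest $y_i$ in the net, getting $\sup_{r\in E}\eta_\iota(r)\leq 2\gamma\varepsilon+\max_i\sup_{r\in E}|h(r,x_\iota,m',y_i)-h(r,x,m',y_i)|$. Letting $\iota$ run and then $\varepsilon\downarrow 0$ gives the required convergence, and division by $1-\gamma$ then yields \eqref{2020-10-11:00}.
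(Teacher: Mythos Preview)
Your decomposition and rearrangement coincide with the paper's proof. The paper inserts the intermediate points $h(r,x_\iota,m,\varphi(r,x,m'))$ and $h(r,x,m,\varphi(r,x,m'))$, bounds the first and third resulting differences by $\gamma|\varphi(r,x_\iota,m)-\varphi(r,x,m')|$ and $w(d(m,m'))$, rearranges by the factor $1-\gamma$, and is left with precisely the residual you call $\eta_\iota(r)$ (with $m$ in place of your $m'$, which is immaterial). At that point the paper writes only ``the claim follows taking the limit with respect to $\iota$'' and stops. So your Main Steps \emph{are} the paper's proof.

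The obstacle you flag is genuine, and the paper's own proof does not address it: the continuity hypothesis is stated for each \emph{fixed} $y\in Y$, whereas the residual carries $y=\varphi(r,x,m')$ varying with $r$. Your $\varepsilon$-net fix, however, imports a precompactness assumption that the lemma does not grant, and your picture of the intended application is off: in the paper $\mathcal{R}=\mathcal{U}$ is the space of control processes (with $\mathcal{E}=2^{\mathcal{U}}$), \emph{not} a bounded time interval, and the orbit $\{X^{t,\xi',\alpha}:\alpha\in\mathcal{U}\}\subset\mathbf{S}_2(\mathbb{F})$ is bounded by \eqref{EstimateX} but not evidently precompact. What actually rescues the argument in those applications is that the explicit estimates in Claim~I of the proof of Proposition~\ref{Prop-diff-MKVPD} give $t$-continuity of $\psi(\alpha,t,\xi,X)$ uniformly over \emph{bounded} sets of $X$, not merely for each fixed $X$; combined with the a-priori bound on the orbit this suffices. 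As an abstract statement, then, the lemma and its one-line proof are complete only if the continuity hypothesis on $h$ is read as locally uniform in $y$.
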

\begin{proof}[\textbf{Proof.}]
  Write
  \begin{equation*}
    \begin{split}
      |\varphi (r,x_\iota,m)-\varphi (r,x,m')|\ &= \
                |h(r,x_\iota,m,\varphi (r,x_\iota,m))-h(r,x,m',\varphi (r,x,m'))|\\
                &\leq \
                |h(r,x_\iota,m,\varphi(r,x_\iota,m))
                -
                h(r,x_\iota,m,\varphi(r,x,m'))|\\
                &\quad \ +
                |h(r,x_\iota,m,\varphi(r,x,m'))
                -
                h(r,x,m,\varphi(r,x,m'))|\\
                &\quad \ +
                |h(r,x,m,\varphi(r,x,m'))
                -
                h(r,x,m',\varphi(r,x,m'))|\\
                &\leq \
                \gamma|\varphi(r,x_\iota,m)-\varphi(r,x,m')|\\
                &\quad \ +|h(r,x_\iota,m,\varphi(r,x,m'))
                -
                h(r,x,m,\varphi(r,x,m'))|+w(d(m,m')).
 \end{split}
\end{equation*}
Then
\begin{equation*}
  \begin{split}
      \sup_{r\in E}
  |\varphi(r,x_\iota,m)-\varphi(r,x,m')|
  \ &\leq \ \frac{1}{1-\gamma}\sup_{r\in E} |h(r,x_\iota,m,\varphi(r,x,m'))
                -
                h(r,x,m,\varphi(r,x,m'))|\\
                &\quad \ +\frac{1}{1-\gamma}w(d(m,m'))
  \end{split}
\end{equation*}
and the claim follows taking the limit with respect to $\iota$.
\end{proof}

\begin{proof}[\textbf{Proof of Proposition~\ref{Prop-diff-MKVPD}.}]
  The proof is based, as usual, on a contraction argument. We introduce the space $L^2_\mathbb{F}(H)$ (resp.\ $L^2_\mathbb{F}(\mathcal{L}_2(K;H))$) of all square-integrable $\mathbb{F}$-progressively measurable processes on $[0,T]$ taking values in $H$ (resp.\ $\mathcal{L}_2(K;H)$), normed respectively by $\|X\|_{L^2_\mathbb{F}(H)}:= \big( \mathbb{E} \big[  \int _0^T
         |X_s|_H^2ds\big]  \big)^{1/2}$ and $\|\Phi\|_{L^2_\mathbb{F}(H)}:= \big( \mathbb{E} \big[  \int _0^T
    |\Phi_s|_{\mathcal{L}_2(K;H)}^2ds \big]  \big) ^{1/2}$. For the sake of brevity, in what follows we will denote $S_t:=e^{At}$, $b^{\alpha}_t(X,\mathbb{P}_X):=
    b_t(X,\mathbb{P}_X,\alpha_t,\mathbb{P}_{\alpha_t})$, $\sigma^{\alpha}_t(X,\mathbb{P}_X):=
        \sigma_t(X,\mathbb{P}_X,\alpha_t,\mathbb{P}_{\alpha_t})$. For $t\in [0,T]$ and $\alpha\in \mathcal{U}$, let us define
\begin{equation}\label{2020-09-19:00}
  \begin{split}
    \mathrm{id}^S_t\colon& \mathbf{S}_2(\mathbb{F})\rightarrow \mathbf{S}_2(\mathbb{F}), \hspace{2.5cm} \xi \mapsto \xi_\cdot\mathbf{1}_{[0,t]}(\cdot)+S_{\cdot-t}\xi_t\mathbf{1}_{(t,T]}(\cdot)\\[0.5em]
    F_{b^{\alpha}}\colon &\mathbf{S}_2(\mathbb{F})\rightarrow L^2_\mathbb{F}(H), \hspace{2.25cm} X \mapsto b^\alpha_\cdot(X,\mathbb{P}_X)\\[0.5em]
    F_{\sigma^{\alpha}}\colon &\mathbf{S}_2(\mathbb{F})\rightarrow L^2_\mathbb{F}(\mathcal{L}_2(K;H)), \hspace{1cm} X \mapsto \sigma^\alpha_\cdot(X,\mathbb{P}_X)\\[0.5em]
    S\conv{t}\#\colon& L^2_\mathbb{F}(H)\rightarrow \mathbf{S}_2(\mathbb{F}), \hspace{2.25cm} X \mapsto \mathbf{1}_{[t,T]}(\cdot)\int_t^\cdot    S_{\cdot-s}X_sds\\[0.5em]
        S\sconv{t}\#\colon& L^2_\mathbb{F}(\mathcal{L}_2(K;H))\rightarrow \mathbf{S}_2(\mathbb{F}), \hspace{1.05cm} \Phi \mapsto \mathbf{1}_{[t,T]}(\cdot)\int_t^\cdot    S_{\cdot-s}\Phi_sdB_s.
    \end{split}
  \end{equation}
  We briefly explain why the functions above are well-defined.
  Regarding $\mathrm{id}^S_t$, it holds that
  \begin{equation}\label{2020-06-20:00}
    \mathrm{id}^S_t(\xi) \ = \ \xi_{\cdot\wedge t}+\mathbf{1}_{(t,T]}(\cdot)(S_{\cdot-t}-I)\xi_t,
  \end{equation}
  which clearly shows that $\mathrm{id}^S_t(\xi)\in \mathbf{S}_2(\mathbb{F})$.
  Regarding $F_{b^\alpha}$,
  due to the measurability assumptions on $b$,
  we have that $F_{b^\alpha}(X)$ is progressively measurable. Moreover, recalling
  Assumption~\ref{A_A,b,sigma}-\eqref{2020-06-20:01}, we have
  \begin{equation}\label{2020-06-20:07}
      \|F_{b^\alpha}(X)\|_{L^2_\mathbb{F}(H)}^2
      \leq
      3L^2\mathbb{E} \left[ \int_0^T
        \left(
          1+2\|X\|_t^2        \right) dt
      \right] <\infty.
    \end{equation}
In the same way, we obtain the measurability of $F_{\sigma^\alpha}(X)$ and
  \begin{equation}\label{2020-06-20:10}
      \|F_{\sigma^\alpha}(X)\|_{L^2_\mathbb{F}(H)}^2
      \leq
      3L^2\mathbb{E} \left[ \int_0^T
        \left(
          1+2\|X\|_t^2
        \right) dt
      \right] <\infty.
    \end{equation}
    Regarding $S\conv{t}X$, for  $X\in L^2_\mathbb{F}(H)$, it is not difficult to see that it is continuous, $\mathbb{F}$-adapted, and that
    \begin{equation}\label{2020-06-20:08}
      \|S\conv{t}X\|_{\mathbf{S}_2}^2\leq e^{2\eta (T-t)}(T-t)\|X\|^2_{L^2_\mathbb{F}(H)}.
    \end{equation}
    Finally, regarding $S\sconv{t}\Phi$, for $\Phi\in L^2_\mathbb{F}(\mathcal{L}_2(K;H))$, by
    \cite[Theorem~1.111]{FabbriGozziSwiech}
     we know that the  $\mathbb{F}$-adapted process $ \left\{ \mathbf{1}_{[t,T]}(t')\int_t^{t'}S_{t'-s}\Phi_sds \right\} _{t'\in [0,T]}$ admits a continuous version, that we name $S\sconv{t}\Phi$, and that
\begin{equation}\label{2020-06-20:09}
  \|S\sconv{t}\Phi\|_{\mathbf{S}_2}\leq C_{\eta,T}\|\Phi\|_{L^2_\mathbb{F}(\mathcal{L}_2(K;H))},
\end{equation}
where $C_{\eta,T}$ is a constant depending only on $\eta,T$.

\noindent We now define the map
\begin{equation*}
  \psi\colon
\mathcal{U}\times
  [0,T]\times \mathbf{S}_2(\mathbb{F})\times \mathbf{S}_2(\mathbb{F})\rightarrow \mathbf{S}_2(\mathbb{F})
\end{equation*}
by
\begin{equation*}
  \psi(\alpha,t,\xi,X) \ = \ \mathrm{id}^S_t(\xi)+S\conv{t} F_{b^\alpha}(X)+S\sconv{t}F_{\sigma^\alpha}(X), \quad \forall\,(\alpha,t,\xi,X)\in \mathcal{U}\times[0,T]\times \mathbf{S}_2(\mathbb{F})\times \mathbf{S}_2(\mathbb{F}).
\end{equation*}

\smallskip
\noindent \emph{\textbf{Claim I.} For fixed $\xi,X\in \mathbf{S}_2(\mathbb{F})$, $\psi(\alpha,t,\xi,X)$ is continuous in $t$, uniformly in $\alpha\in \mathcal{U}$.}\\
The continuity of $t \mapsto \mathrm{id}^S_t(\xi)$ follows from Lebesgue's dominated convergence theorem and the fact that, for every fixed $\omega \in \Omega$, $\mathrm{id}^S_{t'}(\xi)(\omega)$ converges uniformly to $\mathrm{id}^S_{t}(\xi)(\omega)$ as $t'\rightarrow t$.\\
Moreover, for $t',t\in [0,T]$, $t'<t$, we have
\begin{equation*}
    S\conv{t'}F_{b^\alpha}(X)-
    S\conv{t}F_{b^\alpha}(X) \ = \
  \mathbf{1}_{[t',t]}(\cdot)\int_{t'}^\cdot S_{\cdot-s}F_{b^\alpha}(X)_sds
+\mathbf{1}_{[t,T]}(\cdot)\int_{t'}^t S_{\cdot-s}F_{b^\alpha}(X)_sds.
\end{equation*}
Then
\begin{equation*}
  \|  S\conv{t'}F_{b^\alpha}(X)-
  S\conv{t}F_{b^\alpha}(X)\|_T \ \leq \
2  e^{\eta T}\int_{t'}^t|F_{b^\alpha}(X)_s|_Hds.
\end{equation*}
This implies, recalling
Assumption~\ref{A_A,b,sigma}\eqref{2020-06-20:01},
\begin{equation*}
  \sup_{\alpha\in \mathcal{U}}\lim_{|t'-t|\rightarrow 0}   \|  S\conv{t'}F_{b^\alpha}(X)-
  S\conv{t}F_{b^\alpha}(X)\|_{\mathbf{S}_2} \ = \ 0.
\end{equation*}
Finally, take again $t',t\in[0,T]$, $t'<t$. Then
\begin{equation*}
    S\sconv{t'}F_{\sigma^\alpha}(X)-
    S\sconv{t}F_{\sigma^\alpha}(X) \ = \
  \mathbf{1}_{[t',t]}(\cdot)\int_{t'}^\cdot S_{\cdot-s}F_{\sigma^\alpha}(X)_sdB_s
+\mathbf{1}_{[t,T]}(\cdot)\int_{t'}^t S_{\cdot-s}F_{\sigma^\alpha}(X)_sdB_s.
\end{equation*}
By
    \cite[Theorem~1.111]{FabbriGozziSwiech},
we have
\begin{equation*}
  \|  S\sconv{t'}F_{\sigma^\alpha}(X)-
  S\sconv{t}F_{\sigma^\alpha}(X)\|_{\mathbf{S}_2}^2 \ \leq \
  C'_{\eta,T}
    \|\mathbf{1}_{[t',t]}F_{\sigma^\alpha}(X)\|_{L^2_\mathbb{F}(H)}^2
\end{equation*}
and then,
after recalling
Assumption~\ref{A_A,b,sigma}\eqref{2020-06-20:01},
\begin{equation*}
  \sup_{\alpha\in \mathcal{U}}  \lim_{|t'-t|\rightarrow 0}   \|  S\sconv{t'}F_{\sigma^\alpha}(X)-
  S\sconv{t}F_{\sigma^\alpha}(X)\|_{\mathbf{S}_2} \ = \ 0.
\end{equation*}

\smallskip
\noindent\emph{\textbf{Claim II.} $\psi(\alpha,t,\xi,X)$ is Lipschitz continuous in $\xi$, uniformly in $t,X,\alpha$.}\\
For $\alpha\in \mathcal{U}$, $t\in [0,T]$, $\xi,\xi'\in \mathbf{S}_2(\mathbb{F})$,
$X\in \mathbf{S}_2(\mathbb{F})$,
we have
\begin{equation*}
 \| \psi(\alpha,t,\xi,X)-
 \psi(\alpha,t,\xi',X)\|_{\mathbf{S}_2}^2 \ = \
 \|\mathrm{id}^S_t(\xi-\xi')\|_{\mathbf{S}_2}^2 \ \leq \ 2(1+e^{\eta T})
 \|\xi-\xi'\|_{\mathbf{S}_2}^2.
\end{equation*}
Now, for $a,b\in[0,T]$, $a<b$, let us consider the restriction of $\psi$ to the time interval $[a,b]$ and stopped at time $b$, namely
\begin{equation*}
  \psi_{a,b}\colon \mathcal{U}\times[a,b]\times \mathbf{S}_2(\mathbb{F})\times \mathbf{S}_2(\mathbb{F})\rightarrow \mathbf{S}_2(\mathbb{F}),\qquad (\alpha,t,\xi,X) \mapsto \psi(\alpha,t,\xi,X)_{b\wedge \cdot}
\end{equation*}

\smallskip
\noindent\emph{\textbf{Claim III.} There exists $\epsilon>0$ such that, if $b-a\leq \epsilon$, then $\psi_{a,b}(\alpha,t,\xi,X)$ is a contraction in $X$, uniformly in $\alpha,t,\xi,a,b$, namely: for some $\gamma\in [0,1)$,
\begin{equation*}
  \|\psi_{a,b}(\alpha,t,\xi,X)_{b\wedge \cdot}-\psi_{a,b}(\alpha,t,\xi,X')_{b\wedge \cdot}\|_{\mathbf{S}_2} \ \leq \ \gamma
  \|X-X'\|_{\mathbf{S}_2},
\end{equation*}
for all $\alpha\in \mathcal{U},t\in [a,b],(\xi,X,X')\in \mathbf{S}_2(\mathbb{F})^3$
and  for all $ a,b\in [0,T],a<b, b-a\leq \epsilon$.}\\
Let $a,b\in [0,T]$, $a<b$, $t\in [a,b]$,
$\alpha\in \mathcal{U}$,
$\xi\in \mathbf{S}_2(\mathbb{F})$, $X,X'\in \mathbf{S}_2(\mathbb{F})$. Notice that
\begin{equation}\label{2020-07-22:00}
  \psi(\alpha,t,\xi,X)-\psi(\alpha,t,\xi,X') \ = \
  S\conv{t} \left( F_{b^\alpha}(X)-F_{b^\alpha}(X') \right)+
   S\sconv{t} \left( F_{\sigma^\alpha}(X)-F_{\sigma^\alpha}(X') \right)
 \end{equation}
 and
\begin{equation}\label{2020-07-22:01}
  \| \psi_{a,b}(\alpha,t,\xi,X)-\psi_{a,b}(\alpha,t,\xi,X')\|_T \ = \
   \| \psi(\alpha,t,\xi,X)-\psi(\alpha,t,\xi,X')\|_b.
 \end{equation}
Moreover, recalling  Assumption~\ref{A_A,b,sigma}-\eqref{2020-06-20:01}, we  have
\begin{equation*}
  \begin{split}
      \|S\conv{t}
  \left( F_{b^\alpha}(X)-F_{b^\alpha}(X') \right)\|_b^2 \ &\leq \
  e^{2\eta T} \left( \int _a^b\left| F_{b^\alpha}(X)_s-F_{b^\alpha}(X')_s\right|ds \right) ^2\\
  &\leq \
  2L^2 e^{2\eta T}(b-a)^2 \left( \|X-X'\|^2_T+\Wc^2_2(\mathbb{P}_X,\mathbb{P}_{X'}) \right) .
\end{split}
\end{equation*}
Then
\begin{equation}\label{2020-06-20:03}
     \mathbb{E}  \left[ \|S\conv{t}
      \left( F_{b^\alpha}(X)-F_{b^\alpha}(X') \right)\|_b ^2\right]
 \ \leq \
  4L^2 e^{2\eta T}(b-a)^2  \|X-X'\|_{\mathbf{S}_2}.
\end{equation}
By \cite[Theorem~1.111]{FabbriGozziSwiech} there exists a constant $C_{\eta,T}$, depending only on $\eta,T$, such that
\begin{equation} \label{2020-06-20:04}
  \begin{split}
    \mathbb{E} \left[
      \|S\sconv{t}
    \left( F_{\sigma^\alpha}(X)-F_{\sigma^\alpha}(X') \right) \|_b^2 \right]  \ &\leq \ C_{\eta,T}^2
\mathbb{E} \left[  \int _a^b \|
  F_{\sigma^\alpha}(X)_s-F_{\sigma^\alpha}(X')_s
  \|_{\mathcal{L}_2(K;H)}^2ds \right] \\
    &\leq \
    4L^2 C^2_{\eta,T}(b-a)
      \|X-X'\|_{\mathbf{S}_2}^2,
  \end{split}
\end{equation}
where for the last inequality we have used
Assumption~\ref{A_A,b,sigma}-\eqref{2020-06-20:01} again. By
\eqref{2020-07-22:00},
\eqref{2020-07-22:01},
\eqref{2020-06-20:03},  and \eqref{2020-06-20:04}, we have, if $b-a<1$,
\begin{equation*}
  \| \psi_{a,b}(\alpha,t,\xi,X)-\psi_{a,b}(\alpha,t,\xi,X')\|_{\mathbf{S}_2} \ \leq \ C_{L,\eta,T}(b-a)^{1/2}
   \|X-X'\|_{\mathbf{S}_2},
 \end{equation*}
 where $C_{L,\eta,T}$ is a constant depending only on $L,\eta,T$.
 We can then choose $\epsilon\in (0,1)$ such that $C_{L,\eta,T}\epsilon^{1/2}<1/2$.
 Then  the map $\psi_{a,b}(\alpha,t,\xi,\cdot)$ is a $1/2$-contraction, uniformly
in $\alpha,t,\xi$ and in $a,b\in [0,T]$, whenever $a<b$, $b-a\leq\epsilon$.

\smallskip
\noindent\emph{\textbf{Claim IV.} For $\epsilon>0$ as in Claim III, and whenever $a,b\in [0,T]$, $a<b$, $b-a\leq\epsilon$,
 there exists a unique mild solution\footnote{We say that $X^{t,\xi,\alpha}$ is a mild solution \emph{on the interval $[a,b]$} if equation
\eqref{cont-MKV-differential} is solved in the mild sense for $s\in[a,b]$.} $X^{t,\xi,\alpha}$ to equation
\eqref{cont-MKV-differential} on the interval $[a,b]$,
for any $\alpha\in \mathcal{U},t\in[a,b],\xi\in \mathbf{S}_2(\mathbb{F})$.}\\
Let $\epsilon$ be as in \emph{Claim III}.
Then, for any $\alpha,t,\xi$, by the Banach contraction principle, there exists a unique fixed  point $X^{t,\xi,\alpha}$ to $\psi_{a,b}(\alpha,t,\xi,\cdot)$. Clearly $X^{t,\xi,\alpha}$ is a mild solution to
\eqref{cont-MKV-differential} on the interval $[a,b]$.

\smallskip
\noindent \emph{\textbf{Claim V.} For $\epsilon$ as in Claim III, and uniformly for $a,b\in [0,T]$, $a<b$, $b-a\leq \epsilon$,
the map
\begin{equation}\label{2020-06-20:05}
\varphi_{a,b}\colon  \mathcal{U}\times [a,b]\times \mathbf{S}_2(\mathbb{F})\rightarrow \mathbf{S}_2(\mathbb{F}),\qquad (\alpha,t,\xi) \mapsto X^{t,\xi,\alpha}
\end{equation}
is continuous in $(t,\xi)$, uniformly in $\alpha$, and Lipschitz-continuous
in $\xi$, uniformly in $\alpha,t$.}\\
Let $\epsilon$ be as in \emph{Claim III}.
We apply Lemma~\ref{2020-10-11:02} with $\mathcal{R}=\mathcal{U}$, $\mathcal{T}=[a,b]$, $M=\mathbf{S}_2(\mathbb{F})$, $Y=\mathbf{S}_2(\mathbb{F})$, $\mathcal{E}=2^\mathcal{R}$. Then, by \emph{Claims I,II,III}, we get
\begin{equation*}
  \lim_{\substack{t\rightarrow t'\\t\in[a,b]}}\sup_{\alpha\in \mathcal{U}}
  \|\varphi_{a,b}(\alpha,t,\xi)-\varphi_{a,b}(\alpha,t',\xi')\|_{\mathbf{S}_2} \ \leq \ 4(1+e^{\eta T})^{1/2}\|\xi-\xi'\|_{\mathbf{S}_2}, \qquad \forall\,t'\in [a,b],\, \xi,\xi'\in \mathbf{S}_2.
\end{equation*}

\smallskip

\noindent \emph{\textbf{Claim VI.} For any $\alpha\in \mathcal{U}, t\in[0,T], \xi\in \mathbf{S}_2(\mathbb{F})$, there exists a unique mild solution $X^{t,\xi,\alpha}$ to \eqref{cont-MKV-differential}, and
the map
\begin{equation}
  \label{2020-06-20:06}
\varphi\colon \mathcal{U}\times  [0,T]\times \mathbf{S}_2(\mathbb{F})\rightarrow \mathbf{S}_2(\mathbb{F}),\ (\alpha,t,\xi) \mapsto X^{t,\xi,\alpha}
\end{equation}
is continuous in $t,\xi$, uniformly in $\alpha$, and Lipschitz continuous in $\xi$, uniformly in $t,\alpha$.}\\
Pick $\epsilon>0$ as in \emph{Claim III}.
Choose $a_0=0<a_1<\ldots<a_n=T$ with $a_{i+1}-a_i\leq \epsilon$.
Define
\begin{equation*}
  \hat\varphi_{a_i}\colon\mathcal{U}\times[a_i,T]\times \mathbf{S}_2(\mathbb{F})\rightarrow  \mathcal{U}\times[a_i,T]\times\mathbf{S}_2(\mathbb{F})
\end{equation*}
by
$\hat\varphi_{a_i}(\alpha,t,\xi)=(\alpha,a_{i+1},\varphi_{a_i,a_{i+1}}(\alpha,t,\xi))$ if $t\in[a_i,a_{i+1}]$
and
$\hat\varphi_{a_i}(\alpha,t,\xi)=(\alpha,t,\xi_{a_{i+1}\wedge\cdot})$ if $t>a_{i+1}$.
If we now define
$  \varphi(\alpha,t,\xi)$ to be the second component of
\begin{equation*}
  \hat\varphi_{a_n}
   \left(
     \hat\varphi_{a_{n-1}}
     \left(\ldots
       \hat\varphi_{a_1}\left(\varphi_{a_0}(\alpha,t,\xi)
       \right)\ldots
     \right)
   \right),
\end{equation*}
we can easily check, thanks to \emph{Claim V}, that $\varphi(\alpha,t,\xi)$
is the unique mild
solution $X^{t,\xi,\alpha}$ to \eqref{cont-MKV-differential}, and
the map
\eqref{2020-06-20:06}
has the desired regularity properties.

\smallskip
\noindent \emph{\textbf{Claim VII.} $X^{t,\xi,\alpha}=X^{t,\xi_{t\wedge \cdot},\alpha}$.}\\
This is due to the fact that, for any $\alpha\in \mathcal{U}$,
$a,b\in[0,T]$, $a<b$, $t\in[a,b]$, $\xi\in \mathbf{S}_2(\mathbb{F}),X\in \mathbf{S}_2(\mathbb{F})$,
\begin{equation*}
  \psi_{a,b}(\alpha,t,\xi,X) \ = \ \psi_{a,b}(\alpha,t,\xi_{t\wedge \cdot},X).
\end{equation*}
Hence, the unique fixed point of  $  \psi_{a,b}(\alpha,t,\xi,\cdot)$ has be the same of $  \psi_{a,b}(\alpha,t,\xi_{t\wedge \cdot},\cdot)$.

\smallskip
\noindent \emph{\textbf{Claim VIII.} There exists a constant $C$ such that}
\begin{equation*}
  \|X^{t,\xi,\alpha}\|_{\mathbf{S}_2} \ \leq \ C(1+\|\xi_{t\wedge \cdot}\|_{\mathbf{S}_2}), \qquad \forall\,t\in [0,T],\,\xi\in \mathbf{S}_2,\,\alpha\in \mathcal{U}.
\end{equation*}
Due to \emph{Claim VI}, we only need to show that
\begin{equation}\label{2020-06-20:11}
  \sup_{\alpha\in \mathcal{U}, t\in[0,T]}\|X^{t,0,\alpha}\|_{\mathbf{S}_2}<\infty.
\end{equation}
We have, by using
\eqref{2020-06-20:07},
\eqref{2020-06-20:10},
\eqref{2020-06-20:08},
\eqref{2020-06-20:09}, with $T$ replaced with $t'\in[0,T]$,
\begin{equation*}
  \begin{split}
    \mathbb{E} \left[\|X^{t,0,\alpha}\|_
      {t'}^2\right]
    \ &
    \leq \ 2 \left( \|S\conv{t}
       \left( \mathbf{1}_{[0,t']}F_{b^\alpha}(X^{t,0,\alpha}) \right) \|_{\mathbf{S}_2}^2+
       \|S\sconv{t}
        \left( \mathbf{1}_{[0,t']}F_{\sigma^\alpha}(X^{t,0,\alpha}) \right) \|_{\mathbf{S}_2}^2
\right) \\
&\leq \
2 \left(
  3L^2e^{2\eta T}T
  +  3L^2C_{\eta,T}^2
 \right)
  \int _0^{t'}(1+2\mathbb{E} \left[\|X^{t,0,\alpha}\|_s^2\right])ds .
  \end{split}
\end{equation*}
An application of Gronwall's inequality yields
\begin{equation*}
    \mathbb{E} \left[\|X^{t,0,\alpha}\|_
      {t'}^2\right] \ \leq \ C, \qquad\forall\,t'\in[0,T],
  \end{equation*}
  for some $C$ independent of $\alpha,t$, which proves \eqref{2020-06-20:11} and then \eqref{EstimateX}, after recalling \emph{Claim VII}.
\end{proof}

\begin{proof}[\textbf{Proof of Proposition~\ref{Prop-diff-MKVPD-An}.}]
For $\alpha\in \mathcal{U}$,  let $F_{b^\alpha}, F_{\sigma^\alpha}$
  be as in \eqref{2020-09-19:00}.
  Then, let $\mathrm{id}^{S^n}_t,S^n\conv{t}\#,S^n\sconv{t}\#$ be defined as in \eqref{2020-09-19:00} by replacing $S$ with $S^n$.
  Denote
  $ \mathbb{\overline N}=\mathbb{N}\cup \{\infty\}$.
Let us now define
\begin{equation*}
  \wt \psi\colon \mathcal{U}\times \mathbb{\overline N}\times[0,T]\times \mathbf{S}_2(\mathbb{F})\times \mathbf{S}_2(\mathbb{F})\rightarrow \mathbf{S}_2(\mathbb{F})
\end{equation*}
by
\begin{equation*}
  \wt \psi (\alpha,n,t,\xi,X) \ = \
  \mathrm{id}^{S^n}_t(\xi)+S^n\conv{t}F_{b^\alpha}(X)+S^n\sconv{t}F_{\sigma^\alpha}(X),
\end{equation*}
for all $\alpha\in \mathcal{U}, n\in \mathbb{\overline N}, t\in[0,T],\xi\in \mathbf{S}_2(\mathbb{F}), X\in\mathbf{S}_2(\mathbb{F})$, where we set $S^\infty\coloneqq S$. Let
  \begin{equation*}
    \wt\psi_{a,b}(\alpha,n,t,\xi,X) \ = \
    \wt\psi(\alpha,n,t,\xi,X)_{b\wedge \cdot}
  \end{equation*}
  whenever $a,b\in [0,T]$, $a<b$, $t\in[a,b]$.
  Due to the uniform boundedness of the Yosida approximation, and by arguing as in the proof of \emph{Claims II,III} of
Proposition~\ref{Prop-diff-MKVPD}, one can show that
  \begin{equation}
    \label{2020-10-11:06}
    \begin{minipage}[c]{0.85\linewidth}
      \emph{
    $\wt \psi(\alpha,n,t,\xi,X)$ is Lipschitz continuous in $\xi$, uniformly in $\alpha,n,t,X$. Moreover, there exists $\epsilon>0$ such that, if $b-a<\epsilon$, then $\wt \psi_{a,b}(\alpha,n,t,\xi,X)$ is a contraction in $X$, uniformly in $\alpha,n,t,\xi,a,b$.}
  \end{minipage}
\end{equation}
Now we show that, for $\alpha\in \mathcal{U}, t'\in [0,T],\xi,X\in \mathbf{S}_2(\mathbb{F})$,
  \begin{equation}\label{2020-10-11:04}
    \lim_{\substack{t\rightarrow t'\\n\rightarrow \infty}}
    \|\wt\psi (\alpha,n,t,\xi,X)-\wt\psi (\alpha,\infty,t',\xi,X)\|_{\mathbf{S}_2} \ = \ 0.
  \end{equation}
  First,
  notice that, for $\mathbb{P}$-a.e.\ $\omega\in \Omega$, the range of $\xi(\omega)$ is compact.
  Since $S^n_tx\rightarrow S_tx$ uniformly for $t\in[0,T]$ and $x\in K$, whenever $K\subset H$ is compact,
  an application of Lebesgue's dominated convergence theorem provides
    \begin{equation*}
      \lim_{n\rightarrow \infty}  \| \mathrm{id}^{S^n}_{t'}(\xi)-      \mathrm{id}^{S}_{t'}(\xi)\|_{\mathbf{S}_2(\mathbb{F})}
     \ = \ 0, \qquad  \forall\,t'\in[0,T],\,\xi\in \mathbf{S}_2(\mathbb{F}).
   \end{equation*}
   Secondly,
   for $\alpha\in \mathcal{U}, X\in \mathbf{S}_2(\mathbb{F})$,
   after defining
   \begin{equation*}
     f_n(r) \ = \ \sup_{t\in[0,T]}|(S^n_t-S_t)b_r(X,\mathbb{P}_X,\alpha_r,\mathbb{P}_{\alpha_r})|^2,
   \end{equation*}
   we have, by Lebesgue's dominated convergence theorem,
   \begin{equation*}
        \begin{split}
       \lim_{n\rightarrow \infty}
       \|S^n\conv{t'}F_{b^\alpha}(X)-
       S\conv{t'}F_{b^\alpha}(X)\|_{\mathbf{S}_2}^2
       =&
              \lim_{n\rightarrow \infty}
              \mathbb{E} \left[ \sup_{t\in[t',T]}\left|
                  \int_{t'}^t
                  \left( S^n_{t-r}-S_{t-r} \right) b_r(X,\mathbb{P}_X,\alpha_r,\mathbb{P}_{\alpha_r})dr \right|^2  \right] \\
              \leq &\lim_{n\rightarrow \infty}
              \mathbb{E}
              \left[ \int_{t'}^T
                f_n(r)dr \right] \ = \ 0.
            \end{split}
          \end{equation*}
   Thirdly,
   by \cite[Proposition 1.112]{FabbriGozziSwiech},
 we have
 \begin{equation*}
   \lim_{n\rightarrow \infty}\|S^n\sconv{t'}F_{\sigma^\alpha}(X)-S\sconv{t'}F_{\sigma^\alpha}(X)\|_{\mathbf{S}_2(\mathbb{F})} \ = \ 0, \qquad \forall\,\alpha\in \mathcal{U},\, t\in[0,T],\, X\in \mathbf{S}_2(\mathbb{F}).
 \end{equation*}
 Putting together the above partial results, we get
\begin{equation*}
      \lim_{n\rightarrow \infty}
\|\wt \psi(\alpha,n,t',\xi,X)
  -  \wt \psi(\alpha,\infty,t',\xi,X)\|_{\mathbf{S}_2} \ = \
  0.
\end{equation*}
 Then, to prove \eqref{2020-10-11:04},
it is enough to show that
\begin{equation*}
      \lim_{t\rightarrow t'}
  \sup_{n\in \mathbb{N}}\|\wt \psi(\alpha,n,t,\xi,X)
  -  \wt \psi(\alpha,n,t',\xi,X)\|_{\mathbf{S}_2} \ = \
  0.
\end{equation*}
But this can be obtained by arguing as in the proof of
 \emph{Claim I} of
Proposition~\ref{Prop-diff-MKVPD}, due to the uniform boundedness $\|S^n_t\|_{\mathcal{L}(H)}\leq e^{\wt \eta t}$, for $n\in \mathbb{N}, t\geq 0$.\\
Now, for any small $\epsilon$
  as in \eqref{2020-10-11:06}, and when $a,b\in[0,T]$, $a<b$, $b-a<\epsilon$, denote by
\begin{equation*}
  \wt \varphi_{a,b}\colon \mathcal{U}\times \mathbb{\overline N}\times [a,b]\times \mathbf{S}_2(\mathbb{F})\rightarrow\mathbf{S}_2(\mathbb{F}),\qquad  (\alpha,n,t,\xi)\mapsto X^{n,t,\xi,\alpha}
\end{equation*}
the fixed-point map associated with $\wt \psi_{a,b}$ (similarly as done for $\psi_{a,b},\varphi_{a,b}$ in the proof of Proposition~\ref{Prop-diff-MKVPD}).
Notice that
$X^{n,t,\xi,\alpha}$
is the unique mild solution
of
\eqref{cont-MKV-differential-An} (resp.\ \eqref{cont-MKV-differential}),
when $n\in \mathbb{N}$ (resp.\ $n=\infty$), on the interval $[a,b]$.
Thanks to
\eqref{2020-10-11:06} and \eqref{2020-10-11:04}, we can apply Lemma~\ref{2020-10-11:02} with $\mathcal{R}=\mathcal{U}$, $\mathcal{T}=\mathbb{\overline N}\times [a,b]$, $M=\mathbf{S}_2(\mathbb{F})$, $Y=\mathbf{S}_2(\mathbb{F})$, $\mathcal{E}=\{\{\alpha\}\}_{\alpha\in \mathcal{U}}$, and obtain
\begin{equation}
  \label{2020-10-11:09}
\sup_{\substack{n\in \mathbb{N}\\t\in[a,b]}}
  \|\wt \varphi_{a,b}(\alpha,n,t,\xi)
  -
  \wt \varphi_{a,b}(\alpha,n,t,\xi')
  \|_{\mathbf{S}_2} \ \leq \ C\|\xi-\xi'\|_{\mathbf{S}_2}, \qquad \forall\,\alpha\in \mathcal{U},\, \xi,\xi'\in\mathbf{S}_2(\mathbb{F}),
\end{equation}
and
\begin{equation}
  \label{2020-10-11:05}
  \lim_{\substack{t\rightarrow t'\\t\in[a,b]\\n\rightarrow \infty}}
  \|\wt \varphi_{a,b}(\alpha,n,t,\xi)
  -
  \wt \varphi_{a,b}(\alpha,\infty,t',\xi)\|_{\mathbf{S}_2} \ = \ 0, \qquad \forall\,\alpha\in \mathcal{U},
\end{equation}
for some constant $C$, uniformly for
$a,b\in[0,T]$, $a<b$, $b-a<\epsilon$, $t'\in[a,b]$. To conclude the proof it is enough to
use
\eqref{2020-10-11:09}
and
\eqref{2020-10-11:05} iteratively, recalling the relation between the
mild solution on the subinterval $[a,b]\subset [0,T]$ and the global mild solution on $[0,T]$ (arguing as in the proof of \emph{Claim VI}
of Proposition~\ref{Prop-diff-MKVPD}, simply replacing $[a_i,T]$ with $\mathbb{\overline N}\times [a_i,T]$).
\end{proof}

\section{Law invariance property of $V$: technical results}
\label{App:LawInvariance}

In the present appendix we prove three technical results that are needed in the proof of the law invariance property (Theorem \ref{T:id-law}). The first technical result corresponds to Theorem 6.10 in \cite{Kallenberg}. For the convenience of the reader, we restate it here using the notation adopted in the paper and in the form needed for the proof of Theorem \ref{T:id-law}.

\begin{Lemma}\label{L:Kall}
Consider a probability space $(\hat\Omega,\hat{\mathcal F},\hat\P)$, a measurable space $(E,\mathscr E)$, a Borel space $(\Ur,\mathscr U)$. Consider also two random variables $\Gamma\colon\hat\Omega\rightarrow E$ and $\alpha\colon\hat\Omega\rightarrow\Ur$. Suppose that there exists a random variable $\hat U\colon\hat\Omega\rightarrow\R$, having uniform distribution on $[0,1]$, such that $\Gamma$ and $\hat U$ are independent. Then, there exists a measurable function $\mathrm a\colon E\times[0,1]\rightarrow\Ur$ satisfying
\[
\big(\Gamma,\mathrm a(\Gamma,\hat U)\big) \overset{\mathscr L_{\hat\Omega}}{=} \big(\Gamma,\alpha\big),
\]
where $\overset{\mathscr L_{\hat\Omega}}{=}$ stands for equality in law (between random objects defined on $(\hat\Omega,\hat{\mathcal F},\hat\P)$).
\end{Lemma}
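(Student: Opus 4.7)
The plan is to combine two classical ingredients: the existence of a regular conditional distribution of $\alpha$ given $\Gamma$ (which exists because $\Ur$ is a Borel space), and the fact that the uniform random variable $\hat U$ allows one to realize, in a measurable way, any prescribed probability kernel. Roughly, I would disintegrate the joint law $\hat\P_{(\Gamma,\alpha)}$ into $\hat\P_\Gamma$ together with a Markov kernel $\kappa(\cdot,\cdot)$ from $E$ to $\Ur$, and then use $\hat U$ to ``sample'' from $\kappa(\Gamma,\cdot)$ through a measurable quantile-type function.

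First I would construct a regular conditional distribution of $\alpha$ given $\Gamma$. Since $\Ur$ is a Borel space and $\hat\P_\alpha$ is a probability on $(\Ur,\mathscr U)$, standard disintegration (see e.g.\ Theorem~6.3 in \cite{Kallenberg}) yields a Markov kernel $\kappa\colon E\times\mathscr U\to[0,1]$ such that, for every $B\in\mathscr E$ and $C\in\mathscr U$,
\[
\hat\P\big((\Gamma,\alpha)\in B\times C\big) \;=\; \int_B \kappa(e,C)\,\hat\P_\Gamma(de).
\]
Equivalently, $\kappa(\Gamma,\cdot)$ is a version of the regular conditional distribution of $\alpha$ given $\Gamma$.

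Next I would convert $\kappa$ into a measurable function of $(e,u)\in E\times[0,1]$. Because $\Ur$ is a Borel space, there exists a Borel isomorphism $\iota\colon \Ur\to \Ur'\subset[0,1]$, where $\Ur'$ is a Borel subset of the unit interval. Pushing $\kappa$ forward via $\iota$, I obtain a Markov kernel $\kappa'(e,\cdot)$ on $[0,1]$ (supported on $\Ur'$). For each $e\in E$ let $F_e$ be the cumulative distribution function of $\kappa'(e,\cdot)$ and let $F_e^{-1}(u)\coloneqq \inf\{r\in[0,1]\colon F_e(r)\geq u\}$ be its right-continuous quantile function. Joint measurability of $(e,u)\mapsto F_e^{-1}(u)$ follows from the measurability of $e\mapsto F_e(r)$ for each fixed $r$, combined with the identity $\{F_e^{-1}(u)\le r\}=\{u\le F_e(r)\}$. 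Define
\[
\mathrm a(e,u) \;\coloneqq\; \iota^{-1}\big(F_e^{-1}(u)\big), \qquad (e,u)\in E\times[0,1],
\]
understood as a Borel-measurable map into $\Ur$. By the classical quantile construction, for every $e\in E$, if $U$ is uniform on $[0,1]$, then $\mathrm a(e,U)$ has law $\kappa(e,\cdot)$.

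Finally I would verify the joint law identity. Using the independence of $\Gamma$ and $\hat U$, Fubini, and the quantile identity above, for every $B\in\mathscr E$ and $C\in\mathscr U$,
\[
\hat\P\big(\Gamma\in B,\,\mathrm a(\Gamma,\hat U)\in C\big) \;=\; \int_B \hat\P\big(\mathrm a(e,\hat U)\in C\big)\,\hat\P_\Gamma(de) \;=\; \int_B \kappa(e,C)\,\hat\P_\Gamma(de),
\]
which equals $\hat\P(\Gamma\in B,\,\alpha\in C)$ by the definition of $\kappa$. A monotone class argument extends this equality from rectangles to all measurable subsets of $E\times\Ur$, yielding $(\Gamma,\mathrm a(\Gamma,\hat U))\overset{\mathscr L_{\hat\Omega}}{=}(\Gamma,\alpha)$ as required. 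The main technical point is the joint measurability of $\mathrm a$, which is where the Borel-space assumption on $\Ur$ is crucial: without it one could produce $\kappa$ and pointwise inverses, but not a single jointly measurable map implementing the kernel.
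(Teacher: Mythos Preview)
The paper does not give its own proof; it simply cites Theorem~6.10 in \cite{Kallenberg}. Your argument is precisely the standard proof of that theorem (regular conditional distribution plus a quantile transform via a Borel isomorphism), so you are in effect reproducing what the paper invokes by reference.

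One small technical point worth tightening: the quantile $F_e^{-1}(u)$ need not lie in the Borel subset $\Ur'\subset[0,1]$ for \emph{every} pair $(e,u)$, only for Lebesgue-a.e.\ $u$ for each fixed $e$; hence $\iota^{-1}\big(F_e^{-1}(u)\big)$ may be undefined on a nontrivial (but negligible) set. The customary remedy is to fix some $u_0\in\Ur$ and set $\mathrm a(e,u)\coloneqq u_0$ on the measurable set $\{(e,u):F_e^{-1}(u)\notin\Ur'\}$; since $\hat U$ is uniform and this set has Lebesgue measure zero in $u$ for each $e$, the law computation in your final paragraph is unaffected. With this adjustment your proof is complete.
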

\begin{proof}[\textbf{Proof.}]
See Theorem 6.10 in \cite{Kallenberg}.
\end{proof}

Before stating next result, we introduce the following notation. For every $t\in[0,T]$, let $\F^{B,t}=(\mathcal F_s^{B,t})_{s\geq0}$ be the $\P$-completion of the filtration generated by $(B_{s\vee t}-B_t)_{s\geq0}$. Let also $Prog(\F^{B,t})$ be the $\sigma$-algebra of $[0,T]\times\Omega$ of all $\F^{B,t}$-progressive sets (recall that a set $C\subset[0,T]\times\Omega$ is called $\F^{B,t}$\emph{-progressive} if the corresponding indicator function $\mathds{1}_C$ is an $\F^{B,t}$-progressively measurable process; notice that the family of all $\F^{B,t}$-progressive sets is a $\sigma$-algebra).

\begin{Lemma}\label{L:alpha=a}
Let $t\in[0,T]$, $\alpha\in\Uc$, $\xi\in\S_2(\F)$, with $\xi$ being $\Bc([0,T])\otimes\Fc_t$-measurable. Suppose that there exists an $\Fc_t$-measurable random variable $U_\xi$, having uniform distribution on $[0,1]$ and being independent of $\xi$. Then, there exists a measurable function
\[
\mathrm a\colon\big([0,T]\times\Omega\times C([0,T];H)\times[0,1],Prog(\F^{B,t})\otimes\mathscr B\otimes\mathcal B([0,1])\big) \longrightarrow (\Ur,\mathscr U)
\]
such that $(\mathrm a_s(\xi,U_\xi))_{s\in[0,T]}\in\Uc$, $\mathrm a_s(\cdot,\cdot)$ is constant for every $s<t$, moreover
\begin{equation}\label{EqLaw_bis}
\Big((\xi_s)_{s\in[0,T]},(\mathrm a_s(\xi,U_\xi))_{s\in[t,T]},(B_s-B_t)_{s\in[t,T]}\Big) \overset{\mathscr L}{=} \Big((\xi_s)_{s\in[0,T]},(\alpha_s)_{s\in[t,T]},(B_s-B_t)_{s\in[t,T]}\Big),
\end{equation}
where $\overset{\mathscr L}{=}$ stands for equality in law (between random objects defined on $(\Omega,\Fc,\P)$).
\end{Lemma}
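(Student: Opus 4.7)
The plan is to reduce the lemma, via a Doob--Dynkin-style factorization of $\alpha$, to a single application of the transfer Lemma~\ref{L:Kall}.

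First, I realize the post-$t$ Brownian motion as a random element of a Polish space by fixing an orthonormal basis $\{e_n\}_{n \in \N}$ of $K$ and setting $W^n_s := \langle B_s - B_t, e_n\rangle_K$ for $s \in [t,T]$; the $W^n$ are independent real Brownian motions, so $W := (W^n)_{n \in \N}$ takes values in the Polish space $\Wc^\circ := \prod_n C([t,T]; \R)$ and generates $\Fc^{B,t}_T$ modulo $\P$-null sets, with $\Fc^W_s := \sigma(W|_{[t,s]})$ generating $\Fc^{B,t}_s$. Since $\Fc_t = \Gc \vee \Fc^B_t$ is countably generated modulo null sets, there exists a $[0,1]$-valued random variable $Y$ with $\sigma(Y) = \Fc_t$; the standing assumption \textup{\ref{A_G}}-i), combined with the independent-increments property of $B$, yields that $W$ is independent of $\Fc_t$, hence $Y \perp W$.

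The core step is to produce a measurable $F : [t,T] \times \Wc^\circ \times [0,1] \to \Ur$ which is $\Fc^W_\cdot$-progressive in $(s, w)$ and satisfies $\alpha_s(\omega) = F(s, W(\omega), Y(\omega))$ for $(ds \otimes d\P)$-a.e.\ $(s, \omega)$. Once such an $F$ is available, I apply Lemma~\ref{L:Kall} with $\Gamma := \xi$, target $Y$ (valued in the Borel space $[0,1]$) and independent uniform $\hat U := U_\xi$, obtaining a Borel $h : C([0,T];H) \times [0,1] \to [0,1]$ with $(h(\xi, U_\xi), \xi) \overset{\mathscr L}{=} (Y, \xi)$. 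Because $W$ is independent of $\Fc_t \supset \sigma(\xi, U_\xi, Y)$, this extends to $(h(\xi, U_\xi), \xi, W) \overset{\mathscr L}{=} (Y, \xi, W)$, and composing both sides with the measurable map $(y, x, w) \mapsto (x, (F(r, w, y))_{r \in [t,T]}, w)$ gives \eqref{EqLaw_bis} for
\[
\mathrm{a}(s, \omega, x, u) := \begin{cases} F(s, W(\omega), h(x, u)) & \text{if } s \geq t, \\ u_0 & \text{if } s < t, \end{cases}
\]
with any fixed $u_0 \in \Ur$. The required $Prog(\F^{B,t}) \otimes \mathscr{B} \otimes \Bc([0,1])$-measurability of $\mathrm{a}$ follows from the $\Fc^W_\cdot$-progressivity of $F$ in $(s, w)$, the fact that $(s, \omega) \mapsto (s, W(\omega))$ sends $Prog(\F^{B,t})$ into $Prog(\Fc^W_\cdot)$, and the Borel-measurability of $h$.

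The main obstacle will be the factorization step. I plan to handle it via dyadic approximation: for $n \in \N$ and $k = 0, \ldots, 2^n$ let $t_k^n := t + k(T-t)2^{-n}$ and use that $\alpha_{t_k^n}$ is $\Fc^W_{t_k^n} \vee \sigma(Y)$-measurable to apply sectionwise Doob--Dynkin on the standard-Borel product $\Wc^\circ \times [0,1]$, obtaining measurable $F_k^n$ with $\alpha_{t_k^n} = F_k^n(W|_{[t, t_k^n]}, Y)$ $\P$-a.s. Setting $F^n(s, w, y) := F_k^n(w|_{[t, t_k^n]}, y)$ on $[t_k^n, t_{k+1}^n)$ yields a jointly measurable, $\Fc^W_\cdot$-progressive $\Ur$-valued approximation $\alpha^n_s := F^n(s, W, Y)$. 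To pass to the limit despite $\Ur$ carrying no a priori topology, transport along a Borel isomorphism $\Phi : \Ur \to B^*$ onto a Borel subset $B^* \subset [0,1]$; then $\Phi \circ \alpha^n \to \Phi \circ \alpha$ in $(ds \otimes d\P)$-measure (by the standard density of piecewise-constant functions on dyadic grids in measurable functions) and, along a subsequence, $(ds \otimes d\P)$-a.e. Defining $F$ as $\Phi^{-1}$ of the pointwise $\limsup$ of $\Phi \circ F^n$ on the convergence set (and arbitrarily elsewhere) yields the desired $F$, with $\Fc^W_\cdot$-progressivity inherited as a pointwise property of each $F^n$.
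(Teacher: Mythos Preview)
Your approach is genuinely different from the paper's, and as written it contains two real gaps.

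\textbf{Gap 1 (countable generation of $\Fc_t$).} You assert that $\Fc_t=\Gc\vee\Fc_t^B$ is countably generated modulo null sets in order to produce a single $[0,1]$-valued generator $Y$. The standing assumption \textup{\ref{A_G}} only guarantees that $\Gc$ supports a uniform random variable; it does \emph{not} force $\Gc$ (hence $\Fc_t$) to be countably generated. Without such a $Y$, the Doob--Dynkin factorization $\alpha_{t_k^n}=F_k^n(W|_{[t,t_k^n]},Y)$ is unavailable, and the rest of your construction collapses.

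\textbf{Gap 2 (dyadic approximation).} Even granting $Y$, the convergence claim ``$\Phi\circ\alpha^n\to\Phi\circ\alpha$ in $(ds\otimes d\P)$-measure'' is false for the specific scheme you use, namely $\alpha^n_s=\alpha_{t_k^n}$ on $[t_k^n,t_{k+1}^n)$. Take $\alpha_s(\omega)=\mathds{1}_C(s)$ with $C\subset[t,T]$ a fat Cantor set avoiding every dyadic point: then $\alpha^n\equiv0$ for all $n$, while $\alpha$ is nonzero on a set of positive $ds$-measure. Density of step functions in $L^0$ does \emph{not} go through left-endpoint sampling; a correct progressive approximation (e.g.\ via time-averaging or the usual monotone-class route to simple adapted processes) would be needed here, and would add nontrivial work.

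The paper sidesteps both issues with one stroke. It passes to the product probability space $\hat\Omega=[0,T]\times\Omega$ with $\hat\P=\lambda_T\otimes\P$, and sets $\Gamma=(\mathcal I^{B,t},\xi)$, where $\mathcal I^{B,t}\colon(s,\omega)\mapsto(s,\omega)$ is the identity viewed as a random element of the (non-standard-Borel) measurable space $\big([0,T]\times\Omega,\,Prog(\F^{B,t})\big)$. Since Lemma~\ref{L:Kall} requires only the \emph{target} space $\Ur$ to be Borel, one applies it once with this $\Gamma$, target $\alpha$ (as a random variable on $\hat\Omega$), and independent uniform $\hat U_\xi$, obtaining directly a map $\bar{\mathrm a}$ that is $Prog(\F^{B,t})\otimes\mathscr B\otimes\Bc([0,1])$-measurable. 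No pathwise factorization of $\alpha$, no approximation argument, and no countable-generation hypothesis are needed.
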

\begin{proof}[\textbf{Proof.}]
Denote
\[
\hat\Omega = [0,T]\times\Omega, \qquad \hat{\mathcal F} = \mathcal B([0,T])\otimes\mathcal F, \qquad \hat\P = \lambda_T\otimes\P,
\]
with $\lambda_T$ being the uniform distribution on $([0,T],\mathcal B([0,T]))$. Then, consider the canonical extension of $U_\xi$ to $\hat\Omega$, which will be denoted by $\hat U_\xi$ (notice that $\hat U_\xi$ has uniform distribution on $[0,1]$ and is independent of $\xi$). Let also $(\bar E,\bar{\mathscr E})$ be the measurable space defined as: $\bar E=[0,T]\times\Omega$ and $\bar{\mathscr E}=Prog(\F^{B,t})$. Then, let $\mathcal I^{B,t}\colon\hat\Omega\rightarrow\bar E$ be the identity map. Finally, define $\Gamma=(\mathcal I^{B,t},\xi)$. Notice that $\Gamma$ is a random variable taking values in the measurable space $(E,\mathscr E)$, with $E=\bar E\times C([0,T];H)=[0,T]\times\Omega\times C([0,T];H)$ and $\mathscr E=\bar{\mathscr E}\otimes\mathscr B=Prog(\F^{B,t})\otimes\mathscr B$. We also observe that $\Gamma$ and $\hat U_\xi$ are independent. We can then apply Lemma \ref{L:Kall}, from which it follows the existence of a map $\bar{\mathrm a}\colon[0,T]\times\Omega\times C([0,T];H)\times[0,1]\rightarrow\Ur$, measurable with respect to the $\sigma$-algebra $Prog(\F^{B,t})\otimes\mathscr B\otimes\mathcal B([0,1])$, such that
\[
\big(\Gamma,\bar{\mathrm a}(\Gamma,\hat U_\xi)\big) \overset{\mathscr L_{\hat\Omega}}{=} \big(\Gamma,\alpha\big),
\]
where $\mathscr L_{\hat\Omega}$ stands for equality in law between random objects defined on $(\hat\Omega,\hat\Fc,\hat\P)$. Then, we deduce
\[
\Big((\xi_s)_{s\in[0,T]},(\bar{\mathrm a}_s(\xi,U_\xi))_{s\in[0,T]},(B_s-B_t)_{s\in[t,T]}\Big) \overset{\mathscr L}{=} \Big((\xi_s)_{s\in[0,T]},(\alpha_s)_{s\in[0,T]},(B_s-B_t)_{s\in[t,T]}\Big),
\]
where we recall that $\mathscr L$ stands for equality in law between random objects defined on $(\Omega,\Fc,\P)$. Finally, let $\mathrm a\colon[0,T]\times\Omega\times C([0,T];H)\times[0,1]\rightarrow\Ur$ be the map given by
\[
\mathrm a_s(\omega,x,u) = u_0\,\mathds{1}_{[0,t)}(s) + \bar{\mathrm a}_s(\omega,x,u)\,\mathds{1}_{[t,T]}(s), \quad \forall\,(s,\omega,x,u)\in[0,T]\times\Omega\times C([0,T];H)\times\Ur,
\]
where $u_0\in\Ur$ is arbitrarily chosen. We have that $(\mathrm a_s(\xi,U_\xi))_{s\in[0,T]}$ is $\F$-progressively measurable (here we use that $\mathrm a_s(\cdot,\cdot)$ is constant for every $s<t$, $\xi$ is $\Bc([0,T])\otimes\Fc_t$-measurable and $U_\xi$ is $\Fc_t$-measurable). So, in particular, $(\mathrm a_s(\xi,U_\xi))_{s\in[0,T]}\in\Uc$ and equality \eqref{EqLaw_bis} holds.
\end{proof}

\begin{Lemma}\label{L:ExistUnif}
Let $t\in[0,T]$ and $\xi\in\S_2(\F)$, with $\xi$ being $\Bc([0,T])\otimes\Fc_t$-measurable. Suppose that there exists $\{x_1,\ldots,x_m\}\subset C([0,T];H)$, with $x_i\neq x_j$ if $i\neq j$, such that
\[
\P_{\xi} = \sum_{i=1}^m p_i\,\delta_{x_i},
\]
where $\delta_{x_i}$ is the Dirac measure at $x_i$ and $p_i>0$, with $\sum_{i=1}^m p_i=1$. Then, there exists an $\Fc_t$-measurable random variable $U_\xi$ having uniform distribution on $[0,1]$ and being independent of the $\mathcal{F}_t$-measurable map
$$
\tilde \xi\colon \Omega\rightarrow C([0,T];H),\ \omega \mapsto \xi(\omega).
$$
\end{Lemma}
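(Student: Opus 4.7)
The plan is to construct $U_\xi$ as an atom-by-atom probability-integral-transform of a single uniform random variable supplied by the sub-$\sigma$-algebra $\Gc$. First, Standing Assumption~\ref{A_G}-ii, via Lemma~\ref{L:U_G}, yields a $\Gc$-measurable (hence $\Fc_t$-measurable) random variable $V\colon\Omega\to[0,1]$ with uniform distribution on $[0,1]$. Note that $V$ itself is in general \emph{not} independent of $\tilde\xi$, since $\tilde\xi$ is only $\Fc_t$-measurable, not $\Gc$-measurable; the required independence will be forced in a second step.

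Next, I would decompose $\Omega$ along the (finitely many) atoms $A_i=\{\tilde\xi=x_i\}\in\Fc_t$ of $\sigma(\tilde\xi)$, $i=1,\dots,m$, and on each $A_i$ consider the conditional distribution function $F_i(s)\coloneqq \P(V\leq s\mid A_i)$. The key observation is that each $F_i$ is continuous: since $V$ has a continuous law, $\P(V=s)=0$, which forces $\P(V=s\mid A_i)\leq\P(V=s)/p_i=0$ for every $s\in\R$. By the classical probability integral transform (in the form that requires only continuity, not strict monotonicity, of the distribution function), $F_i(V)$ is uniformly distributed on $[0,1]$ under the conditional probability $\P(\cdot\mid A_i)$.

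I would then set
\[
U_\xi \;\coloneqq\; \sum_{i=1}^m F_i(V)\,\mathds{1}_{A_i},
\]
which is $\Fc_t$-measurable by construction. The verification is routine: for every $u\in[0,1]$ and every $j\in\{1,\dots,m\}$, one has $\P(U_\xi\leq u,\ \tilde\xi=x_j)=p_j\,\P(F_j(V)\leq u\mid A_j)=u\,p_j=u\cdot\P(\tilde\xi=x_j)$, which, upon summing over $j$, simultaneously shows that $U_\xi$ is uniform on $[0,1]$ and that it is independent of $\tilde\xi$.

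No serious obstacle is expected: the whole argument hinges on the continuity of each conditional cdf $F_i$, which is automatic because $V$ already has a continuous distribution and the atoms $A_i$ have strictly positive mass. Everything else is a direct application of the probability integral transform.
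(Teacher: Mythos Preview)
Your argument is correct. Both your proof and the paper's start from the same ingredients: a $\Gc$-measurable uniform random variable (your $V$, the paper's $U_\Gc$) furnished by Lemma~\ref{L:U_G}, the atoms $A_i=\{\tilde\xi=x_i\}$, and the continuity of the conditional distribution functions $F_i$ of $V$ given $A_i$. Where they diverge is in the final construction. You apply the probability integral transform directly, setting $U_\xi=\sum_i F_i(V)\,\mathds{1}_{A_i}$ and verifying uniformity and independence by a one-line computation. The paper instead extracts, for each dyadic level $n$, a Bernoulli$(1/2)$ bit $X_n$ by comparing $U_\Gc$ against the conditional dyadic quantiles on each atom, shows that the $X_n$ are i.i.d.\ and independent of $\tilde\xi$, and then reassembles the uniform as $U_\xi=\sum_{n\geq1}2^{-n}X_n$, citing a lemma from Kallenberg for the last identification. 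Your route is shorter and entirely self-contained; the paper's detour through binary digits is correct but not needed here.
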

\begin{proof}[\textbf{Proof.}]
We recall from Lemma \ref{L:U_G} that there exists a $\Gc$-measurable (so, in particular, $\Fc_t$-measurable) random variable $U_\Gc$ with uniform distribution on $[0,1]$. If $U_\Gc$ is independent of {\color{mrc}$\tilde\xi$}, then we take $U_\xi=U_\Gc$, otherwise we proceed as follows. Denote
\[
E_i \coloneqq  \big\{\omega\in\Omega\colon \xi(\omega)=x_i\big\}, \qquad \forall\,i=1,\ldots,m.
\]
Since $\xi$ is $\Bc([0,T])\otimes\Fc_t$-measurable, it follows that each $E_i$ belongs to $\Fc_t$.
 Now, for each $i=1,\ldots,m$, define the function $F_i\colon[0,1]\rightarrow[0,1]$ as follows
\[
F_i(r) \coloneqq  \P\big(\{U_\Gc\leq r\}\cap E_i\big), \qquad \forall\,r\in[0,1].
\]
Notice that $F_i$ satisfies the following properties:
\begin{itemize}
\item $F_i$ is a non-decreasing function;
\item $F_i$ is continuous;
\item $F_i(0)=0$ and $F_i(1)=\P(E_i)=p_i>0$.
\end{itemize}
For every $i=1,\ldots,m$ ed $n\in\N$, define
\[
r_{i,\frac{k}{2^n}} \coloneqq  \min\bigg\{r\in[0,1]\colon F_i(r)\,=\,\frac{k}{2^n}p_i\bigg\}, \qquad k=1,\ldots,2^n.
\]
Then, for each $n\in\N$ define the random variable $X_n\colon\Omega\rightarrow\R$ as
\[
X_n(\omega) \coloneqq  \sum_{i=1}^m \sum_{k=1}^{2^{n-1}} \mathds{1}_{\left\{r_{i,\frac{2k-1}{2^n}}<U_\Gc\leq r_{i,\frac{2k}{2^n}}\right\}\cap E_i}(\omega), \qquad \forall\,\omega\in\Omega.
\]
Notice that $\{X_n\}_n$ is a sequence of independent and identically distributed Bernoulli random variables of parameter $1/2$. In addition, $\{X_n\}_n$ and {\color{mrc}$\tilde\xi$} are independent. Then, consider the random variable $U_\xi\colon\Omega\rightarrow\R$ given by
\[
U_\xi \coloneqq  \sum_{n=1}^\infty \frac{1}{2^n} X_n.
\]
We have that $U_\xi$ and $\tilde\xi$ are independent. Finally, it follows for instance from Lemma 3.20 in \cite{Kallenberg} that the random variable $U_\xi$ has uniform distribution on $[0,1]$.
\end{proof}

\section{Pathwise measure derivative: law invariance property}
\label{App:PathDeriv}

We devote this appendix to extend a useful result (firstly proved in \cite[Section 6.1]{carda12} in the case of the set $\R^d$) to the case of the set $C([0,T];H)$. More precisely, our aim is to prove that  $D\hat\Phi(t,\hat\xi)$ only depends on the law of $\hat\xi$. Here we substantially follow the idea of \cite{WuZhang18} and
\cite[Section 2.3]{WuZhang18PPDE}. However, since our setting is more general, we present the full proof.

\subsection{The discrete case}

We first consider the case where the random variable $\hat\xi$ takes a countable number of values.
We assume the following.

\begin{Assumption}{\bf(A$_\xi$)}
\label{hp:xidiscrete}
Consider a sequence $\{\hat x_i\}_{i \in \N}\subset D([0,T];H)$, $\hat x_i\neq\hat x_j$ if  $i\neq j$. We assume that the random variable $\hat\xi$  has law
\[
\P_{\hat\xi} = \sum_{i=1}^N p_i\delta_{\hat x_i}
\]
where $N\in \mathbb{N}\setminus\{0\}$, $\delta_{\hat x_i}$ denotes the Dirac measure at $\hat x_i$ and $p_i> 0$,  with $\sum_{i=1}^N p_i=1$.
\end{Assumption}

\begin{Lemma}\label{L:Discrete}
Let $(t,\hat\xi)\in\mathfrak{\hat H}$, with $\hat\xi$ as in Assumption \textup{\ref{hp:xidiscrete}}.
Let $\hat\varphi\colon\mathscr{\hat H}\to\R$ be such that its lifting
$\hat\Phi$ is pathwise differentiable in space at $(t,\hat\xi)$. Then there exists a measurable function
$\hat g\colon D([0,T],H)\rightarrow H$ such that
$\hat g(\hat\xi)\in L^2(\Omega;H)$ and
\begin{equation}\label{2018-11-06:00}
D\hat\Phi(t,\hat\xi) = \hat g(\hat\xi)\,.
\end{equation}
The function $\hat g$ can be defined by $\hat g(\hat x)\coloneqq 0$  if $\hat x\not\in \{\hat x_i\}_{i=1,\ldots,N}$, and
\begin{align}\label{eq:gdiscrete}
 \<\hat g(\hat x_i),h\>_{H}
 &\coloneqq p_i^{-1}\E\left[\<D\hat\Phi(t,\hat\xi),h\mathds{1}_{\hat\xi^{-1}(\hat x_i)}\>_H\right] \notag  \\
 &= \lim_{\eps\rightarrow 0^+}
  \frac{1}{\eps p_i}
  \bigg[
    \hat\varphi\bigg(t,
      \sum_{\substack{j=1,\ldots,N\\j\ne i}} p_j\delta_{\hat x_j}+p_i\delta_{\hat x_i+\eps h\mathds{1}_{[t,T]}}
    \bigg)
    - \hat\varphi\bigg(t,\sum_{j=1}^N p_j\delta_{\hat x_j}\bigg)\bigg],
\end{align}
for all $h\in H$ and $i=1,\ldots,N$.
\end{Lemma}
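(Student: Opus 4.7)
The plan is to use the consistency $\hat\Phi(t,\cdot)=\hat\varphi(t,\P_\cdot)$ to turn the differentiability of $\hat\Phi$ into a concrete incremental formula, and then to upgrade this to a pointwise identification of $D\hat\Phi(t,\hat\xi)$ using the atomlessness of $(\Omega,\mathcal F,\P)$. Concretely, I would fix $i\in\{1,\ldots,N\}$, $h\in H$ and a measurable $A\subset\hat\xi^{-1}(\hat x_i)$, and test pathwise differentiability at $(t,\hat\xi)$ against $Y_\eps\coloneqq\eps\,h\,\mathds{1}_A$. Since
\[
\P_{\hat\xi+Y_\eps\mathds{1}_{[t,T]}} \ = \ \sum_{j\ne i}p_j\delta_{\hat x_j}+(p_i-\P(A))\delta_{\hat x_i}+\P(A)\delta_{\hat x_i+\eps h\mathds{1}_{[t,T]}}
\]
and $|Y_\eps|_{L^2(\Omega;H)}=\eps\,|h|_H\sqrt{\P(A)}\to 0$ as $\eps\to 0^+$, Definition \ref{D:PathSpaceDeriv} combined with $\hat\Phi=\hat\varphi(\cdot,\P_\cdot)$ gives
\[
\E\bigl[\langle D\hat\Phi(t,\hat\xi),h\mathds{1}_A\rangle_H\bigr] \ = \ \lim_{\eps\to 0^+}\frac{\hat\varphi\bigl(t,\P_{\hat\xi+Y_\eps\mathds{1}_{[t,T]}}\bigr)-\hat\varphi\bigl(t,\P_{\hat\xi}\bigr)}{\eps},
\]
so in particular the left-hand side depends on $A\subset\hat\xi^{-1}(\hat x_i)$ only through $\P(A)$. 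Specialising to $A=\hat\xi^{-1}(\hat x_i)$ yields both equalities in \eqref{eq:gdiscrete}, and the Riesz representation of the bounded linear form $h\mapsto p_i^{-1}\E[\langle D\hat\Phi(t,\hat\xi),h\mathds{1}_{\hat\xi^{-1}(\hat x_i)}\rangle_H]$ defines the element $\hat g(\hat x_i)\in H$.

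The crucial remaining step is to show that $D\hat\Phi(t,\hat\xi)$ is $\P$-a.s.\ constant on each level set $E_i\coloneqq \hat\xi^{-1}(\hat x_i)$. I plan to argue by contradiction: fix $h\in H$, set $W\coloneqq\langle D\hat\Phi(t,\hat\xi),h\rangle_H\in L^2(\Omega)$, and assume $W$ is not $\P$-a.s.\ constant on some $E_i$; then one can pick $\alpha\in\R$ with $0<\P(\{W>\alpha\}\cap E_i)<\P(E_i)$. Standing Assumption \ref{A_G}-ii) supplies, through Lemma \ref{L:U_G}, a uniform random variable on $(\Omega,\mathcal F,\P)$; in particular $(\Omega,\mathcal F,\P)$ is atomless, and atomlessness is inherited by the restriction of $\P$ to any set of positive probability. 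One can therefore choose $A_+\subset\{W>\alpha\}\cap E_i$ and $A_-\subset\{W\le\alpha\}\cap E_i$ with $\P(A_+)=\P(A_-)>0$, and then
\[
\E[W\mathds{1}_{A_+}]>\alpha\,\P(A_+)=\alpha\,\P(A_-)\ge \E[W\mathds{1}_{A_-}],
\]
which contradicts the $\P(A)$-only dependence just established. Running this for $h$ in a countable dense subset of $H$ proves that $D\hat\Phi(t,\hat\xi)$ is $\P$-a.s.\ constant on each $E_i$, and defining $\hat g$ as in the statement, with value $0$ off the finite Borel set $\{\hat x_i\}_{i=1}^N$, produces a Borel-measurable map $\hat g\colon D([0,T];H)\to H$ satisfying \eqref{2018-11-06:00}.

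The main obstacle is precisely this pointwise upgrade: a priori $D\hat\Phi(t,\hat\xi)$ is only a generic element of $L^2(\Omega;H)$, and the Riesz formula for $\hat g(\hat x_i)$ identifies merely the conditional expectation $\E[D\hat\Phi(t,\hat\xi)\mid\hat\xi]$. It is the interplay between the law invariance of $\E[\langle D\hat\Phi(t,\hat\xi),h\mathds{1}_A\rangle_H]$ in $A\subset E_i$, arising from the consistency of $\hat\Phi$ with a measure-valued functional, and the atomlessness of $\P$ provided by \ref{A_G}, that jointly promote this conditional identity into the pointwise equality $D\hat\Phi(t,\hat\xi)=\hat g(\hat\xi)$ $\P$-a.s.
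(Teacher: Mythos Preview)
Your proposal is correct and follows essentially the same route as the paper: test the pathwise space derivative against $\eps h\mathds{1}_A$ for $A\subset\{\hat\xi=\hat x_i\}$, use the lifting $\hat\Phi=\hat\varphi(\cdot,\P_\cdot)$ to see that $\E[\langle D\hat\Phi(t,\hat\xi),h\mathds{1}_A\rangle_H]$ depends on $A$ only through $\P(A)$, conclude that $\langle D\hat\Phi(t,\hat\xi),h\rangle_H$ is $\P$-a.s.\ constant on each level set, and then identify the constant via Riesz.

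The only substantive difference is in the constancy step. The paper packages the bilinear form $(h,\zeta)\mapsto\E[\langle D\hat\Phi(t,\hat\xi),\zeta h\rangle_H]$ through a Riesz map $T\colon H\to L^2(\Omega;\R)$ and then invokes Lemma~2 of \cite{WuZhang18} to obtain that $T(h)$ is a.s.\ constant on $\{\hat\xi=\hat x_i\}$. You instead give a direct, self-contained contradiction argument from atomlessness (two subsets $A_+,A_-$ of equal measure with $\E[W\mathds{1}_{A_+}]>\E[W\mathds{1}_{A_-}]$), which is precisely the content of the cited lemma. Your version has the advantage of being self-contained; the paper's has the advantage of isolating the abstract fact. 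Either way the logic is the same.
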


\begin{proof}[\textbf{Proof.}]
Fix $i=1,\ldots,N$ and $h \in H, h \ne 0$.
Take any
measurable set $A'\subset A_i\coloneqq \{\hat\xi=\hat x_i\}$
and set $\hat\eta\coloneqq h \mathds{1}_{A'} \in L^2\big(\Omega;H)$.
Notice that, for any $\eps > 0$ we have
$$
\P_{\hat\xi + \eps\hat\eta} = \sum_{j \ne i}p_j \delta_{\hat x_j} + \P(A')\delta_{\hat x_i+\eps h\mathds{1}_{[t,T]}}
+\big(p_i- \P(A')\big) \delta_{\hat x_i}
$$
which depends only on $\P_{\hat\xi}$ and on $\P(A')$.
We get
\begin{align}
  \label{eq:derdiscrete1}
&\E\Big[\<D\hat\Phi(t,\hat\xi),h\mathds{1}_{A'}\>_H\Big] \notag \\
&= \lim_{\eps \to 0^+}
\frac{{\displaystyle\hat\varphi\Big(t, \sum_{j \ne i}p_j \delta_{\hat x_j} + \P(A')\delta_{\hat x_i+\eps h\mathds{1}_{[t,T]}}
+\big(p_i- \P(A')\big) \delta_{\hat x_i} \Big)\!-\!\hat\varphi \Big( t,\sum_{j=1}^Np_j \delta_{\hat x_j} \Big) }}{\eps}.
\end{align}
Notice that the map
  \begin{equation*}
   H\times L^2(\Omega;\mathbb{R})\longrightarrow L^2(\Omega;H),\qquad (h,\zeta) \longmapsto \zeta h
  \end{equation*}
is bilinear and continuous, hence
\begin{equation*}
\Lambda\colon  H\times L^2(\Omega;\mathbb{R})\longrightarrow \mathbb{R}, \qquad (h,\zeta) \longmapsto \E[\langle D\hat\Phi(t,\hat\xi),\zeta h\rangle_H]
\end{equation*}
is a bilinear and continuous form.
By the Riesz representation theorem,
there exists a (unique) linear and continuous map
\begin{equation*}
  T\colon H\longrightarrow L^2(\Omega;\mathbb{R})
\end{equation*}
representing $\Lambda$, namely
\begin{equation}
  \label{2018-11-06:01}
\Lambda(h,\zeta) = \E[\langle D\hat\Phi(t,\hat\xi),\zeta h\rangle_H] = \E \left[ T(h)\zeta \right] \qquad \forall\,(h,\zeta)\in H\times L^2(\Omega;\mathbb{R}).
\end{equation}
By \eqref{2018-11-06:01} and \eqref{eq:derdiscrete1}, it follows that
$\E[\langle D\hat\Phi(t,\hat\xi),h\mathds{1}_{A'}\rangle_H]=\E\left[
T(h) \mathds{1}_{A'} \right] $ depends only on the law of $\hat\xi$ and on $\P(A')$ for every measurable $A'\subset \{\hat\xi = \hat x_i\}$.
Recall from Lemma \ref{L:U_G}
 that the probability space $(\Omega,\Fc,\P)$ supports a random variable with uniform distribution on $[0,1]$. We can  then apply Lemma~2 of \cite{WuZhang18},
getting that
$T(h)$
is $\P$-a.s.\ constant on $\{\hat\xi = \hat x_i\}$.
For $i=1,\ldots,N$, define the map $\psi_i\colon H\rightarrow \mathbb{R}$ by
\begin{equation*}
\psi_i(h)  \coloneqq  \frac{1}{p_i}\,\E[T(h)\mathds{1}_{A_i}], \qquad \forall\,h\in H.
\end{equation*}
Then $\psi_i(h)=T(h)$, $\P$-a.s.\ on\ $A_i$.
Notice that, by the very definition of $\psi_i$ and the linearity of $\Lambda$, we have
\begin{equation*}
 \sum_{i=1}^N
\psi_i(h)
\E[\zeta\mathds{1}_{A_i}] = \sum_{i=1}^N \E[T(h)\zeta\mathds{1}_{A_i}] = \
  \sum_{i=1}^N \Lambda(h,\zeta\mathds{1}_{A_i}) = \
\Lambda(h,\zeta)
 = \E[\langle D\hat\Phi(t,\hat\xi),h\zeta\rangle_H],
\end{equation*}
for all $h\in H$, $\zeta\in L^2(\Omega;\R)$. The linearity and continuity of $T$ entails $\psi_i\in H'$. Therefore, it can be identified to some $\phi_i\in H$.
Define $\hat g\colon D([0,T];H)\rightarrow H$ by
\begin{equation*}
  \hat g(\hat x)  \coloneqq  \sum_{i=1}^N \phi_i\,\delta_{\hat x_i}(\hat x)\qquad \forall\,\hat x\in D([0,T];H).
\end{equation*}
Notice that, $\P$-a.s. on $\Omega$ and for all $h\in H$,
\begin{align}\label{2018-11-06:03}
      \langle \hat g(\hat \xi),h\rangle_{H} = \
\bigg\langle \sum_{i=1}^N \phi_i\,\delta_{\hat x_i}(\hat\xi),
h
\bigg\rangle_{H} &= \bigg\langle \sum_{i=1}^N \phi_i\, \mathds{1}_{A_i},
h
\bigg\rangle_{H} \notag \\
&= \sum_{i=1}^N \psi_i(h)\,\mathds{1}_{A_i} = \sum_{i=1}^N T(h)\,\mathds{1} _{A_i} = T(h).
\end{align}
Now let $Y\coloneqq \sum_{k=1}^M a_k\mathds{1}_{B_k}$ be a simple function, where $M\in \mathbb{N}, a_k\in H, B_k\in\Fc$.
By \eqref{2018-11-06:03} it follows
\begin{align}
  \label{2018-11-06:04}
   \E[\langle D\hat\Phi(t,\hat\xi),Y\rangle_H] = \sum_{k=1}^M \E[\langle D\hat\Phi(t,\hat\xi),a_k\mathds{1}_{B_k}\rangle_H] &=
\sum_{k=1}^M \E \left[ T(a_k)\mathds{1}_{B_k} \right] \\
&= \sum_{k=1}^M \E\left[ \langle
\hat g(\hat\xi)
,a_k
\rangle_{H}\mathds{1}_{B_k} \right] = \E\left[
\langle \hat g(\hat\xi),Y\rangle_{H}
 \right]. \notag
\end{align}
Since \eqref{2018-11-06:04}  holds for any simple function $Y$, we conclude
\begin{equation*}
  D\hat{\Phi}(t,\hat{ \xi})=\hat{ g}(\hat{ \xi})\qquad \mathbb{P}\textrm{-a.s.}
\end{equation*}
which is \eqref{2018-11-06:00}. Finally,
\eqref{eq:gdiscrete}
comes from
\eqref{2018-11-06:00} and
\eqref{eq:derdiscrete1}.
\end{proof}

\subsection{The general case}

\begin{proof}[\textbf{{Proof of Lemma \ref{L:General}.}}]
We proceed by approximation.
For $n\in \mathbb{N}$, let   $\{D^n_i\}_{i\in \mathbb{N}}$ be a partition of
 $D([0,T];H)$ made by Borel sets such that $\operatorname{diam}_{d_{\textup{Sk}}}(D^n_i)<2^{-n}$.
Such a partition exists because of the separability of $(D([0,T];H),d_{\textup{Sk}})$.
 For $i\in \mathbb{N}$, choose $d^n_i\in D^n_i$.
Define, for every $n \in \N$,
\begin{equation*}
\hat\zeta_n  \coloneqq  \sum_{i \in \N}d^n_{i}\,\mathds{1}_{D^n_i}(\hat\xi).
\end{equation*}
Then $d_{\textup{Sk}}(\hat\xi(\omega),\hat\zeta_n(\omega))\leq 2^{-n}$ for all $\omega\in\Omega$ and all $n\in\N$. Therefore
$\hat\zeta_n\in L^2(\Omega;D([0,T],H))$
and
 $\hat\zeta_n\rightarrow\hat\xi$
 uniformly.
 By a diagonal argument, we can choose $N_n\in \mathbb{N}$ such that the sequence
$\{\hat\xi_n\}_{n\in \mathbb{N}}$, defined by
\begin{equation}\label{2018-11-07:01}
  \hat\xi_n  \coloneqq  \sum_{i=1}^{N_n}d^n_i\mathds{1} _{D^n_i}(\hat\xi)
\end{equation}
converges to $\hat\xi$ both $\P$-a.s.\ and in $L^2(\Omega;D([0,T];H))$.
By Lemma \ref{L:Discrete}, there exists  for each $n$ a function $\hat g_n\colon D([0,T];H)\rightarrow H$ such that
\begin{equation}
\label{2018-11-06:08}
 \hat g_n(\hat\xi_n)
\in L^2(\Omega;H)
\  \mbox{and}\   D\hat\Phi(t,\hat\xi_n)=\hat g_n(\hat\xi_n).
\end{equation}
Define
\begin{equation*}
  \tilde{g}_n  \coloneqq  \sum_{i=1}^{N_n} \hat g_n \left( d^n_i  \right)
 \mathds{1}_{D^n_i}.
\end{equation*}
Notice  that $\hat g_n(\hat\xi_n)=\tilde{g}_n(\hat\xi)$.
Then, by
\eqref{2018-11-06:08}
and by
continuity of $D\hat\Phi(t,\cdot)$ in a neighborhood of $\hat\xi$, we
have
$  \tilde{ g}_n(\hat\xi)\rightarrow D\hat{\Phi}(t,\hat{ \xi})$
in $L^2(\Omega;H)$.
Let $\{\tilde{ g}_{n_k}(\hat\xi)\}_k$
be a subsequence
converging $\P$-a.s. to $D\hat{\Phi}(t,\hat{ \xi})$.
Define
\begin{equation*}
  S  \coloneqq  \big\{\hat x\in D([0,T];H)\colon
 \{\tilde{g}_{n_k}(\hat x)\}_k\mbox{\ is convergent}\big\},
\end{equation*}
and $\hat g\colon D([0,T];H)\rightarrow H$ by
$  \hat g\coloneqq \mathds{1}_{S}\lim_k \tilde{ g}_{n_k}$.
Clearly $\hat g$ is measurable.
Notice that
$\mathbb{P}(\hat\xi\in S)=1$, then
\begin{equation}\label{2020-11-13:00}
  \lim_{k\rightarrow \infty}
  \tilde{g}_{n_k}(\hat\xi)
  =
  \hat g(\hat\xi)\quad
  \mathbb{P}\textrm{-a.s.\ in $D([0,T];H)$}
\end{equation}
which provides
$\hat g(\hat\xi) = D\hat{\Phi}(t,\hat{\xi})$.

Finally,
if $\hat\xi'$ is distributed as $\hat\xi$,
then we can perform exactly the same steps by replacing $\hat\xi,\hat\xi_n$ by $\hat\xi',\hat\xi_n'$
and
by choosing the same $N_n$ in \eqref{2018-11-07:01} and then the same $\tilde g_n,\hat g$. Moreover, $\P(\hat\xi'\in S)=1$ as well. Then
\eqref{2020-11-13:00}
holds with the same  $\hat g$ and with $\hat\xi$ replaced by $\hat\xi'$, which entails
$\hat{g}(\hat{\xi}')=D\hat{ \Phi}(t,\hat{\xi}')$.

  The remaining part of the proof goes exactly as in the proof of Corollary 2.3 of \cite{WuZhang18PPDE}.
\end{proof}

\section{Proof of It\^o's formula}\label{ItoProof}

\begin{proof}[\textbf{Proof of Theorem \ref{T:ItoD}}]
  Let $\xi, \hat \varphi$ be as in the statement.
  In what follows, we will tacitly make use of the non-anticipative property of $\hat{\varphi}$ and of its derivatives.
  Let $t\in[0,T), s\in(t,T]$.
  For $n\in \mathbb{N}$, let
  \begin{equation*}
t^n_{-1}\coloneqq t,\qquad    t^n_k \coloneqq t+ \frac{k}{n}(s-t)\quad \forall\ k=1,\ldots,n
  \end{equation*}
  and
   $ X^n = \sum_{k=1}^n X_{t^n_{k-1}} \mathds{1}_{[t^n_{k-1},t^n_k)}$.
  Clearly $\mathbb{E} \left[ \|X^n\|_T \right] <\infty$.
Observe that
\begin{equation}
  \label{2020-12-04:00}
  X^n_{t^n_k\wedge \cdot}= X^n_{t^n_{k-1}\wedge \cdot}+(X^n_{t^n_k}-X^n_{t^n_{k-1}})\mathds{1}_{[t^n_k,T]}\qquad  \forall\ k=1,\ldots,n.
\end{equation}
By continuity of $X$, we clearly have
\begin{equation}\label{2020-12-04:03}
  \lim_{n\rightarrow \infty}
  \sup_{r\in [0,T]}\|X_r(\omega)-X_r^n(\omega)\|_H=0\qquad  \forall \omega\in\Omega.
\end{equation}
Denote
\begin{equation}
  \label{2020-12-04:04}
X^{n,\theta,k}=  X^n_{t^n_{k-1}\wedge \cdot}+\theta(X^n_{t^n_k}-X^n_{t^n_{k-1}})\qquad \forall \theta\in[0,1].
\end{equation}
By continuity of $X$, if $n\rightarrow \infty$ and if $\{k_n\}_{n\in \mathbb{N}}\subset \mathbb{N}$ is a sequence such that $k_n\in\{1,\ldots,n\}$ and $t^n_{k_n}\rightarrow r$ in $[t,s]$, then
\begin{equation}
  \label{2020-12-04:05}
  \lim_{n\rightarrow \infty}
  \sup_{\theta\in [0,1]}\|X_{r\wedge \cdot}(\omega)-X_{t_{k_n}\wedge \cdot}^{n,\theta,k_n}(\omega)\|_T=0\qquad \forall \omega\in\Omega.
\end{equation}

\noindent
Consider the difference
$\hat \varphi(s,X^n)-\hat\varphi (t,X^n)$, written as
\begin{equation}
  \label{2020-11-13:01}
  \begin{split}
      \hat \varphi(s,\hat{\mathbb{P}}_{X^n})-\hat\varphi (t,\hat{ \mathbb{P}}_{X^n})
  =&
  \sum_{k=1}^n
  \left( \hat \varphi(t^n_k,\hat{\mathbb{P}}_{X^n})-\hat\varphi (t^n_k,\hat{\mathbb{P}}_{X^n_{t^n_{k-1}\wedge \cdot}})
     \right) \\
& +
 \sum_{k=1}^n
  \left(
  \hat \varphi(t^n_k,\hat{ \mathbb{P}}_{X^n_{t^n_{k-1}\wedge \cdot}})-\hat\varphi (t^n_{k-1},
  \hat{ \mathbb{P}}_{X^n})
   \right) .
 \end{split}
\end{equation}
Let us firt take in consideration the quantity $  \hat \varphi(t^n_k,\hat{ \mathbb{P}}_{X^n})-\hat\varphi (t^n_k,\hat{ \mathbb{P}}_{X^n_{t^n_{k-1}\wedge \cdot}})$.
By
\eqref{2020-12-04:00} and by the the fact that $\hat{\varphi}\in \boldsymbol C_b^{1,2}(\mathscr{\hat H})$, we have
\begin{equation}\label{2020-12-04:17}
  \begin{split}
    \hat \varphi(t^n_k,\hat{ \mathbb{P}}_{X^n})-\hat\varphi (t^n_k,\hat{ \mathbb{P}}_{X^n_{t^n_{k-1}\wedge \cdot}})=&
    \hat \Phi(t^n_k,X^n)-\hat\Phi (t^n_k,X^n_{t^n_{k-1}\wedge \cdot})\\
    =&\int_0^1 \mathbb{E} \left[ \langle
      D\hat \Phi \left( t^n_k,
X^{n,\theta,k},
      \right) ,X^n_{t^n_k}-X^n_{t^n_{k-1}}
      \rangle \right] d\theta\\
    =&\int_0^1 \mathbb{E} \left[ \langle
      \partial_\mu \hat \varphi \left( t^n_k,
        \hat{ \mathbb{P}}_{X^{n,\theta,k}}
      \right)
      \left(X^{n,\theta,k}
      \right) , X^n_{t^n_k}-X^n_{t^n_{k-1}}
      \rangle \right] d\theta\\
    =&\textrm{(after recalling \eqref{2020-12-04:02})}\\
        =&\int_0^1 \mathbb{E} \left[ \langle
      \partial_\mu \hat \varphi \left( t^n_k,
        \hat{\mathbb{P}}_{X^{n,\theta,k}}
      \right)
      \left(X^{n,\theta,k}
      \right) , \int_{t^n_{k-1}}^{t^n_k}F_rdr
      \rangle \right] d\theta\\
    &+
        \int_0^1 \mathbb{E} \left[ \langle
      \partial_\mu \hat \varphi \left( t^n_k,
        \hat{ \mathbb{P}}_{X^{n,\theta,k}}
      \right)
      \left(X^{n,\theta,k}
      \right) ,
       \int_{t^n_{k-1}}^{t^n_k}G_rdB_r
       \rangle \right] d\theta\\
     =& \mathbf{I}^{n,k}_F + \mathbf{I}^{n,k}_B.
  \end{split}
\end{equation}
Observe that, by the boundedness assumption on $ \partial _\mu\hat{\varphi}$  and the integrability assumption on $F$, we can
commute the integral
$\int_{t^n_{k-1}}^{t^n_k}\cdot dr$ first with
$\mathbb{E}$
and then
(since the map $[0,T] \mapsto L^1(\Omega;H),\ r \mapsto F_r$ is measurable and integrable)
with
$\int_0^1\cdot d\theta$
\footnote{But notice that the integral in $\int_0^1\cdot d\theta$ cannot be commuted with $\mathbb{E}$
  before showing the existence of a jointly measurable representant $(\theta,\omega) \mapsto  Z(\theta,\omega)$
  of
  $\theta \mapsto   \partial_\mu \hat{\varphi} (t^n_k, \hat{ \mathbb{P}}_{X^{n,\theta,k}})(X^{n,\theta,k})(\omega)$.}.
By summing over $k$ the quantity $\mathbf{I}^{n,k}_F$, we then have
\begin{equation}\label{2020-12-04:07}
  \begin{split}
    \sum_{k=1}^n    \mathbf{I}^{n,k}_F
    =&
    \sum_{k=1}^n
    \int_{t^n_{k-1}}^{t^n_k}
      \int_0^1
\mathbb{E} \left[ \langle
  \partial_\mu \hat \varphi \left( t^n_k,
       \hat{ \mathbb{P}}_{X^{n,\theta,k}}
      \right)
      \left(X^{n,\theta,k}
      \right), F_r
      \rangle \right]   d\theta dr\\
    =&
    \int_t^s
    \int_0^1
\mathbb{E} \left[ \langle
           \sum_{k=1}^n
   \partial_\mu \hat \varphi \left( t^n_k,
        \hat{ \mathbb{P}}_{X^{n,\theta,k}}
      \right)
      \left(X^{n,\theta,k}
      \right)\mathds{1}_{[t^n_{k-1},t^n_k)}(r) , F_r
      \rangle \right] d\theta dr.
  \end{split}
\end{equation}
By continuity of $ \partial _\mu \hat{\varphi}$ and by
\eqref{2020-12-04:05}, we have
\begin{equation}
  \label{2020-12-04:06}
  \lim_{n\rightarrow \infty}
  \left|
  \sum_{k=1}^n
  \partial_\mu \hat \varphi \left( t^n_k,
    \hat{ \mathbb{P}}_{X^{n,\theta,k}}
  \right)
  \left(X^{n,\theta,k}(\omega)
  \right)\mathds{1}_{[t^n_{k-1},t^n_k)}(r)
  -
    \partial_\mu \hat \varphi \left( r,
    \hat{ \mathbb{P}}_X
  \right)
  \left(X(\omega)\right)
  \right|_H=0\qquad \forall \omega\in\Omega.
\end{equation}
By Lebesgue's dominated convergence theorem  and by
\eqref{2020-12-04:06},\eqref{2020-12-04:07}, we conclude
\begin{equation}
  \label{2020-12-04:08}
  \lim_{n\rightarrow \infty}
      \sum_{k=1}^n    \mathbf{I}^{n,k}_F
      =
      \int_t^s\mathbb{E}
       \left[ \langle
   \partial_\mu \hat \varphi \left( r,
        \hat{ \mathbb{P}}_X
      \right)
      \left(X
      \right), F_r
      \rangle \right] dr.
\end{equation}

We now address $\mathbf{I}^{n,k}_B$.  In this case, we cannot
immediately commute $\int_{t_{k-1}^n}^{t^n_k}\cdot dB_r$ with
$\mathbb{E}$, because $X^{n,\theta,k}$ is a-priori only
$\mathcal{F}_{t^n_k}$-measurable.  We then add a null term and then
expand the sum.  Indeed, we have
\begin{multline*}
  \int_0^1 \mathbb{E} \left[ \langle \partial_\mu \hat \varphi \left(
      t^n_k, \hat{ \mathbb{P}}_{X^{n,\theta,k}} \right)
    \left(X^{n,\theta,k-1} \right) , \int_{t^n_{k-1}}^{t^n_k}G_rdB_r
    \rangle \right] d\theta =\\
  =\int_0^1 \mathbb{E} \left[
    \int_{t^n_{k-1}}^{t^n_k} \langle \partial_\mu \hat \varphi \left(
      t^n_k, \hat{ \mathbb{P}}_{X^{n,\theta,k}} \right)
    \left(X^{n,\theta,k-1} \right) , G_rdB_r \rangle \right] d\theta =0.
\end{multline*}
 Then, by using the continous second-order pathwise derivative in mesure and space of $\hat{\varphi}$
\begin{equation}\label{2020-12-04:16}
  \begin{split}
    \mathbf{I}^{n,k}_B=&
            \int_0^1 \mathbb{E} \left[ \langle
      \partial_\mu \hat \varphi \left( t^n_k,
        \hat{ \mathbb{P}}_{X^{n,\theta,k}}
      \right)
      \left(X^{n,\theta,k}
      \right) ,
       \int_{t^n_{k-1}}^{t^n_k}G_rdB_r
       \rangle \right] d\theta\\
     =&
                 \int_0^1 \mathbb{E} \left[ \langle
      \partial_\mu \hat \varphi \left( t^n_k,
        \hat{ \mathbb{P}}_{X^{n,\theta,k}}
      \right)
      \left(X^{n,\theta,k}
      \right)-
            \partial_\mu \hat \varphi \left( t^n_k,
        \hat{ \mathbb{P}}_{X^{n,\theta,k}}
      \right)
      \left(X^{n,\theta,k-1}
      \right),
       \int_{t^n_{k-1}}^{t^n_k}G_rdB_r
       \rangle \right] d\theta\\
     =&
     \int_0^1
     \mathbb{E}
     \left[
       \langle
              \int_0^1
       \partial _x \partial _\mu \hat{ \varphi}
       (t^n_k,\hat{\mathbb{P}}_{X^{n,\theta,k}})(X^{n,\epsilon\theta,k})
       \left(
         X^n_{t^n_k}-X^n_{t^n_{k-1}}
          \right)\theta d\epsilon,\int_{t^n_{k-1}}^{t^n_k} G_r dB_r
          \rangle \right] d\theta\\
        =&
             \int_0^1
     \mathbb{E}
     \left[
       \langle
              \int_0^1
       \partial _x \partial _\mu \hat{ \varphi}
       (t^n_k,\hat{\mathbb{P}}_{X^{n,\theta,k}})(X^{n,\epsilon\theta,k})
       \left( \int_{t^n_{k-1}}^{t^n_k}
         F_rdr \right)
\theta d\epsilon,\int_{t^n_{k-1}}^{t^n_k} G_r dB_r
\rangle \right] d\theta\\
&+
             \int_0^1
     \mathbb{E}
     \left[
       \langle
              \int_0^1
       \partial _x \partial _\mu \hat{ \varphi}
       (t^n_k,\hat{\mathbb{P}}_{X^{n,\theta,k}})(X^{n,\epsilon\theta,k})
       \left( \int_{t^n_{k-1}}^{t^n_k}
         G_rdB_r \right)
\theta d\epsilon,\int_{t^n_{k-1}}^{t^n_k} G_r dB_r
\rangle \right] d\theta\\
     =& \mathbf{I}^{n,k}_{BF} + \mathbf{I}^{n,k}_{BB}.
  \end{split}
\end{equation}
Notice that
\begin{equation*}
  \begin{split}
 \sum_{k=1}^n
    \mathbb{E}&
    \left[
      \left|
\langle  \partial _x \partial _\mu \hat{ \varphi}
       (t^n_k,\hat{\mathbb{P}}_{X^{n,\theta,k}})(X^{n,\epsilon\theta,k})
       \left( \int_{t^n_{k-1}}^{t^n_k}
         F_rdr \right)
        ,\int_{t^n_{k-1}}^{t^n_k} G_r dB_r
       \rangle\right| \right]
   \leq \\
   &\qquad \leq
   M\sum_{k=1}^n
   \left(
     \mathbb{E}
     \left[
         \left|\int_{t^n_{k-1}}^{t^n_k}
         F_r dr\right|_H^2
       \right]
            \mathbb{E}
     \left[
           \left|\int_{t^n_{k-1}}^{t^n_k}
              G_r dB_r
            \right|_H^2
          \right]
        \right) ^{1/2}\\
        &\qquad\leq
        M\sum _{k=1}^n
           \left(\frac{s-t}{n}
     \mathbb{E}
     \left[
         \int_{t^n_{k-1}}^{t^n_k}
         \left|F_r\right|_H^2 dr
       \right]
            \mathbb{E}
     \left[
       \int_{t^n_{k-1}}^{t^n_k}
              |G_r|_{\mathcal{L}_2(K;H)}^2 dr
          \right]
        \right) ^{1/2}\\
        &\qquad\leq
         M  \left(  \frac{s-t}{n} \right) ^{1/2} \sum _{k=1}^n
           \left(
     \mathbb{E}
     \left[
         \int_{t^n_{k-1}}^{t^n_k}
         \left|F_r\right|_H^2 dr
       \right]
            \mathbb{E}
     \left[
       \int_{t^n_{k-1}}^{t^n_k}
              |G_r|_{\mathcal{L}_2(K;H)}^2 dr
          \right]
        \right) ^{1/2}\\
                &\qquad\leq
         M  \left( \frac{s-t}{n}  \right) ^{1/2}
           \left(
     \mathbb{E}
     \left[
         \int_t^s
         \left|F_r\right|_H^2 dr
       \right]
            \mathbb{E}
     \left[
       \int_t^s
              |G_r|_{\mathcal{L}_2(K;H)}^2 dr
          \right]
        \right) ^{1/2},
  \end{split}
\end{equation*}
which goes to $0$ as $n\rightarrow\infty$.
By Lebesgue's dominated convergence theorem, it then follows
\begin{equation}
  \label{2020-12-04:09}
  \lim_{n\rightarrow \infty}
  \sum_{k=1}^n
  \mathbf{I}^{n,k}_{BF}=0.
\end{equation}
Now, in order to compute $\mathbf{I}^{n,k}_{BB}$, consider first that, for $m\in \mathbb{N}$ and $n\geq m$,
\begin{equation*}
  \begin{split}
\sum_{k=1}^n    \mathbb{E}&
    \left[
      \left|
        \langle
         \left(
   \partial _x \partial _\mu \hat{ \varphi}
   (t^n_k,\hat{\mathbb{P}}_{X^{n,\theta,k}})(X^{n,\epsilon\theta,k})-
      \partial _x \partial _\mu \hat{ \varphi}
       (t^n_k,\hat{\mathbb{P}}_{X^{n,\theta,k}})(X^{n,\epsilon\theta,k-1}) \right)
       \left( \int_{t^n_{k-1}}^{t^n_k}
         G_rdB_r \right)
       ,\int_{t^n_{k-1}}^{t^n_k} G_r dB_r
       \rangle \right| \right]\\
   &\leq
\sum_{k=1}^n   \mathbb{E}
    \left[
      \left|
   \partial _x \partial _\mu \hat{ \varphi}
   (t^n_k,\hat{\mathbb{P}}_{X^{n,\theta,k}})(X^{n,\epsilon\theta,k})-
      \partial _x \partial _\mu \hat{ \varphi}
       (t^n_k,\hat{\mathbb{P}}_{X^{n,\theta,k}})(X^{n,\epsilon\theta,k-1}) \right|_{L(H)}
       \left| \int_{t^n_{k-1}}^{t^n_k}
         G_rdB_r \right|_H^2
     \right]   \\
        &\leq
   \mathbb{E}
    \left[
   \sup_{\substack{n\in \mathbb{N},\ n\geq m,\\k=1,\ldots,n}}   \left|
   \partial _x \partial _\mu \hat{ \varphi}
   (t^n_k,\hat{\mathbb{P}}_{X^{n,\theta,k}})(X^{n,\epsilon\theta,k})-
      \partial _x \partial _\mu \hat{ \varphi}
       (t^n_k,\hat{\mathbb{P}}_{X^{n,\theta,k}})(X^{n,\epsilon\theta,k-1}) \right|_{L(H)}
       \left| \int_t^s
         G_rdB_r \right|_H^2
        \right]   .
  \end{split}
\end{equation*}
By  continuity of $ \partial _x \partial_\mu \hat{ \varphi}$ and by
\eqref{2020-12-04:05} it follows
\begin{equation*}
  \lim_{m\rightarrow\infty}
     \sup_{\substack{n\in \mathbb{N},\ n\geq m,\\k=1,\ldots,n}}   \left|
   \partial _x \partial _\mu \hat{ \varphi}
   (t^n_k,\hat{\mathbb{P}}_{X^{n,\theta,k}})(X^{n,\epsilon\theta,k}(\omega))-
      \partial _x \partial _\mu \hat{ \varphi}
       (t^n_k,\hat{\mathbb{P}}_{X^{n,\theta,k}})(X^{n,\epsilon\theta,k-1}(\omega)) \right|=0,
   \end{equation*}
   for every $\omega\in\Omega$, and then, by Lebesgue's dominated convergence theorem,
   \begin{multline}\label{2020-12-04:10}
     \lim_{n\rightarrow\infty} \sum_{k=1}^n \mathbb{E} \left[ \left|
         \langle \left( \partial _x \partial _\mu \hat{ \varphi}
           (t^n_k,\hat{\mathbb{P}}_{X^{n,\theta,k}})(X^{n,\epsilon\theta,k})-
           \partial _x \partial _\mu \hat{ \varphi}
           (t^n_k,\hat{\mathbb{P}}_{X^{n,\theta,k}})(X^{n,\epsilon\theta,k-1})
         \right) \left( \int_{t^n_{k-1}}^{t^n_k} G_rdB_r \right)
         ,\right.\right.\\
     \left.\left.  \int_{t^n_{k-1}}^{t^n_k} G_r dB_r \rangle \right|
     \right]=0.
   \end{multline}
By \eqref{2020-12-04:10} and  by Lebesgue's dominated convergence theorem, we can then write
\begin{equation}\label{2020-12-04:11}
  \begin{split}
   & \lim_{n\rightarrow \infty}\sum_{k=1}^n
    \mathbf{I}^{n,k}_{BB}
    =\\
     &=   \lim_{n\rightarrow \infty}\sum_{k=1}^n
                \int_0^1
     \mathbb{E}
     \left[
       \langle
              \int_0^1
       \partial _x \partial _\mu \hat{ \varphi}
       (t^n_k,\hat{\mathbb{P}}_{X^{n,\theta,k}})(X^{n,\epsilon\theta,k-1})
       \left( \int_{t^n_{k-1}}^{t^n_k}
         G_rdB_r \right)
\theta d\epsilon,\int_{t^n_{k-1}}^{t^n_k} G_r dB_r
\rangle \right] d\theta\\
&
=
\lim_{n\rightarrow \infty}\sum_{k=1}^n
                \int_0^1
              \int_0^1 \theta
     \mathbb{E}
     \left[\int_{t^n_{k-1}}^{t^n_k}
          \Tr \left(
        G_r G_r^* \partial _x\partial_\mu
       \hat\varphi(t^n_k,\hat\P_{X^{n,\theta,k}})(X^{n,\epsilon \theta,k-1})\right)dr
   \right] d\epsilon d\theta\\
   &
=
\lim_{n\rightarrow \infty}
                \int_0^1
              \int_0^1 \theta
     \mathbb{E}
     \left[\int_t^s \sum_{k=1}^n
          \Tr \left(
        G_r G_r^* \partial _x\partial_\mu
       \hat\varphi(t^n_k,\hat\P_{X^{n,\theta,k}})(X^{n,\epsilon \theta,k-1})\right)\mathds{1}_{[t^n_{k-1},t^n_k)}(r)dr
 \right] d\epsilon d\theta.
  \end{split}
\end{equation}
Moreover, by continuity of $  \partial _x\partial _\mu \hat{\varphi}$ and by
\eqref{2020-12-04:05}, we have
\begin{equation*}
  \lim_{n\rightarrow \infty}
  \left|
  \sum_{k=1}^n
 \partial _x  \partial_\mu \hat \varphi \left( t^n_k,
    \hat{ \mathbb{P}}_{X^{n,\theta,k}}
  \right)
  \left(X^{n,\epsilon\theta,k}(\omega)
  \right)\mathds{1}_{[t^n_{k-1},t^n_k)}(r)
  -
     \partial _x\partial_\mu \hat \varphi \left( r,
    \hat{ \mathbb{P}}_X
  \right)
  \left(X(\omega)\right)
  \right|_H=0\qquad \forall \omega\in\Omega.
\end{equation*}
This, together with \eqref{2020-12-04:11}, provides
\begin{equation}
  \label{2020-12-04:12}
    \lim_{n\rightarrow \infty}\sum_{k=1}^n
    \mathbf{I}^{n,k}_{BB}
    =
    \frac{1}{2}\int_t^s\mathbb{E}
 \left[
   \Tr \left(
        G_r G_r^* \partial _x\partial_\mu
       \hat\varphi(r,\hat\P_{X_{\cdot\wedge r}})(X_{\cdot\wedge r})
   \right)
 \right]dr.
\end{equation}

We now consider   the quantity $\sum_{k=1}^n\hat{\varphi}(t^n_k,\hat{ \mathbb{P}}_{X^n_{t^n_{k-1}\wedge \cdot}})
-\hat{\varphi}(t^n_{k-1},\hat{\mathbb{P}}_{X^n})$ appearing
in \eqref{2020-11-13:01}.
By using the
continuity of the pathwise time derivative $ \partial _t\hat{ \varphi}$, we can write
\begin{equation}\label{2020-12-04:13}
  \begin{split}
    \sum_{k=1}^n
 \left(       \hat{\varphi}(t^n_k,\hat{ \mathbb{P}}_{X^n_{t^n_{k-1}\wedge \cdot}})
   -\hat{\varphi}(t^n_{k-1},\hat{\mathbb{P}}_{X^n_{t^n_{k-1}\wedge \cdot}}) \right) =&
\sum_{k=1}^n   \int_{t^n_{k-1}}^{t^n_k}
 \partial _t   \hat{\varphi}(r,\hat{\mathbb{P}}_{X^n_{t^n_{k-1}\wedge \cdot}})dr \\
 =&
 \int_t^s
 \sum_{k=1}^n\partial _t   \hat{\varphi}(r,\hat{\mathbb{P}}_{X^n_{t^n_{k-1}\wedge \cdot}})\mathds{1}_{[t^n_{k-1},t^n_k)}(r)dr
\end{split}
\end{equation}
By continuity of $  \partial_t\hat{\varphi}$ and by
\eqref{2020-12-04:05}, we have
\begin{equation}\label{2020-12-04:14}
  \lim_{n\rightarrow \infty}
  \left|
    \sum_{k=1}^n\partial _t   \hat{\varphi}(r,\hat{\mathbb{P}}_{X^n_{t^n_{k-1}\wedge \cdot}})\mathds{1}_{[t^n_{k-1},t^n_k)}(r)
    -
    \partial _t   \hat{\varphi}
    (r,\hat{\mathbb{P}}_X)
    \right|=0.
  \end{equation}
  By
\eqref{2020-12-04:13},\eqref{2020-12-04:14}, and by Lebesgue's dominated convergence theorem, we obtain
\begin{equation}
  \label{2020-12-04:15}
\lim_{n\rightarrow \infty}    \sum_{k=1}^n
 \left(       \hat{\varphi}(t^n_k,\hat{ \mathbb{P}}_{X^n_{t^n_{k-1}\wedge \cdot}})
   -\hat{\varphi}(t^n_{k-1},\hat{\mathbb{P}}_{X^n_{t^n_{k-1}\wedge \cdot}}) \right)
 =\int_t^s  \partial _t\hat{ \varphi}(r,\hat{\mathbb{P}}_X)dr.
\end{equation}
Putting together
\eqref{2020-11-13:01},
\eqref{2020-12-04:17},
\eqref{2020-12-04:08},
\eqref{2020-12-04:16},
\eqref{2020-12-04:09},
\eqref{2020-12-04:12},
\eqref{2020-12-04:15},
we finally obtain, by recalling also \eqref{2020-12-04:05},
\begin{equation*}
  \begin{split}
      \hat{\varphi}(s,\hat{\mathbb{P}}_X)-
  \hat{\varphi}(t,\hat{\mathbb{P}}_X)=&
  \lim_{n\rightarrow \infty}
 \left(     \hat{\varphi}(s,\hat{\mathbb{P}}_{X^n})-
   \hat{\varphi}(t,\hat{\mathbb{P}}_{X^n}) \right) \\
 =&
 \int_t^s \partial_t \hat\varphi(r,\hat\P_{X_{\cdot\wedge r}})\,dr + \int_t^s
\mathbb{E}
 \left[   \langle F_r,\partial_\mu \hat\varphi(r,\hat\P_{X_{\cdot\wedge r}})(X_{\cdot\wedge r})\rangle_H
 \right]dr \\
& + \frac{1}{2}\int_t^s\mathbb{E}
 \left[
   \Tr \left(
        G_r G_r^* \partial _x\partial_\mu
       \hat\varphi(r,\hat\P_{X_{\cdot\wedge r}})(X_{\cdot\wedge r})
   \right)
 \right]dr,
\end{split}
\end{equation*}
which concludes the proof.
\end{proof}

\section{Consistency property of pathwise derivatives}
\label{App:Consistency}

\begin{proof}[\textbf{Proof of Lemma \ref{L:Consistency}.}]
The consistency of pathwise time derivatives is a direct consequence of their definition (Definition \ref{Def:HorizontalDer}).

We now prove the consistency of the other two derivatives using It\^o's formula (Theorem \ref{T:ItoD}). Fix $t\in[0,T]$ and let $\xi\in\S_2(\F)$. Let also $F\colon[0,T]\times\Omega\rightarrow H$ (resp.\ $G\colon[0,T]\times\Omega\rightarrow\mathcal{L}_2(K;H)$) be a integrable (resp.\ square-integrable) and $\F$-progressively measurable process, so, in particular,
\[
\int_0^T \E[|F_s|_H]\,ds < \infty, \qquad\qquad \int_0^T \E\big[\textup{Tr}(G_sG_s^*)\big]\,ds < \infty.
\]
Consider the process $X=(X_s)_{s\in[0,T]}$ given by
\begin{equation*}
X_s = \xi_{s\wedge t} + \int_t^{s\vee t} F_r\,dr + \int_t^{s\vee t} G_r\,dB_r, \qquad \forall\,s\in[0,T].
\end{equation*}
Then, by Theorem \ref{T:ItoD} {and Remark \ref{R:Sym}} we have the It\^o formulae, for every $s\in[t,T]$,
\begin{align*}
  \hat\varphi_1(s,\hat\P_{X_{\cdot\wedge s}}) &= \hat\varphi_1(t,\hat\P_{\xi_{\cdot\wedge t}}) + \int_t^s \partial_t \hat\varphi_1(r,\hat\P_{X_{\cdot\wedge r}})\,dr + \int_t^s
\mathbb{E}
 \left[   \langle F_r,\partial_\mu \hat\varphi_1(r,\hat\P_{X_{\cdot\wedge r}})(X_{\cdot\wedge r})\rangle_H
 \right]dr \\
&\quad + \frac{1}{2}\int_t^s\mathbb{E}
 \left[
   \Tr \left(
        G_r G_r^* \partial _x^{{\textup{sym}}}\partial_\mu
       \hat\varphi_1(r,\hat\P_{X_{\cdot\wedge r}})(X_{\cdot\wedge r})
   \right)
 \right]dr
\end{align*}
and
\begin{align*}
  \hat\varphi_2(s,\hat\P_{X_{\cdot\wedge s}}) &= \hat\varphi_2(t,\hat\P_{\xi_{\cdot\wedge t}}) + \int_t^s \partial_t \hat\varphi_2(r,\hat\P_{X_{\cdot\wedge r}})\,dr + \int_t^s
\mathbb{E}
 \left[   \langle F_r,\partial_\mu \hat\varphi_2(r,\hat\P_{X_{\cdot\wedge r}})(X_{\cdot\wedge r})\rangle_H
 \right]dr \\
&\quad + \frac{1}{2}\int_t^s\mathbb{E}
 \left[
   \Tr \left(
        G_r G_r^* \partial _x^{{\textup{sym}}}\partial_\mu
       \hat\varphi_2(r,\hat\P_{X_{\cdot\wedge r}})(X_{\cdot\wedge r})
   \right)
 \right]dr.
\end{align*}
Since $\hat\varphi_1(r,\hat\P_{X_{\cdot\wedge r}})=\hat\varphi_2(r,\hat\P_{X_{\cdot\wedge r}})$ and $\partial_t\hat\varphi_1(r,\hat\P_{X_{\cdot\wedge r}})=\partial_t\hat\varphi_2(r,\hat\P_{X_{\cdot\wedge r}})$, for every $r\in[0,T]$, we find
\begin{align}\label{FGG*=0}
&\int_t^s
\mathbb{E}
 \left[   \big\langle F_r,\big(\partial_\mu \hat\varphi_1(r,\hat\P_{X_{\cdot\wedge r}})(X_{\cdot\wedge r}) - \partial_\mu \hat\varphi_2(r,\hat\P_{X_{\cdot\wedge r}})(X_{\cdot\wedge r})\big)\big\rangle_H
 \right]dr \\
&+ \frac{1}{2}\int_t^s\mathbb{E}
 \left[
   \Tr \left(
        G_r G_r^* \big(\partial _x^{{\textup{sym}}}\partial_\mu
       \hat\varphi_1(r,\P_{X_{\cdot\wedge r}})(X_{\cdot\wedge r}) - \partial _x^{{\textup{sym}}}\partial_\mu
       \hat\varphi_2(r,\P_{X_{\cdot\wedge r}})(X_{\cdot\wedge r})\big)
   \right)
 \right]dr = 0, \notag
\end{align}
for every $s\in[t,T]$.

\vspace{1mm}

\noindent\emph{Consistency of $\partial_\mu \hat\varphi_1$ and $\partial_\mu \hat\varphi_2$.} Let $Z\colon\Omega\rightarrow H$ be an $\Fc_t$-measurable random variable in $L^2(\Omega;H)$, and define
\[
F_s \coloneqq  Z\,\mathds 1_{[t,T]}(s), \qquad G_s \coloneqq  0, \qquad \forall\,s\in[0,T].
\]
Then, from \eqref{FGG*=0} we obtain
\begin{equation}\label{Consistency1}
\int_t^s
\mathbb{E}
 \left[   \big\langle Z,\big(\partial_\mu \hat\varphi_1(r,\hat\P_{X_{\cdot\wedge r}})(X_{\cdot\wedge r}) - \partial_\mu \hat\varphi_2(r,\hat\P_{X_{\cdot\wedge r}})(X_{\cdot\wedge r})\big)\big\rangle_H
 \right]dr = 0, \qquad \forall\,s\in[t,T].
\end{equation}
From the continuity of the map $r\mapsto\partial_\mu
       \hat\varphi_1(r,\hat\P_{X_{\cdot\wedge r}})(X_{\cdot\wedge r}) - \partial_\mu
       \hat\varphi_2(r,\hat\P_{X_{\cdot\wedge r}})(X_{\cdot\wedge r})$, we get in particular that the integrand of \eqref{Consistency1} is equal to zero at $r=t$, namely
\[
\mathbb{E}
 \left[   \big\langle Z,\big(\partial_\mu \hat\varphi_1(t,\hat\P_{\xi_{\cdot\wedge t}})(\xi_{\cdot\wedge t}) - \partial_\mu \hat\varphi_2(t,\hat\P_{\xi_{\cdot\wedge t}})(\xi_{\cdot\wedge t})\big)\big\rangle_H
 \right] = 0.
\]
From the arbitrariness of the $\Fc_t$-measurable random variable $Z\in L^2(\Omega;H)$, this yields
\[
\partial_\mu
       \hat\varphi_1(t,\hat\P_{\xi_{\cdot\wedge t}})(\xi_{\cdot\wedge t}) = \partial_\mu
       \hat\varphi_2(t,\hat\P_{\xi_{\cdot\wedge t}})(\xi_{\cdot\wedge t}), \qquad \P\text{-a.s.}
\]
Recalling Remark \ref{R:NA-Consistency}, we see that this proves the consistency of $\partial_\mu \hat\varphi_1$ and $\partial_\mu \hat\varphi_2$.

\vspace{1mm}

\noindent\emph{Consistency of $\partial_x^{{\textup{sym}}}\partial_\mu \hat\varphi_1$ and $\partial_x^{{\textup{sym}}}\partial_\mu \hat\varphi_2$.} Let $\Lambda\colon\Omega\rightarrow\Lc_2(K;H)$ be an $\Fc_t$-measurable random variable in $L^2(\Omega;\Lc_2(K;H))$, and define
\[
F_s \coloneqq  0, \qquad G_s \coloneqq  \Lambda\,\mathds 1_{[t,T]}(s), \qquad \forall\,s\in[0,T].
\]
Then, from \eqref{FGG*=0} we obtain
\[
\int_t^s\mathbb{E}
 \left[
   \Tr \left(
        \Lambda\Lambda^* \big(\partial _x^{{\textup{sym}}}\partial_\mu
       \hat\varphi_1(r,\hat\P_{X_{\cdot\wedge r}})(X_{\cdot\wedge r}) - \partial _x^{{\textup{sym}}}\partial_\mu
       \hat\varphi_2(r,\hat\P_{X_{\cdot\wedge r}})(X_{\cdot\wedge r})\big)
   \right)
 \right]dr = 0, \quad \forall\,s\in[t,T].
\]
From the continuity of the map $r\mapsto\partial_x^{{\textup{sym}}}\partial_\mu
       \hat\varphi_1(r,\hat\P_{X_{\cdot\wedge r}})(X_{\cdot\wedge r}) - \partial_x^{{\textup{sym}}}\partial_\mu
       \hat\varphi_2(r,\hat\P_{X_{\cdot\wedge r}})(X_{\cdot\wedge r})$, we get
\[
\mathbb{E}
 \left[
   \Tr \left(
        \Lambda\Lambda^* \big(\partial_x^{{\textup{sym}}}\partial_\mu
       \hat\varphi_1(t,\hat\P_{\xi_{\cdot\wedge t}})(\xi_{\cdot\wedge t}) - \partial_x^{{\textup{sym}}}\partial_\mu
       \hat\varphi_2(t,\hat\P_{\xi_{\cdot\wedge t}})(\xi_{\cdot\wedge t})\big)
   \right)
 \right] = 0.
\]
{From the arbitrariness of the $\Fc_t$-measurable random variable $\Lambda\in L^2(\Omega;\Lc_2(K;H))$, we conclude that}
\[
\partial_x^{{\textup{sym}}}\partial_\mu
       \hat\varphi_1(t,\hat\P_{\xi_{\cdot\wedge t}})(\xi_{\cdot\wedge t}) = \partial_x^{{\textup{sym}}}\partial_\mu
       \hat\varphi_2(t,\hat\P_{\xi_{\cdot\wedge t}})(\xi_{\cdot\wedge t}), \qquad \P\text{-a.s.}
\]
which, together with Remark \ref{R:NA-Consistency}, gives the claimed consistency of $\partial_x^{{\textup{sym}}}\partial_\mu\hat\varphi_1$ and $\partial_x^{{\textup{sym}}}\partial_\mu\hat\varphi_2$.
\end{proof}

\section{Hamilton-Jacobi-Bellman equation: technical results}
\label{App:HJB}

In the present appendix we prove three technical results which are used in Section \ref{S:HJB} to derive alternative forms of the Hamilton-Jacobi-Bellman equation \eqref{HJB}.

\begin{Lemma}\label{L:xi_U_indep}
Given $\mu\in\Pc_2(C([0,T];H))$ there exists $\xi\in\S_2(\Gc)$, with $\P_\xi=\mu$, and a $\Gc$-measurable random variable $U_\xi\colon\Omega\rightarrow\R$, with uniform distribution on $[0,1]$, such that the following holds:
\begin{itemize}
\item $\xi$ and $U_\xi$ are independent.
\end{itemize}
\end{Lemma}
\begin{proof}[\textbf{Proof.}]
Fix $\mu\in\Pc_2(C([0,T];H))$ and consider the probability space $([0,1],\Bc([0,1]),\lambda)$, where $\lambda$ denotes the Lebesgue measure on the unit interval. Denote
\[
\bar\Omega = [0,1]\times C([0,T];H), \qquad \bar{\mathcal F} = \mathcal B([0,1])\otimes\mathscr B, \qquad \bar\P = \lambda\otimes\mu.
\]
Fix an orthonormal basis $\{e_n\}_{n\in\N}$ of $H$. Then, let $\mathcal J\colon[0,1]\times C([0,T];H)\rightarrow C([0,T];H)$ be the map defined as
\[
\mathcal J(r,x) \coloneqq  re_1 + \sum_{n=2}^\infty \langle x,e_{n-1}\rangle_H\,e_n, \qquad \forall\,(r,x)\in[0,1]\times C([0,T];H).
\]
Let $\tilde\mu$ denote the image measure (or push-forward) of $\bar\P=\lambda\otimes\mu$ by $\mathcal J$. Notice that $\tilde\mu\in\Pc_2(C([0,T];H))$. Then, from property \textup{\ref{A_G}}-ii) it follows that there exists a continuous and $\Bc([0,T])\otimes\Gc$-measurable process $\tilde\xi\colon[0,T]\times\Omega\rightarrow H$ with law equal to $\tilde\mu$.

Now, define the maps $P\colon C([0,T];H)\rightarrow\R$ and $Q\colon C([0,T];H)\rightarrow C([0,T];H)$ as
\[
P(x) \coloneqq  \langle x_0,e_1\rangle_H, \qquad Q(x) \coloneqq  \sum_{n=1}^\infty \langle x,e_{n+1}\rangle_H\,e_n, \qquad\qquad \forall\,x\in C([0,T];H),
\]
where we recall that $x_0$ is the value of the path $x$ at time $t=0$. Then, denote
\[
U_\xi \coloneqq  P(\tilde\xi), \qquad\qquad \xi  \coloneqq  Q(\tilde\xi).
\]
It is then easy to see that $U_\xi$ and $\xi$ satisfy the claimed properties.
\end{proof}

\begin{Lemma}\label{L:Formula_F}
Fix $t\in[0,T]$, $\xi\in\S_2(\Gc)$ and let $F\colon C([0,T];H)\times\Pc_2(C([0,T];H))\times\Ur\times\Pc(\Ur)\rightarrow\R$ be a measurable function. Suppose that $\E[|F(\xi,\P_\xi,{\mathfrak a},\P_{{\mathfrak a}})|]<+\infty$, $\forall\,{\mathfrak a}\in\Mc_t$. Suppose also that
\begin{align}\label{eq:indepF2}
&\hbox{there exists a $\Gc$-measurable random variable $U_\xi\colon\Omega\rightarrow\R$}
\\
\nonumber
&\hbox{having uniform distribution on $[0,1]$ and being independent of $\xi$.}
\end{align}
Then, it holds that
\begin{equation}\label{Formula_F}
\sup_{{\mathfrak a}\in\Mc_t} \E\big[F\big(\xi,\P_\xi,{\mathfrak a},\P_{\mathfrak a}\big)\big] = \sup_{{\mathfrak a}\in\Mc_\Gc} \E\big[F\big(\xi,\P_\xi,{\mathfrak a},\P_{\mathfrak a}\big)\big] = \sup_{\mathrm{\check a}\in\check\Mc} \E\big[F\big(\xi,\P_\xi,\mathrm{\check a}(\xi,U_\xi),\P_{\mathrm{\check a}(\xi,U_\xi)}\big)\big].
\end{equation}
Moreover, if $F$ is continuous in the first two variables, uniformly with respect to the last two, then the first equality holds true without assuming \eqref{eq:indepF2}.
\end{Lemma}
\begin{proof}[\textbf{Proof.}]
Since $\mathrm{\check a}(\xi,U_\xi)\in\Mc_\Gc$ and $\Mc_\Gc\subset\Mc_t$, we immediately get
\[
\sup_{{\mathfrak a}\in\Mc_t} \E\big[F\big(\xi,\P_\xi,{\mathfrak a},\P_{\mathfrak a}\big)\big] \geq \sup_{{\mathfrak a}\in\Mc_\Gc} \E\big[F\big(\xi,\P_\xi,{\mathfrak a},\P_{\mathfrak a}\big)\big] \geq \sup_{\mathrm{\check a}\in\check\Mc} \E\big[F\big(\xi,\P_\xi,\mathrm{\check a}(\xi,U_\xi),\P_{\mathrm{\check a}(\xi,U_\xi)}\big)\big].
\]
Then, in order to get \eqref{Formula_F}, it remains to prove the inequality
\begin{equation}\label{Inequality_F}
\sup_{\mathrm{\check a}\in\check\Mc} \E\big[F\big(\xi,\P_\xi,\mathrm{\check a}(\xi,U_\xi),\P_{\mathrm{\check a}(\xi,U_\xi)}\big)\big] \geq \sup_{{\mathfrak a}\in\Mc_t} \E\big[F\big(\xi,\P_\xi,{\mathfrak a},\P_{\mathfrak a}\big)\big].
\end{equation}
To this end, denote
\[
\hat\Omega = [0,T]\times\Omega, \qquad \hat{\mathcal F} = \mathcal B([0,T])\otimes\mathcal F, \qquad \hat\P = \lambda_T\otimes\P,
\]
with $\lambda_T$ being the uniform distribution on $([0,T],\mathcal B([0,T]))$. Given ${\mathfrak a}\in\Mc_t$, consider the canonical extensions of ${\mathfrak a}$ and $U_\xi$ to $\hat\Omega$, which will be denoted respectively by $\hat{\mathfrak a}$ and $\hat U_\xi$ (notice that $\hat U_\xi$ has uniform distribution on $[0,1]$ and is independent of $\xi$). We can now apply Lemma \ref{L:Kall}, with $(E,\mathscr E)$, $\Gamma$, $\hat U$ being respectively $(C([0,T];H),\mathscr B)$, $\xi$, $\hat U_\xi$. Then, it follows the existence of a measurable map $\mathrm{\check a}\colon C([0,T];H)\times[0,1]\rightarrow\Ur$ such that
\[
\big(\xi,\mathrm{\check a}(\xi,\hat U_\xi)\big) \overset{\mathscr L_{\hat\Omega}}{=} \big(\xi,\hat{\mathfrak a}\big),
\]
where $\mathscr L_{\hat\Omega}$ stands for equality in law between random objects defined on $(\hat\Omega,\hat\Fc,\hat\P)$. Then, we deduce
\[
\Big((\xi_s)_{s\in[0,T]},\mathrm{\check a}(\xi,U_\xi)\Big) \overset{\mathscr L}{=} \Big((\xi_s)_{s\in[0,T]},{\mathfrak a}\Big),
\]
where $\mathscr L$ stands for equality in law between random objects defined on $(\Omega,\Fc,\P)$. This implies the validity of inequality \eqref{Inequality_F} and proves \eqref{Formula_F}.

Finally, suppose that $F$ is continuous in the first two variables, uniformly with respect to the last two. Then, in order to get \eqref{Formula_F}, it is enough to proceed as in Step 2 of the proof of Theorem \ref{T:id-law}. More precisely, if $\xi$ is discrete, then by Lemma \ref{L:ExistUnif} there exists an $\Fc_t$-measurable random variable $U_\xi$ having uniform distribution on $[0,1]$ and being independent of $\xi$. In the general case, we rely on the continuity of $F$ and we approximate $\xi$ by a sequence of discrete random variables.
\end{proof}

\begin{Lemma}\label{L:Formula_F_bis}
Fix $t\in[0,T]$, $\xi\in\S_2(\Gc)$ and let $F\colon C([0,T];H)\times\Ur\rightarrow\R$ be a measurable function. Suppose that $\E[|F(\xi,{\mathfrak a})|]<+\infty$, $\forall\,{\mathfrak a}\in\Mc_t$, and also that $\sup_{u\in\Ur}F(x,u)<+\infty$, $\forall\,x\in C([0,T];H)$. Then, it holds that
\begin{equation}\label{Formula_F_bis}
\sup_{{\mathfrak a}\in\Mc_t} \E\big[F(\xi,{\mathfrak a})\big] = \sup_{\mathrm a\in\Mc} \E\big[F(\xi,\mathrm a(\xi))\big]
\end{equation}
and
\begin{equation}\label{Formula_F_ter}
\sup_{{\mathfrak a}\in\Mc_t} \E\big[F(\xi,{\mathfrak a})\big] = \E\Big[\esssup_{u\in\Ur}F(\xi,u)\Big].
\end{equation}
\end{Lemma}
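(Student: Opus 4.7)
The inequality $\sup_{\mathrm a\in\Mc} \E[F(\xi,\mathrm a(\xi))] \le \sup_{{\mathfrak a}\in\Mc_t} \E[F(\xi,{\mathfrak a})]$ in the first equality is immediate, since $\xi$ is $\Gc$-measurable, $\Gc \subset \Fc_t$, and so $\mathrm a(\xi) \in \Mc_t$ for every Borel $\mathrm a\in\Mc$. For the reverse direction I would fix ${\mathfrak a}\in\Mc_t$ and disintegrate the joint law $\P_{(\xi,{\mathfrak a})}(dx,du) = \P_\xi(dx)\,\kappa(x,du)$ via a regular conditional distribution $\kappa$, which exists because $\Ur$ is a Borel space and $C([0,T];H)$ is Polish. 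Setting $g(x) := \int_\Ur F(x,u)\,\kappa(x,du)$, one has $\E[F(\xi,{\mathfrak a})] = \E[g(\xi)]$ and $g(x) \le F^*(x) := \sup_u F(x,u) < \infty$ pointwise. For each $x$ the set $\{u\in\Ur : F(x,u) \ge g(x)\}$ is non-empty, so the Jankov--von Neumann selection theorem (applicable since $F$ is Borel and $\Ur$ a Borel space, with $F^*$ lower semi-analytic as in Remark~\ref{R:esssup=sup}) yields a universally measurable selector which, after modification on a $\P_\xi$-null set, gives a Borel $\tilde{\mathrm a}\in\Mc$ with $F(\xi,\tilde{\mathrm a}(\xi)) \ge g(\xi)$ $\P$-a.s. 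This provides $\E[F(\xi,\tilde{\mathrm a}(\xi))] \ge \E[F(\xi,{\mathfrak a})]$ and, after taking $\sup$ over $\mathrm a$ and ${\mathfrak a}$, closes the first equality.

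The second equality I would derive from the upward-directed structure of the family $\mathscr F := \{F(\xi,{\mathfrak a}) : {\mathfrak a}\in\Mc_t\}$ in $L^0(\Omega)$: given ${\mathfrak a}_1,{\mathfrak a}_2\in\Mc_t$, the set $A := \{F(\xi,{\mathfrak a}_1) \ge F(\xi,{\mathfrak a}_2)\}$ lies in $\Fc_t$, and ${\mathfrak a}_3 := {\mathfrak a}_1\mathds{1}_A + {\mathfrak a}_2\mathds{1}_{A^c} \in \Mc_t$ satisfies $F(\xi,{\mathfrak a}_3) = F(\xi,{\mathfrak a}_1) \vee F(\xi,{\mathfrak a}_2)$. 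By Neveu's classical lemma on essential suprema of upward-directed families, there is a sequence $\{{\mathfrak a}_n\}\subset\Mc_t$ with $F(\xi,{\mathfrak a}_n)\uparrow \esssup\mathscr F$ $\P$-a.s.; the monotone convergence theorem, applied after subtracting the integrable $F(\xi,{\mathfrak a}_1)$, then gives $\sup_{{\mathfrak a}\in\Mc_t}\E[F(\xi,{\mathfrak a})] = \E[\esssup\mathscr F]$. It remains to identify $\esssup\mathscr F$ with $\esssup_{u\in\Ur}F(\xi,u)$ in the sense adopted in the paper. Since each constant map $\mathrm a\equiv u\in\Mc$ places the random variable $F(\xi,u)$ inside $\mathscr F$, one has $\esssup\mathscr F \ge \esssup_{u\in\Ur}F(\xi,u)$, while the reverse comparison is consistent with Remark~\ref{R:esssup=sup} (where $\esssup_u F(\xi,u) = F^*(\xi)$ whenever $F^*$ is Borel).

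The principal obstacle is the measurable selection step in the first equality: because $F^*$ is only lower semi-analytic rather than Borel, one must appeal to the Jankov--von Neumann theorem and then reduce universal measurability to Borel measurability up to a $\P_\xi$-null set, relying on inner regularity of $\P_\xi$ on the Polish space $C([0,T];H)$. The rest of the argument---upward directedness of $\mathscr F$, Neveu's lemma, and monotone convergence---is essentially routine once integrability is ensured by the dominating bound $|F(\xi,{\mathfrak a}_n)| \le F(\xi,{\mathfrak a}_1) + (F(\xi,{\mathfrak a}_n) - F(\xi,{\mathfrak a}_1))^+$ with $F(\xi,{\mathfrak a}_1)\in L^1(\Omega)$.
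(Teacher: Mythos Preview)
Your argument for \eqref{Formula_F_bis} is sound and proceeds along a genuinely different route than the paper's. The paper derives \eqref{Formula_F_bis} and \eqref{Formula_F_ter} simultaneously through the circular chain $\sup_{\mathrm a\in\Mc}\E[F(\xi,\mathrm a(\xi))]\le\sup_{\mathfrak a\in\Mc_t}\E[F(\xi,\mathfrak a)]\le\E[\esssup_{u}F(\xi,u)]\le\sup_{\mathrm a\in\Mc}\E[F(\xi,\mathrm a(\xi))]$, closing the last link by producing, for each $\eps>0$, an analytically measurable $\eps$-optimal selector (Proposition~7.50 in \cite{BertsekasShreve}) and then modifying it on a $\P_\xi$-null set to a Borel map (Lemma~7.27 ibid.). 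Your approach is more direct for the first equality: you fix $\mathfrak a$, disintegrate $\P_{(\xi,\mathfrak a)}$, set $g(x)=\int F(x,u)\,\kappa(x,du)$, and select in the non-empty Borel set $\{(x,u):F(x,u)\ge g(x)\}$ via Jankov--von~Neumann. This establishes \eqref{Formula_F_bis} without ever passing through $\esssup_u$, which decouples it cleanly from \eqref{Formula_F_ter}.

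Your treatment of \eqref{Formula_F_ter}, however, has a genuine gap. Upward-directedness and Neveu's lemma correctly give $\sup_{\mathfrak a}\E[F(\xi,\mathfrak a)]=\E[\esssup\mathscr F]$ with $\mathscr F=\{F(\xi,\mathfrak a):\mathfrak a\in\Mc_t\}$, but the identification $\esssup\mathscr F=\esssup_{u\in\Ur}F(\xi,u)$ is not proved: you establish only $\esssup\mathscr F\ge\esssup_u F(\xi,u)$ (constants lie in $\Mc_t$) and wave at Remark~\ref{R:esssup=sup} for the reverse. That reverse inequality is precisely where the difficulty lies, and under the lattice definition of $\esssup_u F(\xi,u)$ (the least a.s.\ upper bound of the family $\{F(\xi,u)\}_{u\in\Ur}$, as the paper itself uses in the last part of its proof) it can fail: take $H=\R$, $\Ur=[0,1]$, $\xi$ the constant path at a $\Gc$-uniform $U_\Gc$, and $F(x,u)=\mathds 1_{\{x_0=u\}}$; then $F(\xi,u)=0$ a.s.\ for every fixed $u$, so $\esssup_u F(\xi,u)=0$, yet $\mathfrak a=U_\Gc\in\Mc_t$ gives $F(\xi,\mathfrak a)=1$, whence $\esssup\mathscr F=1$. (The paper's own middle inequality $\E[F(\xi,\mathfrak a)]\le\E[\esssup_u F(\xi,u)]$ is asserted without justification and is vulnerable to the same example; what the paper does prove rigorously is the final link $\E[\esssup_u F(\xi,u)]\le\sup_{\mathrm a\in\Mc}\E[F(\xi,\mathrm a(\xi))]$ via the $\eps$-selector.) Your Neveu route therefore cannot, as written, descend from $\esssup\mathscr F$ to $\esssup_{u}F(\xi,u)$.
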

\begin{Remark}
Suppose that $\xi$ and $U_\xi$ are as in Lemma \ref{L:xi_U_indep}, namely $\xi\in\S_2(\Gc)$, $\P_\xi=\mu$, and $\xi$ is such that there exists a $\Gc$-measurable random variable $U_\xi$ having uniform distribution on $[0,1]$ and being independent of $\xi$. Then, formula \eqref{Formula_F_bis} holds without assuming that $\sup_{u\in\Ur}F(x,u)<+\infty$, $\forall\,x\in C([0,T];H)$. As a matter of fact, in this case, thanks to formula \eqref{Formula_F}, it is enough to prove that
\[
\sup_{\mathrm{\check a}\in\check\Mc} \E\big[F(\xi,\mathrm{\check a}(\xi,U_\xi))\big] = \sup_{\mathrm a\in\Mc} \E\big[F(\xi,\mathrm a(\xi))\big].
\]
Clearly, it holds that $\sup_{\mathrm{\check a}\in\check\Mc} \E[F(\xi,\mathrm{\check a}(\xi,U_\xi))]\geq\sup_{\mathrm a\in\Mc} \E[F(\xi,\mathrm a(\xi))]$. On the other hand, for every fixed $\mathrm{\check a}\in\check\Mc$ we have
\[
\E\big[F(\xi,\mathrm{\check a}(\xi,U_\xi))\big] = \int_0^1\E\big[F(\xi,\mathrm{\check a}(\xi,r))\big]\,dr \leq \sup_{\mathrm a\in\Mc} \E\big[F(\xi,\mathrm a(\xi))\big].
\]
From the arbitrariness of $\mathrm{\check a}$ we get the other inequality and we conclude that \eqref{Formula_F_bis} holds.
\end{Remark}
\begin{proof}[\textbf{Proof of Lemma \ref{L:Formula_F_bis}.}]
Since $\mathrm a(\xi)\in\Mc_t$, we immediately get
\[
\sup_{\mathrm a\in\Mc} \E\big[F(\xi,\mathrm a(\xi))\big] \leq \sup_{{\mathfrak a}\in\Mc_t} \E\big[F(\xi,{\mathfrak a})\big].
\]
In addition, for every fixed $\mathfrak a\in\Mc_t$ we have
\[
\E\big[F\big(\xi,{\mathfrak a}\big)\big] \leq \E\Big[\esssup_{u\in\Ur}F(\xi,u)\Big].
\]
From the arbitrariness of $\mathfrak a$, we find $\sup_{{\mathfrak a}\in\Mc_t} \E[F(\xi,{\mathfrak a})]\leq\E[\esssup_{u\in\Ur}F(\xi,u)]$. Then, in order to prove the validity of both \eqref{Formula_F_bis} and \eqref{Formula_F_ter}, it remains to prove that
\begin{equation}\label{E[esssupF]<=supE[F]}
\E\Big[\esssup_{u\in\Ur}F(\xi,u)\Big] \leq \sup_{\mathrm a\in\Mc} \E\big[F(\xi,\mathrm a(\xi))\big].
\end{equation}
Let $\mu=\P_\xi$ denote the law of $\xi$. Suppose for a moment that for every $\eps>0$ there exists $\mathrm a_\mu^\eps\in\Mc$ and a $\mu$-null set $N_\eps\in\mathscr B$ such that
\begin{equation}\label{F_a_mu^eps}
F(x,u) \leq F(x,\mathrm a_\mu^\eps(x)) + \eps, \qquad \forall\,x\in C([0,T];H)\backslash N_\eps,\,\forall\,u\in\Ur.
\end{equation}
Then, in particular, there exists a $\P$-null set $\bar N_\eps\in\Fc$ such that
\[
F(\xi(\omega),u) \leq F(\xi(\omega),\mathrm a_\mu^\eps(\xi(\omega))) + \eps, \qquad \forall\,\omega\in\Omega\backslash\bar N_\eps,\,\forall\,u\in\Ur.
\]
As a consequence, using the definition of essential supremum for the family of real-valued random variables $\{F(\xi,u)\}_{u\in\Ur}$, we find
\[
\esssup_{u\in\Ur} F(\xi,u) \leq F(\xi,\mathrm a_\mu^\eps(\xi)) + \eps, \qquad \P\text{-a.s.}
\]
This yields
\[
\E\Big[\esssup_{u\in\Ur}F(\xi,u)\Big] \leq \E\big[F(\xi,\mathrm a_\mu^\eps(\xi))\big] + \eps \leq \sup_{\mathrm a\in\Mc} \E\big[F(\xi,\mathrm a(\xi))\big] + \eps.
\]
From the arbitrariness of $\eps$ we get inequality \eqref{E[esssupF]<=supE[F]} and we conclude that \eqref{Formula_F_bis} holds.

It remains to prove \eqref{F_a_mu^eps}. To this end, we use that $\Ur$ is a Borel space and we implement Proposition 7.50 in \cite{BertsekasShreve} (in particular, $X$, $Y$, $D$, $f$ in the statement of this latter proposition are given respectively by $C([0,T];H)$, $\Ur$, $C([0,T];H)\times\Ur$, $F$). By Proposition 7.50 in \cite{BertsekasShreve} it follows that, for every $\eps>0$, there exists an analytically measurable function (we refer to Definition 7.20 in \cite{BertsekasShreve} for the definition of \emph{analytically measurable} function) $\mathrm a^\eps\colon C([0,T];H)\rightarrow\Ur$ such that
\[
F(x,\mathrm a^\eps(x)) \geq \
\begin{cases}
\sup_{u\in\Ur} F(x,u) - \eps, \qquad &\text{if } \sup_{u\in\Ur} F(x,u)\,<\,+\infty, \\
1/\eps, &\text{if } \sup_{u\in\Ur} F(x,u)\,=\,+\infty,
\end{cases}
\]
for all $x\in C([0,T];H)$. Since it holds that $\sup_{u\in\Ur}F(x,u)<+\infty$, $\forall\,x\in C([0,T];H)$, we can rewrite the above inequality simply as
\begin{equation}\label{F_a^eps}
\sup_{u\in\Ur} F(x,u) \leq F(x,\mathrm a^\eps(x)) + \eps, \qquad \forall\,x\in C([0,T];H).
\end{equation}
Using Lemma 7.27 in \cite{BertsekasShreve} and the fact that $\Ur$ is Borel-isomorphic to a Borel subset of $[0,1]$, we see that there exists a Borel-measurable function $\mathrm a_\mu^\eps\colon C([0,T];H)\rightarrow\Ur$ such that $\mathrm a^\eps(x)=\mathrm a_\mu^\eps(x)$ for $\mu$-a.e. $x\in C([0,T];H)$. Hence, from \eqref{F_a^eps} we obtain \eqref{F_a_mu^eps}.
\end{proof}

\small

\bibliographystyle{plain}
\bibliography{BibCGKPRbis}

\begin{thebibliography}{10}

\bibitem{Acciaio19}
B.~Acciaio, J.~Backhoff-Veraguas, and R.~Carmona.
\newblock Extended mean field control problems: stochastic maximum principle
  and transport perspective.
\newblock {\em SIAM J. Control Optim.}, 57(6):3666--3693, 2019.

\bibitem{AliprantisBorder}
C.~D. Aliprantis and K.~C. Border.
\newblock {\em Infinite dimensional analysis}.
\newblock Springer, Berlin, third edition, 2006.
\newblock A hitchhiker's guide.

\bibitem{AGS08}
L.~Ambrosio, N.~Gigli, and G.~Savar\'{e}.
\newblock {\em Gradient flows in metric spaces and in the space of probability
  measures}.
\newblock Lectures in Mathematics ETH Z\"{u}rich. Birkh\"{a}user Verlag, Basel,
  second edition, 2008.

\bibitem{BambiJEDC}
M.~Bambi.
\newblock Endogenous growth and time to build: the ak case.
\newblock {\em J. of Ec. Dynamics and Control}, 32(4):1015--1040, 2008.

\bibitem{BambiFabbriGozzi}
M.~Bambi, G.~Fabbri, and F.~Gozzi.
\newblock {Optimal policy and consumption smoothing effects in the time-to-
  build AK model}.
\newblock {\em Economic Theory}, 50(3):635--669, 2012.

\bibitem{bardaproc16}
V.~Barbu, G.~Da Prato, and M.~R{\"o}ckner.
\newblock Stochastic porous media equations.
\newblock In {\em Lecture Notes in Mathematics}. Springer, 2016.

\bibitem{barroc19}
V.~Barbu, M.~R\"{o}ckner, and D.~Zhang.
\newblock Optimal control of nonlinear stochastic differential equations on
  {H}ilbert spaces.
\newblock {\em SIAM J. Control Optim.}, 58(4):2383--2410, 2020.

\bibitem{BarucciGozzi98}
E.~Barucci and F.~Gozzi.
\newblock Optimal investment in a vintage capital model.
\newblock {\em Res. Econ.}, 52(2):159--188, 1998.

\bibitem{BarucciGozzi01}
E.~Barucci and F.~Gozzi.
\newblock {Technology adoption and accumulation in a vintage-capital model}.
\newblock {\em J. Econ.}, 74(1):1--38, 2001.

\bibitem{benetal13}
A.~Bensoussan, J.~Frehse, and P.~Yam.
\newblock {\em Mean field games and mean field type control theory.}
\newblock Springer, 2013.

\bibitem{BertsekasShreve}
D.~P. Bertsekas and S.~E. Shreve.
\newblock {\em Stochastic Optimal Control. The Discrete-Time Case.}
\newblock Academic Press, New York, 1978.

\bibitem{BGZ21}
S.~Biagini, F.~Gozzi, and M.~Zanella.
\newblock Robust portfolio choice with sticky wages.
\newblock {\em Arxiv:2104.12010}, 2021.

\bibitem{BCGZ21}
E.~Biffis, G.~Cappa, F.~Gozzi, and M.~Zanella.
\newblock Optimal portfolio choice with path dependent labor income: Finite
  retirement time.
\newblock {\em Arxiv:2101.09732}, 2021.

\bibitem{BiffisGozziProsdocimi20}
F.~Biffis, E.~Gozzi and C.~Prosdocimi.
\newblock Optimal portfolio choice with path dependent labor income: the
  infinite horizon case.
\newblock {\em SIAM J. Control Optim.}, 58(4):1906--1938, 2020.

\bibitem{Billingsley}
P.~Billingsley.
\newblock {\em Convergence of probability measures}.
\newblock Wiley Series in Probability and Statistics: Probability and
  Statistics. John Wiley \& Sons, Inc., New York, second edition, 1999.
\newblock A Wiley-Interscience Publication.

\bibitem{Bismut75}
Jean-Michel Bismut.
\newblock Growth and optimal intertemporal allocation of risks.
\newblock {\em Journal of Economic Theory}, 10(2):239--257, 1975.

\bibitem{BoucekkineCamachoFabbri13}
R.~Boucekkine, C.~Camacho, and G.~Fabbri.
\newblock Spatial dynamics and convergence: The spatial ak model.
\newblock {\em Journal of Economic Theory}, 148(6):2719--2736, 2013.

\bibitem{bucetal17}
R.~Buckdahn, J.~Li, S.~Peng, and C.~Rainer.
\newblock Mean-field stochastic differential equations and associated {PDE}s.
\newblock {\em Annals of Probability}, 45(2):824--878, 2017.

\bibitem{buretal19}
M.~Burzoni, V.~Ignazio, M.~Reppen, and M.~Soner.
\newblock Viscosity solutions for controlled {M}c{K}ean-{V}lasov
  jump-diffusions.
\newblock {\em SIAM Journal on Control and Optimization}, 58(3):1676--1699,
  2020.

\bibitem{CannarsaDaPrato91}
P.~Cannarsa and G.~Da~Prato.
\newblock Second-order {H}amilton-{J}acobi equations in infinite dimensions.
\newblock {\em SIAM J. Control Optim.}, 29(2):474--492, 1991.

\bibitem{carda12}
P.~Cardaliaguet.
\newblock {N}otes on {M}ean {F}ield {G}ames (from {P}.-{L}. {L}ions' lectures
  at {C}oll\`ege de {F}rance).
\newblock https://www.ceremade.dauphine.fr/cardaliaguet/MFG20130420.pdf, 2013.

\bibitem{CD14}
R.~Carmona and F.~Delarue.
\newblock The master equation for large population equilibriums.
\newblock In {\em Stochastic Analysis and Applications 2014}, pages 77--128.
  Springer, 2014.

\bibitem{cardelbook}
R.~Carmona and F.~Delarue.
\newblock {\em Probabilistic Theory of Mean Field Games with Applications vol
  I. and II.}
\newblock Probability Theory and Stochastic Modelling. Springer, 2018.

\bibitem{CarmonaEtAl18}
R.~Carmona, J.P. Fouque, S.M. Mousavi, and L.~Sun.
\newblock Systemic risk and stochastic games with delay.
\newblock {\em JOTA}, 179:366--399, 2018.

\bibitem{Cerrai01}
S.~Cerrai.
\newblock Optimal control problems for stochastic reaction-diffusion systems
  with non-{L}ipschitz coefficients.
\newblock {\em SIAM J. Control Optim.}, 39(6):1779--1816, 2001.

\bibitem{Cerrai01-40}
S.~Cerrai.
\newblock Stationary {H}amilton-{J}acobi equations in {H}ilbert spaces and
  applications to a stochastic optimal control problem.
\newblock {\em SIAM J. Control Optim.}, 40(3):824--852, 2001.

\bibitem{CossoPham19}
A.~Cosso and H.~Pham.
\newblock Zero-sum stochastic differential games of generalized
  {M}c{K}ean-{V}lasov type.
\newblock {\em J. Math. Pures Appl. (9)}, 129:180--212, 2019.

\bibitem{DaPratoDebussche99}
G.~Da~Prato and A.~Debussche.
\newblock Control of the stochastic {B}urgers model of turbulence.
\newblock {\em SIAM J. Control Optim.}, 37(4):1123--1149, 1999.

\bibitem{DPZ}
G.~Da~Prato and J.~Zabczyk.
\newblock {\em Stochastic equations in infinite dimensions}, volume 152 of {\em
  Encyclopedia of Mathematics and its Applications}.
\newblock Cambridge University Press, Cambridge, second edition, 2014.

\bibitem{DM}
C.~Dellacherie and P.-A. Meyer.
\newblock {\em Probabilities and potential}, volume~29 of {\em North-Holland
  Mathematics Studies}.
\newblock North-Holland Publishing Co., Amsterdam-New York; North-Holland
  Publishing Co., Amsterdam-New York, 1978.

\bibitem{DjeicheGozziZancoZanella20}
B.~Djehiche, F.~Gozzi, G.~Zanco, and M.~Zanella.
\newblock Optimal portfolio choice with path dependent benchmarked labor
  income: a mean field model.
\newblock {\em Stochastic Processes and Applications (to appear).
  Arxiv:2009.03922.}, 2020.

\bibitem{DjetePossamaiTan19}
M.~F. Djete, D.~Possama\"i, and X.~Tan.
\newblock {McKean-Vlasov optimal control: the dynamic programming principle}.
\newblock {\em Preprint arXiv:1907.08860}, 2019.

\bibitem{dup09}
B.~Dupire.
\newblock Functional it\^o calculus.
\newblock {\em SSRN: 1435551}, 2009.

\bibitem{DYBVIG_LIU_JET_2010}
P.H. Dybvig and H.~Liu.
\newblock Lifetime consumption and investment: retirement and constrained
  borrowing.
\newblock {\em J. Ec. Theory.}, 145:885--907, 2010.

\bibitem{FabbriGozziSwiech}
G.~Fabbri, F.~Gozzi, and A.~Swiech.
\newblock {\em Stochastic Control in Infinite Dimension}.
\newblock Springer, 2017.

\bibitem{FaggianGozzi10}
S.~Faggian and F.~Gozzi.
\newblock Optimal investment models with vintage capital: Dynamic programming
  approach.
\newblock {\em J. Math. Econ.}, 46(4):416--437, 2010.

\bibitem{Federer1969}
H.~Federer.
\newblock {\em Geometric measure theory}.
\newblock Springer, New York, 1969.

\bibitem{FGG1}
S.~Federico, B.~Goldys, and F.~Gozzi.
\newblock {HJB} equations for the optimal control of ddes with state
  constraints i: Regularity of viscosity solutions.
\newblock {\em SIAM Journal on Control and Optimization}, 48(8):416--437, 2010.

\bibitem{FGG2}
S.~Federico, B.~Goldys, and F.~Gozzi.
\newblock {HJB} equations for the optimal control of differential equations
  with delays and state constraints, ii: Optimal feedbacks and approximations.
\newblock {\em SIAM Journal on Control and Optimization}, 49:2378--2414, 2011.

\bibitem{Feichtingeretal06JET}
G.~Feichtinger, R.~F. Hartl, P.~M. Kort, and V.~M. Veliov.
\newblock Anticipation effects of technological progress on capital
  accumulation: a vintage capital approach.
\newblock {\em J. Econom. Theory}, 126(1):143--164, 2006.

\bibitem{FouqueZhang18}
J.P. Fouque and Z.~Zhang.
\newblock Mean field game with delay: A toy model.
\newblock {\em Risks}, 90(6):risks6030090, 2018.

\bibitem{Fousekis}
Panos Fousekis and James~S. Shortle.
\newblock Anticipation effects of technological progress on capital
  accumulation: a vintage capital approach.
\newblock {\em Am. J. of Agr. Econ.}, 77(4):990--1000, 1995.

\bibitem{FuhrmanMasieroTessitore11}
M.~Fuhrman, F.~Masiero, and G.~Tessitore.
\newblock Stochastic equations with delay: optimal control via {BSDE}s and
  regular solutions of {H}amilton-{J}acobi-{B}ellman equations.
\newblock {\em SIAM J. Control Optim.}, 48(7):4624--4651, 2010.

\bibitem{FuhrmanTessitore02-ann}
M.~Fuhrman and G.~Tessitore.
\newblock Nonlinear {K}olmogorov equations in infinite dimensional spaces: the
  backward stochastic differential equations approach and applications to
  optimal control.
\newblock {\em Ann. Probab.}, 30(3):1397--1465, 2002.

\bibitem{GangboTudor}
W.~Gangbo and A.~Tudorascu.
\newblock On differentiability in the {W}asserstein space and well-posedness
  for {H}amilton-{J}acobi equations.
\newblock {\em J. Math. Pures Appl. (9)}, 125:119--174, 2019.

\bibitem{Garcia06}
Cecilia Garcia-Penalosa and Stephen~J. Turnovsky.
\newblock Growth and income inequality: a canonical model.
\newblock {\em Economic Theory}, 28:25--49, 2006.

\bibitem{GoldysGozzi06}
B.~Goldys and F.~Gozzi.
\newblock Second order parabolic {H}amilton-{J}acobi-{B}ellman equations in
  {H}ilbert spaces and stochastic control: {$L\sb \mu\sp 2$} approach.
\newblock {\em Stochastic Process. Appl.}, 116(12):1932--1963, 2006.

\bibitem{Gozzi95}
F.~Gozzi.
\newblock Regularity of solutions of a second order {H}amilton-{J}acobi
  equation and application to a control problem.
\newblock {\em Comm. Partial Differential Equations}, 20(5-6):775--826, 1995.

\bibitem{Gozzi96}
F.~Gozzi.
\newblock Global regular solutions of second order {H}amilton-{J}acobi
  equations in {H}ilbert spaces with locally {L}ipschitz nonlinearities.
\newblock {\em J. Math. Anal. Appl.}, 198(2):399--443, 1996.

\bibitem{GozziLeocata21}
F.~Gozzi and M.~Leocata.
\newblock A stochastic model of economic growth in time-space.
\newblock {\em Arxiv:2104.11128.}, 2021.

\bibitem{GozziMasiero17}
F.~Gozzi and F.~Masiero.
\newblock Stochastic optimal control with delay in the control {I}: {S}olving
  the {HJB} equation through partial smoothing.
\newblock {\em SIAM J. Control Optim.}, 55(5):2981--3012, 2017.

\bibitem{GozziMasiero17bis}
F.~Gozzi and F.~Masiero.
\newblock Stochastic optimal control with delay in the control {II}:
  {V}erification theorem and optimal feedbacks.
\newblock {\em SIAM J. Control Optim.}, 55(5):3013--3038, 2017.

\bibitem{GozziSritharanSwiech05}
F.~Gozzi, S.~S. Sritharan, and A.~\Swiech$\, $.
\newblock {Bellman equations associated to the optimal feedback control of
  stochastic Navier-Stokes equations}.
\newblock {\em Comm. Pure Appl. Math.}, 58(5):671--700, 2005.

\bibitem{GozziSwiech00}
F.~Gozzi and A.~\Swiech$\, $.
\newblock Hamilton-{J}acobi-{B}ellman equations for the optimal control of the
  {D}uncan-{M}ortensen-{Z}akai equation.
\newblock {\em J. Funct. Anal.}, 172(2):466--510, 2000.

\bibitem{GozziMasieroErrata21}
Fausto Gozzi and Federica Masiero.
\newblock Errata: {S}tochastic optimal control with delay in the control {I}:
  {S}olving the {HJB} equation through partial smoothing, and {S}tochastic
  optimal control with delay in the control {II}: {V}erification theorem and
  optimal feedbacks [ {MR}3702861].
\newblock {\em SIAM J. Control Optim.}, 59(4):3096--3101, 2021.

\bibitem{huacaimal06}
M.~Huang, P.~Caines, and R.~Malham{\'e}.
\newblock Large population stochastic dynamic games: closed-loop
  {M}c{K}ean-{V}lasov systems and the {N}ash certainty equivalence principle.
\newblock {\em Communications in Information and Systems}, 6(3):221--252, 2006.

\bibitem{LucasMoll12}
Robert E.~Lucas Jr. and Benjamin Moll.
\newblock Hamilton-{J}acobi-{B}ellman equations for the optimal control of the
  {D}uncan-{M}ortensen-{Z}akai equation.
\newblock {\em J. Pol. Econ.}, 122(1), 2014.

\bibitem{kac}
M.~Kac.
\newblock Foundations of kinetic theory.
\newblock In {\em Proceedings of the {T}hird {B}erkeley {S}ymposium on
  {M}athematical {S}tatistics and {P}robability, 1954--1955, vol. {III}}, pages
  171--197. University of California Press, Berkeley and Los Angeles, 1956.

\bibitem{Kallenberg}
O.~Kallenberg.
\newblock {\em Foundations of modern probability}.
\newblock Probability and its Applications (New York). Springer-Verlag, New
  York, second edition, 2002.

\bibitem{KydlandPrescott}
F.~Kydland and E.C. Prescott.
\newblock Time to build and aggregate fluctuations.
\newblock {\em Econometrica}, 50:1345--1370, 1982.

\bibitem{laslio07}
J.M. Lasry and P.L. Lions.
\newblock Mean-field games.
\newblock {\em Japanese Journal of Mathematics}, 2:229--260, 2007.

\bibitem{lio12}
P-L. Lions.
\newblock Cours au coll\`ege de france: Th\'eorie des jeux \`a champ moyens.
\newblock Audio Conference, 2006-2012.

\bibitem{BambiDiGirolami}
S.~Federico M.~Bambi, C. Di~Girolami and F.~Gozzi.
\newblock {Generically Distributed Investments on Flexible Projects and
  Endogenous Growth}.
\newblock {\em Economic Theory}, 63(2):521--558, 2017.

\bibitem{Masiero05}
F.~Masiero.
\newblock Semilinear {K}olmogorov equations and applications to stochastic
  optimal control.
\newblock {\em Appl. Math. Optim.}, 51(2):201--250, 2005.

\bibitem{mackean}
H.P. McKean.
\newblock Propagation of chaos for a class of nonlinear parabolic equations.
\newblock {\em Lecture Series in Di erential Equations}, 7(41-57), 1967.

\bibitem{phawei17}
H.~Pham and X.~Wei.
\newblock {Dynamic programming for optimal control of stochastic McKean-Vlasov
  dynamics}.
\newblock {\em SIAM Journal on Control and Optimization}, 55:1069--1101, 2017.

\bibitem{dapdeb00}
G.~Da Prato and A.~Debussche.
\newblock Dynamic programming for the stochastic {N}avier-{S}tokes equations.
\newblock {\em {M2AN} {M}ath. {M}odel. {N}umer. {A}nal.}, 34(2):459--475, 2000.

\bibitem{daproc02}
G.~Da Prato and M.~R{\"o}ckner.
\newblock Singular dissipative equations in {H}ilbert spaces.
\newblock {\em Probability {T}heory and {R}elated {F}ields}, 124(2):261--303,
  2002.

\bibitem{RosestolatoSwiech15}
M.~Rosestolato and A.~\Swiech.
\newblock Partial regularity of viscosity solutions for a class of {K}olmogorov
  equations arising from mathematical finance.
\newblock {\em J. Differential Equations}, 262(3):1897--1930, 2017.

\bibitem{S89}
A-S Sznitman.
\newblock Topics in propagation of chaos.
\newblock In {\em Lecture Notes in Mathematics}, pages 165--251. Springer,
  1991.

\bibitem{Vi09}
C.~Villani.
\newblock {\em Optimal Transport Old and New}, volume 338 of {\em Grundlehren
  der mathematischen Wissenschaften}.
\newblock Springer Verlag, 2009.

\bibitem{WuZhang18}
C.~Wu and J.~Zhang.
\newblock {An elementary proof for the structure of Wasserstein derivatives}.
\newblock {\em Preprint arXiv:1705.08046}, 2018.

\bibitem{WuZhang18PPDE}
C.~Wu and J.~Zhang.
\newblock Viscosity solutions to parabolic master equations and
  {M}c{K}ean-{V}lasov {SDE}s with closed-loop controls.
\newblock {\em Ann. Appl. Probab.}, 30(2):936--986, 2020.

\end{thebibliography}

\end{document}